\documentclass[12pt]{article}

\usepackage{graphicx,amsmath,amssymb,tikz,caption,authblk}

\usepackage[pdftex,pagebackref=false, hidelinks]{hyperref} 

\usepackage[normalem]{ulem}
\usetikzlibrary{calc}

\usepackage[margin = 1in]{geometry}
\usepackage{amsthm,amsmath,amssymb,enumerate,mathtools,comment,graphicx,tikz}
\usetikzlibrary{decorations.pathmorphing}

\newtheorem{theorem}{Theorem}[section]
\newtheorem{claim}{}[theorem]
\newtheorem{lemma}[theorem]{Lemma}

\newtheorem{proposition}[theorem]{Proposition}
\newtheorem{corollary}[theorem]{Corollary}

\theoremstyle{definition}
\newtheorem{definition}[theorem]{Definition}

\theoremstyle{plain} 
\newtheorem{subtheorem}{}[theorem]

\newcommand{\cB}{\mathcal{B}}
\newcommand{\cC}{\mathcal{C}}

\newcommand{\cF}{\mathcal{F}}

\newcommand{\cL}{\mathcal{L}}

\DeclareMathOperator{\si}{si}

\DeclareMathOperator{\cl}{cl}

\newcommand{\del}{\!\setminus\!}

\newcommand{\qbg}{\ensuremath{(G,\cB,\cL,\cF)}}
\newcommand{\qbgp}{\ensuremath{(G',\cB',\cL',\cF')}}
\usetikzlibrary{arrows.meta,decorations.markings}
\tikzset{
    midarrow/.style={
        postaction={
            decorate,
            decoration={
                markings,
                mark=at position 0.5 with {\arrow{stealth}}
            }
        }
    }
}

\newdimen\R
\title{The unbreakable quasi-graphic matroids}

\newcommand*\samethanks[1][\value{footnote}]{\footnotemark[#1]}

\author[1]{Sayantani Bhattacharya\thanks{The authors were supported in part by NSF grant DMS-2452015.}}
\author[1]{John David Clifton\samethanks}
\author[1]{Zach Walsh\samethanks}

\affil[1]{\small Auburn University, Department of Mathematics and Statistics, Auburn, AL, U.S.A.}

\date{\today}

\begin{document}

\maketitle

\begin{abstract} 
A matroid $M$ is unbreakable if it is connected and $M/F$ is connected for every flat $F$ of $M$.
Oxley and Pfeil characterized the unbreakable graphic matroids, and Fife, Mayhew, Oxley, and Semple characterized the graphs underlying $3$-connected unbreakable frame matroids.
We extend the latter result by giving a complete characterization of the $3$-connected unbreakable quasi-graphic matroids.
As a special case we obtain a characterization of the $3$-connected lifted-graphic matroids.
\end{abstract}

\section{Introduction}\label{sec: introduction}

A matroid $M$ is \emph{unbreakable} if it is connected, and $M/F$ is connected for every flat $F$ of $M$.
These matroids were first defined by Oxley and Pfeil \cite{Oxley-Pfeil2022}, and have since accrued a number of interesting equivalent characterizations.
For example, unbreakable matroids can be characterized by forbidden parallel minors \cite[Theorem 1.1]{Oxley-Pfeil2022}, forbidding skew circuits in the dual matroid \cite[Theorem 1.1]{Oxley-Pfeil2022}, or a symmetric circuit exchange axiom in the dual matroid \cite[Theorem 1.1]{Cho-Oxley-Wang-2026}, and unbreakable binary matroids can be characterized by a circuit-difference property \cite[Lemma 1.2]{Drummond-Fife-Grace-Oxley-2020}.
Moreover, unbreakability may be a useful property for inductive arguments involving matroid minors as it generalizes the notion of \emph{roundness}, which played a crucial role in \cite{Geelen-Nelson-2010} and \cite{Nelson-2013}, for example.

In light of this list of properties of unbreakable matroids, it may be useful to know whether or not a given matroid is unbreakable.
The first result in this direction is due to Oxley and Pfeil \cite{Oxley-Pfeil2022}, who characterized the unbreakable regular matroids. 
The following corollary gives a  characterization of the unbreakable graphic matroids (writing $\si(G)$ for the simplification of a graph $G$).

\begin{theorem}[{\cite[Theorem 1.2]{Oxley-Pfeil2022}}] \label{thm: the unbreakable graphic matroids}
If $G$ is a graph with no isolated vertices, then the graphic matroid $M(G)$ is unbreakable if and only if $G$ is loopless and $\si(G)$ is a cycle or a complete graph.
\end{theorem}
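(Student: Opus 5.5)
We argue directly from the definition, interpreting flats of $M(G)$ and their contractions graphically. Loops are handled first. If $G$ has a loop $e$ and is not a single loop, then $M(G)$ is either disconnected or has $\{e\}$ as a component; even if $G$ is a single loop, $\cl(\emptyset)=\{e\}$ and $M/\{e\}$ is the empty matroid. We adopt whichever convention makes the loop case fail, so assume $G$ loopless. Unbreakability is unchanged by parallel extension: the flats of $M(G)$ correspond to the flats of $M(\si(G))$, and adding parallel copies preserves connectivity of each contraction, since parallel classes lie in a single connected component. So we may assume $G$ is simple. Since $M(G)$ must be connected and $G$ has no isolated vertices, $G$ is $2$-connected or is a single edge. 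A flat $F$ of $M(G)$ corresponds to a partition of $V(G)$ into parts $V_1,\dots,V_k$, each inducing a connected subgraph, with $F$ the set of edges inside parts. Then $M/F = M(G/F)$, where $G/F$ is obtained by identifying each $V_i$ to a vertex and deleting loops. Hence $M(G)$ is unbreakable if and only if, for every such partition, the quotient graph is $2$-connected, a single vertex, or a bond-type graph with one non-loop parallel class.

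For the \emph{if} direction, first let $G$ be a complete graph. Any quotient of $K_n$ by a partition into parts is a complete graph (with multiplicities) on $k$ vertices. This is $2$-connected for $k\ge 3$, a parallel class for $k=2$, and empty for $k=1$, so in every case it is connected. Next let $G$ be a cycle. Contracting any set of edges of a cycle yields a cycle, possibly of length $1$ or $2$, or nothing. Moreover every flat of $M(C_n)$ is a set of edges whose contraction still leaves a connected matroid.

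For the \emph{only if} direction, assume $G$ is simple and $2$-connected, and that $M(G)$ is unbreakable. Suppose $G$ is neither a cycle nor complete. Then $G$ has two nonadjacent vertices $u,v$. We plan to choose the flat $F$ of all edges of $G-\{u,v\}$ if that subgraph is connected. Contracting $F$ gives a graph on three vertices $u,w,v$ with no $uv$ edge: a path of parallel classes. Its cycle matroid is the direct sum of two parallel classes, which is disconnected whenever both classes are nonempty. Both are nonempty by $2$-connectivity.

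The main obstacle is that $G-\{u,v\}$ may be disconnected. In that case we take the partition whose parts are the components of $G-\{u,v\}$ together with $\{u\}$ and $\{v\}$. The quotient is then $u$, $v$, and component vertices $w_1,\dots,w_m$, each joined to both $u$ and $v$. That is a multigraph $K_{2,m}$, which is $2$-connected when $m\ge 2$, so this choice fails. We must therefore pick $u,v$ cleverly. If $G$ is not a cycle, it has a vertex $x$ of degree at least $3$. If no nonadjacent pair $u,v$ has $G-\{u,v\}$ connected, then every nonadjacent pair is a $2$-cut. For the general case we instead use the flat spanned by the subgraph induced on $V\setminus\{u\}$ for a suitable $u$ with $G-u$ connected. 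The quotient is $u$ plus one vertex, which is always connected, so we need a different partition: take $u$ and a neighbor set. Specifically, choose nonadjacent $u,v$ with a common neighbor, and let $H$ be a component of $G-\{u,v\}$ containing a common neighbor. Contract $E(H)$ together with the edges of the other components merged via $u$, that is, use parts $V(H)$, $\{v\}$, and $V\setminus(V(H)\cup\{v\})$. The latter part contains $u$ and is connected through $u$. The quotient is a triangle minus the edge between $v$ and the $u$-part (absent since $u\not\sim v$, and the other components attach to $v$ only), which is a path. This is disconnected, a contradiction. Verifying that this last part induces a connected subgraph and that no edges join $v$ to it except via $u$ needs care, and that is the step we would check most carefully.
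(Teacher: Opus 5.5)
Your reductions (to loopless and then simple $G$), the description of flats via vertex partitions, the ``if'' direction, and the first case of the ``only if'' direction (some nonadjacent pair $\{u,v\}$ with $G-\{u,v\}$ connected) are all fine. The gap is in the remaining case, which is where the hypothesis that $G$ is not a cycle has to enter.

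With parts $V(H)$, $\{v\}$ and $R=V(G)-(V(H)\cup\{v\})$, you claim there are no edges between $v$ and $R$. This is false whenever $G-\{u,v\}$ is disconnected. Every component $K\neq H$ of $G-\{u,v\}$ lies in $R$. Since $G$ is $2$-connected, $K$ has a neighbour at $v$, as well as one at $u$; otherwise $u$ would separate $K$ from $v$. So the quotient has edges $h$--$v$, $h$--$r$ and $v$--$r$. It is a triangle with multiplied edges, which is $2$-connected, so $M(G)/F$ is connected and there is no contradiction.

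A sanity check shows this step cannot be right. It never uses that $G$ is not a cycle. For $C_n$ with $n\ge 4$, every nonadjacent pair is a $2$-cut, so the same reasoning would ``show'' that $M(C_n)$ is breakable, contradicting your own ``if'' direction. The sentences before it (a vertex of degree $3$, contracting the edges of $G[V-\{u\}]$) are abandoned and do not cover this case either.

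The case can be repaired by generalizing your partition. Suppose $S\subseteq V(G)$ induces a connected subgraph and $G-S$ has components $D_1,\dots,D_m$ with $m\ge 2$. Then the partition $\{S,D_1,\dots,D_m\}$ gives a flat whose contraction is a star with multiplied edges. Its cycle matroid is a direct sum of $m$ parallel classes, hence disconnected.

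Now let $u\not\sim v$, and let $H_1,\dots,H_m$ with $m\ge 2$ be the components of $G-\{u,v\}$. Let $P$ be a shortest $u$--$v$ path with interior in $H_1$, and take $S=V(P)$. Then $H_2,\dots,H_m$ are components of $G-S$, so $G-S$ is disconnected unless $m=2$ and $P$ covers $V(H_1)$. In that exceptional case, $P$ is induced: a chord would shorten it, and $uv\notin E(G)$. Hence $G[V(H_1)\cup\{u,v\}]$ is exactly the path $P$.

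Repeat the argument with $H_2$ in place of $H_1$. If both sides are such paths, then $G$ is a cycle, and only here is that hypothesis genuinely used. Alternatively, an end-block argument as in Lov\'asz's proof of Brooks' theorem shows the bad case never occurs. Pick a vertex $w$ of degree at least $3$ lying in a $2$-cut. Then take neighbours of $w$ in the interiors of two end blocks of $G-w$. These give nonadjacent $u,v$ with $G-\{u,v\}$ connected, so your first case always applies.

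As a minor point, ``adopting whichever convention makes the loop case fail'' is not an argument. Under the standard conventions the single-loop graph gives $U_{0,1}$, which is unbreakable. It is a trivial exception to state explicitly, not something to argue away.
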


Theorem \ref{thm: the unbreakable graphic matroids} was extended by Fife, Mayhew, Oxley, and Semple \cite{Fife-Mayhew-Oxley-Semple2020} to the class of $3$-connected \emph{frame matroids}, which is an important class of matroids that contains all graphic matroids, signed-graphic matroids, and bicircular matroids.
A frame matroid can be defined from a \emph{biased graph}, which is a pair $(G, \cB)$ where $G$ is a graph and $\cB$ is a collection of cycles of $G$ (called \emph{balanced} cycles) that does not contain exactly two cycles from any theta subgraph of $G$.
The following is the main result of \cite{Fife-Mayhew-Oxley-Semple2020}; the lower bound on the number of vertices is sharp.

\begin{theorem}[{\cite[Theorem 1.2]{Fife-Mayhew-Oxley-Semple2020}}] \label{thm: the unbreakable frame matroids}
Let $M$ be a $3$-connected unbreakable frame matroid on a graph $G$ with $|V(G)| > 6$ and no isolated vertices.
Then $\si(G)$ can be obtained from a complete graph by deleting the edges of a path of length at most two.
\end{theorem}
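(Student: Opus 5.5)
Write $H=\si(G)$. The plan is to show that if two non-adjacent vertex pairs of $H$ are disjoint, or if three non-adjacent pairs form a triangle or a star, then $M$ has a flat $F$ with $M/F$ disconnected. Once these are ruled out, every two missing edges of $H$ share a vertex. A family of edges with this property and no triangle or $K_{1,3}$ is a path of length at most two, so $H$ is $K_n$ minus such a path.

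The main tool is to build flats from vertex sets. For $X\subseteq V(G)$, let $E[X]$ be the set of edges with both ends in $X$. Take the frame representation with joints: adjoin a frame point $v$ for each vertex, so that an edge lies in the span of its endpoints and an unbalanced loop at $v$ is parallel to $v$. Each flat of $M$ is then the restriction of a flat of this extension. The useful special case is when $E[X]$ contains an unbalanced cycle, or unbalanced loops, in every component. Then $F=\cl(E[X])$ is exactly the set of edges with both ends in $X$ together with any balanced-closure edges. In $M/F$, each vertex of $X$ behaves like a joint that has been contracted away. So $M/F$ is the frame matroid of the biased graph obtained by deleting $X$, where every edge with exactly one end in $X$ becomes a half-edge at its other end. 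Half-edges at different vertices of $V-X$ are parallel only through the frame structure, so $M/F$ is disconnected whenever the graph $G-X$, with half-edges attached, splits into pieces that no cycle or handcuff crosses. In particular, if $G-X$ consists of two vertices $u,w$ that are non-adjacent in $H$, then $M/F$ is the direct sum of the half-edges and loops at $u$ and those at $w$. So $M/F$ is disconnected as long as both parts are nonempty, which 3-connectivity with $|V(G)|>6$ guarantees.

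The steps run as follows.

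\textbf{Step 1.} Using $|V(G)|>6$ and 3-connectivity, show that for most choices of $X$ with $|V-X|\ge 2$, the graph $G[X]$ is connected and contains an unbalanced cycle. This gives the flat description above. If it fails, $M$ is close to graphic on $X$; in that case use Theorem~\ref{thm: the unbreakable graphic matroids}, or a balancing-vertex analysis, to produce a different disconnecting flat.

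\textbf{Step 2.} Take $X=V-\{u,w\}$ for a non-edge $uw$ of $H$. This shows that $\{u,w\}$ plus any neighbourhood structure is fine; this step alone does not yet constrain $H$.

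\textbf{Step 3.} Use $X=V-\{a,b,c,d\}$, or a smaller $X$ together with a vertex set $Y$ that is deleted instead, to treat the forbidden configurations. If $ab$ and $cd$ are disjoint non-edges, the structure induced on $\{a,b,c,d\}$ with half-edges must be connected. Show that some choice of which further vertices to put in $X$, using the freedom given by $n\ge 7$, makes it split into $\{a,c\}\cup\{b,d\}$-type pieces that no unbalanced handcuff joins. The triangle and star cases are handled similarly with a three-vertex complement.

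I expect Step 3 to be the main obstacle. Connectivity of $M/F$ in a frame matroid is not just graph connectivity of $G-X$: two half-edges at $u$ and $w$ lie in a common circuit through any $u$--$w$ path, so a single missing edge of $H$ does not disconnect anything once further vertices remain. The argument therefore has to choose $X$ so that $G-X$ is exactly the non-adjacent configuration, with no path joining its parts. This means carefully handling the possibility that $E[X]$ is balanced, where contracting does not absorb the joints, and using the bias structure to find an unbalanced cycle inside $X$. I would first try to show that $G$ has no balancing vertex and at most one balancing pair, using 3-connectivity and $n>6$, and then treat that exceptional case separately.
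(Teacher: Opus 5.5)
There is a genuine gap, and it is in Step~3. Your half-edge description of $M/\cl(E[X])$ is correct, but Step~2 proves more than you say. Suppose $uw$ is a non-edge of $H$ and every component of $G-\{u,w\}$ is unbalanced. Then contracting $\cl(E(G-\{u,w\}))$ leaves two vertices carrying only half-edges and loops, so $M$ is not unbreakable. Hence $G-\{u,w\}$ always has a balanced component, and when $G$ is $3$-connected, $G-\{u,w\}$ is balanced (part $(i)$ of Lemma~\ref{lem: x,y pair of non-adjacent vertices}).

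So Step~1 fails precisely for the sets you need. In the $2$-matching, claw and triangle configurations, $X$ avoids both ends of some non-edge $ab$. Then $G[X]\subseteq G-\{a,b\}$ is balanced, and no unbalanced cycle inside $X$ can be found.

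A cleverer $X$ does not help either. When $H$ is exactly $K_n$ minus a matching $\{ab,cd\}$, the only pairs of vertex classes with no edges between them are $\{a\},\{b\}$ and $\{c\},\{d\}$. So even a flat mixing unbalanced components (rolled into half-edges) with balanced ones (collapsed to vertices) can disconnect the graph of $M/F$ only in the situation of Step~2. In the critical case, the separation of $M/F$ must come from the bias while the graph stays connected, and your proposal has no mechanism for that.

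The second half of your preliminary step is also false. Every non-edge is a balancing pair, so on $K_n$ minus a path $u$--$v$--$w$ both $\{u,v\}$ and $\{v,w\}$ are balancing pairs. Moreover, $3$-connected unbreakable frame matroids on such graphs exist \cite[Theorem 1.5]{Fife-Mayhew-Oxley-Semple2020}.

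The missing ingredient, which the paper takes from Fife, Mayhew, Oxley and Semple, is an analysis of shortest unbalanced cycles.
\begin{enumerate}
\item First, the case that $G$ is not $3$-connected is handled by a $2$-vertex-cut argument (Theorem~\ref{thm: 3-connected quasi matroid with G 2 connected}). This is needed because $3$-connectivity of $M$ does not give $3$-connectivity of $G$ for free.
\item For a non-edge $uv$, the shortest unbalanced cycles avoiding $v$ and avoiding $u$ have length at most $3$ once $|V(G)|\ge 7$ (parts $(ii)$ and $(vi)$ of Lemma~\ref{lem: 6 part lemma}).
\item In the $2$-matching case this produces unbalanced cycles through $\{u,s\}$ and through $\{v,t\}$, and a vertex $y$ outside them adjacent to every other vertex. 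It also shows that $G\del X$ is balanced, where $X$ is the set of edges inside $\{u,v,s,t\}$, and gives unbalanced triangles on $\{y,u,s\}$ and $\{y,v,t\}$.
\item The disconnecting flat $F$ is the span of the balanced star from $y$ to $u,v,s,t$. Contracting it collapses these five vertices to $y$, and turns $X-F$, which contains the edges $us$ and $vt$, into a parallel class of unbalanced loops at $y$. No circuit joins this class to the balanced remainder.
\end{enumerate}
This is a contraction of a balanced set, the opposite of your half-edge picture. Your target reduction (no $2$-matching, claw, or triangle in the complement of $H$) is the right one, but Step~3 needs this machinery.
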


We will extend Theorem \ref{thm: the unbreakable frame matroids} to the class of \emph{quasi-graphic matroids}, which is a broad class of matroids derived from graphs that properly contains the class of frame matroids.
A connected quasi-graphic matroid can be defined from a tuple $(G, \cB, \cL, \cF)$ where $(G, \cB)$ is a biased graph and $(\cB,\cL, \cF)$ is a partition of the cycles of $G$ so that every cycle in $\cL$ shares a vertex with every cycle in $\cF$.
The triple $(\cB, \cL, \cF)$ is a \emph{proper tripartition} of the cycles of $G$.
When $\cL = \varnothing$ the quasi-graphic matroid of $(G, \cB, \cL, \cF)$ is simply the frame matroid of $(G, \cB)$, and when $\cF = \varnothing$ the quasi-graphic matroid of $(G, \cB, \cL, \cF)$ is the \emph{lifted-graphic} matroid of $(G, \cB)$, which is another fundamental matroid that can be defined from a biased graph.
We will give more details about these matroids in Section \ref{sec: background}.
The following is our main result, and it implies Theorem \ref{thm: the unbreakable frame matroids}.

\begin{theorem} \label{thm: main}
Let $M$ be a $3$-connected unbreakable quasi-graphic matroid on a graph $G$ so that $|V(G)| > 6$ and each component has at least two vertices.
Then either $\si(G)$ is a cycle and $M$ is lifted-graphic, or $\si(G)$ can be obtained from a complete graph by deleting the edges of a path of length at most two.
\end{theorem}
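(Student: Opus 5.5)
Since $M$ is $3$-connected and each component of $G$ has at least two vertices, I will first argue that $G$ is connected. The quasi-graphic matroid is determined by a tuple $(G,\cB,\cL,\cF)$ with a proper tripartition. I would then split into three cases according to whether $\cL$ and $\cF$ are empty.

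\emph{Case $\cL=\varnothing$.} Here $M$ is the frame matroid of $(G,\cB)$, and Theorem~\ref{thm: the unbreakable frame matroids} applies directly. The second outcome follows.

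\emph{Case $\cF=\varnothing$.} Here $M$ is lifted-graphic, and the key tool is contraction of vertex stars. Let $v$ be a vertex, and let $F$ be the closure in $M$ of the edges not incident with $v$, or more generally the flat spanned by the edges of an induced subgraph $G-X$. Then $M/F$ restricted to the edges meeting $X$ is closely related to a graphic or lifted-graphic matroid on the graph obtained by identifying $V(G)-X$ to a single vertex. Unbreakability forces these contractions to be connected. I expect this to yield a strong condition: for every vertex cut or vertex set $X$, the subgraph induced on the edges between $X$ and its complement must be ``well connected''.

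The plan in this case is to use the graphic characterization, Theorem~\ref{thm: the unbreakable graphic matroids}. When $\cB$ contains enough cycles, $M$ restricted to balanced parts looks graphic. One could also pass to $M/e$ for an unbalanced loop or edge $e$, whose contraction is graphic-like; more precisely, in a lifted-graphic matroid, contracting the flat spanned by an unbalanced cycle yields $M(G)/C$. Unbreakability of $M/F$ then forces the structure of $G/C$. Taking $C$ to be a shortest unbalanced cycle, I would show that $\si(G/C)$ is a cycle or complete graph for every unbalanced cycle $C$. From this I would deduce that $\si(G)$ is either a cycle, giving the first outcome, or nearly complete. In the nearly complete case, a comparison argument across different choices of $C$ should show that at most a path of length at most two is missing. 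Here $|V(G)|>6$ is used to rule out small sporadic examples.

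\emph{Case with $\cL,\cF$ both nonempty.} Every $\cL$-cycle meets every $\cF$-cycle. I would use known structure for proper tripartitions: there is a vertex, or a small vertex set, meeting all cycles of one class, or $G$ has a special shape. Contracting appropriate flats then reduces to one of the previous two cases, applied to a minor that is still $3$-connected and unbreakable. Unbreakability is closed under contracting flats, and $3$-connectivity can be recovered up to simplification. The claim to establish is that in this mixed case $\si(G)$ is never a cycle and is again nearly complete.

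The main obstacle is the lifted-graphic case. Lifts are not determined by the graph as rigidly as frame matroids are, and the flats of a lifted-graphic matroid (balanced versus unbalanced closure) must be handled carefully. The first thing to try is to exhibit, for any non-edge pattern other than a subpath of length at most two, a specific flat $F$ whose contraction is disconnected. Two natural candidates are two disjoint non-edges, or a vertex missing two neighbors that are themselves nonadjacent to a third vertex. For such a configuration, $F$ would be spanned by the balanced edges of the complementary induced subgraph.
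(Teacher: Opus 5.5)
\textbf{There are genuine gaps.} Neither your lifted-graphic case nor your mixed case contains an argument that actually determines $\si(G)$.

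\emph{The case split and the mixed case.} Emptiness of $\cL$ or $\cF$ is the wrong invariant: a degenerate $\cL$ already gives $M=F(G,\cB)$, and a degenerate $\cF$ gives $M=L(G,\cB)$. As stated, your mixed-case target is false. Take an $n$-cycle with $n\ge 7$, double every edge, put every $2$-cycle in $\cL$ and every Hamiltonian cycle in $\cF$. This is a proper tripartition with $\cB=\varnothing$. Then $M=L(G,\cB)$ is $3$-connected and unbreakable by Theorem~\ref{thm: main converse for cycles}, both $\cL$ and $\cF$ are nonempty, and $\si(G)$ is a cycle. The correct statement is Lemma~\ref{lem: si(G) is a cycle}: $\cF$ must be degenerate.

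The proposed reduction also fails, for several reasons:
\begin{itemize}
\item No structure theorem of the kind you invoke is cited or proved. A single vertex meeting all cycles of a class just means that class is degenerate, so you were already in a frame or lift case.
\item Contracting a flat preserves unbreakability but not $3$-connectivity, even after simplification.
\item The minor may have at most six vertices.
\item Knowing $\si(G/F)$ says little about $\si(G)$, since each component of $G[F]$ collapses to a vertex.
\end{itemize}
The paper never separates $\cL$ from $\cF$. It runs a single graph-theoretic argument on $(G,\cB)$, following Fife--Mayhew--Oxley--Semple. The matroid enters only through lemmas valid for all quasi-biased graphs, chiefly Lemma~\ref{lem: tree deletion disconnects G}. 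It also uses the fact that vertex-disjoint unbalanced cycles lie in the same class. So a suitable contraction either gives a disconnected frame-type minor (both in $\cF$) or a graphic minor to which Theorem~\ref{thm: the unbreakable graphic matroids} applies (both in $\cL$).

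\emph{The lifted case.} Your only concrete tool is Lemma~\ref{lem: contract a cycle}: $\si(G/\cl(C))$ is complete or a cycle for $C\in\cL$. This cannot determine $\si(G)$ by itself. Nothing in your plan bounds the length of unbalanced cycles. If $C$ has $k$ vertices then $G/C$ has $|V(G)|-k+1$ vertices, which is vacuous for large $k$. Contraction also loses the adjacencies inside $V(C)$, and which vertex of $C$ a neighbour attaches to. The promised ``comparison argument'' is where all the content would have to be.

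Moreover, $3$-connectivity of $M$ plays no role in your plan. In the paper it gives that all $1$- and $2$-cycles are unbalanced, that $G$ has minimum degree at least three, and that $G$ is $2$-connected. Combined with Lemma~\ref{lem: tree deletion disconnects G} ($M$ is breakable if a tree $T$ covers $V(C)$ for an unbalanced $C$ and $G-V(T)$ is disconnected), these yield the following when $G$ is $3$-connected:
\begin{itemize}
\item for nonadjacent $x,y$, the graph $G-\{x,y\}$ is balanced and every unbalanced cycle meets $\{x,y\}$;
\item the shortest unbalanced cycles $C_u,C_v$ have length at most $4$, and length $4$ forces $|V(G)|\le 6$.
\end{itemize}

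The paper then splits on the connectivity of $G$. A $2$-vertex cut with $|V(G)|\ge 7$ forces $\si(G)$ to be a cycle and $\cF$ degenerate (Theorem~\ref{thm: 3-connected quasi matroid with G 2 connected}). For $G$ $3$-connected, the complement of $\si(G)$ has no vertex of degree at least three and no $2$-edge matching (Theorem~\ref{thm: 4.1}).

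Your closing idea of a disconnecting flat for each forbidden non-edge pattern is the right endgame there. But for a matching $\{uv,st\}$ of non-edges, the flat that works is spanned by the edges from a vertex $y$ to $\{u,v,s,t\}$. It is only available after showing that some $y$ is adjacent to all other vertices, and that every unbalanced cycle uses an edge with both ends in $\{u,v,s,t\}$. Both facts come from the structural lemmas above, not from the non-edge pattern alone.
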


The condition that $|V(G)| > 6$ is necessary as there are several examples of a $3$-connected unbreakable frame matroid on a graph $G$ so that $|V(G)| = 6$ and $\si(G)$ is not a cycle or the complement of a path with at most two edges, as shown in Figure \ref{figure: 6-vertex graphs}.
Since adding an isolated vertex or a set of loops in $\cL$ at an isolated vertex does not change the quasi-graphic matroid, the condition that each component has at least two vertices is also necessary. Restricting to lifted-graphic matroids, we obtain the following new result as a corollary.

\begin{corollary} \label{thm: main corollary for lifted-graphic matroids}
Let $M$ be a $3$-connected unbreakable lifted-graphic matroid on a graph $G$ so that $|V(G)| > 6$ and each component has at least two vertices.
Then $\si(G)$ is a cycle, or $\si(G)$ can be obtained from a complete graph by deleting the edges of a path of length at most two.    
\end{corollary}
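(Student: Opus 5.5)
This corollary should follow directly from Theorem~\ref{thm: main}, once we check that a lifted-graphic matroid on $G$ is a quasi-graphic matroid on the same graph $G$. Let $M$ be the lifted-graphic matroid of a biased graph $(G,\cB)$. We take the tripartition $(\cB,\cL,\varnothing)$, where $\cL$ is the set of unbalanced cycles of $G$. This is a proper tripartition, because the condition that every cycle in $\cL$ meets every cycle in $\cF$ holds vacuously when $\cF=\varnothing$. As noted in the introduction, when $\cF=\varnothing$ the quasi-graphic matroid of $(G,\cB,\cL,\cF)$ is exactly the lifted-graphic matroid of $(G,\cB)$.

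The one point needing care is that the introduction defines quasi-graphic matroids via tuples only in the connected case. Since $M$ is $3$-connected, it is connected, so this restriction causes no problem. We should also confirm that the phrase ``quasi-graphic matroid on a graph $G$'' in Theorem~\ref{thm: main} refers to the same underlying graph $G$ as in the corollary. I would verify this against the definitions in Section~\ref{sec: background}.

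With this identification, $M$ is a $3$-connected unbreakable quasi-graphic matroid on $G$, with $|V(G)|>6$ and every component of $G$ having at least two vertices. So all hypotheses of Theorem~\ref{thm: main} hold. The theorem then gives one of two outcomes. Either $\si(G)$ is a cycle, in which case the extra conclusion that $M$ is lifted-graphic is automatic and we simply discard it. Or $\si(G)$ is obtained from a complete graph by deleting the edges of a path of length at most two. These are exactly the two alternatives in the corollary.

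I do not expect any step to be a real obstacle. All of the difficulty lies in Theorem~\ref{thm: main}, and the only thing to check here is the bookkeeping that lifted-graphic matroids are exactly the $\cF=\varnothing$ case of the quasi-graphic framework on the same graph.
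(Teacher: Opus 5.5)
Your proposal is correct and matches the paper's argument. The paper obtains this corollary directly from Theorem~\ref{thm: main} by viewing $L(G,\cB)$ as $M(G,\cB,\cL,\varnothing)$, with $\cL$ the set of unbalanced cycles, exactly as you do. Your worry about connectedness does not arise, because Theorem~\ref{thm: equivalence of frameworks and proper tripartitions} gives $(2)\Rightarrow(1)$ unconditionally, so $G$ is a framework for $M$.
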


Fife, Mayhew, Oxley, and Semple also proved a partial converse to Theorem \ref{thm: the unbreakable frame matroids} by showing that if $G$ can be obtained from a complete graph by deleting the edges of a path of length at most two, then there exists a $3$-connected unbreakable frame matroid $M$ on $G$ \cite[Theorem 1.5]{Fife-Mayhew-Oxley-Semple2020}.
We significantly generalize this result by giving a complete characterization of the $3$-connected unbreakable quasi-graphic matroids on a graph $G$ with $|V(G)| > 6$ and $\si(G)$ a cycle or the complement of a path with length at most two.
We separately consider the cases when $\si(G)$ is a cycle and when $\si(G)$ is nearly complete, and in each case we also characterize which quasi-graphic matroids on $G$ are $3$-connected.
The following result shows that when $\si(G)$ is a cycle and $M$ is both lifted-graphic and $3$-connected, $M$ is always unbreakable.

\begin{theorem} \label{thm: main converse for cycles}
Let $M$ be a lifted-graphic matroid on a graph $G$ so that $\si(G)$ is a cycle and $|V(G)| > 6$.
Then $M$ is $3$-connected if and only if $M$ is simple and at most one edge of $G$ is not in a $2$-cycle, and if $M$ is $3$-connected, then $M$ is unbreakable.
\end{theorem}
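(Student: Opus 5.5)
Throughout, $L$ denotes the lifted-graphic matroid of the biased graph $(G,\cB)$. Its rank is $|V(G)|-c(G)+1$ when $(G,\cB)$ is unbalanced and $|V(G)|-c(G)$ otherwise. Its circuits are the balanced cycles, the unions of two unbalanced cycles meeting in at most one vertex, and the unbalanced thetas. Let $n=|V(G)|>6$ and let $C=v_1v_2\cdots v_n$ be the cycle $\si(G)$.

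\emph{Necessity.} Suppose $M$ is $3$-connected.
\begin{itemize}
\item Since $M$ has many elements, $M$ is simple.
\item $(G,\cB)$ is unbalanced. Otherwise $M=M(G)$, and the graphic matroid of a graph whose simplification is a long cycle has series pairs.
\item Suppose two edges $e,f$ of $G$ are each not in a $2$-cycle. Then $e$ and $f$ are the only edges joining their respective endpoint pairs, say $\{v_i,v_{i+1}\}$ and $\{v_j,v_{j+1}\}$. Deleting $e,f$ disconnects $G$ into two paths of parallel classes, so $G-\{e,f\}$ has two components and $r(E-\{e,f\})\le n-2+1=r(M)-1$. Thus $\{e,f\}$ is a cocircuit, which contradicts $3$-connectivity.
\end{itemize}
Hence $M$ is simple and at most one edge lies in no $2$-cycle.

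\emph{Sufficiency.} Assume $M$ is simple and at most one edge lies in no $2$-cycle. Simplicity forces every loop to be unbalanced and every $2$-cycle to be unbalanced, and it allows at most one unbalanced loop overall, since two unbalanced loops form a circuit. Consequently each adjacent pair $\{v_i,v_{i+1}\}$, except possibly one, carries at least two parallel edges, any two of which form an unbalanced $2$-cycle.

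To show $3$-connectivity, I will rule out a $1$- or $2$-separation $(X,Y)$ using the cocircuit structure: the cocircuits of $L$ are the edge sets $D$ such that $G-D$ either has one more component or becomes balanced. The plan is to show that each parallel class lies in a common circuit with every other parallel class. Two unbalanced $2$-cycles at distinct pairs that share at most one vertex form a circuit, and thetas spanning the whole cycle $C$ link everything together. I will then compute $r(X)+r(Y)-r(M)$ directly via the formula $r(X)=|V(X)|-c(X)+[X\text{ unbalanced}]$. A split that keeps an unbalanced $2$-cycle on both sides gives $\lambda\ge 2$, because each side spans long paths. The remaining small cases, namely the single bare edge or a lone loop, are handled by hand.

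\emph{Unbreakability.} The main step is to classify the flats $F$ of $M$ and check that $M/F$ is connected.
\begin{itemize}
\item If $F$ is balanced (hence a forest of parallel-free edges), then $M/F$ is again lifted-graphic on $G/F$. Here $\si(G/F)$ is a shorter cycle, and the contracted graph still has an unbalanced $2$-cycle in almost every parallel class. Connectivity of a lifted-graphic matroid on an unbalanced $2$-edge-connected graph in which every edge lies on an unbalanced cycle then follows. I expect to verify this by showing that any two edges lie in a common circuit: a pair of unbalanced cycles joined by a path, or a theta.
\item If $F$ is unbalanced, then $M/F$ is the graphic matroid of $G/F$ with the parallel classes of $F$ absorbed. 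Since $F$ is closed, $G/F$ has the form obtained by contracting connected pieces of the cycle. Its simplification is a cycle, or $F$ is spanning. The graph $G/F$ is $2$-connected with no loops surviving outside $F$, because closure absorbs loops and the parallel edges at pairs inside $F$. So $M/F$ is connected by Theorem~\ref{thm: the unbreakable graphic matroids}, or directly because a graph whose simplification is a cycle is $2$-connected.
\end{itemize}

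The obstacle I expect is the closure bookkeeping. In the unbalanced case I must confirm that the closure of $F$ contains every edge of each pair whose endpoints lie in the same component of $F$. This holds because such edges close unbalanced cycles, or thetas, with $F$, and so they lie in $\cl(F)$. It ensures no loops appear in $M/F$. In the balanced case I must confirm that contracting a balanced forest never creates a balanced graph, which holds because at least $n-1\ge 6$ pairs carry unbalanced $2$-cycles while $F$ misses them.
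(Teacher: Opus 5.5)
The genuine gap is the balanced-flat case of your unbreakability argument. The criterion you invoke, that a lifted-graphic matroid on an unbalanced $2$-edge-connected graph in which every edge lies on an unbalanced cycle is connected, is false. Join two vertices by edges $p_1,p_2,q_1,q_2$, declare $\{p_1,p_2\}$ and $\{q_1,q_2\}$ balanced and each $\{p_i,q_j\}$ unbalanced. The theta property and all your hypotheses hold, yet the lift matroid is $U_{1,2}\oplus U_{1,2}$; this is outcome $(ii)$ of Lemma~\ref{lem: rank-2 disconnected loopless quasi-biased graphs}. Worse, your balanced case never uses the hypothesis that at most one adjacent pair of $\si(G)$ is bare (joined by a single edge), and this is exactly where that hypothesis is needed. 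Suppose bare edges $f,g$ flank a vertex, and $F$ takes one edge from each of the other $n-2$ pairs so that $F\cup\{f,g\}$ is a balanced Hamiltonian cycle. Then $F$ is a flat and $M/F$ is outcome $(iii)$ of that lemma, even though your reasoning would apply verbatim. Two smaller points: two unbalanced cycles joined by a path form a loose handcuff, not a lift circuit; and a balanced flat need not be a forest (a balanced Hamiltonian cycle), though that case only gives $r(M/F)=1$.

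The missing idea is the reduction of Lemma~\ref{lem: contract to rank 2}: it suffices to exclude flats with $M/F$ disconnected of rank $2$. Then a balanced $F$ is a forest with exactly $n-2$ edges, one from each of $n-2$ pairs. So $G/F$ has two vertices, the other edges at covered pairs become unbalanced loops, and the non-loop edges are those at the two uncovered pairs. Since at most one pair is bare, some covered pair yields a loop, which rules out outcome $(ii)$. Also some uncovered pair keeps an unbalanced $2$-cycle among the non-loop edges. This is impossible in outcomes $(iii)$ and $(iv)$, where the non-loop edges form a single parallel class.

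Your sufficiency argument for $3$-connectivity is also only a sketch, and its case split is not exhaustive. A side of a separation may be a balanced forest of any size, or be unbalanced only through the loop or a Hamiltonian cycle. Moreover, ``each side spans long paths'' is not why $\lambda\ge 2$, since a side may be a single $2$-cycle. The paper argues directly:
\begin{itemize}
\item If both sides are unbalanced, then $r(X)+r(Y)=r(M)+|V(G[X])\cap V(G[Y])|-c(X)-c(Y)+2$, and $|V(G[X])\cap V(G[Y])|\ge c(X)+c(Y)$ because $\si(G)$ is a cycle.
\item If $X$ is balanced, then $Y$ meets every $2$-cycle and so contains a spanning path. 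Either $r(Y)=r(M)$ and $X$ contains a loop or parallel pair, or $Y$ is balanced too; then symmetrically $r(X)\ge n-1$, forcing $n\le 3$.
\end{itemize}

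Your necessity argument agrees with the paper. So does your unbalanced-flat case: contracting an unbalanced cycle leaves a graphic matroid on a loopless graph whose simplification is a cycle. Like the paper, your third bullet silently takes $e,f$ to be non-loops. That is the right reading of the statement, since a single unbalanced loop together with a single bare pair still gives a $3$-connected matroid.
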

 
When $\si(G)$ is the complement of a path with at most two edges,
we show that $M$ is unbreakable unless $G$ has a highly structured \emph{balancing set}, which is a set $X$ of edges so that $G$ has an unbalanced cycle but $G \del X$ does not.

The exact statement (Theorem \ref{thm: structure of breakable complete quasi-graphic matroids}) is somewhat technical, so for now we will only give an informal statement. 
However, it is straightforward to determine when $M$ is $3$-connected.

\begin{theorem} \label{thm: main converse for nearly complete graphs}
Let $M$ be a quasi-graphic matroid on a graph $G$ so that $\si(G)$ is the complement of a path with length at most two and $|V(G)| > 6$.
Then $M$ is $3$-connected if and only if $M$ is simple and $G$ has no balancing set $X$ with $r_M(X) \le 2$, and $M$ is not unbreakable if and only if $G$ has a highly structured balancing set.
\end{theorem}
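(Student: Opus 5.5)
Theorem \ref{thm: main converse for nearly complete graphs} has two parts, and I would treat them separately. Throughout I would use the standard rank function of a quasi-graphic matroid. For $X\subseteq E(G)$, $r_M(X)=|V(X)|-c(X)+\epsilon(X)$, where $\epsilon(X)\in\{0,1\}$ records whether some component of $G[X]$ contains an unbalanced cycle. The exact value of $\epsilon$ depends on whether the unbalanced cycles lie in $\cL$, in $\cF$, or in several components, and I would use the precise rule from Section \ref{sec: background}.

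\textbf{3-connectivity.} If $M$ is $3$-connected it is simple. Now suppose $X$ is a balancing set with $r_M(X)\le 2$. Then $X$ is a nonempty set of rank at most two. Its complement $E-X$ spans a balanced subgraph $G\del X$, which is connected because $\si(G)$ is nearly complete and $|V(G)|>6$; hence $r_M(E-X)=|V(G)|-1=r(M)-1$. So $\lambda(X)\le 2+(r(M)-1)-r(M)=1$. Since $M$ is simple and $|X|\ge2$ may be arranged (a single-edge balancing set forces a coloop-like structure that contradicts simplicity or gives a $1$-separation), this is a $2$-separation, and $M$ is not $3$-connected.

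For the converse, I would take a $k$-separation $(A,B)$ with $k\le2$ and compute $r(A)+r(B)-r(M)$ from the formula above. Because $\si(G)$ misses at most two edges and has at least $7$ vertices, the graph-connectivity term $|V(A)|+|V(B)|-|V(G)|$ minus components is large unless one side, say $A$, spans only two or three vertices, or is a very small edge set. A short case analysis then shows one of two things. Either $A$ is a parallel class or a set of loops, which contradicts simplicity. Or the only deficiency comes from $\epsilon$, meaning $\epsilon(B)=0$ and $\epsilon(M)=1$, so $A$ is a balancing set of rank at most $2$.

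\textbf{Unbreakability.} For the second part I would use the dual formulation from \cite[Theorem 1.1]{Oxley-Pfeil2022}: $M$ is unbreakable if and only if $M^*$ has no two skew circuits. Equivalently, $M$ has two disjoint cocircuits (hyperplane complements) $C_1^*,C_2^*$ with $r(C_1^*\cup C_2^*)$ small; more concretely, there are hyperplanes $H_1,H_2$ whose complements are disjoint and with $H_1\cup H_2=E$ and $r(H_1\cap H_2)=r(M)-2$.

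I would then classify the hyperplanes of $M$ on a nearly complete graph. The first type is balanced spanning-like sets of the form $E-X$ with $X$ a minimal balancing set. The second type is $E(G-v)$ together with an unbalanced part, or the edges inside a vertex set whose complement is a large cut; in a nearly complete graph, cuts are huge unless they isolate one or two vertices. Two disjoint cocircuits of the cut type would need two disjoint near-complete vertex cuts, which is impossible for $|V(G)|>6$ when at most two edges are missing. So at least one cocircuit must be a minimal balancing set $X$. I would then examine how it can be disjoint from, and cover together with, another cocircuit. This forces the structure: roughly, $X$ lies on the star of a vertex or pair of vertices, or is a balancing set disjoint from another balancing set or from a vertex star in a prescribed way. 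That is the "highly structured balancing set" of Theorem \ref{thm: structure of breakable complete quasi-graphic matroids}.

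\textbf{Main obstacle.} I expect the hardest step to be this case analysis of pairs of disjoint cocircuits. In particular, the cases mixing $\cL$-type and $\cF$-type unbalanced cycles are delicate, because the hyperplane description differs there: the lifted part behaves like an extra element, while the frame part behaves like vertex-wise deficiency. I would first handle the frame case using the methods of \cite{Fife-Mayhew-Oxley-Semple2020}. I would then reduce the general case to it, using the fact that every $\cL$-cycle meets every $\cF$-cycle, which severely limits where the two kinds can coexist.
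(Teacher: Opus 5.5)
Your $3$-connectivity half follows the paper's proof of Proposition~\ref{prop: which ones are 3-connected?}: the same rank count for the forward direction, and for the converse the same reduction to one side containing a spanning tree of $G$, then loop or parallel pair versus balancing set of rank at most two. The counting step you only assert is where the paper argues that the other side contains a complete multipartite graph minus at most two edges, giving $r(M)\le 6$.

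The unbreakability half rests on a false claim. Two disjoint cut-type cocircuits are not impossible. Whenever $\si(G)$ is not complete there are non-adjacent vertices $u,w$, and every edge then avoids $u$ or $w$. So the hyperplanes $\cl(E(G-u))$ and $\cl(E(G-w))$ cover $E(G)$. If $G-\{u,w\}$ has an unbalanced cycle, they meet in a flat of rank $|V(G)|-2$. Their complements lie in the disjoint sets $E_u$ and $E_w$, and they witness that $M$ is not unbreakable; this is Lemma~\ref{lem: tree deletion disconnects G} with $T$ a spanning tree of $G-\{u,w\}$. Neither cocircuit is a balancing set, since $G-u$ and $G-w$ are unbalanced. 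So your deduction that one of the two cocircuits must be a minimal balancing set fails. If moreover every cycle is balanced, $M(G)$ is not unbreakable by Theorem~\ref{thm: the unbreakable graphic matroids}, yet $G$ has no balancing set at all. These configurations are outcomes $(i)$ and $(v)$ of Theorem~\ref{thm: structure of breakable complete quasi-graphic matroids}, the precise form of the informal ``highly structured balancing set'', and they are not balancing-set conditions. Your case analysis would therefore prove an incorrect characterization.

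Three further issues. Your hyperplane list is too coarse for quasi-graphic matroids. A hyperplane can be a part containing an $\cF$-cycle together with a balanced connected part on the remaining vertices, and this is what produces outcome $(iii)'$. You also never address the converse, that each structured configuration really yields such a pair of hyperplanes. Finally, your reformulation needs correcting: the condition is $r_M(E-C_1^*-C_2^*)=r(M)-2$, not that $r(C_1^*\cup C_2^*)$ is small.

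Once corrected, the reformulation is equivalent to the paper's Lemma~\ref{lem: contract to rank 2} with $F=H_1\cap H_2$. The paper then avoids classifying hyperplanes altogether. It classifies connected quasi-biased graphs whose matroid is loopless, disconnected and of rank two (Lemma~\ref{lem: rank-2 disconnected loopless quasi-biased graphs}). It separately treats the case where contracting an $\cF$-cycle disconnects the graph. It lifts the structure back to $G$ with Lemmas~\ref{lem: unbalanced cycles preserved by contraction} and~\ref{lem: at least one frame-type loop}, using near-completeness only to show that a single vertex class is rolled up. Analysing $(G,\cB,\cL,\cF)/(H_1\cap H_2)$ rather than the cocircuits directly would repair your approach.
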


Theorems \ref{thm: main}, \ref{thm: main converse for cycles}, and \ref{thm: main converse for nearly complete graphs} together give a complete characterization of the $3$-connected unbreakable quasi-graphic matroids on graphs with more than six vertices.
This leads to the following question: what about unbreakable quasi-graphic matroids that are not $3$-connected? 
If $M$ is obtained from a $3$-connected unbreakable matroid by making a parallel copy of an element, then $M$ is not $3$-connected but is unbreakable, so it suffices to consider simple matroids.
We give a complete answer to the above question in Section \ref{sec: future work} by considering two cases separately.
We say that $\cL$ (respectively $\cF$) is \emph{degenerate} if it contains no vertex-disjoint cycles, and is otherwise \emph{non-degenerate.} If $\cL$ (respectively $\cF$) is degenerate, then $M\qbg$ is the frame matroid (respectively lifted-graphic matroid) of $(G,\cB)$.
We will show that, for simple quasi-graphic matroids $M=M\qbg$, the structure imposed by having non-degenerate collections $\cL$ and $\cF$ is incompatible with the structure imposed by being unbreakable when $M$ is not $3$-connected.

\begin{theorem}\label{thm: non-degen unbreakability}
   Let $M=M\qbg$ be a simple connected quasi-graphic matroid such that neither $\cL$ nor $\cF$ is degenerate. If $M$ is not 3-connected, then $M$ is not unbreakable.
\end{theorem}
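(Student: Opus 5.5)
Since $M$ is simple and connected but not $3$-connected, it has a $2$-separation $(A,B)$ with $|A|,|B|\ge 2$. Our aim is to find a flat $F$ of $M$ with $M/F$ disconnected. The basic device is this: if $F$ is a flat so that $F\cup A$ and $F\cup B$ are ``independent modulo each other'' after contraction, meaning $r(M/F)=r_{M/F}(A\setminus F)+r_{M/F}(B\setminus F)$ with both sides nonempty, then $M/F$ is disconnected. So I would look for a flat $F$ that spans the ``guts'' of the $2$-separation, that is, a flat $F$ with $\sqcap_{M/F}(A\setminus F,B\setminus F)=0$, while leaving elements of both $A$ and $B$ outside $F$.

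The first step is to translate the $2$-separation into the graph. I would use the standard descriptions of low-order separations of quasi-graphic matroids (via the rank function $r(X)=|V(X)|-c(X)+\epsilon(X)$, where $\epsilon$ records whether a component is balanced, contains a cycle of $\cL$ only, or contains cycles of $\cF$). A $2$-separation $(A,B)$ then falls into one of a few types. (i) $G[A]$ and $G[B]$ meet in at most one vertex, and one side is balanced or contributes no extra rank. (ii) They meet in two vertices, and one side is balanced. (iii) A side consists of a balancing-type structure: for instance, $B$ is such that $G\setminus B$ is balanced and $B$ has small rank. Non-degeneracy is the key hypothesis here. Because $\cL$ contains two vertex-disjoint cycles $L_1,L_2$ and $\cF$ contains two vertex-disjoint cycles $F_1,F_2$, and every $\cL$-cycle meets every $\cF$-cycle, the graph carries a rigid configuration: each of $L_1,L_2$ meets each of $F_1,F_2$. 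I would use this to show that the unbalanced cycles cannot all be pushed to one side, so each side $A,B$ contains, or is crossed by, some of these cycles.

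The second step is to choose the flat. In the vertex-type cases, let $X$ be the side not containing a particular unbalanced cycle, say $L_1$, and take $F=\cl(X')$ where $X'$ is $X$ with one edge removed, or more precisely a closure of a spanning tree-like part of $B$ that identifies its attachment vertices. Contracting it then turns the separating vertices into a single vertex or a loop structure. In lifted-type behaviour, contracting a set containing an $\cL$-cycle makes the rest graphic on $G/F$. I would then verify that $M/F$ decomposes as a direct sum of the remaining part of $A$ and the remaining part of $B$. Simplicity of $M$ guarantees that both remainders are nonempty, since $F$ cannot swallow a parallel partner. Non-degeneracy is used to show that the contracted side does not ``reconnect'' the other side through a cycle in $\cF$ that meets both sides. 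The vertex-disjoint pairs give room to choose $F$ avoiding one of $F_1,F_2$ entirely.

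The main obstacle will be the case analysis of how the cycles $L_1,L_2,F_1,F_2$ interact with the $2$-separation when the separation is of the non-vertex type, meaning the guts arise from the extra rank $\epsilon$ rather than from shared vertices. There a natural flat to contract is the closure of one side minus an element. The danger is that contracting an $\cL$-cycle versus an $\cF$-cycle produces different minors: contracting an $\cF$-cycle yields a frame-like structure, while contracting an $\cL$-cycle yields a graphic one. So connectivity of $M/F$ must be checked separately in the two cases. I would first try to choose $F$ to contain an $\cL$-cycle, making $M/F$ graphic on $G/V(\ldots)$, and reduce to showing that the resulting graph has a cut vertex coming from the $2$-separation. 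If that fails, I would instead contain an $\cF$-cycle and use that $M/F$ is then the frame matroid of a biased graph with a known disconnection.
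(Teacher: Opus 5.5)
\paragraph{Verdict: genuine gap.} Your outline breaks down at its central step. The ``basic device'' is a valid sufficient condition. The trouble is that a $2$-separation does not by itself produce a flat $F$ that kills the guts while leaving elements of both $A$ and $B$ outside $F$; the existence of such an $F$ is the whole content of the theorem.

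Let $M=M_1\oplus_2 M_2$ be the $2$-sum corresponding to $(A,B)$, with basepoint $p$. Both parts are loopless and have rank at least two, because $M$ is connected and simple. By the Oxley--Pfeil lemma on $2$-sums, $M$ is breakable (which is what such an $F$ would witness) only if $p$ lies in a non-spanning circuit of $M_1$ or of $M_2$. For general matroids this fails: $U_{2,4}\oplus_2 U_{2,4}$ is simple, connected, not $3$-connected, and unbreakable.

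So you must show that non-degeneracy of $\cL$ and $\cF$ prevents the basepoint from being free on both sides, and your proposal gives no argument for this. The remarks that non-degeneracy ``gives room'' to avoid one of $F_1,F_2$, and that you ``would then verify'' that $M/F$ splits, are exactly the unproved claims. Your classification of $2$-separations into types (i)--(iii) is also asserted rather than derived. Type (i) cannot even occur, since here $G$ is loopless and $2$-connected.

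\paragraph{What is missing.} You need a way to pin down the separation before choosing $F$.
\begin{enumerate}
\item The paper uses the strengthened Bowler--Funk--Slilaty decomposition: $M$ is built by nontrivial $2$-sums from graphic matroids and a single $3$-connected $M(H_0,\cB_0,\cL_0,\cF_0)$ in which $\cL_0$ and $\cF_0$ are still non-degenerate.
\item It combines this with the Oxley--Pfeil lemmas. If $M$ were unbreakable, every basepoint would be free on both sides. A graphic summand with a free basepoint must be a cycle on at least three vertices, so the first link-sum $H_1$ is just $H_0$ with the edge $p_1$ subdivided.
\item Only then does your fallback idea of contracting $\cl(C)$ for a lift-type cycle $C$ pay off, and not by finding a cut vertex. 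Contracting $\cl(C)$ in the matroid on $H_1$ gives a graphic unbreakable matroid, so its graph simplifies to a cycle or a complete graph. The subdivided path rules out the complete case.
\item Apply this to two vertex-disjoint lift-type cycles $C,C'$ of $H_0$ avoiding $p_1$. This forces $|C|=|C'|=2$ and $\si(H_1)$ to be a cycle.
\item That contradicts the fact that a graph with both $\cL$ and $\cF$ non-degenerate whose simplification is a cycle has exactly four vertices, whereas $|V(H_1)|\ge 5$.
\end{enumerate}
Neither the reduction to a subdivided edge nor this final vertex-count contradiction appears in your outline. Without them, the case analysis you anticipate has no finite shape.
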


In light of the above results, to obtain a complete characterization of the unbreakable quasi-graphic matroids it suffices to consider frame matroids or lifted-graphic matroids that are not $3$-connected. To this end, we utilize additional results of \cite{Oxley-Pfeil2022}, along with Theorem \ref{thm: main}, to articulate a 2-sum decomposition of the connected but not 3-connected unbreakable frame and lifted-graphic matroids. 
Here a \emph{free element} of a matroid is an element that is in a circuit and is only in spanning circuits. 
\begin{theorem}\label{thm: connected not 3-connected}
    Let $M$ be a simple frame or lifted-graphic matroid that is connected but not 3-connected. Then $M$ is unbreakable if and only if $M$ is obtained via 2-sums along free elements with a sequence of either unbreakable frame matroids or unbreakable lifted-graphic matroids such that:
    \begin{enumerate}[$(i)$]
        \item If $M$ is a frame matroid, the rank of each summand is at most 6.
        \item If $M$ is lifted-graphic, each summand with rank greater than 6 is on a graph which simplifies to a cycle.
        \item If any summand is also graphic, it has a graph which is a cycle on exactly 3 vertices.
    \end{enumerate}
\end{theorem}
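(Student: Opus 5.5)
The plan is to combine the 2-sum structure results for unbreakable matroids from \cite{Oxley-Pfeil2022} with Theorem~\ref{thm: main}. The key fact I would take from Oxley and Pfeil is this: a connected but not $3$-connected matroid $M = M_1 \oplus_2 M_2$, with basepoint $p$, is unbreakable if and only if both $M_1$ and $M_2$ are unbreakable and $p$ is a free element of each. The intuition is that if $p$ lies in a non-spanning circuit of $M_1$, then contracting its closure in $M_1$ yields a flat $F$ of $M$ with $M/F$ disconnected. Iterating over the canonical tree decomposition (Cunningham--Edmonds), $M$ is unbreakable exactly when every $3$-connected-or-circuit part is unbreakable and every basepoint is free in the parts containing it. Since $M$ is simple, no part is a cocircuit of size at least $3$ glued to a parallel class in a way that breaks simplicity. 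Cocircuit parts also cannot be unbreakable unless they are small: contracting a single element of a cocircuit of size at least $3$ leaves a disconnected matroid. Cycles (circuits) are unbreakable, and every element of a circuit is free.

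The second step is class closure. Each summand of a 2-sum decomposition of a frame matroid is a minor of $M$ after replacing the basepoint by a path in the other side, so each summand is again frame (respectively lifted-graphic). I would verify this using the standard fact that the classes of frame and lifted-graphic matroids are minor-closed, and that $M_i$ is isomorphic to a minor of $M$. Conversely, the 2-sum of frame matroids along elements is not always frame. So for the ``if'' direction I would have to show that the 2-sums allowed by (i)--(iii) are exactly those realizable; I would read the statement as characterizing unbreakability among matroids already known to be frame or lifted-graphic, so that only unbreakability needs checking, and this follows from the Oxley--Pfeil criterion.

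The third step handles the constraints (i)--(iii) for the ``only if'' direction. Let $N$ be a $3$-connected summand. If $N$ is frame of rank greater than $6$, Theorem~\ref{thm: main} (via Theorem~\ref{thm: the unbreakable frame matroids}) says $\si(G)$ is a nearly complete graph. I then need to show that such $N$ has no free element, contradicting the fact that the basepoint must be free. Near-completeness gives each edge a triangle or short cycle avoiding vertices, and for a nearly complete graph on more than $6$ vertices any edge $e$ lies in a balanced or unbalanced cycle, or a theta/handcuff, that does not use all vertices. That gives a non-spanning circuit containing $e$. Establishing this for all biased structures, including the case where the balanced cycles are sparse, is the main obstacle; I would try handcuffs and thetas on few vertices and use $|V(G)|>6$ to keep them non-spanning. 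For lifted-graphic $N$ the same argument applies to nearly complete $\si(G)$, leaving only the cycle case, which gives (ii). For (iii), a graphic unbreakable summand is a cycle or a complete graph by Theorem~\ref{thm: the unbreakable graphic matroids}. A complete graph $K_n$ with $n\ge 4$ has no free element, since every edge is in a triangle, which is non-spanning. A cycle of length at least $4$ that occurs as a circuit part would be merged with adjacent circuit parts in the canonical decomposition. After normalizing the decomposition, splitting long circuits into triangles, graphic summands can therefore be taken to be triangles, which is $U_{2,3}$, the graph a $3$-vertex cycle.
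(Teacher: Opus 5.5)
There are two genuine gaps: the iterated Oxley--Pfeil criterion you use is false, and the no-free-element step you flag as the main obstacle is left unproved.

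\textbf{The iterated criterion.} Your claim that $M$ is unbreakable exactly when every part of the Cunningham--Edmonds tree is unbreakable and every basepoint is free in the parts containing it is false. Your ``if'' direction rests on it. Lemma~\ref{lem: 2-sum is unbreakable iff free element} needs two things for each $2$-sum: the basepoint must be free in the \emph{whole} matroid on each side of the $2$-separation, and both sides must have rank at least $2$. Freeness in the individual parts does not give the first. Here is a counterexample. Let $U_{2,4}$ have ground set $\{p,q,a,b\}$, and $2$-sum a triangle onto $p$ and another onto $q$.
\begin{itemize}
\item Every part is unbreakable, every basepoint is free in each part containing it, and (i)--(iii) hold.
\item The result $M$ is the simple frame matroid of the biased graph with unbalanced loops $a,b$ at vertices $u,v$ and an unbalanced $4$-cycle through $u$ and $v$.
\item Yet $\{a,b\}$ is a flat and $M/\{a,b\}\cong U_{1,2}\oplus U_{1,2}$, so $M$ is not unbreakable.
\end{itemize}
The failure is that $\{q,a,b\}$ is a non-spanning circuit of the rank-$3$ side $M(K_3)\oplus_p U_{2,4}$, so $q$ is not free there.

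The paper avoids this as follows. It writes $M$ as a sequential $2$-sum $N_i=N_{i-1}\oplus_{p_{i-1}}M_i$ of $3$-connected proper minors. By downward induction from $N_k=M$ it shows that each $N_i$ is simple and unbreakable, with $r(N_{i-1}),r(M_i)\ge 2$. Hence each $p_{i-1}$ is free in both $N_{i-1}$ and $M_i$. The converse then applies Lemma~\ref{lem: 2-sum is unbreakable iff free element} one step at a time, with freeness taken in the accumulated matroid.

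That same induction is what excludes $U_{1,n}$ parts. Your reason for excluding them (contracting one element of a cocircuit) is invalid. A single element of $U_{1,n}$ is not a flat, and $U_{1,n}$ is unbreakable since every element is free (Lemma~\ref{lem: free element implies unbreakable}). The real obstruction is that a parallel pair through a basepoint, in a side of rank at least $2$, is a non-spanning circuit.

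\textbf{The no-free-element lemma.} You leave unproved that a quasi-graphic matroid on a nearly complete graph with more than six vertices has no free element. This is the paper's Lemma~\ref{lem: complete graphs dont have free elements}, and both (i) and (ii) depend on it. Note that an unbalanced cycle alone is never a circuit, so your candidate list needs two unbalanced cycles. You can avoid any case analysis over biased structures by using the rank function, as follows.
\begin{itemize}
\item Suppose $p$ is free and $G$ has an unbalanced cycle, so $r(M)=|V(G)|$. Then $p$ is not a coloop, so some unbalanced cycle avoids $p$.
\item A shortest such cycle $C$ has at most four vertices. Otherwise near-completeness gives a chord of $C$ other than $p$, and the theta property yields a shorter unbalanced cycle avoiding $p$.
\item Let $H$ be $C$ together with $p$ and at most two short paths from the ends of $p$ to $C$; then $|V(H)|\le 6$.
\item $H\setminus p$ is connected, unbalanced, and contains both ends of $p$. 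So Proposition~\ref{prop: rank of X} gives $r(H\setminus p)=|V(H)|\ge r(H)$.
\item Hence $p$ lies in a circuit of rank at most $6<r(M)$, which is non-spanning.
\end{itemize}
The paper reaches the same conclusion by building explicit handcuffs and bracelets. It uses that every cycle through $p$ is unbalanced and that vertex-disjoint unbalanced cycles have the same type.

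With this lemma, your derivation of (i) and (ii) from Theorem~\ref{thm: main} matches the paper's. Item (iii) is immediate with $3$-connected summands: a graphic summand with a free basepoint is a cycle by Lemma~\ref{lem: link-sum graphic matroids are cycles}, and $3$-connectivity forces a triangle.
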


The rest of this paper is structured as follows.
We will conclude this section by fixing some notation for graphs.
In Section \ref{sec: background} we will give some background on biased graphs, frame matroids, lifted-graphic matroids, and quasi-graphic matroids.
In Section \ref{sec: preliminaries} we will state some basic lemmas, many of which appear in \cite{Fife-Mayhew-Oxley-Semple2020}, about biased graphs and unbreakable matroids.
In Section \ref{sec: when G is not 3-connected} we will prove Theorem \ref{thm: main} in the case that $G$ is not $3$-connected, and in Section \ref{sec: when G is 3-connected} we will prove Theorem \ref{thm: main} in the case that $G$ is $3$-connected.
Both of these proofs will follow \cite{Fife-Mayhew-Oxley-Semple2020} very closely, generalizing each step from frame matroids to quasi-graphic matroids.
We will prove Theorems \ref{thm: main converse for cycles} and \ref{thm: main converse for nearly complete graphs} in Section \ref{sec: characterization}, and conclude in Section \ref{sec: future work} by proving Theorems \ref{thm: non-degen unbreakability} and \ref{thm: connected not 3-connected}.
We will assume familiarity with matroid theory, and use notation from \cite{Oxley2011} unless stated otherwise.

\bigskip
\noindent 
\textbf{Notation.} 
Let $G = (V, E)$ be a graph.
For a set $W$ of vertices of $G$ we write $G[W]$ for the subgraph of $G$ induced by $W$.
For a set $X$ of edges of $G$ we write $G[X]$ for the graph with edge set $X$ and vertex set consisting of all ends of edges in $X$, and $c(X)$ for the number of components of $G[X]$.
For a vertex $v$ we write $E_v$ for the set of edges incident with $v$.
The \emph{simplification} of $G$, denoted $\si(G)$, is any graph obtained from $G$ by deleting all loops and all but one element from each maximal set of parallel edges.
For disjoint sets $V_1$ and $V_2$ of vertices of $G$ we write $E[V_1, V_2]$ for the set of edges of $G$ with one end in $V_1$ and one end in $V_2$.

\section{Background} \label{sec: background}

In this section we will give some background information on biased graphs, frame matroids, lifted-graphic matroids, and quasi-graphic matroids.

\subsection{Biased graphs and their matroids} \label{sec: biased graphs and their matroids}

A \emph{theta graph} is a graph consisting of two vertices with three internally vertex-disjoint paths between them.
A \emph{biased graph} is a pair $(G, \cB)$ where $G$ is a graph and $\cB$ is a set of cycles of $G$ that satisfies the \emph{theta property}: no theta subgraph contains exactly two cycles in $\cB$.
Biased graphs were originally introduced by Zaslavsky \cite{Zaslavsky1989} as combinatorial objects that store information about certain group-labelings of the edges of a graph.
The cycles in $\cB$ are \emph{balanced} and the cycles not in $\cB$ are \emph{unbalanced}.
More generally, a set $X$ of edges of $G$ is \emph{balanced} if every cycle of $G[X]$ is balanced.

Every biased graph $(G, \cB)$ has two associated matroids on $E(G)$, both originally defined by Zaslavsky \cite{Zaslavsky1991}, in which the circuits are particular types of biased subgraphs.
A \emph{tight handcuff} is a graph consisting of two edge-disjoint cycles with exactly one common vertex, and a \emph{loose handcuff} is a graph consisting of two vertex-disjoint cycles with a minimal path connecting them.
Equivalently, they are subdivisions of the graphs shown in Figure \ref{fig: handcuffs}.
The \emph{frame matroid} of $(G, \cB)$, denoted $F(G, \cB)$, is the matroid on $E(G)$ whose circuits are the balanced cycles, tight handcuffs with both cycles unbalanced, theta graphs with all cycles unbalanced, and loose handcuffs with both cycles unbalanced.
Note that if all cycles are balanced, then $F(G, \cB)$ is the graphic matroid of $G$.
Important special types of frame matroids include signed-graphic matroids, Dowling geometries, and bicircular matroids.

\begin{figure}
    \centering
\begin{tabular}{ccc ccc c}
 \begin{tikzpicture}[scale=0.75]

\begin{scope}
    \draw (0,0) circle (1);
    \draw (2,0) circle (1);
    \fill (1,0) circle (3pt);
\end{scope}

\end{tikzpicture}
     &&&
 \begin{tikzpicture}[scale=0.75]

\begin{scope}
    \draw (0,0) circle (1);
    \draw (3,0) circle (1);
    \fill (1,0) circle (3pt);
    \fill (2,0) circle (3pt);
    \draw (1,0) -- (2,0);
\end{scope}

\end{tikzpicture}

     &&&
 \begin{tikzpicture}[scale=0.75]

\begin{scope}
    \draw (0,0) circle (1);
    \fill (-1,0) circle (3pt);
    \fill (1,0) circle (3pt);
    \draw (-1,0) -- (1,0);
\end{scope}

\end{tikzpicture}

\\
(a) &&& (b) &&& (c)

\end{tabular}
       
    \caption{Tight handcuffs, loose handcuffs, and theta graphs are subdivisions of the graphs (a), (b), and (c), respectively.}
    \label{fig: handcuffs}
\end{figure}

The \emph{lift matroid} of $(G, \cB)$, denoted $L(G, \cB)$, is the matroid on $E(G)$ whose circuits are the balanced cycles, tight handcuffs with both cycles unbalanced, theta graphs with all cycles unbalanced, and unions of two vertex-disjoint unbalanced cycles.
If all cycles are balanced, $L(G, \cB)$ is the graphic matroid of $G$.
A matroid is \emph{lifted-graphic} if it is isomorphic to the lift matroid of a biased graph.
Lifted-graphic matroids are very close to being graphic: a matroid $M$ is lifted-graphic if and only if there is a matroid $N$ with an element $e$ so that $N\del e = M$, and $N/e$ is graphic.
Important special cases of lifted-graphic matroids include even-cycle matroids and spikes.

\subsection{Quasi-graphic matroids} \label{sec: quasi-graphic matroids}

While the classes of frame matroids and lifted-graphic matroids are fundamental, they have the following significant drawbacks: both classes have an infinite list of excluded minors \cite{Chen-Geelen18}, and neither class has a polynomial-time recognition algorithm \cite{Chen-Whittle18}.
In an attempt to bypass these undesirable properties, Geelen, Gerards, and Whittle \cite{Geelen-Gerards-Whittle18} defined a common generalization of frame matroids and lifted-graphic matroids.
A matroid $M$ is \emph{quasi-graphic} if it has a \emph{framework}: that is, a graph $G$ with $(i) \ E(G)=E(M)$, such that $(ii)$ the rank of the edge set of each component of $G$ is at most the size of its vertex set, $(iii)$ for each vertex $v \in V(G)$ the closure in $M$ of $E(G-v)$ does not contains an edge with endpoints $v,w$ with $w \ne v$, and $(iv)$ no circuit of $M$ induces a subgraph in $G$ of more than two components. 

The class of quasi-graphic matroids is closed under direct sums (by taking disjoint unions of frameworks) and minors \cite[Lemma 4.1]{Geelen-Gerards-Whittle18}, and contains the class of frame matroids \cite[Theorem 1.3]{Geelen-Gerards-Whittle18} and the class of lifted-graphic matroids \cite[Theorem 1.2]{Geelen-Gerards-Whittle18}.
Moreover, it is conjectured that the list of excluded minors for the class of quasi-graphic matroids is finite \cite[Conjecture 1.5]{Chen-Geelen18}, and that there is a polynomial-time algorithm for recognition of $3$-connected quasi-graphic matroids \cite[Conjecture 1.7]{Geelen-Gerards-Whittle18}.

\bigskip
\noindent 
\textbf{Quasi-biased graphs.}
It is straightforward to show that if $M$ is a quasi-graphic matroid with framework $G$ then the set $\cB$ of cycles of $G$ that are circuits of $M$ satisfies the theta property \cite[Lemma 3.2]{Geelen-Gerards-Whittle18}, so every quasi-graphic matroid has a unique associated underlying biased graph.
While this biased graph does not uniquely determine $M$, Bowler, Funk, and Slilaty \cite{Bowler-Funk-Slilaty24-corrigendum} show if $G$ or $M$ is connected then $M$ can be recovered with slightly more information about the unbalanced cycles.
If $G$ is a graph and $(\cB, \cL, \cF)$ is a partition of its cycles, then $(\cB, \cL, \cF)$ is a \emph{proper tripartition} if the cycles in $\cB$ satisfy the theta property and every cycle in $\cL$ meets every cycle in $\cF$.
An unbalanced cycle is \emph{frame-type} if it is in $\cF$ and \emph{lift-type} if it is in $\cL$.
Bowler, Funk, and Silaty \cite[Theorem 2.3]{Bowler-Funk-Slilaty20} show that there is a matroid $M(G, \cB, \cL, \cF)$ with ground set $E(G)$ and the following circuits:
\begin{itemize}
    \item cycles in $\cB$,
    \item theta subgraphs with no cycles in $\cB$,
    \item tight handcuffs with neither cycle in $\cB$,
    \item unions of two vertex-disjoint cycles in $\cL$, and
    \item loose handcuffs with both cycles in $\cF$.
\end{itemize}
They prove that the class of matroids that arise in this way is very closely related to the class of quasi-graphic matroids.

\begin{theorem} [{\cite[Theorem 2]{Bowler-Funk-Slilaty24-corrigendum}}] \label{thm: equivalence of frameworks and proper tripartitions}
Let $M$ be a matroid and let $(G, \cB)$ be a biased graph with $E(G) = E(M)$.
Consider the following statements.
\begin{enumerate}[$(1)$]
\item $M$ is quasi-graphic with framework $G$ and $\cB$ is the set of cycles of $G$ that are circuits of $M$.

\item There is a proper tripartition $(\cB, \cL, \cF)$ of the cycles of $G$ such that $M = M(G, \cB, \cL, \cF)$.
\end{enumerate}
Then $(2)$ implies $(1)$, and if $M$ is connected or $G$ is connected, then $(1)$ implies $(2)$.
\end{theorem}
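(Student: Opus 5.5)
The implication $(2)\Rightarrow(1)$ is a direct check of the framework axioms for $N=M(G,\cB,\cL,\cF)$. I would first record the rank function of $N$. For $X\subseteq E(G)$, let $b(X)$ be the number of balanced components of $G[X]$. Then
\[
r_N(X)=|V(G[X])|-b(X)+\epsilon(X),
\]
where $\epsilon(X)\in\{0,1\}$ is $1$ exactly when $G[X]$ contains a cycle in $\cL$ but no unbalanced component that contains a cycle in $\cF$. This is read off from the list of circuits, and the condition that every cycle in $\cL$ meets every cycle in $\cF$ is what makes $\epsilon$ well defined. The axioms then follow.
\begin{itemize}
\item Axiom $(i)$ holds by construction.
\item For $(ii)$: a single component contributes at most its number of vertices, because $\epsilon=1$ only when there are at least two unbalanced components, so the bonus is used up by the other component.
\item For $(iii)$: every listed circuit is a subgraph of minimum degree at least $2$. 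An edge $vw$ with $w\ne v$ lying in $\cl_N(E(G-v))$ would lie in a circuit inside $E(G-v)\cup\{vw\}$, and in that circuit $v$ would have degree $1$, which is impossible.
\item For $(iv)$: every listed circuit has at most two components.
\end{itemize}
Finally, the cycles of $G$ that are circuits of $N$ are exactly the cycles in $\cB$, since unbalanced cycles are independent.

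For $(1)\Rightarrow(2)$, assume $M$ or $G$ is connected and let $G$ be a framework for $M$. I would first classify the circuits of $M$ as subgraphs. By $(iii)$, a circuit has no vertex of degree $1$. By $(ii)$ applied to the restriction, a connected circuit $C$ has $|C|\le|V(C)|+1$, so it is a balanced cycle, a theta, or a tight or loose handcuff. By $(iv)$, a disconnected circuit has exactly two components, and the same rank count forces each component to be an unbalanced cycle. The theta property for $\cB$ is given (GGW Lemma 3.2).

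Next I would partition the unbalanced cycles. A cycle goes into $\cL$ if it is one of the two cycles of some disconnected circuit. It goes into $\cF$ if it is one of the two cycles of some loose-handcuff circuit. Remaining cycles are assigned to $\cF$, or to $\cL$ if $\cF$ is otherwise empty. Everything then reduces to two consistency claims:
\begin{enumerate}[$(a)$]
\item if $C,D$ are vertex-disjoint unbalanced cycles, then exactly one of $C\cup D$ and $C\cup P\cup D$ (for a minimal connecting path $P$) is a circuit, and which one does not depend on $P$;
\item this choice is ``transitive'': if $C\cup D$ is a circuit, then $C\cup D'$ is a circuit for every unbalanced $D'$ disjoint from $C$.
\end{enumerate}
Both claims would be proved by rank and submodularity, using that $r(C\cup D)\in\{|V|-1,|V|\}$ and that $P$ adds $|P|$ edges and $|P|-1$ new vertices. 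Connectivity of $G$ or $M$ supplies the paths needed to link any two unbalanced cycles, and hence guarantees that the classification is global rather than per component.

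Once $(a)$ and $(b)$ hold, a cycle in $\cL$ and a disjoint cycle in $\cF$ cannot coexist, so the tripartition is proper. Tight handcuffs and thetas with no balanced cycle are circuits in both matroids by the rank count. Hence $M$ and $M(G,\cB,\cL,\cF)$ have the same circuits.

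I expect $(b)$ to be the main obstacle, particularly when $C$, $D$, $D'$ pairwise interact in complicated ways, for instance when $D$ and $D'$ intersect. My first attempt would be to take a cycle $D''$ in $D\cup D'$, or a theta there, and apply strong circuit elimination to $C\cup D$ and a suitable circuit in $C\cup D'$, deriving a contradiction with the rank bound $r\le|V|$ coming from $(ii)$ on a connected subgraph. If that fails, I would fall back to an induction on $|D\cap D'|$ via thetas in $D\cup D'$.
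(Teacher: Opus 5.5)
The paper gives no proof of this statement; it quotes it from Bowler, Funk, and Slilaty. Your sketch therefore has to stand on its own, and as written it does not.

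\textbf{The main gap: claim $(b)$ is unproved.} Your reduction to $(a)$ and $(b)$ is sound, and $(a)$ is immediate. But $(b)$ is the whole content of $(1)\Rightarrow(2)$, and you leave it open. The two routes you float are not promising.
\begin{itemize}
\item If $C\cup D'$ is not a circuit, it is independent. So there is no circuit inside $C\cup D'$ to eliminate against.
\item Inducting on $|D\cap D'|$ attacks the easy case: when $D$ meets $D'$ the claim falls out of the lemma below directly.
\end{itemize}
The missing idea is a lemma that uses axiom $(iii)$ quantitatively. \emph{If $X\cup Y$ is a circuit, with $X,Y$ disjoint unbalanced cycles, and $Y'$ is an unbalanced cycle in the same component $H$ of $G-V(X)$ as $Y$, then $X\cup Y'$ is a circuit.}

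To prove it, note that $(X\cup Y)\cap E(H)=Y$. Submodularity and $r(Z)\le|V(Z)|$ for connected $Z$ then give $r(X\cup E(H))\le r(X\cup Y)+r(E(H))-r(Y)\le|V(X)|+|V(H)|-1$. On the other hand, suppose $X\cup Y'$ were independent. Grow $Y'$ through $H$ one new vertex at a time. By $(iii)$ each step raises the rank by one, so $r(X\cup E(H))\ge|V(X)|+|V(H)|$, a contradiction.

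To get $(b)$ when $G$ is connected, take a shortest path from $V(D')$ to $V(C)\cup V(D)$.
\begin{itemize}
\item If it ends on $D$ (in particular if $D$ meets $D'$), apply the lemma to $(X,Y,Y')=(C,D,D')$.
\item If it ends on $C$, then $D'$ and the path both avoid $V(D)$. The lemma applied to $(D,C,D')$ shows that $D\cup D'$ is a circuit. Eliminate an edge $d\in D$ from the circuits $C\cup D$ and $D\cup D'$. This gives a circuit inside $C\cup D'\cup(D-d)$. It must be $C\cup D'$, because the path $D-d$ is vertex-disjoint from $C\cup D'$ and circuits have no vertex of degree one.
\end{itemize}

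\textbf{The connectivity hypothesis is mishandled.} When $M$ is connected but $G$ is not, there are no paths between components. Claim $(a)$ then says nothing about cycles in different components, and your rule for the remaining cycles can put disjoint cycles into $\cL$ and $\cF$. Claim $(b)$ genuinely fails for a direct sum of a frame and a lifted-graphic matroid, so this is exactly where the hypothesis must do work. If only one component has edges, you are back in the connected case. Otherwise:
\begin{itemize}
\item Take two components $G_i,G_j$ with edges. Connectivity of $M$ and axiom $(iv)$ give a circuit $C\cup D$ with $C\subseteq G_i$ and $D\subseteq G_j$.
\item $G_j$ lies in one component of $G-V(C)$, and $G_i$ lies in one component of $G-V(D)$. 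Two applications of the lemma show that every unbalanced cycle of $G_i$ forms a circuit with every unbalanced cycle of $G_j$.
\item The elimination step above then shows that any two disjoint unbalanced cycles in the same component form a circuit. So $\cF=\varnothing$ works.
\end{itemize}

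\textbf{The rank formula in $(2)\Rightarrow(1)$ is wrong.} It gives rank $|C|+1$ to a single cycle $C\in\cL$. Also, $k$ pairwise disjoint cycles in $\cL$ have rank $|V|-k+1$, which no $\{0,1\}$ correction to $|V|-b$ can produce. The correct formula is $|V(G[X])|-b(X)$ if $X$ contains a cycle in $\cF$, and $|V(G[X])|-c(X)+l(X)$ otherwise. Axiom $(ii)$ still holds, because an independent set has at most one cycle in each component of its graph.
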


It is straightforward to see that the condition in Theorem \ref{thm: equivalence of frameworks and proper tripartitions} that $M$ or $G$ is connected cannot be removed: the direct sum of a frame matroid and a lifted-graphic matroid is quasi-graphic, but does not always arise via a proper tripartition.

While our main results concern quasi-graphic matroids, it will be more convenient to work with matroids that arise from proper tripartitions.
To this affect, we make the following definition.

\begin{definition}
A \emph{quasi-biased graph} is a tuple $(G, \cB, \cL, \cF)$ where $G$ is a graph and $(\cB, \cL, \cF)$ is a proper tripartition of the cycles of $G$.
\end{definition}

Since we will only be concerned with connected matroids in this paper, we will not lose any generality by considering the quasi-graphic matroids that arise from quasi-biased graphs.
We say that $(G, \cB, \cL, \cF)$ is \emph{connected} if $G$ is connected.
One benefit of working with quasi-biased graphs instead of frameworks is that it is very clear how matroids from quasi-biased graphs relate to frame matroids and lifted-graphic matroids.
Given a quasi-biased graph $(G,\cB,\cL,\cF)$, the collection $\cL$ (respectively $\cF$) is \textit{degenerate} if no two cycles of $\cL$ (respectively $\cF$) are vertex-disjoint, and is otherwise \emph{non-degenerate}. Observe that if $\cL$ is degenerate, then $M(G, \cB, \cL, \cF)$ is equal to $F(G, \cB)$, the frame matroid of $(G, \cB)$. 
Similarly if $\cF$ is degenerate, then $M(G, \cB, \cL, \cF)$ is equal to $L(G, \cB)$, the lifted-graphic matroid of $(G, \cB)$.
In particular, note that if either of $\cL$ or $\cF$ contains a loop, then necessarily the other set is degenerate as all cycles in $\cL$ or $\cF$ must meet this loop.

We comment that if $M$ is a connected quasi-graphic matroid with rank at least two, then there is a quasi-biased graph $(G, \cB, \cL, \cF)$ so that $M = M(G, \cB, \cL, \cF)$ and each component of $G$ has at least two vertices. To see this, note that a component with only one vertex consists of either an isolated vertex or a vertex with incident loops. Clearly deleting isolated vertices does not change the underlying quasi-graphic matroid. Since $M$ is connected it has no balanced loops, and it is straightforward to see that if a vertex is incident with any lift-type loops, these loops may be freely placed on any vertex of $G$ without changing the circuits of the matroid. If any component consists of a vertex incident with a frame-type loop, this contradicts that $M$ is connected since $M$ has rank at least two.

\bigskip
\noindent 
\textbf{Rank and minors.}
We will need to know the independent sets and rank function of matroids associated with quasi-biased graphs.
Bowler, Funk, and Slilaty stated the rank function of such a matroid \cite[Lemma 2.4]{Bowler-Funk-Slilaty20}.

\begin{proposition}[{\cite[Lemma 2.4]{Bowler-Funk-Slilaty20}}]\label{prop: rank of X}
Let $(G, \cB, \cL, \cF)$ be a quasi-biased graph and let $X$ be a subset of $E(G)$. Then the rank of $X$ in $M(G,\cB,\cF,\cL)$ is given by:
\begin{align*}
    r(X)=\begin{cases} \lvert V(G[X])\rvert-b(X) & \textit{if $X$ contains a cycle in $\cF$}\\\\
    \lvert V(G[X])\rvert-c(X)+l(X) &\textit{otherwise}
    \end{cases}  
\end{align*}
where $b(X)$ is the number of balanced components of $G[X]$, $c(X)$ is the number of components of $G[X]$, and $l(X)$ is $1$ if $X$ contains an unbalanced cycle and $0$ otherwise. 
\end{proposition}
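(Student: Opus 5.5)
The plan is to verify that the stated function is the rank function of $M(G,\cB,\cL,\cF)$. Fix $X \subseteq E(G)$. It suffices to exhibit an independent subset $I \subseteq X$ of the claimed size and to show that every subset of $X$ of larger size contains a circuit from the list given above. Equivalently, I will characterize the independent sets: $I$ is independent if and only if each component of $G[I]$ is a tree or contains exactly one cycle, and that cycle is unbalanced; moreover, if two components contain cycles, then all such cycles lie in $\cF$; and if some component contains a cycle in $\cL$, then no other component contains a cycle. Thus the independent sets are exactly the frame-independent sets (pseudoforests whose cycles are unbalanced) in which lift-type cycles only appear alone.

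\textbf{Step 1: the independent sets.} First I check that each listed circuit violates this description. A balanced cycle is excluded directly. A theta graph, or a tight or loose handcuff, is a connected graph with two independent cycles. A union of two disjoint $\cL$-cycles has two cyclic components, one of which is lift-type. Conversely, let $I$ be a set that violates the description. If some component has cyclomatic number at least $2$, it contains a theta or a handcuff; if every cycle in that subgraph is unbalanced, it is a circuit, and otherwise a balanced cycle is present. If a component contains a balanced cycle, that cycle is a circuit. In the remaining case, every component is unicyclic with an unbalanced cycle, and two distinct cyclic components exist with at least one cycle in $\cL$. If both cycles are in $\cL$, their union is a circuit. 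If one is in $\cL$ and the other in $\cF$, they would have to meet, since $(\cB,\cL,\cF)$ is a proper tripartition; but they lie in different components, which is a contradiction.

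\textbf{Step 2: computing the maximum.} Suppose first that $X$ contains an $\cF$-cycle $C$. In each unbalanced component of $G[X]$, choose a spanning unicyclic subgraph whose cycle is unbalanced. In the component containing $C$, this cycle is $C$ itself. In each other unbalanced component $K$, I claim an $\cF$-cycle can be chosen. Any unbalanced cycle of $K$ is disjoint from $C$, so it cannot lie in $\cL$ (every $\cL$-cycle meets every $\cF$-cycle), and hence it lies in $\cF$. Choosing a spanning tree in each balanced component then gives an independent set of size $|V(G[X])| - b(X)$. This is an upper bound, because each component contributes at most as many edges as it has vertices, and balanced components contribute at most (number of vertices) $-1$.

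Otherwise every unbalanced cycle in $X$ lies in $\cL$, so at most one component of an independent subset can carry a cycle. The maximum is then a spanning forest plus one edge closing an unbalanced cycle, provided such a cycle exists. This gives $|V(G[X])| - c(X) + l(X)$.

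\textbf{Main obstacle.} The delicate point is the extension step in the $\cF$-case: that a spanning unicyclic subgraph with an unbalanced cycle exists in each unbalanced component. I would take an unbalanced cycle in the component and extend it by BFS-style tree growth, which adds no new cycles. The partition argument in Step 2 is what shows this cycle may be taken in $\cF$.
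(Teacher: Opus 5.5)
Your argument is correct. The paper gives no proof of this statement; it imports it from \cite[Lemma 2.4]{Bowler-Funk-Slilaty20}, treats the rank formula as the primitive, and then says the independent-set description (Proposition~\ref{prop: independent sets of a quasi-graphic matroid}, itself cited from \cite[Theorem 5.1]{Bowler-Funk-Slilaty20}) is recovered from it. You run the logic in the opposite direction. You start only from the circuit list of $M(G,\cB,\cL,\cF)$, derive the independent sets (your description is equivalent to outcomes $(i)$--$(iii)$ of that proposition), and then maximize componentwise.

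What your route buys is self-containment given the circuit list. It also shows exactly where properness of the tripartition enters. It is used once to rule out an $\cL$-cycle and an $\cF$-cycle in different components of an independent set. It is used again to force every unbalanced cycle outside the component of $C$ to be frame-type in the $\cF$-case, which is what makes the bound $|V(G[X])|-b(X)$ attainable. The paper's route buys nothing beyond brevity, since it outsources the work.

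There are three small points to tighten; none is a real gap.
\begin{enumerate}
\item In the converse part of Step~1, a loose handcuff with both cycles unbalanced is a circuit only when both cycles are in $\cF$. If both are in $\cL$, it is not itself a circuit, but it contains the circuit formed by the union of its two cycles. The mixed case cannot occur because the two cycles are vertex-disjoint. So ``contains a circuit'' still holds, but ``is a circuit'' is false as written.
\item For the other direction of Step~1, note explicitly that your description is closed under taking subsets. Only then does checking that each listed circuit violates it show that a set satisfying it contains no circuit.
\item In the case with no $\cF$-cycle, ``a spanning forest plus one edge closing an unbalanced cycle'' should mean the following. Take an unbalanced cycle $D\subseteq X$ and an edge $e\in D$, choose a maximal forest of $G[X]$ containing $D-e$, and add $e$. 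An arbitrary spanning forest plus an edge may close a balanced cycle.
\end{enumerate}
The extension step you flag as the main obstacle is routine. Starting from $D$, repeatedly add an edge joining the current subgraph to a new vertex of the component; this creates no new cycle.
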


In particular, note that if $G$ is connected and has an unbalanced cycle, then the rank of $M(G, \cB, \cF, \cL)$ is equal to $|V(G)|$.
From the rank function we can recover the independent sets.

\begin{proposition}[{\cite[Theorem 5.1]{Bowler-Funk-Slilaty20}}]\label{prop: independent sets of a quasi-graphic matroid}
Let $(G, \cB, \cL, \cF)$ be a quasi-biased graph and $X$ be a subset of $E(G)$. Then $X$ is independent in $M(G,\cB,\cF,\cL)$ if and only if $G[X]$ satisfies one of the following:
\begin{enumerate}[$(i)$]
    \item $G[X]$ is a forest.
    \item $G[X]$ has exactly one cycle, which is in $\cL$.
    \item Each component of $G[X]$ has at most one cycle, each of which is in $\cF$.
\end{enumerate}
\end{proposition}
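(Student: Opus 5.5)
We derive the independence characterization of Proposition \ref{prop: independent sets of a quasi-graphic matroid} from the rank function of Proposition \ref{prop: rank of X}, using that $X$ is independent if and only if $r(X)=|X|$. Write $n=|V(G[X])|$ and $c=c(X)$. The basic graph-theoretic fact we use is that $|X| \ge n-c$, with equality exactly when $G[X]$ is a forest, and more precisely $|X| - (n-c)$ equals the cyclomatic number of $G[X]$, which is the sum over components of (edges minus vertices plus one).

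\emph{Case 1: $X$ contains no cycle in $\cF$.} Then $r(X)=n-c+l(X)$. If $G[X]$ is a forest, then $l(X)=0$ and $r(X)=n-c=|X|$, which gives (i). If $G[X]$ has a cycle, then $l(X)\le 1$. Hence $r(X)=|X|$ forces the cyclomatic number to be exactly $1$ with $l(X)=1$. So $G[X]$ has exactly one cycle and that cycle is unbalanced. Since it is not in $\cF$ by the case assumption, it lies in $\cL$, which gives (ii). Conversely, in case (ii) we have cyclomatic number $1$ and $l=1$, so $r(X)=|X|$. A forest lies in this case automatically.

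\emph{Case 2: $X$ contains a cycle in $\cF$.} Then $r(X)=n-b(X)$, where $b(X)$ counts the balanced components. Since $n=\sum_K |V(K)|$ over components $K$, independence is equivalent to $\sum_K(|E(K)|-|V(K)|) = -b(X)$. Each balanced component that is a tree contributes $-1$. Each component containing a cycle contributes at least $0$. A balanced component containing a cycle contributes at least $0$ while being counted in $b(X)$, so it creates an excess. Therefore equality holds if and only if every component containing a cycle is unicyclic with an unbalanced cycle, and every balanced component is a tree. It remains to show that every unicyclic component's cycle lies in $\cF$ rather than $\cL$. This follows from the properness of the tripartition: $X$ contains some cycle $C\in\cF$. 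Any other cycle of $G[X]$ lies in a different component, since components are unicyclic, so it is vertex-disjoint from $C$. It therefore cannot lie in $\cL$, because every cycle in $\cL$ meets every cycle in $\cF$. This gives (iii). Conversely, if (iii) holds and some cycle exists, we are in Case 2 and the computation gives $r(X)=|X|$. If no cycle exists, (iii) reduces to (i).

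The only subtle step is the last one: the use of the proper-tripartition condition to exclude mixed $\cL$/$\cF$ cycles in different components. Everything else is bookkeeping with the cyclomatic number.
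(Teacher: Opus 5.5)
Your proposal is correct and follows the paper's route. The paper gives no separate proof; it cites Bowler--Funk--Slilaty and remarks that the independent sets are recovered from the rank function (Proposition~\ref{prop: rank of X}), which is exactly the cyclomatic-number bookkeeping you carry out, including the use of properness of the tripartition to force every cycle into $\cF$ in the second case.
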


Crucially for our work, the class of matroids arising from quasi-biased graphs is minor-closed. 
We next describe deletion and contraction for quasi-biased graphs, following \cite{Bowler-Funk-Slilaty20}. 
Let $(G,\cB,\cL,\cF)$ be a quasi-biased graph. 
Let $(G,\cB,\cL,\cF)\del e$ denote the graph $G\del e$ together with the tripartition $(\cB',\cL',\cF')$ of the cycles of $G \del e$ obtained from $(\cB,\cL,\cF)$ by taking
\begin{align*}
    \cB' &= \{C : C \in \cB \ \text{and} \ C \ \text{does not contain} \ e\} \\
    \cL' &= \{C : C \in \cL \ \text{and} \ C \ \text{does not contain} \ e\} \\
    \cF' &= \{C : C \in \cF \ \text{and} \ C \ \text{does not contain} \ e\}.
\end{align*}
It is straightforward to show that $M(G, \cB, \cL, \cF) \del e = M((G, \cB, \cL, \cF) \del e)$ \cite[Theorem 4.5]{Bowler-Funk-Slilaty20}.
If $e$ is not a loop, let $(G,\cB,\cL,\cF)/e$ denote the graph $G/e$ together with the tripartition $(\cB'',\cL'',\cF'')$ of the cycles of $G/e$ obtained from $(\cB,\cL,\cF)$ by taking
\begin{align*}
    \cB'' &= \{C : C \in \cB \ \text{or} \ C \cup e \in \cB\} \\
    \cL'' &= \{C : C \in \cL \ \text{or} \ C \cup e \in \cL\} \\
    \cF'' &= \{C : C \in \cF \ \text{or} \ C \cup e \in \cF\}.
\end{align*}
Again, it is straightforward to show that $M(G, \cB, \cL, \cF)/e = M((G, \cB, \cL, \cF)/e)$ \cite[Theorem 4.5]{Bowler-Funk-Slilaty20}.

We are left to deal with the contraction of a loop $e$ of $G$.
If $e \in \cB$ then $e$ is also a loop of $M(G, \cB, \cL, \cF)$, so we can define $(G, \cB, \cL, \cF)/e$ to be $(G, \cB, \cL, \cF) \del e$.
If $e \notin \cB$, then $M(G,\cB, \cL, \cF)$ is a frame matroid (if $\{e\} \in \cF$) or a lifted-graphic matroid (if $\{e\} \in \cL$) and we consider these cases separately, as in \cite{Funk-Pivotto-Slilaty22}.
If $\{e\} \in \cL$, we define $(G, \cB, \cL, \cF)/e = (G/e, \cC(G/e), \varnothing, \varnothing)$ where $\cC(G/e)$ is the set of all cycles of $G/e$.
In other words, to contract a loop in $\cL$, delete the loop and declare all cycles to be balanced.
As described in \cite[page 9]{Funk-Pivotto-Slilaty22}, $M(G, \cB, \cL, \cF)/e = M((G, \cB, \cL, \cF)/e)$ in this case.
If $\{e\} \in \cF$, define $(G,\cB, \cL, \cF) / e = (G', \cB', \varnothing, \cC(G') - \cB')$ where $G'$ is obtained from $G$ by replacing each non-loop incident with $v$ by a loop at its other end, and $\cB'$ consists of all cycles in $\cB$ disjoint from $v$ and all loops of $G$ incident with $v$.
To use language from \cite{Funk-Pivotto-Slilaty22}, $G'$ is obtained by taking each non-loop incident with $v$ and ``rolling it up" to a loop at its opposite endpoint. 
As described in \cite[page 9]{Funk-Pivotto-Slilaty22}, $M(G, \cB, \cL, \cF)/e = M((G, \cB, \cL, \cF)/e)$ in this case.

It is straightforward to show that contractions from $(G, \cB, \cL, \cF)$ commute and that contractions and deletions commute, so for disjoint sets $X$ and $Y$ of edges of $G$ we will write $(G, \cB, \cL, \cF)\del X/Y$ for the quasi-biased graph obtained from $(G, \cB, \cL, \cF)$ by deleting the edges in $X$ and contracting the edges in $Y$ in any order.
Note that if $Y$ contains no cycles in $\cF$, then the graph of $(G, \cB, \cL, \cF) \del X/Y$ is $G \del X/Y$; we will freely use this fact.

We will need two straightforward properties of contraction for quasi-biased graphs.
The first helps us understand how unbalanced cycles behave with respect to contraction of balanced sets.

\begin{lemma} \label{lem: unbalanced cycles preserved by contraction}
Let $(G, \cB, \cL, \cF)$ be a quasi-biased graph, let $C$ be an unbalanced cycle, and let $F$ be a flat of $M(G, \cB, \cL, \cF)$ so that $C - F \ne \varnothing$ and $F$ contains no unbalanced cycles.
Then $C - F$ contains an unbalanced cycle of $(G, \cB, \cL, \cF)/F$. 

\end{lemma}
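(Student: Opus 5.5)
Since $F$ contains no unbalanced cycle, $F$ is a balanced set of edges. In particular it contains no cycle in $\cF$, and no loop of $G$ that lies in $\cL$ or $\cF$. So the contraction $(G,\cB,\cL,\cF)/F$ can be computed edge by edge using only the non-loop rule and the rule for balanced loops (contracting a balanced loop is just deleting it). The graph of the contraction is then $G/F$, and for each non-loop contraction a cycle $D$ of $G/e$ has the same class ($\cB$, $\cL$ or $\cF$) as whichever of $D$ or $D\cup e$ is a cycle of $G$. My plan is to contract the edges of $F$ one at a time and track $C$, arguing by induction on $|F|$. To make the induction work I will prove a slightly more general statement: if $X$ is a balanced set of edges with $C-X\neq\varnothing$ and $C-X$ not contained in $\cl_M(X)$, then $C-X$ contains an unbalanced cycle of $(G,\cB,\cL,\cF)/X$. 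The hypothesis that $F$ is a flat gives exactly this: since $C\not\subseteq F$ and $F$ is closed, some edge of $C$ lies outside $\cl(F)$.

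First I handle $C\cap F$. Since $F$ is balanced, $C\not\subseteq F$, and the graph of the contraction is $G/F$, so $C-F$ is a nonempty edge set in $G/F$. Contracting the edges of $C\cap F$ turns $C$ into a closed walk in $G/F$. Each edge of $C\cap F$ is a non-loop: a balanced loop in $C$ would force $C$ to be that loop, which is balanced, contradicting that $C$ is unbalanced. So contracting the edges of $C\cap F$ one by one keeps $C$ a cycle, and its class is unchanged each time: the contracted cycle $C'$ satisfies $C'\cup e=C$, so it inherits the class of $C$. After these steps $C$ becomes an unbalanced cycle $C_1=C-F$ in $(G,\cB,\cL,\cF)/(C\cap F)$. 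The problem is therefore reduced to the case $C\cap F=\varnothing$, with the remaining set $F'=F-C$ still balanced, since contraction of a balanced non-loop preserves balancedness of the other sets by the theta property.

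Now $C\cap F=\varnothing$, and I contract an edge $e\in F$. If the two ends of $e$ are not both on $C$, then $C$ remains a cycle of $G/e$ in the same class. The delicate case is when both ends of $e$ lie on $C$. Then $C/e$ splits into two cycles $C_a$ and $C_b$ at the merged vertex, where $C_a\cup e$ and $C_b\cup e$ are the two cycles of the theta $C\cup e$ that contain $e$. The theta property forbids exactly two balanced cycles in $C\cup e$. Since $C$ is unbalanced, at most one of $C_a\cup e$ and $C_b\cup e$ is balanced. If both were balanced, that would already give exactly two balanced cycles, which is impossible. So at least one of them is unbalanced, and the corresponding $C_a$ or $C_b$ is an unbalanced cycle of $G/e$ contained in $C-F$. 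Iterating this over all edges of $F$ produces an unbalanced cycle contained in $C-F$.

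The main obstacle is this splitting step. In particular, I need to make sure that a cycle can shrink to a loop without trouble, and that repeatedly replacing $C$ by a subcycle causes no problems. Both are fine because the contraction rules are stated for arbitrary cycles, including loops. I also note that the flat hypothesis is only needed to guarantee $C-F\neq\varnothing$, and this is given directly, so the more general statement about closures may not be needed at all; I would drop it if the direct induction goes through.
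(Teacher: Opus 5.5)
Your proof is correct and follows essentially the same route as the paper: contract one edge $f\in F$ at a time. $C$ keeps its class when $f\in C$, when $f$ meets at most one vertex of $C$, or when $f$ is a balanced loop; when both ends of $f$ lie on $C$, the theta property applied to $C\cup f$ shows that one of the two resulting cycles is unbalanced. As you suspected, the closure-based generalization is unnecessary: the argument uses only that $F$ is balanced, which already forces $C-F\neq\varnothing$ and is preserved under contracting balanced edges.
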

\begin{proof}
It suffices to consider the case when $F$ consists of a single edge $f$ which is not an unbalanced loop. If $f$ is an edge of $C$, then $C-f$ is an unbalanced cycle of $(G,\cB,\cL,\cF)/f$. If $f$ is incident with two vertices $\{u,v\}$ of $C$, then in $G/f$, the edge set $C-f$ is a disjoint union of two cycles $C_1$ and $C_2$. At least one such cycle, say $C_1$ must be unbalanced. To see this, suppose otherwise. Then each of the cycles $C_i\cup f$ for $i=1,2$ is balanced. By the theta property, this forces $C$ to be balanced, a contradiction. Lastly consider the case when $f$ is incident with at most one vertex of $C$. If $f$ is a non-loop edge or a balanced loop, then $C$ is an unbalanced cycle of $(G,\cB,\cL,\cF)/f$. 

\end{proof}

We will also need to understand what happens when we contract a flat that contains a frame-type cycle.

\begin{lemma} \label{lem: at least one frame-type loop}
Let $(G, \cB, \cL, \cF)$ be a quasi-biased graph so that $G$ is connected, and let $M = M(G, \cB, \cL, \cF)$.
If $F$ is a flat of $M$ so that $r(M/F) \ge 1$ and $F$ contains a cycle in $\cF$, then $(G, \cB, \cL, \cF)/F$ has a frame-type loop.
\end{lemma}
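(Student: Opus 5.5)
We contract the edges of $F$ one at a time, following the contraction rules described above, and track a frame-type loop through the process.

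First we reduce to a single frame-type loop. Let $C \in \cF$ be a cycle contained in $F$. If $C$ is not a loop, we choose an edge $e \in C$ and contract the non-loop edges $C - e$ first. This uses the rule for contracting a non-loop edge, under which the image of a cycle $D$ lies in $\cF''$ exactly when $D$ or $D \cup e'$ lies in $\cF$ for the contracted edge $e'$. After these contractions $C - (C-e)$ is the loop $\{e\}$, and it lies in the frame-type class because $C \in \cF$. So after contracting $C-e$ we have a quasi-biased graph $(G_1,\cB_1,\cL_1,\cF_1)$ in which $e$ is a frame-type loop at some vertex $v$. Moreover, $G_1 = G/(C-e)$ is connected, since $G$ is.

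Next we contract the frame-type loop $e$. By the rule for contracting a loop in $\cF$, the result is $(G', \cB', \varnothing, \cC(G') - \cB')$. Here every non-loop edge incident with $v$ becomes a loop at its other end. The balanced cycles $\cB'$ are the cycles of $\cB_1$ avoiding $v$, together with the loops at $v$; the latter are really loops of the matroid, since they lie in the closure of $e$. Every other cycle is frame-type. Let $f$ be a non-loop edge of $G_1$ incident with $v$; the goal is to produce such an $f$ that is not in $F$.

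This is where we use $r(M/F) \ge 1$, which says there is an edge $g \notin F$ with $g \notin \cl(F)$; here $\cl(F) = F$ since $F$ is a flat. After contracting $C$ we are in $M/C$, and $M/F = (M/C)/(F - C)$. Because $G_1$ is connected, we can run the argument on the remaining contractions. We contract the non-loop edges of $F - C$, keeping $e$ uncontracted, so that the image of $C$ is still the frame-type loop $e$. We also contract the other loops of $F$; loops in $\cL$ cannot occur here, because $\cF$ is non-degenerate only if it meets every loop, and the $\cF$-loops are handled last. Now we use connectivity: some edge $g \notin F$ is incident with the vertex $w$ that carries the image of $e$. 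Otherwise the component at $w$ would consist entirely of edges of $F$, and connectivity would force every edge of $G$ to lie in $F$, which contradicts $r(M/F)\ge 1$.

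If $g$ is a non-loop, then contracting $e$ rolls $g$ up into a loop at its other end, and $\{g\}$ is not in $\cB'$. This holds because $\cB'$ contains only the loops that were already at $v$, and a non-loop is not such a loop. Hence $\{g\}$ is frame-type in the contraction.

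If $g$ is a loop at $w$, then it is not balanced, since it is not in $F$ and is not parallel into $F$. In that case we instead look for another edge $h \notin F$ leaving the vertex set of the current graph. If no such edge exists, $g$ would become a matroid loop, which contradicts $g \notin \cl(F)$. Before contracting $e$, $g$ is an unbalanced loop. After contracting $e$ it becomes balanced, and hence a matroid loop of $M/F$. This contradicts $g \notin \cl(F)$, so $g$ is not a loop.

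The main obstacle is bookkeeping. The contraction rules must behave correctly regardless of the order of contraction (the paper notes that they commute), and we must check that the edges rolled up when we contract the $\cF$-loop are exactly the frame-type loops we claim, and not balanced ones. If this order-dependent argument becomes messy, the fallback is to use the rank function in Proposition~\ref{prop: rank of X}: $M/F$ has the element $g$ with $r_{M/F}(\{g\}) = 1$, and the structure after the frame-loop contraction forces every surviving non-balanced edge to be a frame-type loop or to lie in a frame-type cycle. Such a cycle yields a loop after the remaining contractions.
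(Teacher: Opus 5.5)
\textbf{There is a genuine gap.} It is in the step where you contract ``the other loops of $F$'' before $e$, on the grounds that loops in $\cL$ cannot occur. That claim is false. Once $e$ is a frame-type loop at $w$, properness only forces every lift-type cycle to pass through $w$, so lift-type loops of $F$ at $w$ are allowed. Contracting one of them before $e$ declares every cycle balanced and turns $e$ into a balanced loop.

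Here is a concrete instance. Let $G$ have vertices $a,b,c$, edges $p_1,p_2,p_3$ joining $a$ and $b$, and an edge $g$ joining $a$ and $c$. Take $\cB=\varnothing$, $\{p_1,p_2\}\in\cF$, and $\{p_1,p_3\},\{p_2,p_3\}\in\cL$. This tripartition is proper. The only circuit of $M$ is $F=\{p_1,p_2,p_3\}$, so $F$ is a flat with $r(M/F)=1$ containing the frame-type cycle $C=\{p_1,p_2\}$. With $e=p_1$, contracting $p_2$ makes $p_1$ a frame-type loop and $p_3$ a lift-type loop at the same vertex. Contracting $p_3$ before $e$, as your order does, leaves only the non-loop edge $g$ with every cycle balanced, so there is no frame-type loop.

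The same example shows that the quasi-biased graph $(G,\cB,\cL,\cF)/F$ depends on the contraction order; only the matroid $M/F\cong U_{1,1}$ does not. So you cannot lean on commutation, and the lemma has to be read for a suitably chosen order. Your rank-function fallback cannot rescue this either, because the conclusion concerns the representation, and the rank function of $M/F$ does not distinguish the two representations.

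\textbf{How to repair it.} Keep your first step: contracting $C-e$ is exactly what guarantees that a frame-type loop exists. But contract $e$ immediately after all non-loop edges of $F$ have been contracted, before any other loop. Then every remaining loop at $w$ becomes balanced, and $\cL$ becomes empty and stays empty. Your connectivity argument gives an edge $g\notin F$ at $w$. As you note, $g$ must be a non-loop, since a loop at $w$ would become balanced and hence a loop of $M/F$, contradicting $g\notin\cl(F)=F$. So contracting $e$ rolls $g$ up to a frame-type loop at its other end $x$.

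You still need to check, and your write-up omits this, that contracting the remaining frame-type loops of $F$ does not make $\{g\}$ balanced. That happens only if one of those loops sits at $x$. In that case $g$ would become a loop of $M/F$, again contradicting $g\notin F$.

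\textbf{Comparison with the paper.} The paper contracts $\cl(T)$ for an edge-maximal forest $T$ of $G[F]$. It asserts that every edge of $F-\cl(T)$ becomes a frame-type loop, then uses connectivity to find a non-loop edge with exactly one end in the set $V'$ of vertices carrying loops of $F$. In the example above with $T=\{p_3\}$, both remaining loops are lift-type. So that assertion needs exactly the ordering care your $C-e$ device supplies.
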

\begin{proof}
Let $T$ be an edge-maximal forest of $G[F]$ and let $(G', \cB', \cL', \cF') = (G, \cB, \cL, \cF)/\cl(T)$.
Note that $\cl(T)$ is balanced and so $G' = G/\cl(T)$.
Since $G$ is connected this implies that $G'$ is connected.
Also, each element in $F - \cl(T)$ is a loop of $G'$ in $\cF'$.
Let $V'$ be the set of vertices of $G'$ with an incident loop in $F$.
Then the graph of $(G, \cB, \cL, \cF)/F$ is obtained from $G'$ as follows: for each non-loop edge $e$ with exactly one end in $V'$, replace $e$ with a frame-type loop at the end of $e$ that is not in $V'$.
Since $G'$ is connected, there is at least one non-loop edge with exactly one end in $V'$, so $(G, \cB, \cL, \cF)/F$ has at least one frame-type loop.
\end{proof}

\section{Preliminaries} \label{sec: preliminaries}

In this section we will state some preliminary lemmas for quasi-graphic matroids and unbreakable matroids.
We will follow \cite[Section 2]{Fife-Mayhew-Oxley-Semple2020} very closely, with each lemma either taken verbatim or generalized from frame matroids to quasi-graphic matroids.

The first was proved by Oxley and Pfeil \cite{Oxley-Pfeil2022} and holds for general unbreakable matroids.

\begin{lemma}[{\cite[Lemma 2.1]{Fife-Mayhew-Oxley-Semple2020}}]
If $M$ is an unbreakable matroid and $F$ is a flat of $M$, then $M/F$ is unbreakable.
\end{lemma}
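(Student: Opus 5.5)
The plan is to verify the two defining conditions of unbreakability directly for $M/F$, using the correspondence between flats of a contraction and flats of the original matroid.

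First, $M/F$ must be connected. This is immediate from the hypothesis, since $F$ is a flat of $M$ and $M$ is unbreakable. (If one adopts the convention that the empty matroid is connected, the case $F = E(M)$ causes no trouble; otherwise it is excluded in the same way the definition excludes it for $M$.)

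Second, let $F'$ be an arbitrary flat of $M/F$. I will use the standard fact that the flats of $M/F$ are exactly the sets $G - F$, where $G$ is a flat of $M$ containing $F$. To see this in the relevant direction, recall that $\cl_{M/F}(X) = \cl_M(X \cup F) - F$ for every $X \subseteq E(M) - F$. Hence $F' = \cl_{M/F}(F')$ gives $F' = \cl_M(F' \cup F) - F$. So $G := F' \cup F$ satisfies $\cl_M(G) = G$, that is, $G$ is a flat of $M$.

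Now contraction is associative: $(M/F)/F' = M/(F \cup F') = M/G$. Since $G$ is a flat of $M$ and $M$ is unbreakable, $M/G$ is connected. Thus $(M/F)/F'$ is connected for every flat $F'$ of $M/F$. Together with the connectivity of $M/F$, this shows that $M/F$ is unbreakable.

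There is no real obstacle here. The only step needing care is the closure identity in the contraction, which is standard (see \cite{Oxley2011}). The lemma is a purely matroid-theoretic statement and does not depend on the quasi-graphic structure.
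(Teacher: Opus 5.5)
Your proof is correct. The key step is that $\cl_{M/F}(X)=\cl_M(X\cup F)-F$, so every flat $F'$ of $M/F$ gives a flat $F\cup F'$ of $M$. Then $(M/F)/F' = M/(F\cup F')$ is connected because $M$ is unbreakable.

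The paper does not prove this lemma itself; it cites \cite[Lemma 2.1]{Fife-Mayhew-Oxley-Semple2020}, a result originally due to Oxley and Pfeil. Your argument is the standard one behind that citation.
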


We can use the rank function to certify that every induced subgraph has a natural corresponding flat. The following lemma generalizes \cite[Lemma 2.3]{Fife-Mayhew-Oxley-Semple2020}.

\begin{lemma}\label{lem: vertex induced flats}
Let $(G, \cB, \cL, \cF)$ be a quasi-biased graph, $L$ its set of loops in $\cB$, and $L'$ its set of loops in $\cL$. Let $U\subseteq V(G).$ If $E(G[U])$ contains no cycles in $\cL$, then $E(G[U]) \cup L$ is a flat of $M(G,\cB,\cL,\cF)$. Otherwise $E(G[U])\cup L\cup L'$ is a flat of $M(G,\cB,\cL,\cF)$.
\end{lemma}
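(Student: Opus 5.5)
The plan is to check the definition of a flat directly with the rank function of Proposition \ref{prop: rank of X}. Let $X$ be the set in the statement: $X=E(G[U])\cup L$ in the first case and $X=E(G[U])\cup L\cup L'$ in the second. We show that $r(X\cup e)>r(X)$ for every edge $e\notin X$.

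\textbf{Reducing to a set $Y$.} The loops in $L$ are balanced cycles of length one, so they are loops of $M=M(G,\cB,\cL,\cF)$. Hence $r(Z\cup L)=r(Z)$ for every $Z$, and we may replace $X$ by $Y=X-L$. In the first case $Y=E(G[U])-L$ contains no cycle in $\cL$. In the second case $Y$ also contains every lift-type loop of $G$.

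\textbf{Possible extra edges $e$.} An edge $e\notin X$ is of one of two kinds:
\begin{itemize}
\item a non-loop edge with at least one end $w\notin U$;
\item an unbalanced loop at a vertex $w\notin U$. This loop is frame-type, or, in the first case only, lift-type.
\end{itemize}
In every situation $e$ has an end $w$ outside $U$. We must make sure that $w$ is a new vertex of $G[Y\cup e]$. The only edges of $Y$ that can meet vertices outside $U$ are lift-type loops, and these lie in $Y$ only in the second case. In the second case, if $Y$ contains a cycle in $\cF$, that cycle lies inside $G[U]$. Every cycle in $\cL$ must meet it, so every lift-type loop is at a vertex of $U$.

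\textbf{Case A: $Y$ contains a cycle in $\cF$.} By the observation above, $V(G[Y])\subseteq U$, so $w\notin V(G[Y])$. We have $r(Y)=|V(G[Y])|-b(Y)$, and the same formula applies to $Y\cup e$.
\begin{itemize}
\item If $e$ joins $w$ to a vertex of $G[Y]$, the vertex count rises by one and $b$ does not increase.
\item If both ends of $e$ are new, the vertex count rises by two and $b$ rises by at most one.
\item If $e$ is a frame-type loop at $w$, the vertex count rises by one and $b$ is unchanged.
\end{itemize}
In each case the rank strictly increases.

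\textbf{Case B: $Y$ contains no cycle in $\cF$.} Here $r(Y)=|V(G[Y])|-c(Y)+l(Y)$.
\begin{itemize}
\item \emph{$e$ is a non-loop edge, or a lift-type loop in the first case.} Adding $e$ creates no cycle in $\cF$. Suppose first that $w$ is a new vertex. Then the vertex count rises by one or two, $c$ rises by at most the same amount minus one, and $l$ does not decrease. For a lift-type loop at a new vertex in the first case, $Y$ is balanced, so $l$ goes from $0$ to $1$ while the vertex count and $c$ each rise by one. In every case the rank increases.
\item \emph{$e$ is a frame-type loop at $w$.} The formula switches, giving $r(Y\cup e)=|V(G[Y\cup e])|-b(Y\cup e)$, where $b(Y\cup e)\le c(Y)-(\text{number of unbalanced components of }Y)$. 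If $l(Y)=1$, this yields $r(Y\cup e)\ge |V(G[Y])|-c(Y)+2>r(Y)$. If $l(Y)=0$, it yields $r(Y\cup e)\ge|V(G[Y])|-c(Y)+1>r(Y)$.
\end{itemize}

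\textbf{Main obstacle.} The delicate point is the bookkeeping for $w$ when $Y$ already contains lift-type loops at vertices outside $U$. This happens only in the second case when $Y$ has no cycle in $\cF$. Then $Y$ has an unbalanced cycle, so $l(Y)=1$. If $w$ already carries a lift-type loop of $Y$, adding a non-loop edge $e$ either merges two components, keeping $|V|$ and lowering $c$, or adds one new vertex without changing $c$. Adding a frame-type loop at $w$ would place a frame cycle disjoint from the $\cL$-cycle in $G[U]$, which is impossible. So the rank still goes up. I would handle these subcases explicitly; everything else is routine counting.
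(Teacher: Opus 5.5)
Your proof is correct, but it takes a different route from the paper. You verify flatness with the rank function of Proposition~\ref{prop: rank of X}, counting vertices, components and balanced components. The paper instead uses the circuit characterization $\cl(X)=X\cup\{x : x\in C\subseteq X\cup x \text{ for some circuit } C\}$. It inspects the list of circuit types (balanced cycles, thetas, handcuffs, and in the second case pairs of disjoint lift-type cycles). From that list it observes that a circuit through an edge $e$ with an end outside $U$ forces $e$ to be a balanced loop, or, in the second case, a lift-type loop.

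The paper's route is shorter and needs no counting. As written, though, it only considers circuits inside $E(G[U])\cup e$. So it leaves implicit why the added loops in $L'$, which in the second case may sit at vertices outside $U$, cannot combine with $e$ into a circuit. An example to worry about is a non-loop edge joining two vertices outside $U$ that both carry lift-type loops. Your ``main obstacle'' paragraph handles exactly this situation. Your version therefore costs more bookkeeping but covers the whole set $X$ explicitly.

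A few points to tidy:
\begin{enumerate}
\item In Case~A, explain why no lift-type loop $e$ at a vertex $w\notin U$ needs checking. Such a loop would be vertex-disjoint from the $\cF$-cycle inside $G[U]$, which properness forbids. This is the observation you made for the second case, and it applies equally in the first.
\item In the second case, frame-type loops at vertices outside $U$ cannot exist at all, since they would miss the $\cL$-cycle in $G[U]$. Noting this removes a subcase.
\item Your sentence saying $c$ rises by at most the vertex increase minus one is false for a lift-type loop at a new vertex. You do treat that subcase correctly in the next sentence, so the bound should simply be stated for non-loop edges only.
\end{enumerate}
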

\begin{proof}
    Let $\cC$ denote the collection of circuits of $M$. We recall the following characterization of closure for a subset $X\subseteq E(M)$ and any matroid $M$: 
    \begin{align*}
        \cl(X)=X\cup\{x:\exists \ C\in\cC \text{ so that }x\in C\subseteq X\cup x\}. 
    \end{align*}
    First suppose $E(G[U])$ contains no cycles in $\cL$. Then, for any edge $e$, if $e\in C\subseteq (E(G[U])\cup e)$ for some circuit $C$, then $C$ is either a balanced cycle, a theta graph with all cycles unbalanced, or a handcuff with both cycles unbalanced. For each of these cases either $e\in E(G[U])$ or $e$ is a balanced loop.

    Now suppose $E(G[U])$ contains a cycle in $\cL$. Again, for any $e\in C\subseteq (E(G[U])\cup e)$, we have that either $e\in E(G[U])$, $e$ is a balanced loop, or $e$ is a loop in $\cL$.
\end{proof}

We will also need the following lemma, which relies on Theorem 1.1.

\begin{lemma} \label{lem: contract a cycle}
Let $\qbg$ be a quasi-biased graph and $M=M\qbg$. If $C$ is an unbalanced cycle, then $M/C$ is either frame or lifted-graphic, depending on whether $C$ belongs to $\cF$ or $\cL$, respectively. Moreover, if $M$ is unbreakable and  $C\in \cL$, then $\si(G/\cl(C))$ is a complete graph or a cycle up to deleting isolated vertices.
\end{lemma}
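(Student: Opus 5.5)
The plan is to contract the cycle $C$ one edge at a time, using the contraction rules for quasi-biased graphs from Section~\ref{sec: background}, until one edge of $C$ becomes a loop. Contracting a loop in $\cF$ produces a quasi-biased graph of the form $(G',\cB',\varnothing,\cdot)$. Contracting a loop in $\cL$ produces one of the form $(G/e,\cC(G/e),\varnothing,\varnothing)$. So the first claim follows once we track the type of $C$ through these contractions.

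\textbf{Tracking the type of $C$.} Write $C=\{e_1,\dots,e_k\}$. Since $C$ is a cycle, $e_1,\dots,e_{k-1}$ form a path, which contains no cycle. Hence contracting them is ordinary graph contraction. By the definitions of $\cB'',\cL'',\cF''$, after each step the remaining part of $C$ is a cycle in the same class as $C$. So $e_k$ becomes a loop in $\cF$ if $C\in\cF$, and a loop in $\cL$ if $C\in\cL$. Contracting it, by the loop-contraction rules:
\begin{itemize}
\item if $\{e_k\}\in\cF$, the tripartition has empty $\cL$, so $M/C$ is the frame matroid of the resulting biased graph;
\item if $\{e_k\}\in\cL$, every cycle becomes balanced, so $M/C=M(G/C)$ is graphic, and in particular lifted-graphic.
\end{itemize}
Contraction commutes with taking the matroid of the quasi-biased graph \cite[Theorem 4.5]{Bowler-Funk-Slilaty20} and the remarks following it. This gives the first sentence of the statement.

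\textbf{Unbreakable case with $C\in\cL$.} Now assume $M$ is unbreakable and $C\in\cL$. The set $\cl(C)$ is a flat, so $M/\cl(C)$ is unbreakable by the lemma of Oxley and Pfeil quoted above. Also $M/\cl(C)=(M/C)/(\cl(C)-C)$, and by the previous paragraph this is the graphic matroid of the graph $G/C$ with the edges of $\cl(C)-C$ contracted.

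We must identify this graph with $G/\cl(C)$. Contracting the $\cL$-loop $e_k$ declares every cycle balanced, so the later contractions are ordinary graph contractions of non-loop edges, and deletions of the edges that have become loops. Those loops are balanced and hence matroid loops; they lie in $\cl(C)$, so they disappear. The resulting graph therefore agrees with $G/\cl(C)$ up to loops, which do not affect $\si$.

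Apply Theorem~\ref{thm: the unbreakable graphic matroids} to this graph after deleting isolated vertices. Unbreakable matroids are loopless, because they are connected (or empty). So $\si(G/\cl(C))$, up to isolated vertices, is a cycle or a complete graph.

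\textbf{Main obstacle.} The main care is bookkeeping: confirming that "contracting $\cl(C)$" in the matroid agrees with graph contraction of $\cl(C)$ in $G$. This needs three facts:
\begin{itemize}
\item after the $\cL$-loop is contracted, all remaining edges of $\cl(C)$ are either non-loops of the balanced graph or balanced loops;
\item $\cl(C)$ may contain lift-type loops elsewhere, and since those are edges of $\cl(C)$ they are contracted or deleted harmlessly;
\item contraction order does not matter, as noted in Section~\ref{sec: background}.
\end{itemize}
There is also an edge case, where $M/\cl(C)$ has rank $0$ or is empty. Then the resulting graph has no edges, and the statement holds vacuously or trivially. A single vertex counts as a complete graph.
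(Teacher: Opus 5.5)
Your proposal is correct and follows essentially the same route as the paper. Both arguments contract all but one edge of $C$ so that the last edge becomes an unbalanced loop of the same type, which forces the contraction to be frame or lifted-graphic (graphic when $C\in\cL$). For $C\in\cL$, both then observe that $M/\cl(C)$ is the graphic, unbreakable matroid of $G/\cl(C)$ and apply Theorem~\ref{thm: the unbreakable graphic matroids}.

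One small nit: your claim that ``unbreakable implies loopless'' fails for a one-element loop matroid. You do not need it, though: $\cl(C)$ is a flat, so $M/\cl(C)$ is loopless, and looplessness is not a hypothesis in the direction of Theorem~\ref{thm: the unbreakable graphic matroids} that you use.
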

\begin{proof}
Let $e\in C$. Contracting the edges of $C-e$, we have an unbalanced loop resulting from $e$. Thus in $\qbg/(C-e)$, one of $\cF$ or $\cL$ is degenerate, and $M/(C-e)$ is either a frame matroid or a lifted-graphic matroid. Now suppose in addition that $C\in \cL$. Then $(G, \cB, \cL, \cF)/C$ has no unbalanced cycles, and its graph is $G/C$. Thus $M/\cl(C)$ is graphic and unbreakable. 
So $\si(G/\cl(C))$ is isomorphic to the cycle matroid of a complete graph or a cycle by Theorem \ref{thm: the unbreakable graphic matroids}.  
\end{proof}

A utility of this result is that we can occasionally default to the results of \cite{Fife-Mayhew-Oxley-Semple2020} even when properties outside of biased graphs are not sufficient, so long as our contraction consists of flats containing cycles in $\cF$.

The following three lemmas from \cite{Fife-Mayhew-Oxley-Semple2020} concern biased graphs only (and not their associated matroids) so they apply in the quasi-graphic setting as well.

\begin{lemma}[{\cite[Lemma 2.4]{Fife-Mayhew-Oxley-Semple2020}}] \label{lem: induced shortest unbalanced cycles}
Let $(G, \cB)$ be a biased graph with no balanced loops or $2$-cycles. Let $H$ be a vertex-induced subgraph of $G$ and let $C$ be a shortest unbalanced cycle in $H$.
If $|C| \ge 3$, then $C$ is induced.
\end{lemma}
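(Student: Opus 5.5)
We argue by contradiction: we take a shortest unbalanced cycle $C$ of $H$ with $|C| \ge 3$ and suppose it has a chord. Because $H$ is vertex-induced, any chord of $C$ in $G$ is an edge $e$ of $H$ joining two vertices $u,v$ of $C$ and not itself in $C$. Since $(G,\cB)$ has no balanced loops, and a loop would in any case not count as a chord, $e$ is a non-loop edge with $u \ne v$.

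The vertices $u$ and $v$ split $C$ into two internally disjoint $u$--$v$ paths $P_1$ and $P_2$. Together with $e$ these give a theta subgraph of $H$ whose three cycles are $C = P_1 \cup P_2$, $C_1 = P_1 \cup e$ and $C_2 = P_2 \cup e$. We check lengths next. Each $P_i$ has at least one edge. If some $P_i$ had exactly one edge, then that edge would be parallel to $e$, and $C_i$ would be a $2$-cycle. By hypothesis every $2$-cycle is unbalanced, so $C_i$ would be an unbalanced cycle of $H$ of length $2 < |C|$, contradicting the minimality of $C$. Hence each $P_i$ has at least two edges, and so $|C_i| = |P_i| + 1 \le |C| - 2 + 1 < |C|$ for $i = 1,2$.

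By the minimality of $C$, both $C_1$ and $C_2$ are balanced. Then the theta subgraph $P_1 \cup P_2 \cup e$ contains exactly two balanced cycles, $C_1$ and $C_2$, while $C$ is unbalanced. This violates the theta property, which is the required contradiction, so $C$ is induced.

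There is no real obstacle in this argument. The only care needed is the degenerate case in which a chord is parallel to an edge of $C$; that is exactly where the hypothesis that $G$ has no balanced $2$-cycles is used, while the exclusion of balanced loops only ensures that loops never enter the picture.
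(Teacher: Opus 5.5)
Your proof is correct and is the standard theta-property argument for this lemma; the paper does not reprove it but quotes it from Fife, Mayhew, Oxley, and Semple. You also correctly handle a chord parallel to an edge of $C$ separately.

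One small repair is needed for loops. Since ``induced'' means $G[V(C)] = C$, a loop at a vertex of $C$ must also be ruled out, and saying it ``is not a chord'' does not do that. The reason is that such a loop lies in $H$ (because $H$ is vertex-induced), is unbalanced (because there are no balanced loops), and so would be an unbalanced cycle of $H$ shorter than $C$. This is exactly where the no-balanced-loops hypothesis is used.
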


\begin{lemma}[{\cite[Lemma 2.5]{Fife-Mayhew-Oxley-Semple2020}}] \label{lem: wheel}
    Let $(G, \cB)$ be a biased graph and $C$ be an unbalanced cycle of $G$. If $w$ is in $V(G)-V(C)$ and $w$ is adjacent to at least two vertices of $C$ then there is an unbalanced cycle $C_w$ with $w \in V(C_w)$ and $E(C_w) \subseteq E(C) \cup E_w$.
\end{lemma}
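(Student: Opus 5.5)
The plan is to use the theta property directly, applied to a single theta subgraph built from $C$ and two edges at $w$.

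First I fix the configuration. Since $w$ is adjacent to at least two vertices of $C$, I choose distinct vertices $u,v \in V(C)$ and edges $e_u, e_v \in E_w$, where $e_u$ has ends $w,u$ and $e_v$ has ends $w,v$. Because $C$ has two distinct vertices, it is not a loop. So $C$ is the union of two internally vertex-disjoint $u$--$v$ paths $P_1$ and $P_2$, each with at least one edge; this holds even when $u$ and $v$ are adjacent on $C$, or when $C$ is a $2$-cycle. Let $Q$ be the $u$--$v$ path $\{e_u, e_v\}$ through $w$. Since $w \notin V(C)$, $Q$ is internally disjoint from $P_1$ and $P_2$.

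Next, $P_1 \cup P_2 \cup Q$ is a theta subgraph with branch vertices $u$ and $v$. Its three cycles are
\[
C = P_1 \cup P_2, \qquad C_1 = P_1 \cup \{e_u, e_v\}, \qquad C_2 = P_2 \cup \{e_u, e_v\}.
\]
Suppose, for a contradiction, that both $C_1$ and $C_2$ are balanced. Since $C$ is unbalanced, the theta would then contain exactly two cycles in $\cB$, which the theta property forbids. Hence at least one of $C_1$, $C_2$ is unbalanced, and I take it as $C_w$. It passes through $w$, and $E(C_w) \subseteq E(C) \cup E_w$, as required.

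I do not expect any real obstacle. The only care needed is the degenerate cases: checking that $C$ cannot be a loop (it has two distinct vertices), and that $Q$ really gives a third internally disjoint path, which follows from $w \notin V(C)$.
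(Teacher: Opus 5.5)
Your proof is correct: in the theta subgraph formed by $C$ together with the path $u\,e_u\,w\,e_v\,v$, the cycle $C$ is unbalanced, so the theta property forces one of the two cycles through $w$ to be unbalanced, and your handling of the degenerate cases ($C$ a $2$-cycle, or $u,v$ adjacent on $C$) is sound. The paper gives no proof of its own and simply imports this lemma from Fife, Mayhew, Oxley, and Semple; the natural proof of that lemma is this same single application of the theta property.
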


\begin{lemma}[{\cite[Lemma 2.6]{Fife-Mayhew-Oxley-Semple2020}}] \label{lem: vertex adjacent to every vertex of cycle}
  Let $(G,\cB)$ be a biased graph and $C$ be an unbalanced cycle of $G$ with $|C| \geq 3$. If $G$ has a vertex $w$ that is adjacent to each vertex of $V(C) - w$ then there is an unbalanced $3$-cycle $C_w$ with $w \in V(C_w)$ and $E(C_w) \subseteq E(C) \cup E_w$.  
\end{lemma}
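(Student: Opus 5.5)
The plan is a fan decomposition: split the cycle $C$ into triangles through $w$, and use the theta property to show that if every such triangle were balanced, $C$ would be balanced too. Write $C = v_1v_2\cdots v_kv_1$ with $k = |C| \ge 3$. The argument splits according to whether or not $w \in V(C)$; the two cases are handled identically once the fan is set up.

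\emph{Setting up the fan.} If $w \notin V(C)$, choose for each $i$ an edge $e_i \in E_w$ joining $w$ to $v_i$. Let $T_i$ be the triangle formed by $e_i$, the edge $v_iv_{i+1}$ of $C$, and $e_{i+1}$ (indices mod $k$). If instead $w \in V(C)$, relabel so that $w = v_1$. Take $e_2$ and $e_k$ to be the edges of $C$ at $v_1$. For $3 \le j \le k-1$, choose an edge $e_j \in E_w$ joining $v_1$ to $v_j$; such an edge exists by hypothesis, and it is not an edge of $C$, since $C$ is a cycle and $v_j$ is not adjacent to $v_1$ along $C$. The triangles are $T_j = e_j \cup \{v_jv_{j+1}\} \cup e_{j+1}$ for $2 \le j \le k-1$. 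If $k = 3$ in this case, then $C$ is itself the required unbalanced $3$-cycle. In both cases every $T_i$ is a $3$-cycle containing $w$ with edges in $E(C) \cup E_w$, so it suffices to show that some $T_i$ is unbalanced.

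\emph{The induction.} Suppose for a contradiction that every $T_i$ is balanced. Define $D_j$ to be the cycle formed by $e_1$ (or $e_2$ in the second case), the subpath of $C$ from the first vertex to $v_j$, and $e_j$. I will show by induction on $j$ that each $D_j$ is balanced.
\begin{itemize}
\item The first $D_j$ is a triangle $T_i$, so it is balanced by assumption.
\item For the inductive step, $D_j$ and $T_j$ share exactly the edge $e_j$, and otherwise meet only in the vertices $w$ and $v_j$. Hence $D_j \cup T_j$ is a theta graph whose three cycles are $D_j$, $T_j$ and $D_{j+1}$. Since $D_j$ and $T_j$ are balanced, the theta property forces $D_{j+1}$ to be balanced.
\end{itemize}

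\emph{Conclusion.} In the case $w \in V(C)$, the last cycle $D_k$ is exactly $C$, so $C$ is balanced, a contradiction. In the case $w \notin V(C)$, the cycle $D_k$ and the triangle $T_k$ share only the edge $e_k$. Their union is a theta graph whose third cycle is $C$, so the theta property again makes $C$ balanced, a contradiction.

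The only step that needs care is checking that each union used above really is a theta graph, that is, that the three paths are internally disjoint. This holds because consecutive triangles share exactly one edge. It also requires that the chosen edges $e_j$ are distinct from the edges of $C$, which is why the second case takes the two edges of $C$ at $w$ as $e_2$ and $e_k$.
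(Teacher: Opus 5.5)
Your fan argument is correct. The paper gives no proof of its own here: it imports the lemma from \cite{Fife-Mayhew-Oxley-Semple2020}, remarking only that it concerns biased graphs alone. Your theta-property induction therefore stands as a self-contained proof.

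There is one small misstatement in the case $w \notin V(C)$. The cycles $D_k$ and $T_k$ share both $e_1$ and $e_k$, because $T_k$ uses $e_{k+1}=e_1$; they do not share only $e_k$. Their union $E(C)\cup\{e_1,e_k\}$ is still a theta graph, with branch vertices $v_1$ and $v_k$, and its three cycles are $D_k$, $T_k$ and $C$. So your conclusion that $C$ is balanced holds as written.
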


The next lemma will help us identify when a quasi-graphic matroid is not unbreakable. It generalizes \cite[Lemma 2.7]{Fife-Mayhew-Oxley-Semple2020}.

\begin{lemma}\label{lem: tree deletion disconnects G}
Let $M=M(G, \cB,\cL,\cF)$ be a quasi-graphic matroid where $G$ is connected.
Let $C$ be an unbalanced cycle and let $T$ be a tree in $G$ such that $V(C) \subseteq V(T)$ and $G - V(T)$ is disconnected. 
Then $M$ is not unbreakable.
\end{lemma}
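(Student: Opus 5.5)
The plan is to exhibit a flat $F$ with $M/F$ disconnected. The flat is $F = \cl(E(T) \cup E(C))$. Since $T$ is a tree and $V(C) \subseteq V(T)$, the set $E(T) \cup E(C)$ spans $G[V(T)]$ and contains an unbalanced cycle. By Proposition \ref{prop: rank of X} its rank is $|V(T)|$. Let $H_1, \dots, H_k$ with $k \ge 2$ be the components of $G - V(T)$. Since $G$ is connected, each $H_i$ is joined to $V(T)$ by at least one edge. None of these joining edges lies in $F$: an edge $e$ with an end in $V(H_i)$ raises the number of vertices by one, while unbalancedness is already present, so $r(F \cup e) = |V(T)| + 1$ in either case of the rank formula. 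Likewise no edge of $H_i$ lies in $F$. So $F \subseteq E(G[V(T)]) \cup \{\text{loops}\}$, and every edge meeting some $V(H_i)$ survives in $M/F$.

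I would compute $M/F$ in two steps, contracting first $F_0 = \cl(E(T))$ and then the rest. The set $F_0$ has rank $|V(T)| - 1$, so it contains no unbalanced cycle. Hence the graph of $(G,\cB,\cL,\cF)/F_0$ is $G/F_0$, in which $V(T)$ collapses to a single vertex $t$. We have $C \not\subseteq F_0$ because $r(F_0) < |V(T)|$. So by Lemma \ref{lem: unbalanced cycles preserved by contraction}, $C - F_0$ contains an unbalanced cycle of the contraction. All edges of $C$ have both ends in $V(T)$, so this cycle is an unbalanced loop $\ell$ at $t$. Contracting the remaining edges of $F$ amounts to contracting $\ell$ together with its closure, which consists of the other loops at $t$. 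There are two cases, according to the type of $\ell$.

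\emph{Case $\ell \in \cL$.} Contracting $\ell$ makes every cycle balanced, so $M/F$ is the graphic matroid of $G/F_0$ with the loops at $t$ removed. In this graph $t$ is a cut vertex separating the $H_i$. Each side contains a non-loop edge, namely an edge from $H_i$ to $t$. So the graphic matroid is disconnected, since its edge set splits into blocks with no common edge.

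\emph{Case $\ell \in \cF$.} By the loop-contraction rule (compare Lemma \ref{lem: at least one frame-type loop}), each non-loop edge at $t$ is rolled up to a frame-type loop at its other end, and $t$ disappears. The resulting graph is the disjoint union of the $H_i$ with added loops. For a quasi-biased graph whose graph is disconnected, the circuits listed in Section \ref{sec: quasi-graphic matroids} cannot meet two components, except for unions of two vertex-disjoint cycles in $\cL$. However, $\cL$ is now empty: in this case the contraction yields a quasi-biased graph of the form $(G', \cB', \varnothing, \cC(G') - \cB')$. So $M/F$ is the direct sum of the matroids on the components. Each component has positive rank, because it received at least one frame-type loop from an edge to $t$. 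Hence $M/F$ is disconnected.

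In both cases $M/F$ is disconnected, so $M$ is not unbreakable. I expect the main obstacle to be the bookkeeping: checking that $F$ really contains no edge meeting $V(H_i)$, and checking that the two-stage contraction agrees with the definitions in Section \ref{sec: quasi-graphic matroids}, especially for loops at $t$ of mixed types. This can be handled by noting that once $\ell$ is contracted, all other loops at $t$ lie in $F$ by the rank computation.
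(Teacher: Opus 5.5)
Your proof is correct and is essentially the paper's argument. Since $E(T)\cup E(C)$ already has rank $|V(T)|$, your $F=\cl(E(T)\cup E(C))$ is exactly the flat spanned by $G[V(T)]$ that the paper takes from Lemma~\ref{lem: vertex induced flats}. Both proofs contract it to a cut vertex $t$ carrying an unbalanced loop, then split into a lift-type case (a graphic matroid with a cut vertex) and a frame-type case (a disconnected graph after rolling up the edges at $t$). Splitting on the type of $\ell$ rather than of $C$ is, if anything, slightly more careful than the paper, since the loop produced by contracting $T$ need not have the same type as $C$.

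One harmless slip: your claim that no edge of $H_i$ lies in $F$ fails for loops. Balanced loops and lift-type loops at vertices of $H_i$ do lie in $F$, because such a loop adds a vertex and a component at once; this is why the paper adjoins $L'$. However, you only use the non-loop edges from $V(T)$ to each $H_i$, and those do avoid $F$, so the conclusion stands.
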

\begin{proof}
    We may assume that $G$ has no balanced loops. Suppose that $C$ is an unbalanced cycle and $T$ is a tree in $G$ such that $V(C) \subseteq V(T)$. Let $L'$ denote the set of loops in $\cL$. Note that $G/T$ contains an unbalanced loop $e$ that is incident with the compound vertex obtained by identifying the vertices of $T$. Observe that this vertex is a cutvertex of $G/T$. As well, by Lemma \ref{lem: vertex induced flats} either $E(G[V(T)])$ or $E(G[V(T)])\cup L'$ is a flat of $M$. 

    If $C$ belonged to $\cL$, then after contracting the loop $e$ in $G/T$, every cycle of the biased graph $G/(T\cup e)$ is balanced, and the resulting matroid is graphic. Since it contains a cutvertex, the cycle matroid of this graph is not connected by \cite[Proposition 4.1.7]{Oxley2011}. Since $E(G[V(T)]\cup L'$ is a flat of $M$ by Lemma \ref{lem: vertex induced flats}, we conclude that $M$ is not unbreakable.
    
    If $C$ belonged to $\cF$, then $M/C$ is a frame matroid. In this case, contracting the unbalanced loop $e$ of $G/T$ yields a biased graph $G'$ associated with the frame matroid $M/E(G[V(T)])$. This biased graph has more than one component having an edge. We deduce that $M/E(G[V(T)])$ is disconnected. Observe that if $L'\neq\varnothing$, then $\cF$ is degenerate. In this case, we may assume $C\in\cL$ and the result follows by the previous case. Thus we may assume $L'=\varnothing$, which implies that $E(G[V(T)])$ is a flat by Lemma \ref{lem: vertex induced flats}. We conclude that $M$ is not unbreakable.
\end{proof}

\section{Beginning the proof of the main theorem} \label{sec: when G is not 3-connected}

In this section we will prove Theorem \ref{thm: main} in the case that the underlying graph $G$ is not $3$-connected.
Throughout this section we will assume that $M = M(G, \cB,\cL,\cF)$ is a $3$-connected unbreakable quasi-graphic matroid so that each component of $G$ has at least two vertices. 

We will further assume that $M$ is not graphic, which implies that $|E(G)| \ge 4$ and not all cycles of $G$ are balanced.

The next four lemmas all hold by the $3$-connectivity of $M$.
We will use the facts that $3$-connected matroids with at least four elements contain no loops, coloops, parallel pairs, or series pairs.

\begin{lemma}
    $G$ is connected.
\end{lemma}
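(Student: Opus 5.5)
We argue by contradiction: suppose $G$ has at least two components, and we show that $M$ is then disconnected, contradicting $3$-connectivity (indeed, contradicting mere connectivity, which unbreakability already requires). Let $G_1$ be one component and $G_2$ the union of the others. Put $X_1 = E(G_1)$ and $X_2 = E(G_2)$. By the standing assumption each component has at least two vertices, so $V(G_1)$ and $V(G_2)$ are nonempty. Since $M$ has no loops, and $G$ has no isolated-vertex components with edges only as loops, both $X_1$ and $X_2$ are nonempty. More precisely, a component with at least two vertices that is connected must contain a non-loop edge.

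The key step is to show that no circuit of $M$ meets both $X_1$ and $X_2$; then $X_1$ is a separator and $M$ is disconnected. Go through the list of circuits of $M\qbg$:
\begin{itemize}
\item balanced cycles, thetas, and tight handcuffs are connected subgraphs;
\item loose handcuffs (with both cycles in $\cF$) are also connected subgraphs;
\item the only disconnected circuits are unions of two vertex-disjoint cycles in $\cL$.
\end{itemize}
So the only possible bad circuit is $C_1 \cup C_2$ with $C_1 \subseteq X_1$ and $C_2 \subseteq X_2$ both in $\cL$.

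To handle that case, use the rank function of Proposition \ref{prop: rank of X} instead of circuit-chasing. The connectivity function is $\lambda(X_1) = r(X_1) + r(X_2) - r(M)$, and we compute it directly in the subcases.

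\emph{No cycle in $\cF$.} If $G$ contains no cycle in $\cF$, then $r(M) = |V| - c + l$, while each $r(X_i) = |V_i| - c_i + l_i$. Hence $\lambda(X_1) = l_1 + l_2 - l$. This is $1$ exactly when both sides have unbalanced cycles, and $0$ otherwise.

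\emph{Some cycle in $\cF$.} If $G$ contains a cycle in $\cF$, then $\cL$-cycles must meet it. This forces every $\cL$-cycle into the same component as that $\cF$-cycle, so no $\cL$-cycles are split across components. The formula $|V| - b$ is then additive over components, giving $\lambda(X_1) = 0$.

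In every case we obtain $\lambda(X_1) \le 1$, so $(X_1, X_2)$ is a $2$-separation as long as $|X_1|, |X_2| \ge 2$. If instead $\lambda(X_1) = 0$, $M$ is disconnected outright. Hence it suffices to check $|X_i| \ge 2$.

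The main obstacle is this size condition together with the residual case $\lambda = 1$. Suppose some $|X_i| = 1$. Then $X_i$ is a single non-loop edge whose component has two vertices. That edge lies in no cycle, so it is a coloop of $M$ by Proposition \ref{prop: independent sets of a quasi-graphic matroid}: it can never lie in a circuit, since the only disconnected circuits use cycles. A coloop contradicts $3$-connectivity.

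Otherwise both sides have size at least $2$, and $\lambda(X_1) \le 1$ gives a $2$-separation, contradicting that $M$ is $3$-connected. Therefore $G$ is connected.
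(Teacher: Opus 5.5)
Your proof is correct, but it takes a more self-contained route than the paper. The paper first cites \cite[Corollary 4.7]{Bowler-Funk-Slilaty20}, which says a connected quasi-graphic matroid on a disconnected graph is lifted-graphic. It then disposes of balanced components, since their edges lie in no circuit with other edges. Finally, with every component unbalanced, it uses the lift rank formula to get $r(A)+r(E-E(A))\le r(M)+1$ for a single component $A$.

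You instead compute $\lambda(X_1)$ directly from Proposition \ref{prop: rank of X} and split on whether $G$ has a cycle in $\cF$. If it does, properness forces the side with no $\cF$-cycle to be balanced, so both ranks have the form $|V_i|-b(X_i)$ and $\lambda(X_1)=0$. If it does not, $\lambda(X_1)=l_1+l_2-l=\min(l_1,l_2)\le 1$, and this also absorbs the paper's balanced-component case.

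Your version essentially reproves the special case of the cited corollary that is needed, from the rank formula and properness. It also checks the size condition $|X_1|,|X_2|\ge 2$, which the paper leaves implicit. That check is in fact automatic whenever $\lambda(X_1)=1$, since then each side is a union of components with at least two vertices containing an unbalanced cycle, hence has at least two edges. The paper's version is shorter because it outsources the frame-type case.

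Two small clean-ups are worth making. First, drop the opening ``key step'' that no circuit meets both $X_1$ and $X_2$. It is false in general, since two vertex-disjoint $\cL$-cycles in different components form a circuit, and you never actually use it. Second, in the $\cF$-case, state explicitly that the side containing no $\cF$-cycle also contains no $\cL$-cycle, hence is balanced. That is exactly what makes its lift-type rank $|V_i|-c(X_i)$ equal to $|V_i|-b(X_i)$, so that the ranks are additive.
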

\begin{proof}
    Let $E = E(G)$.
    First, we note a helpful result from \cite[Corollary 4.7]{Bowler-Funk-Slilaty20}, which states that for a connected quasi-graphic matroid $M=M(G,\cB,\cL,\cF)$, either $G$ is connected or $M$ is lifted-graphic. Suppose $G$ is disconnected. By \cite[Corollary 4.7]{Bowler-Funk-Slilaty20}, we may assume $M$ is lifted graphic. 
    Recall that every component of $G$ has at least two vertices, by assumption. 
    
    If any component of $G$ is balanced, then the edges of this component do not belong to any circuits with edges of other components, contradicting that $M$ is connected. Thus we may assume that every component of $G$ contains an unbalanced cycle. 
    Let $A$ be a component of $G$.
    We see that,
    \begin{align*}
        r(A)+r(E-E(A))&\leq\lvert V(A)\rvert+\lvert V(E-E(A))\rvert-(c(G)-1)+l(E-E(A))\\
        &\leq\lvert V(G)\rvert-(c(G)-1)+l(E-E(A))\\
        &\leq r(M)+1,
    \end{align*}
    which contradicts that $M$ is 3-connected. 
\end{proof}

Since $G$ is connected and not all cycles of $G$ are balanced we see from Proposition \ref{prop: rank of X} that $r(M) = |V(G)|$.

\begin{lemma}[{\cite[Lemma 3.1]{Fife-Mayhew-Oxley-Semple2020}}]\label{lem: small cycles are unbalanced}
All $1$- and $2$-cycles in $G$ are unbalanced.
\end{lemma}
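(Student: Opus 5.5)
The plan is to show that a balanced $1$- or $2$-cycle of $G$ would give $M$ a loop or a parallel pair. This contradicts the $3$-connectivity of $M$, since $|E(G)| \ge 4$ under the standing assumptions of this section. The key input is the circuit description of $M = M(G,\cB,\cL,\cF)$ in Section \ref{sec: quasi-graphic matroids}: every cycle in $\cB$ is a circuit of $M$.

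First, suppose $e$ is a loop of $G$ with $\{e\} \in \cB$. Then $\{e\}$ is a circuit of $M$, so $e$ is a loop of $M$. A $3$-connected matroid with at least four elements has no loops, so this is impossible.

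Second, suppose $\{e,f\}$ is a $2$-cycle of $G$ that lies in $\cB$. Then $\{e,f\}$ is a circuit of $M$, so $e$ and $f$ form a parallel pair in $M$. A $3$-connected matroid with at least four elements is simple, so this is also impossible. Note that neither $e$ nor $f$ can be a loop of $M$, since a $2$-element circuit contains no loops; hence the pair is genuinely parallel.

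There is no real obstacle here. The one point to check is that $|E(M)| \ge 4$, so that $3$-connectivity really forces simplicity; this was recorded at the start of the section as a consequence of $M$ not being graphic. Beyond that, we only need that balanced cycles are always circuits of $M$, regardless of how the unbalanced cycles are split between $\cL$ and $\cF$. So the argument of \cite[Lemma 3.1]{Fife-Mayhew-Oxley-Semple2020} carries over verbatim to quasi-graphic matroids.
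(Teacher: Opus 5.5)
Your proof is correct and follows the paper's argument: a balanced loop is a loop of $M$, and a balanced $2$-cycle $X$ is a parallel pair. The paper shows the latter is impossible by exhibiting $(X, E(G)-X)$ as a $2$-separation via the rank bound $r(X)+r(E(G)-X)-r(M)\le 1$, rather than quoting that $3$-connected matroids with at least four elements are simple; the two are equivalent because $|E(G)|\ge 4$ gives $|E(G)-X|\ge 2$.
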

\begin{proof}
If $e$ is a balanced loop then $M$ is disconnected. Thus any $1$-cycle in $G$ must be unbalanced. Next let $X$ be a balanced $2$-cycle in $G$. Let $Y=E(G)-X$. Then,
\begin{align*}
    r(X)+r(Y)-r(X \cup Y) &\leq \lvert V(X)\rvert-1+\lvert V(Y)\rvert-\lvert V(G)\rvert \leq 1.
\end{align*}
This is a contradiction to $M$ being $3$-connected. Thus, any $2$-cycle in $G$ is unbalanced.
\end{proof}

The following result holds verbatim from \cite[Lemma 3.2]{Fife-Mayhew-Oxley-Semple2020}. Its proof is included for the convenience of verifying the ranks involved.
\begin{lemma}[{\cite[Lemma 3.2]{Fife-Mayhew-Oxley-Semple2020}}]\label{lem: min degree is at least 3}
$G$ has no vertex that meets fewer than three edges.
\end{lemma}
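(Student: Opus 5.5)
We argue by contradiction, using the standing assumptions of this section: $M$ is $3$-connected and not graphic, $|E(G)|\ge 4$, $G$ is connected, $r(M)=|V(G)|$, and every $1$- and $2$-cycle is unbalanced (Lemma \ref{lem: small cycles are unbalanced}). Loops at a vertex are counted in $E_v$. Suppose some vertex $v$ has $|E_v|\le 2$; the plan is to show that $E_v$ or its complement has small rank, giving a $1$- or $2$-separation of $M$. The key tool is Proposition \ref{prop: rank of X} applied to $Y=E(G)-E_v$. Note that $G[Y]$ avoids $v$, so $|V(G[Y])|\le |V(G)|-1$, and hence every case of the rank formula gives $r(Y)\le |V(G)|-1 = r(M)-1$.

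First I handle the degenerate degrees. If $|E_v|=0$, then $v$ is isolated, which is impossible because $G$ is connected with at least two vertices. If $E_v=\{e\}$, then $r(Y)\le r(M)-1$ makes $e$ a coloop, contradicting $3$-connectivity. (If $e$ were a loop it would be its own component, again impossible.)

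Now suppose $E_v=\{e,f\}$ with $e \neq f$. Then $r(Y)\le r(M)-1$, so
\[
r(E_v)+r(Y)-r(M)\le 2+(r(M)-1)-r(M)=1.
\]
This is a $2$-separation provided $|Y|\ge 2$. If instead $|Y|\le 1$, then $|E(G)|\le 3$, contradicting $|E(G)|\ge 4$. Note that a loop at $v$ contributes only one edge-end but is still one element of $E_v$, so the count $|E_v|=2$ is handled by the same bound. Equivalently, one can see $\{e,f\}$ as a series pair: every circuit of $M$ that uses $v$ uses exactly two edge-ends at $v$, so it contains both $e$ and $f$.

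The only step needing care is checking that the rank bound $r(Y)\le|V(G)|-1$ holds in all three cases of Proposition \ref{prop: rank of X}. It does: $b(Y)\ge 0$ in the first case, and $c(Y)\ge 1 \ge l(Y)$ in the second, because $Y$ is nonempty once $|E(G)|\ge 4$. Otherwise the argument is routine, and the main obstacle is just confirming that the $2$-separation is nondegenerate, which the bound $|E(G)|\ge4$ ensures.
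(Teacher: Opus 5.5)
Your proof is correct and is essentially the paper's argument. The paper observes that $r(E(G)-E_u)\le |V(G)|-1=r(M)-1$, so $E_u$ contains a cocircuit of size at most two; you unpack the same bound as either a coloop or an explicit $2$-separation $(E_v,Y)$, using $|E(G)|\ge 4$ to guarantee $|Y|\ge 2$.

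One aside in your proposal is not right: the claim that $\{e,f\}$ is ``equivalently'' a series pair fails when $e$ is a lift-type loop at $v$. A loop has two ends at $v$, not one. A bracelet consisting of that loop and a vertex-disjoint lift-type cycle is a circuit avoiding $f$, and in that case $f$ is actually a coloop. Your main rank argument does not rely on this aside, so the proof stands.
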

\begin{proof}
Let $u\in V(G)$. Suppose $\lvert E_u\rvert\leq2$. Then $r(E(G)-E_u)\leq\lvert V(G-u)\rvert=\lvert V(G)\rvert-1$, so $E_u$ contains a cocircuit of $M$. This contradicts the fact that $M$ is 3-connected having at least four elements.
\end{proof}

\begin{lemma}[{\cite[Lemma 3.3]{Fife-Mayhew-Oxley-Semple2020}}] \label{lem: G is 2-connected}
$G$ is $2$-connected.
\end{lemma}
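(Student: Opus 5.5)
We argue by contradiction, using only the rank function of Proposition \ref{prop: rank of X}: a cut vertex of $G$ will give a $2$-separation of $M$. By the previous lemma $G$ is connected, and by Lemma \ref{lem: min degree is at least 3} it has at least two vertices. Suppose $G$ has a cut vertex $v$. Then $E(G)$ can be partitioned into nonempty sets $A$ and $B$ such that $G[A]$ and $G[B]$ are connected, $V(G[A]) \cap V(G[B]) = \{v\}$, and each of $V(G[A])$ and $V(G[B])$ contains a vertex other than $v$. Loops at $v$ may be placed on either side. Since $r(M) = |V(G)| = |V(G[A])| + |V(G[B])| - 1$, the aim is to show $r(A) + r(B) - r(M) \le 1$ while $|A|, |B| \ge 2$. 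That would be a $2$-separation, contradicting $3$-connectivity.

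\textbf{Rank bound.} By Proposition \ref{prop: rank of X}, each connected side satisfies $r(A) \le |V(G[A])|$, with strict inequality if $G[A]$ is balanced; the same holds for $B$. Adding the two bounds gives $r(A) + r(B) \le r(M) + 1$ in every case. So $\lambda(A) \le 1$ regardless of how the unbalanced cycles are distributed. The frame, lift and mixed cases are all handled by this single rank count, since the bound $r(X) \le |V(G[X])|$ holds for every connected $X$.

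\textbf{Size of the sides.} Each side contains a vertex $u \neq v$. That vertex has degree at least three by Lemma \ref{lem: min degree is at least 3}, and all its incident edges lie on its own side. A single loop at $u$ contributes $2$ to the degree, so $u$ meets at least two distinct edges, all on $u$'s side. Hence $|A|, |B| \ge 2$, and $(A,B)$ is a $2$-separation of $M$, which is the required contradiction.

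\textbf{Main obstacle.} The delicate point is getting a valid edge partition when there are loops at $v$. The fix is to put all loops at $v$ into $A$; this keeps $V(G[A]) \ni v$ and leaves $G[A]$ connected. One should also make sure that $A$ and $B$ are taken to be unions of whole blocks, so that each induces a connected subgraph and the vertex count $|V(G[A])| + |V(G[B])| = |V(G)| + 1$ is exact.
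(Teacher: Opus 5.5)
Your proof is correct and is essentially the paper's argument. The paper also splits $E(G)$ at the cut vertex $v$: it takes $A$ to be the edges induced by $V(A_1)\cup\{v\}$ for a component $A_1$ of $G-v$, and $B$ to be the remaining edges. It then bounds $r(A)+r(B)\le |V(G[A])|+|V(G[B])| = r(M)+1$ using Proposition \ref{prop: rank of X}, and uses Lemma \ref{lem: min degree is at least 3} to get $|A|,|B|\ge 2$.

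Your extra care with loops is harmless but unnecessary. That lemma already gives at least three distinct edges at each vertex, not merely degree three.
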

\begin{proof}
Suppose that $G$ has a cutvertex $v$. Let $A_1$ be a component of $G-v$. Let $A$ be the graph induced by the vertex set $V(A_1)\cup v$, and let $B$ be the graph induced by the edge set $E(G)-E(A)$. Then we know that $r(E(A)) \leq |V(A)|$ and $r(E(B)) \leq |V(B)|$. Recall that, because $G$ is connected and $M$ is not graphic, we know that $r(M) = \lvert V(G)\rvert$. 
Furthermore we can see that $|V(G)| = |V(A)|+|V(B)|-1$. Thus,
\begin{align*}
    r(E(A))+r(E(B))-r(M) &\leq |V(A)|+|V(B)|-|V(G)| = 1.
\end{align*}

By Lemma \ref{lem: min degree is at least 3} we see that each of $E(A),E(B)$ has size at least two, thus the rank calculation above contradicts that $M$ is 3-connected.
\end{proof}

The following lemma differs significantly from the frame matroid case \cite[Lemma 3.4]{Fife-Mayhew-Oxley-Semple2020}.

\begin{lemma} \label{lem: si(G) is a cycle}
If $\si(G)$ is a cycle, then either $\lvert V(G)\rvert\leq6$ or $\cF$ is degenerate. 
\end{lemma}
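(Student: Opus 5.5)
Throughout, $M$ is $3$-connected and unbreakable, so by the lemmas above $G$ is $2$-connected, every $1$- and $2$-cycle of $G$ is unbalanced, and every vertex meets at least three edges. I will argue by contradiction: assume $\si(G)$ is a cycle with vertices $v_1,\dots,v_n$ in cyclic order, $n\ge 7$, and that $\cF$ is non-degenerate. The target is a flat $F$ of $M$ with $M/F$ disconnected.

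\textbf{Step 1: a frame-type cycle of length at most two.} Since $\cF$ is non-degenerate, it contains two vertex-disjoint cycles. Each cycle of $G$ is a $1$-cycle, a $2$-cycle, or the unique long cycle (up to parallel choices) through all $n$ vertices. A Hamilton-type cycle meets every other cycle, so at least one of the two disjoint $\cF$-cycles, call it $C_0$, is a loop or a $2$-cycle. Its vertex set $S$ then consists of one vertex or two consecutive vertices $v_i,v_{i+1}$.

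\textbf{Step 2: choose a flat whose removal leaves $G$ disconnected.} Pick two vertices $a,b$ of the cycle $\si(G)$ that are far from $S$ and from each other; $n\ge 7$ gives room for this. Let $U=\{a,b\}\cup S$. If $U$ is not an interval of the cycle, then $G-U$ has at least two components, each containing at least one vertex.

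\textbf{Step 3: apply Lemma \ref{lem: tree deletion disconnects G}.} That lemma needs an unbalanced cycle $C$ and a tree $T$ with $V(C)\subseteq V(T)$ and $G-V(T)$ disconnected. Take $C=C_0$ and let $T$ be a single edge, or a single vertex if $C_0$ is a loop. Then $G-V(T)$ is a path on the remaining $n-|S|$ vertices, which is still connected. So a tree containing $S$ does not separate $G$ directly, and the vertex set must be enlarged. Two options:
\begin{itemize}
\item[(a)] Let $T$ be a path $v_i,\dots,v_j$ along the cycle. Then $G-V(T)$ is again an interval, so this also fails.
\item[(b)] Contract $C_0$ first. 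By Lemma \ref{lem: contract a cycle}, $M/\cl(C_0)$ is a frame matroid. Lemma \ref{lem: at least one frame-type loop} and the "rolling up" description show that its graph is obtained from $G/S$ by turning the edges at $S$ into frame-type loops at the neighbours of $S$. What remains is a path $P$ on $v_{i+2},\dots,v_{i-1}$ with frame-type loops at both ends, together with the parallel edges of $P$.
\end{itemize}
I will use option (b). Now contract a further flat: $E(G[\{v_k\}])$ plus the loops at $v_k$, for an interior vertex $v_k$ of $P$ that carries a loop or lies on an unbalanced $2$-cycle. Equivalently, in $M$ I contract the flat $F=\cl(C_0\cup D)$, where $D$ is an unbalanced $2$-cycle at $\{v_k,v_{k+1}\}$ with $k$ roughly opposite to $i$. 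Such a $D$ exists because, by the hypothesis, at most one edge of $G$ fails to be in a $2$-cycle, and all $2$-cycles are unbalanced. Using the rank formula of Proposition \ref{prop: rank of X} (the case where $X$ contains an $\cF$-cycle), I will check that $F=\cl(C_0)\cup\cl(D)$ is just the edges spanned by these vertex pairs, together with any loops there.

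\textbf{Step 4: disconnection.} In $M/F$, since $F$ contains an $\cF$-cycle, the contracted graph is obtained by deleting $S\cup\{v_k,v_{k+1}\}$ and rolling up the boundary edges into frame-type loops. The result is two paths, each carrying frame-type loops at its ends, and both are nonempty when $n\ge 7$. In a frame matroid, vertex-disjoint components of the graph give a direct sum. So $M/F$ is disconnected unless one side has no edges, and choosing $k$ so that both sides have at least one vertex forces each side to carry a loop. This contradicts unbreakability.

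\textbf{Main obstacle.} The delicate step is verifying that $\cl(C_0\cup D)$ contains no further edges. The rank of $C_0\cup D$ is $4$, which equals the size of its vertex set, and the question is whether an edge between $v_{i+1}$ and $v_{i+2}$ could lie in the closure. By the rank formula it does not: adding that edge increases the vertex count by one, so the rank goes up. I also expect a small-$n$ check to be needed to guarantee that both sides are nonempty; this is where $n\ge 7$, and in fact $n\ge 5$, is used.
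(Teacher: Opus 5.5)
Your core argument is the paper's. The paper also picks two far-apart, vertex-disjoint unbalanced $2$-cycles on $\si(G)$. It notes that if both lie in $\cF$, contracting the closure of their union rolls the boundary edges into frame-type loops and leaves two disjoint paths, so the contraction is a disconnected frame matroid.

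The routes differ only in how loops are handled. The paper argues directly: a loop in $\cF$ is handed to \cite[Lemma 3.4]{Fife-Mayhew-Oxley-Semple2020`}, a loop in $\cL$ forces $\cF$ degenerate, and otherwise every unbalanced $2$-cycle is shown to lie in $\cL$. You instead argue by contradiction from a short $\cF$-cycle $C_0$ and treat a frame-type loop uniformly with $2$-cycles. That is fine and slightly more self-contained.

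Steps 2 and 3(a) are dead ends you can drop. You also need not compute $\cl(C_0\cup D)$ at all, since $M/\cl(X)$ is just $M/X$ with its loops deleted.

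The step that is not justified is the existence of $D$. ``At most one edge of $G$ fails to be in a $2$-cycle'' is not a hypothesis here. It is the $3$-connectivity criterion proved later (Section \ref{sec: cycles}), and only for lifted-graphic matroids. Moreover, $3$-connectivity alone does not give it when $\cF\neq\varnothing$. If deleting two edges splits $G$ into two paths each containing an $\cF$-cycle, the complement has rank $|V(G)|$ by Proposition \ref{prop: rank of X}, so there is no $2$-separation.

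Use Lemmas \ref{lem: min degree is at least 3} and \ref{lem: small cycles are unbalanced} instead. Since $\si(G)$ is a cycle and each vertex meets at least three edges, every vertex carries a loop or lies in a $2$-cycle, and these are unbalanced. So choose a vertex $w$ roughly opposite $S$ and let $D$ be a loop or $2$-cycle at $w$. Being vertex-disjoint from $C_0\in\cF$, $D$ cannot be in $\cL$, so it lies in $\cF$. Your rolling-up argument then works unchanged even when $D$ is a loop.

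With only this guarantee, $n\ge 7$ is exactly what makes both arcs of $G-(S\cup V(D))$ nonempty. For example, with $S=\{v_1,v_2\}$, $w=v_5$ and $D$ on $\{v_5,v_6\}$, you need $v_7$ to exist. So your aside that $n\ge 5$ suffices is wrong.
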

\begin{proof}
Suppose that $\si(G)$ is a cycle and that $\lvert V(G)\rvert\geq7$. By Lemma \ref{lem: min degree is at least 3} and Lemma \ref{lem: small cycles are unbalanced}, each vertex of $G$ meets an unbalanced cycle of length at most 2. If any loops of $G$ belongs to $\cF$ then $\cL$ is degenerate, and $M=F(G,\cB)$. By \cite[Lemma 3.4]{Fife-Mayhew-Oxley-Semple2020}, $|V(G)|\leq 6$. Thus we may assume that any loops of $G$ belong to $\cL$. When $\cL$ contains a loop, $\cF$ is degenerate and the conclusion holds. Now suppose $G$ has no loops. For a vertex $v$ choose an incident unbalanced 2-cycle $C_v$. Note that, since $\lvert V(G)\rvert\geq7$, for each vertex $v$ there is a vertex $u$ so that $u$ has distance three in $G$ from each vertex of $C_v$. Moreover, for any choice of $v$ and $C_v$ there is a vertex $u$ so that each vertex of an incident unbalanced 2-cycle $C_u$ has distance two from each vertex of $C_v$. Note that because these two cycles are disjoint, either both cycles belong to $\cF$ or both belong to $\cL$. If they both belong to $\cF$, then the matroid $M/\cl(C_u\cup C_v)$ is disconnected. To see this, note that in contracting one edge from $C_u$ and one edge from $C_v$, we obtain a quasi-biased graph with two unbalanced frame-type loops at different vertices. Contracting these loops gives a disconnected graph, and it follows from Proposition \ref{prop: rank of X} that $M/\cl(C_u\cup C_v)$ is not connected, contradicting that $M$ is unbreakable. Thus both $C_u$ and $C_v$ belong to $\cL$. Since this holds for any vertex $v$, we deduce that every unbalanced 2-cycle belongs to $\cL$. From this we can observe that the only cycles in $\cF$ are Hamiltonian, and so $\cF$ is degenerate. 
\end{proof}

We can now prove the first important case of our main result, generalizing \cite[Theorem 3.5]{Fife-Mayhew-Oxley-Semple2020}.

\begin{theorem} \label{thm: 3-connected quasi matroid with G 2 connected}
Let $M = M(G, \cB, \cL, \cF)$ be a $3$-connected quasi-graphic matroid and assume that $G$ is $2$-connected but not $3$-connected.
Then either $|V(G)| \le 6$, or $\si(G)$ is a cycle and $\cF$ is degenerate.
\end{theorem}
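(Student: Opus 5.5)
Note first that the statement as written omits the word ``unbreakable'', but the standing assumption of this section is that $M$ is a $3$-connected unbreakable non-graphic quasi-graphic matroid, and the plan uses unbreakability throughout. The plan follows the proof of \cite[Theorem 3.5]{Fife-Mayhew-Oxley-Semple2020}. Suppose $|V(G)| \ge 7$. The aim is to show that $\si(G)$ is a cycle; Lemma \ref{lem: si(G) is a cycle} then gives that $\cF$ is degenerate.

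Since $G$ is $2$-connected but not $3$-connected, choose a $2$-separation: vertices $x,y$ such that $G-\{x,y\}$ is disconnected. Let the components be $A_1,\dots,A_k$ with $k\ge 2$. The strategy is to show that each $A_i$, together with $x$ and $y$, is essentially a path. Iterating over all $2$-cuts then forces $\si(G)$ to be a cycle.

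The main tool is Lemma \ref{lem: tree deletion disconnects G}. Suppose some unbalanced cycle $C$ can be covered by a tree $T$ whose vertex set contains $x$ and $y$ but misses vertices of two different components. Then $G-V(T)$ is disconnected, so $M$ is not unbreakable, a contradiction. So first I locate unbalanced cycles. By Lemma \ref{lem: min degree is at least 3} and Lemma \ref{lem: small cycles are unbalanced}, together with $3$-connectivity of $M$ (no series pairs), there are plenty of short unbalanced cycles. For example, a degree-$2$ vertex of $\si(G)$ lies on a $2$-cycle or carries a loop, and both are unbalanced.

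Consider an unbalanced cycle $C$ meeting $A_1$. Take $T$ to be $C$ minus an edge, extended inside $A_1\cup\{x,y\}$ to reach $x$ and $y$ when needed. If $A_1$ has a vertex off $T$ and some other component exists, then $G-V(T)$ is disconnected. The same happens if $T$ avoids the other components and there are at least two of them. Hence every unbalanced cycle, together with its connection to $\{x,y\}$, must essentially use all the vertices of its side.

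A second ingredient handles sides that are \emph{balanced}. For a side $A$, let $H=G[V(A)\cup\{x,y\}]$. If $E(H)$ were balanced, then $r(E(H)) \le |V(H)|-1$ and $r(E(G)-E(H)) \le |V(G)|-|V(A)|$, giving a $2$-separation of $M$. This contradicts $3$-connectivity unless $|E(H)|\le 1$, which is impossible by the degree condition. So each side contains an unbalanced cycle.

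Combining the two ingredients, every side $H_i$ must have the property that every unbalanced cycle in it, plus paths to $x$ and $y$, spans $H_i$. Using Lemma \ref{lem: wheel} to move unbalanced cycles onto vertices adjacent to them, together with the abundance of unbalanced $2$-cycles and loops, I expect to show that $\si(H_i)$ is an $x$--$y$ path. It then follows that $k=2$ and $\si(G)$ is a cycle.

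The main obstacle is this structural step. The issue arises when $\si(H_i)$ contains a vertex of degree at least $3$ in $\si(G)$ and the short unbalanced cycles are awkwardly placed. Here the rank computations differ from the frame case: lift-type pairs of disjoint cycles give circuits, so the separation arguments must use Proposition \ref{prop: rank of X} carefully. When all unbalanced cycles are of $\cL$-type, I would instead contract one of them and apply Lemma \ref{lem: contract a cycle}. Then $\si(G/\cl(C))$ must be complete or a cycle. Because of the $2$-cut and $|V(G)|\ge 7$, it cannot be complete, and a cycle quotient forces $\si(G)$ itself to be a cycle.
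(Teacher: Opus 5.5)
\textbf{There is a genuine gap.} You are right that unbreakability is a standing hypothesis of this section. Your two ingredients are sound: Lemma~\ref{lem: tree deletion disconnects G} forces $k=2$ and forces each unbalanced cycle of a side, extended to $x$ and $y$, to cover that side; and the rank count gives an unbalanced cycle in each side. For the second ingredient you should first move all $x$--$y$ edges and all loops at $x$ or $y$ to the other side, as the paper does.

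The decisive step, that $\si(H_i)$ is an $x$--$y$ path, is only announced, and it cannot be obtained side by side from the covering property. Take the third graph of Figure~\ref{figure: 6-vertex graphs} with all cycles unbalanced. It gives a $3$-connected unbreakable frame matroid on a $2$-connected, not $3$-connected graph with a $2$-cut $\{x,y\}$. One side simplifies to a path, and the other simplifies to $K_4$ minus the edge $xy$. Every unbalanced cycle in that second side, together with paths to $x$ and $y$, covers it. So the hypothesis $|V(G)|\ge 7$ must enter through an interaction between the two sides, and your sketch never uses it there.

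In the paper this interaction is claim~\ref{sub:B has a path P_B}: if one side has a $u$--$v$ path missing one of its vertices, the other side has at most two vertices besides $u,v$. That settles the case where neither side is a path. When both sides are paths, $\si(G)$ is a cycle and Lemma~\ref{lem: si(G) is a cycle} applies. The remaining case, exactly one side not a path and no $uv$ edge, is handled by choosing the cut so that $|V(A)|$ is minimal. Then $|V(B)|\le 4$, and the paper analyses a shortest unbalanced cycle $C_A$ step by step to reach $|V(G)|\le 6$.

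The quasi-graphic-specific part is also missing. In the paper it arises when the small side contains a $2$-cycle $C_B$ vertex-disjoint from $C_A$ (claim~\ref{sub:V(B)} and the final step). Disjointness forces $C_A$ and $C_B$ to have the same type. If both lie in $\cF$, contracting $\cl(C_A\cup C_B)$ gives a disconnected minor. If both lie in $\cL$, Lemma~\ref{lem: contract a cycle} forces $\si(G/C_B)$ to be complete, which the $2$-cut forbids.

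Your plan says nothing concrete about $\cF$-type or mixed cycles. Your $\cL$-type fallback relies on the inference that if $\si(G/\cl(C))$ is a cycle then $\si(G)$ is a cycle, which is false in general. Let $C$ be a triangle $abc$ with $ca$ doubled, and join $a$ and $b$ to the two ends of a long path. Then $G/C$ simplifies to a cycle, but $\si(G)$ has the chord $ab$. The paper draws such a conclusion only in Section~\ref{sec: future work}, and only for a $2$-cycle none of whose edges lies in a triangle.
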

\begin{proof}

We will follow the proof of \cite[Theorem 3.5]{Fife-Mayhew-Oxley-Semple2020} verbatim until stated otherwise.

Assume that $|V(G)| \geq 7$. Let $\{u,v\}$ be a vertex cut in $G$. Let $A_1$ and $B_1$ be disjoint non-empty graphs, each a disjoint union of components of $G-\{u,v\}$, such that $A_1 \cup B_1 = G - \{u,v\}$. Let $(A,B)$ be a partition of $E(G)$ with $A \subseteq E(G[V(A_1) \cup \{u,v\}])$ and $B \subseteq E(G[V(B_1) \cup \{u,v\}])$. Hence, each edge joining $u$ and $v$, and each unbalanced loop incident to $u$ or $v$ can lie in $A$ or $B$. Assume initially that each such edge lies in $B$. Because $M$ is 3-connected, Proposition \ref{prop: rank of X} implies that $G[A]$ is unbalanced. By symmetry, we deduce that each of $A$ and $B$ contains an unbalanced cycle that contains no edge joining $u$ and $v$ and is not an unbalanced loop incident to $u$ or $v$. 

Next we show the following.

\begin{subtheorem}[{\cite[3.5.1]{Fife-Mayhew-Oxley-Semple2020}}]\label{sub:B has a path P_B}
    Suppose B has a path $P_{B}$ joining u and v that does not use all of the vertices of B. Let $C_{A}$ be an unbalanced cycle in A, and let $P^u$ and $P^v$ be internally disjoint paths from u and v to $C_A$ with each such path using a single vertex of $C_A$. Then
    \begin{itemize}
        \item[$(i)$] $P^u$ and $P^v$ each have at most one edge;
        \item[$(ii)$] $V(C_A)\cup \{u,v\}=V(A);$
        \item[$(iii)$] $\lvert V(A)-\{u,v\}\rvert \leq2;$ and
        \item[$(iv)$] if $\lvert V(A)-\{u,v\}\rvert =2,$ then no edge in $P^u$ and $P^v$ is in a 2-cycle.
    \end{itemize}
\end{subtheorem}
\begin{proof}
The proof of \cite[3.5.1]{Fife-Mayhew-Oxley-Semple2020} only uses the matroid $M$ to show that Lemmas 2.7 and 3.2 from \cite{Fife-Mayhew-Oxley-Semple2020} hold for the biased graph $(G, \cB)$, and all other arguments only involve $(G, \cB)$.
Since we have generalized Lemmas 2.7 and 3.2 from \cite{Fife-Mayhew-Oxley-Semple2020} to Lemmas \ref{lem: tree deletion disconnects G} and \ref{lem: min degree is at least 3}, respectively, the proof of \cite[3.5.1]{Fife-Mayhew-Oxley-Semple2020} applies verbatim to prove 4.6.1 after replacing the applications of Lemmas 2.7 and 3.2 from \cite{Fife-Mayhew-Oxley-Semple2020} with Lemmas \ref{lem: tree deletion disconnects G} and \ref{lem: min degree is at least 3}, respectively.
\end{proof}

Previously, for a 2-vertex cut $\{u,v\}$ in $G$, we defined subgraphs $A$ and
$B$ whose union is $G$. If both $A$ and $B$ contain $(u,v)$-paths that do not use
all of their vertices, then, by \ref{sub:B has a path P_B}$(iii)$, $|V(G)| \le 6$. If each of $\operatorname{si}(A)$ and
$\operatorname{si}(B)$ is a path, then $\operatorname{si}(G)$ is a cycle, so by Lemma \ref{lem: si(G) is a cycle} it follows that $\cF$ is degenerate or $|V(G)| \le 6$. Thus we may assume that exactly one of $\operatorname{si}(A)$ and $\operatorname{si}(B)$
is a path. It follows that we may also assume that $G$ has no edge joining
$u$ and $v$. Now, we choose the vertex cut $\{u,v\}$ and the subgraphs $A$ and
$B$ such that $\operatorname{si}(A)$ is not a path and $|V(A)|$ is a minimum subject to this
requirement. Then $\operatorname{si}(B)$ is a path. Let $C_A$ and $C_B$ be shortest unbalanced
cycles in $A$ and $B$, respectively, such that neither cycle uses an edge joining
$u$ to $v$ and neither cycle is an unbalanced loop incident to $u$ or $v$. Subject to
this, choose $|V(C_A) \cap \{u,v\}|$ to be a maximum. By \ref{sub:B has a path P_B}$(iii)$, $|V(B)| \le 4$.
Let $P_A^u$ and $P_A^v$ be disjoint paths from $C_A$ to $u$ and $v$, respectively, chosen
so that $|P_A^u| + |P_A^v|$ is a minimum. Let $x$ and $y$ be the vertices of $C_A$ that
meet $P_A^u$ and $P_A^v$, respectively.
\begin{subtheorem}[{\cite[3.5.3]{Fife-Mayhew-Oxley-Semple2020}}]\label{sub: V(A)}
    $V(A)=V(C_A)\cup V(P^u_A)\cup V(P^v_A)$.
\end{subtheorem}
To see this, note that if $V(A) \neq V(C_A)\cup V(P^u_A)\cup V(P^v_A)$ and $T$ is a tree whose edge set is $P^u_A \cup P^v_A$ together with all but one edge of $C_A$, then $G-V(T)$ is disconnected, a contradiction to Lemma \ref{lem: vertex adjacent to every vertex of cycle}. Thus \ref{sub: V(A)} holds.
\begin{subtheorem}[{\cite[3.5.4]{Fife-Mayhew-Oxley-Semple2020}}]\label{sub:C_A does not use both}
    $C_A$ does not use both $u$ and $v$.
\end{subtheorem}
Suppose otherwise. Then, since $G$ has no edge joining $u$ and $v$, the cycle $C_A$ has a vertex not in $\{u,v\}$. By Lemma \ref{lem: min degree is at least 3}, this vertex has degree at least three, so the cycle $C_A$ is not an induced cycle of $G[A]$, a contradiction to Lemma \ref{lem: induced shortest unbalanced cycles}. Hence \ref{sub:C_A does not use both} holds.   
\begin{subtheorem}[{\cite[3.5.5]{Fife-Mayhew-Oxley-Semple2020}}]\label{sub:If u not in V(C_A)}
    If $u \notin V(C_A)$, then $\lvert P_A^u\rvert=\lvert P_A^v\rvert=1$.
\end{subtheorem}
Suppose not, letting $u'$ be the neighbor of $u$ on the path $P^u_A$. Observe that $\{u',v\}$ cannot be a vertex cut of $G$ otherwise $\si(A-u)$ is a path and so $\si(A)$ is a path, a contradiction. Thus $u$ is adjacent to some vertex $w$ of $V(A)-\{u',v\}$. By the choice of $P^u_A$, we see that $w \in V(P^v_A)$. The union of an edge joining $u$ and $w$ with the edge set of $P^v_A$ and all but one edge of $C_A$ is a tree $T$ such that $G-V(T)$ is disconnected, a contradiction to Lemma \ref{lem: vertex adjacent to every vertex of cycle}. We conclude that \ref{sub:If u not in V(C_A)} holds.

\begin{subtheorem}[{\cite[3.5.6]{Fife-Mayhew-Oxley-Semple2020}}]\label{sub:C_A geq 3}
    $\lvert C_A\rvert\geq3$.
\end{subtheorem}
This follows by \ref{sub: V(A)}, \ref{sub:If u not in V(C_A)}, and by symmetry, otherwise $|V(G)| \leq 6$.

The choice of $C_A$ implies that $A$ has no unbalanced $2$-cycles and no unbalanced loops. Hence, by Lemma \ref{lem: min degree is at least 3}, we have the following.   

\begin{subtheorem}[{\cite[3.5.7]{Fife-Mayhew-Oxley-Semple2020}}]\label{sub: every vertex of C_A is adjacent}
    Every vertex of $C_A$ must be adjacent to a vertex outside of $V(C_A)$.
\end{subtheorem}
By \ref{sub:C_A does not use both}, we may now assume that $u \notin V(C_A)$. As the next step towards proving Theorem \ref{thm: 3-connected quasi matroid with G 2 connected}, we now show the following. 

\begin{subtheorem}[{\cite[3.5.8]{Fife-Mayhew-Oxley-Semple2020}}]\label{sub: C_A geq 4}
    For $\lvert  C_A\rvert\geq 4,$ suppose $s$ and $t$ are distinct vertices of $C_A$ that are neighbors of $u$ in $G$. Then $C_A$ has an edge joining $s$ and $t$.
\end{subtheorem}

Let $f$ and $g$ be edges joining $u$ to $s$ and $t$, respectively. Then, in the
theta graph $H$ with edge set $C_A \cup \{f,g\}$, at least one cycle meeting $u$ is unbalanced.
Let $C_A'$ be such an unbalanced cycle. As $|C_A'| \le |C_A|$, equality must hold, so
there is an $(s,t)$-path $P^{st}$ in $C_A$ of length two such that the edge set of $C_A'$ is
$\{f,g\} \cup (C_A - P^{st})$. Note that $v \in V(C_A)$, otherwise, as $C_A'$ is an unbalanced
cycle in $A$ of length $|C_A|$ that uses $u$, we have a contradiction to the choice
of $C_A$. By replacing $C_A$ by $C_A'$ in \ref{sub:C_A does not use both}, we deduce that $v \notin V(C_A')$. Thus $v$
is the internal vertex of $P^{st}$. As the 4-cycle $C_A''$ with edge set $\{f,g\} \cup P^{st}$
uses $u$ and $v$, it must be balanced. By Lemma \ref{lem: min degree is at least 3} and \ref{sub: V(A)}, every vertex
in $V(C_A) - \{v,s,t\}$ is adjacent to $u$. Then, as $u$ is also adjacent to $s$ and
$t$, Lemma \ref{lem: wheel} gives us an unbalanced 3-cycle in $A$, a contradiction. We
conclude that \ref{sub: C_A geq 4} holds.

Suppose $v \in V(C_A)$. Then, by \ref{sub: every vertex of C_A is adjacent}, every vertex of $V(C_A) - v$ is adjacent
to $u$. Thus, by \ref{sub: C_A geq 4}, $|V(C_A)| \le 3$, so $|V(G)| \le 6$, a contradiction. We
may now assume that $V(C_A)$ avoids $\{u,v\}$. By \ref{sub: V(A)} and \ref{sub:If u not in V(C_A)}, $V(C_A) =
V(A) - \{u,v\}$. 

We next show that
\begin{subtheorem} [{\cite[3.5.9]{Fife-Mayhew-Oxley-Semple2020}}]\label{sub:V(B)}
    $\lvert V(B)-\{u,v\}\rvert=1$.
\end{subtheorem}
We will now stop following the proof of \cite[Theorem 3.5]{Fife-Mayhew-Oxley-Semple2020} verbatim. Recall that $\lvert V(B)\rvert\leq4.$ Suppose that $\lvert V(B)-\{u,v\}\rvert=2.$ By \ref{sub:B has a path P_B}$(i)$ and \ref{sub:B has a path P_B}$(iv)$, with the roles of $A$ and $B$ reversed, $\lvert C_B\rvert>1.$ This implies that $C_B$ is a 2-cycle that is vertex-disjoint from $\{u,v\}$. Note that both $C_A$ and $C_B$ either belong to $\cL$ or to $\cF$. First, suppose they are both in $\cF$. Then, following \cite{Fife-Mayhew-Oxley-Semple2020}, we observe that $\cl(C_B)$ consists of all edges in parallel with an edge of $C_B$ and $\cl(C_A\cup C_B)=C_A\cup \cl(C_B)$. Now, contracting $C_A\cup\cl(C_B)$ from $\qbg$ produces a 2-vertex disconnected quasi-biased graph so that each vertex meets a frame-type loop. We deduce that $M/\cl(C_A\cup C_B)$ is disconnected, a contradiction. 

Now we assume both $C_A$ and $C_B$ belong to $\cL$. We apply Lemma \ref{lem: contract a cycle} and observe the structure of $M/\cl(C_B)$. Since $\lvert C_A\rvert\geq3$ and $C_A$ does not use $u$ or $v$, note that $\si(G/C_B)$ cannot be a cycle. Moreover, by Lemma \ref{lem: induced shortest unbalanced cycles}, we have that $C_B$ is induced, so $\cl(C_B)-C$ may consist of only unbalanced loops in $\cL$. Thus by Lemma \ref{lem: contract a cycle}, $\si(G/C_B)$ is complete. This means that, in $G/C_B$, there is an edge from each vertex of $C_A$ to the vertex resulting from identifying the vertices of $C_B$. Such edges originated from edges in $G$ from $C_A$ to vertices of $C_B$, contradicting that $\{u,v\}$ was a vertex cut. We conclude that \ref{sub:V(B)} holds.

Now, as $\lvert V(G)\rvert\geq7,$ we see that $\lvert C_A\rvert\geq4$. As well, by \ref{sub: every vertex of C_A is adjacent}, every vertex of $C_A$ is adjacent to either $u$ or $v$. Note that \ref{sub: C_A geq 4} applies when considering either $u$ or $v$. Applying \ref{sub: C_A geq 4} to both $u$ and $v$, we see that $\lvert C_A\rvert=4$ and that two consecutive vertices of $C_A$ are adjacent to $u$ while the other two are adjacent to $v$. As well, by \ref{sub: C_A geq 4}, no vertex is adjacent to both $u$ and $v$. Observe that if there are any loops $e\in\cL$, then $M=L(G,\cB)$ and, since $\si(G)$ is not a cycle, by Lemma \ref{lem: contract a cycle} $M/\cl(e)$ is complete, contradicting that $\{u,v\}$ is a vertex cut. If $C_B$ is a loop in $\cF$, then $\cL$ is degenerate and $M=F(G,\cB)$. Thus the result follows from \cite[Theorem 3.5]{Fife-Mayhew-Oxley-Semple2020}. If $C_B$ is not a loop in $\cF$, then it follows that $C_B$ is a 2-cycle. Without loss of generality, this cycle must meet $u$, and we can find a tree $T$ by taking one edge of $C_B$ along with a path in $A$ of length 3 consisting of $u$, one of its neighbors in $C_A$, and both of the neighbors of $v$ in $C_A$. Deleting $T$ disconnects $G$, a contradiction to Lemma \ref{lem: tree deletion disconnects G}. This completes the proof.  
\end{proof}

\section{Finishing the proof of the main theorem} \label{sec: when G is 3-connected}

In this section we will prove the following result. Combined with Theorem \ref{thm: 3-connected quasi matroid with G 2 connected}, it proves Theorem \ref{thm: main}.

\begin{theorem} \label{thm: 4.1}
Let $M(G,\cB,\cL,\cF)$ be a $3$-connected unbreakable quasi-graphic matroid where $G$ is $3$-connected and $|V(G)| \ge 7$.
Then $\si(G)$ can be obtained from a complete graph by deleting the edges of a path of length at most two.
\end{theorem}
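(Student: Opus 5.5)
We would prove Theorem \ref{thm: 4.1} by reducing, as far as possible, to the frame case \cite[Section 4]{Fife-Mayhew-Oxley-Semple2020}, and handling lift-type cycles separately. First split on degeneracy. If $\cL$ is degenerate then $M=F(G,\cB)$ and the conclusion is exactly Theorem \ref{thm: the unbreakable frame matroids}. So assume $\cL$ is non-degenerate. Then there are vertex-disjoint cycles in $\cL$, and in particular some unbalanced cycle lies in $\cL$. Since $G$ is $3$-connected, $\si(G)$ is not a cycle.

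The engine is Lemma \ref{lem: contract a cycle}. For any $C\in\cL$, $M/\cl(C)$ is graphic and unbreakable, so $\si(G/\cl(C))$ is complete (it cannot be a cycle once $G$ is $3$-connected with $|V(G)|\ge 7$ and $|C|$ small; we will check this). Concretely, every vertex outside $V(C)$ is adjacent to the identified vertex, hence to some vertex of $C$. Moreover, any two vertices outside $V(C)$ are adjacent in $G$, except possibly when they are joined only through $C$. Edges of $G/\cl(C)$ between outside vertices come only from edges of $G$, since $\cl(C)$ consists of edges spanned by $V(C)$ together with lift loops by Lemma \ref{lem: vertex induced flats}. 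So $G-V(C)$ induces a graph whose simplification is complete.

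We then choose $C\in\cL$ shortest. By Lemma \ref{lem: small cycles are unbalanced} and Lemma \ref{lem: induced shortest unbalanced cycles}, it is either short (length at most $2$) or induced. The goal is to show that $G-V(C)$ is complete with at least $|V(G)|-3$ vertices and then control the edges at $V(C)$. If $|C|\le 2$ this immediately gives a clique on all but at most two vertices. We then use Lemmas \ref{lem: wheel} and \ref{lem: vertex adjacent to every vertex of cycle}, with unbalanced cycles in the large clique, together with non-degeneracy: pick a second $C'\in\cL$ disjoint from $C$ and repeat the argument with $C'$. The two complements $V(C)$ and $V(C')$ are disjoint, so every non-adjacent pair must meet both $V(C)$ and $V(C')$. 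Consequently the missing edges of $\si(G)$ form a bipartite set between two sets of size at most two. Varying $C'$ over disjoint lift cycles within the clique (there are many, since $|V(G)|\ge 7$ and Lemma \ref{lem: vertex adjacent to every vertex of cycle} produces unbalanced triangles through any vertex) should shrink the missing set to a path of length at most two. Any two disjoint $\cL$-cycles then avoid a common vertex of each missing edge, which forbids two disjoint missing edges and forbids a missing $P_3$ configuration beyond a path.

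The main obstacle is controlling whether the many short unbalanced cycles produced by Lemmas \ref{lem: wheel} and \ref{lem: vertex adjacent to every vertex of cycle} lie in $\cL$ or in $\cF$, since only $\cL$-cycles give the graphic contraction. Cycles in $\cF$ can be handled instead by Lemma \ref{lem: at least one frame-type loop} and Lemma \ref{lem: tree deletion disconnects G}. For those, we would mirror \cite[Section 4]{Fife-Mayhew-Oxley-Semple2020}: contracting a flat containing two disjoint $\cF$-cycles at distance at least two yields a disconnected frame matroid, exactly as in the proof of Lemma \ref{lem: si(G) is a cycle}. So every pair of disjoint short unbalanced cycles at distance at least two must lie in $\cL$, and the properness condition ($\cL$ meets $\cF$) then forces the remaining $\cF$-cycles to be long and to meet everything. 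What we would try first is to show that every unbalanced triangle lies in $\cL$ when $\cL$ is non-degenerate, and then run the clique argument above.
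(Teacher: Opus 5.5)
There is a genuine gap. What you have is a plan whose decisive steps are left as ``should'' and ``would try''. The obstacle you name, whether the short unbalanced cycles you generate lie in $\cL$ or $\cF$, is never resolved. Several intermediate steps also fail as stated:
\begin{itemize}
\item Lemma \ref{lem: induced shortest unbalanced cycles} concerns a shortest \emph{unbalanced} cycle, not a shortest cycle in $\cL$. A chord of $C\in\cL$ creates a theta whose shorter unbalanced cycle may lie in $\cF$, so a shortest $\cL$-cycle need not be induced.
\item Nothing bounds $|C|$ or $|C'|$, so ``a bipartite set between two sets of size at most two'' is unjustified.
\item ``Varying $C'$ \dots\ should shrink the missing set to a path'' is not an argument. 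The triangles supplied by Lemma \ref{lem: vertex adjacent to every vertex of cycle} may lie in $\cF$, where the graphic contraction is unavailable.
\item Your fallback for $\cF$-cycles does not transfer. In the proof of Lemma \ref{lem: si(G) is a cycle}, contracting $\cl(C_u\cup C_v)$ disconnects the matroid only because $\si(G)$ is a cycle, so removing $V(C_u)\cup V(C_v)$ separates the graph. In a $3$-connected graph where $G-V(C)$ is nearly complete, removing two short cycles leaves a connected graph, the contraction is a connected frame matroid, and there is no contradiction.
\item ``Every unbalanced triangle lies in $\cL$'' is unproved and need not hold for a given tripartition.
\end{itemize}

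The missing idea is that you never need to know the type of an unbalanced cycle. Lemma \ref{lem: tree deletion disconnects G} holds for both types. If $C\in\cL$, contracting the tree and the resulting loop gives a graphic matroid with a cut vertex. If $C\in\cF$, the edges at the merged vertex roll up into frame loops and the frame matroid is disconnected. Take a non-adjacent pair $\{x,y\}$; then $G-\{x,y\}$ is connected because $G$ is $3$-connected. Applying the lemma to a spanning tree of $G-\{x,y\}$ shows that $G-\{x,y\}$ is balanced. So every unbalanced cycle, of either type, meets every non-adjacent pair (Lemma \ref{lem: x,y pair of non-adjacent vertices}). Hence $\si(G-V(C))$ is complete for \emph{every} unbalanced cycle $C$ (Lemma \ref{lem: delete a cycle}). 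This is the type-free version of what your $\cL$-contraction engine gives.

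With it, the purely biased-graph arguments of \cite{Fife-Mayhew-Oxley-Semple2020} apply directly:
\begin{itemize}
\item By Lemma \ref{lem: 6 part lemma}, the shortest unbalanced cycles avoiding one vertex of a non-adjacent pair have length at most $3$ once $|V(G)|\ge 7$.
\item The complement of $\si(G)$ then has no vertex of degree three and no two-edge matching.
\item The only new matroid step, which excludes the matching, contracts the balanced flat spanned by the edges from the universal vertex $y$ to $\{u,v,s,t\}$. The edges $us$ and $vt$ become unbalanced loops, the rest of the graph is balanced, and the contraction is disconnected whether those loops are lift-type or frame-type.
\end{itemize}
Your degeneracy split is valid, and so is the observation that non-edges must cross between two disjoint $\cL$-cycles. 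Without this type-free ingredient, however, they do not lead to the conclusion.
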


The following lemma generalizes \cite[Lemma 4.2]{Fife-Mayhew-Oxley-Semple2020}, and will help us understand the structure of $(G, \cB, \cL, \cF)$ from the perspective of a pair of non-adjacent vertices.

\begin{lemma} \label{lem: x,y pair of non-adjacent vertices}
Let $M=M(G,\cB,\cL,\cF)$ be a $3$-connected unbreakable quasi-graphic matroid where $G$ is $3$-connected and not all cycles are in $\cB$.
Then the following hold for any pair $\{x,y\}$ of non-adjacent vertices of $G$:
\begin{enumerate}[$(i)$]
\item $G - \{x,y\}$ is balanced.

\item Every unbalanced cycle in $G$ uses at least one of $x$ and $y$.

\item There is at least one unbalanced cycle in $G$ that avoids $x$ and at least one unbalanced cycle that avoids $y$.

\item If $C_y$ is a shortest unbalanced cycle in $G$ containing $y$ and avoiding $x$, and $|C_y| \ge 3$, then $C_y$ is an induced subgraph of $G$.
\end{enumerate}
\end{lemma}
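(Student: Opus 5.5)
The plan is to prove $(i)$ first, by contracting a flat built from the edges at $x$ and $y$. Then $(ii)$ and $(iii)$ follow from $(i)$ together with $3$-connectivity of $M$, and $(iv)$ follows from Lemma \ref{lem: induced shortest unbalanced cycles}.

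For $(i)$, let $U = V(G)-\{x,y\}$ and $W=\{x,y\}$. Suppose $G-\{x,y\}=G[U]$ contains an unbalanced cycle $C$. By Lemma \ref{lem: vertex induced flats}, $F = E(G[U])$, possibly together with the loops in $\cL$, is a flat of $M$; balanced loops do not occur by Lemma \ref{lem: small cycles are unbalanced}.

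\begin{itemize}
\item If $C\in\cF$, Lemma \ref{lem: at least one frame-type loop} describes $(G,\cB,\cL,\cF)/F$. Every edge from $\{x,y\}$ to $U$ is rolled up into a frame-type loop at $x$ or at $y$. The only remaining edges are loops at $x$ or $y$, since $x$ and $y$ are non-adjacent. So the graph of $(G,\cB,\cL,\cF)/F$ has two components, at $x$ and at $y$, each carrying a frame-type loop (both $x$ and $y$ have neighbours in $U$ since $G$ is $3$-connected). By Proposition \ref{prop: rank of X}, the rank of this contraction is the sum of the ranks of the two components, so $M/F$ is disconnected.
\item If $C\in\cL$, then by Lemma \ref{lem: contract a cycle}, contracting $\cl(C)$ and then the rest of $F$ yields a graphic matroid. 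Its graph is $G/E(G[U])$ restricted to the edges not in $F$. This is the graph on the identified vertex $u^*$ together with $x$ and $y$, whose edges are the $x$--$u^*$ and $y$--$u^*$ edges (plus loops). Here $u^*$ is a cutvertex because $x,y$ are non-adjacent. So this cycle matroid is disconnected whenever both sides have edges, which they do.
\end{itemize}

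In either case $M/F$ is disconnected, contradicting unbreakability. The step needing care is confirming that $r(M/F)\ge 1$ and that both sides really contain non-loop edges. This is where $3$-connectivity of $G$ is used: $x$ and $y$ each have at least three neighbours in $U$.

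Part $(ii)$ is the contrapositive of $(i)$: a cycle avoiding both $x$ and $y$ lies in $G-\{x,y\}$, so it is balanced. For $(iii)$, suppose every unbalanced cycle uses $y$. Then $G-y$ is balanced, and I will derive a $2$-separation. Take $X=E_y$ and $Y=E(G)-E_y$. Since $G[Y]$ is balanced and connected ($G$ is $3$-connected), Proposition \ref{prop: rank of X} gives $r(Y)=|V(G)|-2$ in the non-$\cF$ case. In the $\cF$ case it also gives $|V(G)|-2$, since $b=1$ as there is no cycle in $\cF$ inside $Y$. Then $r(X)+r(Y)-r(M)\le r(X)-2$, which is not immediately small. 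So instead I would argue as for Lemma \ref{lem: min degree is at least 3}: $r(Y)=r(M)-2$ means $E_y$ contains a union of cocircuits of rank deficiency two. In particular $M\del E_y$ has corank $2$, which does not directly contradict $3$-connectivity.

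Hence the cleaner route for $(iii)$ is again unbreakability. Contract the flat $F=\cl(E(G-y))$, which is balanced on $|V(G)|-1$ vertices, so $r(M/F)=1$. This is consistent and gives no contradiction, so that approach fails as well. The fallback is that if all unbalanced cycles pass through $y$, then by the theta property one can re-sign so that $M$ is a graphic matroid of a graph obtained by splitting $y$, or reduce to the frame/lift case handled in \cite[Lemma 4.2]{Fife-Mayhew-Oxley-Semple2020}. I expect $(iii)$ to be the main obstacle; the plan is to mirror the FMOS argument, using Lemma \ref{lem: contract a cycle} to pass to frame or graphic quotients.

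For $(iv)$, apply Lemma \ref{lem: induced shortest unbalanced cycles} to the induced subgraph $H=G-x$. Loops and $2$-cycles are unbalanced by Lemma \ref{lem: small cycles are unbalanced}. By $(i)$ and $(ii)$, every unbalanced cycle of $H$ uses $y$, so $C_y$ is a shortest unbalanced cycle of $H$ and is therefore induced in $H$. Since $x\notin V(C_y)$, it is also induced in $G$.
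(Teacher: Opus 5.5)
Your arguments for $(i)$, $(ii)$ and $(iv)$ are correct. The direct contraction in $(i)$ is the argument inside Lemma~\ref{lem: tree deletion disconnects G} with $T$ a spanning tree of $G-\{x,y\}$, which is what the paper relies on.

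The genuine gap is $(iii)$, which your proposal never proves. The contraction of $\cl(E(G-y))$ fails because it swallows $x$ into the contracted vertex (note also $r(M/F)=2$, not $1$). What remains is two vertices $y,w^*$ joined by the edges of $E_y$, whose $2$-cycles may all be unbalanced, so $M/F$ can be connected. Your fallback is not an argument either. Splitting a vertex $y$ that meets every unbalanced cycle gives a graphic representation only when the edges at $y$ fall into at most two balance classes, as for signed graphs; in general $M$ need not be graphic. For instance, take the triangle on $y,w_1,w_2$ and replace each of its two edges at $y$ by three parallel edges with gains $1,2,3$ in $\mathbb{R}^\times$. Every unbalanced cycle meets $y$, yet the frame matroid is simple of rank $3$ on $7$ elements, hence not graphic. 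Likewise, ``mirror FMOS using Lemma~\ref{lem: contract a cycle}'' does not specify anything to contract.

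The missing idea is the cutvertex trick from your own $(i)$, applied to the \emph{balanced} set $W=E(G-\{x,y\})$.
Suppose $G-y$ is balanced.
\begin{itemize}
\item $W$ is balanced and $G-\{x,y\}$ is connected. There are no balanced loops, so $W$ is a flat by Lemma~\ref{lem: vertex induced flats}, with $r_M(W)=|V(G)|-3$. Hence $r(M/W)=3$.
\item As $x$ and $y$ are non-adjacent, $E(M/W)$ is the disjoint union of the non-empty sets $E_x$ and $E_y$.
\item $W\cup E_x=E(G-y)$ is balanced and connected on $|V(G)|-1$ vertices, so $r_{M/W}(E_x)=1$.
\item $r_{M/W}(E_y)\le (|V(G)|-1)-(|V(G)|-3)=2$.
\end{itemize}
Hence $r_{M/W}(E_x)+r_{M/W}(E_y)\le r(M/W)$, so $M/W$ is disconnected, contradicting unbreakability. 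Swapping $x$ and $y$ gives the other half. This is the paper's proof.

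Your FMOS reduction can also be made rigorous, but only via an observation you don't make. If every unbalanced cycle meets $y$, then $\cL$ and $\cF$ are both degenerate, so $M=F(G,\cB)$, and \cite[Lemma 4.2]{Fife-Mayhew-Oxley-Semple2020} applies as stated.
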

\begin{proof}
The proof of parts $(i)$, $(ii)$, and $(iv)$ of \cite[Lemma 4.2]{Fife-Mayhew-Oxley-Semple2020} only uses the matroid $M$ to show that Lemmas 2.4 and 2.7 from \cite{Fife-Mayhew-Oxley-Semple2020} apply to the biased graph $(G,\cB)$. Since we have generalized Lemmas 2.4 and 2.7 from \cite{Fife-Mayhew-Oxley-Semple2020} to Lemmas 3.4 and 3.7, respectively, the proofs of parts $(i),(ii)$, and $(iv)$ apply verbatim to prove Lemma \ref{lem: x,y pair of non-adjacent vertices} after replacing the applications of Lemmas 2.4 and 2.7 from \cite{Fife-Mayhew-Oxley-Semple2020} with Lemmas 3.4 and 3.7, respectively.

Part $(iii)$ will require contraction to prove. 
Suppose $G-x$ is balanced. 
Then the edge set $W$ of $G - \{x,y\}$ is a flat of $M$.
The graph $G/W$ has three vertices including a cutvertex that results from identifying all the vertices in $W$. 
In $G/W$, all the cycles incident with $y$ are balanced, so this cutvertex actually induces a separation in $M/W$, a contradiction.
Thus $(iii)$ holds.
\end{proof}

The following is an immediate consequence of the last lemma using parts $(i)$ and $(ii)$. It generalizes \cite[Lemma 4.3]{Fife-Mayhew-Oxley-Semple2020}.

\begin{lemma}\label{lem: delete a cycle}
    Let $M=M\qbg$ be a $3$-connected unbreakable quasi-graphic matroid where $G$ is $3$-connected. Suppose $G$ has a pair of non-adjacent vertices $\{x,y\}$. Then $\si(G-V(C))$ is complete for every unbalanced cycle $C$.
\end{lemma}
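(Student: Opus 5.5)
The plan is to show that any two vertices $a,b$ of $G-V(C)$ are adjacent, by contradiction. Suppose $a,b \in V(G)-V(C)$ are distinct and non-adjacent in $G$. Then $\{a,b\}$ is a pair of non-adjacent vertices of $G$, so Lemma \ref{lem: x,y pair of non-adjacent vertices} applies to $\{x,y\}=\{a,b\}$.

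By part $(ii)$ of that lemma, every unbalanced cycle of $G$ uses at least one of $a$ and $b$. Equivalently, by part $(i)$, $G-\{a,b\}$ is balanced. However, $C$ is an unbalanced cycle, and $V(C)\cap\{a,b\}=\varnothing$. Hence $C$ is a cycle of $G-\{a,b\}$, which contradicts either part. So every two vertices of $G-V(C)$ are adjacent in $G$. Since $G-V(C)$ is an induced subgraph, they are also adjacent in $G-V(C)$.

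It follows that $\si(G-V(C))$, which is loopless with at most one edge between any pair of vertices, is a complete graph. If $G-V(C)$ has at most one vertex, the statement holds trivially, with $K_0$ or $K_1$ viewed as complete.

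The hypothesis that $G$ has some non-adjacent pair $\{x,y\}$ is needed only to make sense of the setting: it guarantees that $G$ is not complete, so the conclusion is non-vacuous. The argument itself uses Lemma \ref{lem: x,y pair of non-adjacent vertices} applied to the new pair $\{a,b\}$ rather than to $\{x,y\}$. The only point to check is that Lemma \ref{lem: x,y pair of non-adjacent vertices} applies. That requires not all cycles to be in $\cB$, which holds because $C$ is unbalanced, and requires $M$ to be $3$-connected and unbreakable with $G$ $3$-connected, which holds by hypothesis. I therefore expect no real obstacle; the argument is essentially immediate from parts $(i)$ and $(ii)$, as the text preceding the statement indicates.
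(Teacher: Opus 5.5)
Your proposal is correct and is essentially the paper's argument. The paper gives no separate proof and presents this lemma as an immediate consequence of parts $(i)$ and $(ii)$ of Lemma \ref{lem: x,y pair of non-adjacent vertices}: applying that lemma to any non-adjacent pair in $G-V(C)$ yields the contradiction, since $C$ is an unbalanced cycle avoiding both vertices, which is exactly what you do.
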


For the rest of this section, we will consider $u$ and $v$ to be a fixed non-adjacent pair of vertices of $G$, and $W$ will denote $E(G-\{u,v\})$. We will write $C_u$ to be the shortest unbalanced cycle avoiding $v$. By Lemma $\ref{lem: x,y pair of non-adjacent vertices}(ii)$, such a cycle must use $u$. Similarly $C_v$ will be the shortest unbalanced cycle avoiding $u$, which must use $v$. The following critical lemma generalizes \cite[Lemma 4.4]{Fife-Mayhew-Oxley-Semple2020} from frame matroids to quasi-graphic matroids. 
We will only use parts $(ii)$ and $(vi)$.

\begin{lemma}\label{lem: 6 part lemma}
    Let $M=M(G,\cB,\cL,\cF)$ be a $3$-connected unbreakable quasi-graphic matroid where $G$ is $3$-connected and has at least one unbalanced cycle. Suppose that $|C_u| \geq |C_v|$. 
    \begin{enumerate}[$(i)$]
        \item Suppose that $|C_u| \geq 4$ and $C$ is an unbalanced cycle of $G$ that avoids $u$. Then $C$ uses all but at most one vertex of $V(C_u) - u$. Moreover, if there is a vertex in $(V(C_u)-u)-V(C)$, then it must be adjacent to $u$. 
        \item $|C_u| \leq 4$. 
        \item Either $\lvert C_v\rvert\in \{1,2\}$, or the subgraph of $G$ induced by $C_u\cup C_v$ is one of the graphs shown in \cite[Figure 1]{Fife-Mayhew-Oxley-Semple2020}.
        \item If $w\in V(G)$ is in an unbalanced cycle of size at most three, then $w$ is nonadjacent to at most two other vertices.
        \item Every vertex $w$ of $G$ is nonadjacent to at most three other vertices.
        \item If $\lvert C_u\rvert=4$, then $\lvert V(G)\rvert\leq6$ and $\si(G)$ has at most seven edges fewer than the complete graph on $\lvert V(G)\rvert$ vertices.  
\end{enumerate}
\end{lemma}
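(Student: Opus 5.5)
We follow the proof of \cite[Lemma 4.4]{Fife-Mayhew-Oxley-Semple2020} as closely as possible. That argument uses the matroid only through a short list of tools: the fact that $G-\{u,v\}$ is balanced; that every unbalanced cycle meets $u$ or $v$; that shortest unbalanced cycles are induced; Lemma \ref{lem: delete a cycle}; the wheel lemmas (Lemmas \ref{lem: wheel} and \ref{lem: vertex adjacent to every vertex of cycle}); the theta property; and the tree-deletion criterion. All of these are now available for quasi-biased graphs: Lemma \ref{lem: x,y pair of non-adjacent vertices}, Lemma \ref{lem: delete a cycle}, Lemma \ref{lem: induced shortest unbalanced cycles} together with Lemma \ref{lem: small cycles are unbalanced}, and Lemma \ref{lem: tree deletion disconnects G}. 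So the first step is to audit that proof line by line. At every point where a contraction of a flat is used to exhibit a disconnection, we must either reroute the argument through Lemma \ref{lem: tree deletion disconnects G}, which is already proved for all quasi-biased graphs, or redo the contraction with the loop-rolling description of contracting frame-type loops.

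Since only parts $(ii)$ and $(vi)$ are needed later, the key steps are in order.

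\textbf{Part $(i)$.} Take an unbalanced cycle $C$ avoiding $u$; it must contain $v$. By Lemma \ref{lem: delete a cycle}, $\si(G-V(C))$ is complete. Suppose two vertices of $V(C_u)-u$ were missed by $C$. Since $C_u$ is induced and $|C_u|\ge 4$, some two of its vertices outside $V(C)$ are non-adjacent, or else we find a tree $T \supseteq V(C_u)$ whose deletion disconnects $G$, contradicting Lemma \ref{lem: tree deletion disconnects G}. The adjacency claim for the missed vertex comes the same way.

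\textbf{Part $(ii)$.} Assume $|C_u|\ge 5$ and apply $(i)$ to $C_v$, which uses nearly all of $V(C_u)-u$. Because $|C_v|\le |C_u|$ and $C_v$ is induced, chords from $v$ into $C_u$ produce, via Lemma \ref{lem: wheel} and the theta property, an unbalanced cycle avoiding $v$ that is shorter than $C_u$. This contradicts the choice of $C_u$.

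\textbf{Part $(iii)$.} This is the case analysis producing \cite[Figure 1]{Fife-Mayhew-Oxley-Semple2020}. It uses only the biased graph, so it carries over verbatim.

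\textbf{Parts $(iv)$ and $(v)$.} These follow from Lemma \ref{lem: delete a cycle} applied to short unbalanced cycles through $w$, together with the fact that every unbalanced cycle meets each non-adjacent pair.

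\textbf{Part $(vi)$.} With $|C_u|=4$, $(iii)$ lists the possible graphs induced by $C_u\cup C_v$. The bound $|V(G)|\le 6$ follows because any extra vertex $z$ must be adjacent to all of $V(C_u)$ or $V(C_v)$ (by $(iv)$/$(v)$). That yields an unbalanced $3$-cycle via Lemma \ref{lem: vertex adjacent to every vertex of cycle}, which contradicts minimality or violates part $(i)$. Counting the missing edges in the listed configurations gives the seven-edge bound.

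\textbf{Main obstacle.} The real work is in the cases $|C_v|\in\{1,2\}$. There, in \cite{Fife-Mayhew-Oxley-Semple2020}, short unbalanced cycles were contracted using frame-matroid behaviour. Here $C_v$ and a disjoint unbalanced cycle may both lie in $\cL$, and contracting two disjoint $\cL$-cycles need not disconnect anything. In those cases we plan to split according to whether the relevant cycles lie in $\cL$ or $\cF$:
\begin{itemize}
\item If a cycle lies in $\cF$, argue as in Lemma \ref{lem: si(G) is a cycle}, using Lemma \ref{lem: at least one frame-type loop}: two frame-type loops at distinct vertices disconnect the contraction.
\item If a cycle lies in $\cL$, use Lemma \ref{lem: contract a cycle}: $\si(G/\cl(C))$ is complete or a cycle. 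Since $G$ is $3$-connected and $|V(G)|\ge 7$, it cannot be a cycle, so it is complete. That forces adjacencies which contradict the non-adjacency of $u$ and $v$, or the assumed cut structure.
\end{itemize}
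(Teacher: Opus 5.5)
Your overall plan is the paper's proof. The paper notes that the argument for \cite[Lemma 4.4]{Fife-Mayhew-Oxley-Semple2020} uses the matroid only through Lemmas 3.2, 4.2 and 4.3 of that paper. These are generalized here as Lemmas~\ref{lem: min degree is at least 3}, \ref{lem: x,y pair of non-adjacent vertices} and \ref{lem: delete a cycle}; the minimum-degree lemma is an input your list of tools omits. Everything else is a statement about $(G,\cB)$, so the proof transfers verbatim.

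The ``main obstacle'' you identify does not exist. The proof of \cite[Lemma 4.4]{Fife-Mayhew-Oxley-Semple2020} never contracts a flat, whether $|C_v|\in\{1,2\}$ or otherwise. The contraction arguments it relies on sit inside its input lemmas, namely Lemma~\ref{lem: tree deletion disconnects G} and part $(iii)$ of Lemma~\ref{lem: x,y pair of non-adjacent vertices}, and the paper has already redone these for quasi-biased graphs. So there is no $\cL$/$\cF$ case split to perform.

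The split you sketch would also fail if it were needed. Its $\cL$-branch looks modelled on the step in the proof of Theorem~\ref{thm: 3-connected quasi matroid with G 2 connected}, where both $|V(G)|\ge 7$ and a $2$-vertex cut were available. This lemma has neither; part $(vi)$ even concludes $|V(G)|\le 6$. So you cannot exclude the cycle outcome of Lemma~\ref{lem: contract a cycle} that way. Moreover, every unbalanced cycle $C$ meets $\{u,v\}$, so at least one of $u,v$ is absorbed into the contracted vertex of $G/\cl(C)$. Completeness of $\si(G/\cl(C))$ therefore can never contradict the non-adjacency of $u$ and $v$.

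Do not let your per-part sketches stand in for the transfer. In $(vi)$, parts $(iv)$ and $(v)$ do not give that an extra vertex $z$ is adjacent to all of $V(C_u)$ or all of $V(C_v)$. Lemma~\ref{lem: delete a cycle} directly gives only adjacency to $V(C_u)\triangle V(C_v)$.

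Part $(i)$ is simpler than your sketch, and needs no trees. Suppose $C$ missed two vertices $a,b$ of $V(C_u)-u$. Then $u,a,b$ lie in $G-V(C)$, so they are pairwise adjacent by Lemma~\ref{lem: delete a cycle}. That is impossible, because $C_u$ is an induced cycle of length at least four. The same reasoning shows that any single missed vertex is adjacent to $u$.
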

\begin{proof}
The proof of \cite[Lemma 4.4]{Fife-Mayhew-Oxley-Semple2020} only uses the matroid $M$ to show that Lemmas 3.2, 4.2, and 4.3 from \cite{Fife-Mayhew-Oxley-Semple2020} hold for the biased graph $(G, \cB)$, and all other arguments only involve $(G, \cB)$.
Since we have generalized Lemmas 3.2, 4.2 and 4.3 from \cite{Fife-Mayhew-Oxley-Semple2020} to Lemmas \ref{lem: min degree is at least 3}, \ref{lem: x,y pair of non-adjacent vertices} and \ref{lem: delete a cycle}, respectively, the proof of \cite[Lemma 4.4]{Fife-Mayhew-Oxley-Semple2020} applies verbatim to prove Lemma \ref{lem: 6 part lemma} after replacing the applications of Lemmas 3.2, 4.2, and 4.3 from \cite{Fife-Mayhew-Oxley-Semple2020} with Lemmas \ref{lem: min degree is at least 3}, \ref{lem: x,y pair of non-adjacent vertices}, and \ref{lem: delete a cycle}, respectively.
\end{proof}


We can now prove Theorem \ref{thm: 4.1}.

\begin{proof}[Proof of Theorem 5.1]
    We will follow the proof of \cite[Theorem 4.1]{Fife-Mayhew-Oxley-Semple2020} verbatim until stated otherwise.
    
    \begin{subtheorem}[{\cite[4.4.1]{Fife-Mayhew-Oxley-Semple2020}}]\label{H is a simple 3 connected graph on 7 vertices claw case} 
        Let $H$ be a simple $3$-connected graph on at least seven vertices. Let $u, v_1, v_2$ and $v_3$ be distinct vertices of $H$ such that $H$ has none of the edges $(u,v_1), (u,v_2)$ and $(u,v_3)$. Then $H \neq \si(G)$. 
    \end{subtheorem}
    \begin{proof}
    The proof of \cite[4.4.1]{Fife-Mayhew-Oxley-Semple2020} uses Lemmas 4.3 and 4.4 from \cite{Fife-Mayhew-Oxley-Semple2020} and otherwise only uses properties of $(G, \cB)$.
Since we have generalized Lemmas 4.3 and 4.4 from \cite{Fife-Mayhew-Oxley-Semple2020} to Lemma \ref{lem: delete a cycle} and \ref{lem: 6 part lemma}, respectively, \ref{H is a simple 3 connected graph on 7 vertices claw case} holds.
    \end{proof}
    
    Next we show the following.
    
    \begin{subtheorem}[{\cite[4.4.2]{Fife-Mayhew-Oxley-Semple2020}}] \label{H is a simple 3 connected graph on 7 vertices matching case}
        Suppose that $H$ is a simple $3$-connected graph with at least seven vertices. If, for distinct vertices $u,v,s$, and $t$, neither $(u,v)$ nor $(s,t)$ is an edge of $H$, then $\si(G) \neq H$.  
    \end{subtheorem}

Suppose that $\si(G) = H$. Let $C_u$, $C_v$, $C_s$, and $C_t$ be shortest unbalanced cycles avoiding, respectively, $v$, $u$, $t$, and $s$. By Lemma \ref{lem: delete a cycle}, these cycles use, respectively, $u$, $v$, $s$, and $t$. Moreover, by Lemma \ref{lem: 6 part lemma}$(ii)$ and Lemma \ref{lem: 6 part lemma}$(vi)$, all of these cycles have at most three edges. Hence each of $C_u$ and $C_v$ meets exactly one vertex of $\{s, t\}$, and each of $C_s$ and $C_t$ meets exactly one vertex of $\{u, v\}$. We would like circuits $C_1$ and $C_2$ in $\{C_u, C_v, C_s, C_t\}$ with $\{u, v, s, t\}\subseteq C_1\cup C_2$. Now $C_u$ uses $u$ and avoids $v$, while $C_v$ uses $v$ and avoids $u$. Both $C_u$ and $C_v$ use exactly one vertex of $\{s, t\}$. We can take $C_1= C_u$ and $C_2 = C_v$ unless, by symmetry, $C_u$ and $C_v$ both contain $s$. Now $C_t$ uses $u$ or $v$ but not both. Taking $C_2 = C_t$, we let $C_1$ be $C_v$ or $C_u$, respectively. By potentially relabelling $s$ and $t$, we may assume that $V(C_1)$ and $V(C_2)$ meet $\{u, v, s, t\}$ in $\{u, s\}$ and $\{v, t\}$, respectively.

Continuing with the proof of \ref{H is a simple 3 connected graph on 7 vertices matching case}, we now show the following.
\begin{subtheorem}[{\cite[4.4.3]{Fife-Mayhew-Oxley-Semple2020}}] \label{vertex adjacent to every vertex of G-y} 
    There is a vertex $y\in V(G)-(V(C_1)\cup V(C_2))$ that is adjacent to each vertex of $G-y$.
\end{subtheorem}
\begin{proof}
The proof of \cite[4.4.3]{Fife-Mayhew-Oxley-Semple2020} uses Lemmas 4.3 and 4.4 from \cite{Fife-Mayhew-Oxley-Semple2020} and otherwise only uses properties of $(G, \cB)$.
Since we have generalized Lemmas 4.3 and 4.4 from \cite{Fife-Mayhew-Oxley-Semple2020} to Lemma \ref{lem: delete a cycle} and \ref{lem: 6 part lemma}, respectively, \ref{vertex adjacent to every vertex of G-y} holds.
\end{proof}

For a vertex $z$, recall that $E_z$ is the set of edges meeting $z$. Let $X$ be the set of edges that only meet vertices in $\{u, v, s, t\}$. 

We show next that
\begin{subtheorem}[{\cite[4.4.4]{Fife-Mayhew-Oxley-Semple2020}}] \label{subthm: G delete X is balanced}
    $G\del X$ has all of its cycles balanced.
\end{subtheorem}
Suppose that $G\del X$ has an unbalanced cycle $C$. Since $y$ is adjacent to each vertex of $C-y$, by Lemma \ref{lem: vertex adjacent to every vertex of cycle}, $G$ has an unbalanced 3-cycle $C_y$ with $C_y\subseteq C\cup E_y$. Let $f$ be the edge of $C_y$ that is not incident with $y$. Then $f\in C$ and, by assumption, $f$ does not join two vertices of $\{u,v,s,t\}$. Thus $C_y$ avoids at least three vertices in $\{u,v,s,t\}$. But, by Lemma \ref{lem: x,y pair of non-adjacent vertices}$(ii)$, $C_y$ meets both $\{u,v\}$ and $\{s,t\}$, a contradiction. We deduce that \ref{subthm: G delete X is balanced} holds. 

Since $y$ is adjacent to each vertex of the unbalanced cycle $C_1$, by Lemma \ref{lem: vertex adjacent to every vertex of cycle}, there is an unbalanced 3-cycle $C'$ using $y$ and exactly two vertices of $C_1$. Because neither $(u,v)$ nor $(s,t)$ is an edge of $G$, Lemma \ref{lem: x,y pair of non-adjacent vertices}$(ii)$ implies that $V(C')$ contains $u$ and $s$. Hence $V(C')=\{y,u,s\}$. By symmetry, there is an unbalanced cycle $C''$ with vertex set $\{y,v,t\}$. 

We now stop following the proof of \cite[Theorem 4.1]{Fife-Mayhew-Oxley-Semple2020} verbatim.
Let $F$ be the flat of $M$ that is spanned by the edges meeting $y$ and one of $u,v,s$, and $t$. 
The edges meeting $y$ contain no unbalanced cycles by \ref{subthm: G delete X is balanced}, so $F$ contains no unbalanced cycles. 
The biased graph $G'$ corresponding to $M/F$ has unbalanced loops at $y$ corresponding to the edges $(u,s)$ and $(v,t)$ of $G$. 

As $G\del X$ has every cycle balanced, letting $X'=X-F$, we deduce that $G'\del X'$ has only balanced cycles. Thus $M/F$ has no circuit that meets both $X'$ and $E(G'\del X')$. As the last two sets are non-empty, this implies that $M/F$ is disconnected, which contradicts the fact that $M$ is unbreakable. We conclude that \ref{H is a simple 3 connected graph on 7 vertices matching case} holds.

By \ref{H is a simple 3 connected graph on 7 vertices claw case}, the complement of $\si(G)$ in $K_n$ has no vertex of degree three or more and, by \ref{H is a simple 3 connected graph on 7 vertices matching case}, has no two-edge matching. Thus this complement is a path of length at most two. Hence Theorem \ref{thm: 4.1} holds.
\end{proof}

We now show that the lower bound $|V(G)| > 6$ is sharp in Theorems \ref{thm: main} and \ref{thm: main corollary for lifted-graphic matroids}.
Let $G$ be a loopless, $2$-connected, $3$-regular, $6$-vertex graph with nine edges such that each theta subgraph has at least five edges, and for every $2$-edge cut $\{x,y\}$, both components of $G \del \{x,y\}$ have a cycle.
Some examples of graphs of this form are shown in Figure \ref{figure: 6-vertex graphs}.
Let $M$ be the frame matroid of $G$ with all cycles unbalanced.
We claim that $M$ is $3$-connected and unbreakable.
Using Proposition \ref{prop: rank of X} we see that $M$ has no coloops or series pairs, so $M^*$ is a simple rank-$3$ matroid.
Since $G$ is loopless, $3$-regular, and each theta subgraph has at least five elements, each circuit of $M$ has at least five elements. 
Therefore each hyperplane of $M^*$ has at most four elements, so $M^*$ is not the union of two hyperplanes.
It follows easily that $M^*$ is $3$-connected, and therefore $M$ is $3$-connected.
Since a simple rank-$3$ matroid cannot have skew circuits, it follows from \cite[Theorem 1.1]{Oxley-Pfeil2022} that $M$ is unbreakable.
Since none of the graphs in Figure \ref{figure: 6-vertex graphs} simplify to a cycle or a complete graph missing a path with at most two edges, the bound $|V(G)| > 6$ of Theorem \ref{thm: main} is sharp.
Since the lift matroid and frame matroid of the first graph in Figure \ref{figure: 6-vertex graphs} with all cycles unbalanced are equal, the bound $|V(G)| > 6$ of Corollary \ref{thm: main corollary for lifted-graphic matroids} is sharp as well.

\begin{figure} 
\centering
\begin{tabular}{ccc ccc ccc c}

\begin{tikzpicture}[thick, every node/.style={inner sep=1.5pt,circle,draw,fill}]
     \node[label = {}] (1) at (0:\R) {};
     \node (2) at (60:\R) {};
     \node (3) at (120:\R) {};
     \node (4) at (180:\R) {};
     \node (5) at (240:\R) {};
      \node (6) at (300:\R) {};
     \draw (1) -- (2);
\draw (2) -- (3);
\draw (3) -- (4);
\draw (4) -- (5);
\draw (5) -- (6);
\draw (6) -- (1);
\draw (2) -- (5);
\draw (3) -- (6);
\draw (4) -- (1);
\end{tikzpicture}

&&& \begin{tikzpicture}[thick, every node/.style={inner sep=1.5pt,circle,draw,fill}]
     \node[label = {}] (1) at (0:\R) {};
     \node (2) at (60:\R) {};
     \node (3) at (120:\R) {};
     \node (4) at (180:\R) {};
     \node (5) at (240:\R) {};
      \node (6) at (300:\R) {};
     \draw (1) -- (2);
\draw (2) -- (3);
\draw (3) -- (4);
\draw (4) -- (5);
\draw (5) -- (6);
\draw (6) -- (1);
\draw (3) -- (5);
\draw (1) -- (4);
\draw (2) -- (6);
\end{tikzpicture}

&&& \begin{tikzpicture}[thick, every node/.style={inner sep=1.5pt,circle,draw,fill}]
     \node[label = {}] (1) at (0:\R) {};
     \node (2) at (60:\R) {};
     \node (3) at (120:\R) {};
     \node (4) at (180:\R) {};
     \node (5) at (240:\R) {};
      \node (6) at (300:\R) {};
     \draw (1) -- (2);
\draw (2) -- (3);
\draw (3) -- (4);
\draw (4) -- (5);
\draw (5) -- (6);
\draw (6) -- (1);
\draw (2) -- (6);
\draw (5) -- (1);
\draw (3) to[bend left=50] (4);
\end{tikzpicture}

&&& \begin{tikzpicture}[thick, every node/.style={inner sep=1.5pt,circle,draw,fill}]
     \node[label = {}] (1) at (0:\R) {};
     \node (2) at (60:\R) {};
     \node (3) at (120:\R) {};
     \node (4) at (180:\R) {};
     \node (5) at (240:\R) {};
      \node (6) at (300:\R) {};
     \draw (1) -- (2);
\draw (2) -- (3);
\draw (3) -- (4);
\draw (4) -- (5);
\draw (5) -- (6);
\draw (6) -- (1);
\draw (5) -- (2);
\draw (3) to[bend left=50] (4);
\draw (6) to[bend left=50] (1);
\end{tikzpicture}

\end{tabular}
\caption{Graphs showing sharpness of the bound $|V(G)| > 6$ in Theorem \ref{thm: main}.}
\label{figure: 6-vertex graphs}
\end{figure}

\section{The complete characterization} \label{sec: characterization}

We have shown that if $M = M(G, \cB, \cL, \cF)$ is $3$-connected and unbreakable, then $\si(G)$ is a nearly complete graph or a cycle (assuming each component of $G$ has at least two vertices and $|V(G)| > 6$), proving Theorem \ref{thm: main}. 
In this section we will refine this result by giving a complete characterization of the $3$-connected unbreakable quasi-graphic matroids.
We will prove two preliminary lemmas, and then we will separately consider two cases depending on whether $\si(G)$ is a nearly complete graph or a cycle.
We begin with a straightforward observation about unbreakable matroids.

\begin{lemma} \label{lem: contract to rank 2}
    If $M$ is a connected matroid that is not unbreakable, then $M$ has a flat $F$ so that $M/F$ is disconnected and $r(M/F) = 2$.
\end{lemma}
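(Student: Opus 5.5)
We argue by choosing a flat $F$ of $M$ with $M/F$ disconnected and $r(M/F)$ as small as possible, and then adding elements to $F$ until the rank drops to exactly $2$.

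\emph{Setup and minimality.} Since $M$ is connected but not unbreakable, some flat $F_0$ has $M/F_0$ disconnected, so the set of such flats is nonempty. Among them choose $F$ with $r(M/F)$ minimum.

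\emph{The rank of $M/F$ is at least $2$.} If $r(M/F)=0$, then $F=E(M)$ and $M/F$ is the empty matroid, which is connected by convention; so this case does not occur. If $r(M/F)=1$, then $M/F$ is a rank-$1$ matroid with no loops, since $F$ is a flat. Hence $E-F$ is a single parallel class of $M/F$, and $M/F$ is connected, a contradiction. So $r(M/F)\ge 2$.

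\emph{Reduction when the rank is at least $3$.} Suppose $r(M/F)\ge 3$ and write $M/F = M_1\oplus M_2$ with both $E(M_1)$ and $E(M_2)$ nonempty. Since $F$ is a flat, $M/F$ is loopless, so each $M_i$ has rank at least $1$. As $r(M_1)+r(M_2)\ge 3$, we may assume $r(M_1)\ge 2$.

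Pick any element $e\in E(M_1)$ and let $F'=\cl_M(F\cup e)$. This is a flat of $M$ with
\[ r(M/F') = r(M/F)-1 \ge 2. \]
Now compute the contraction: $M/F'$ equals $(M/F)/e$ with its new loops deleted, which is
\[ \bigl((M_1/e)\del(\text{loops of } M_1/e)\bigr)\oplus M_2. \]
The second summand $M_2$ is nonempty. The first summand is also nonempty, because $r(M_1/e)=r(M_1)-1\ge 1$ and so $M_1/e$ has a non-loop element. Therefore $M/F'$ is disconnected and has smaller rank than $M/F$, contradicting the minimality of $F$. Hence $r(M/F)=2$.

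\emph{Main point to check.} The delicate step is the nonemptiness of both summands after contracting $e$. It depends on choosing $e$ from a component of rank at least $2$, so that this component survives as a nonempty loopless matroid. It also uses the fact that contracting a flat never creates loops, which is why $M/F'$ is genuinely a direct sum of two nonempty parts.
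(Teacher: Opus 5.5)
Your proof is correct, and it differs from the paper's only in packaging. The paper makes no minimality choice. It takes an arbitrary disconnecting flat $F'$ and writes $M/F' = M_1\oplus\cdots\oplus M_k$. In one step it enlarges $F'$ by the flat $F''$ of $M/F'$ consisting of $E(M_3)\cup\cdots\cup E(M_k)$, a hyperplane $H_1$ of $M_1$, and a hyperplane $H_2$ of $M_2$. This gives $M/(F'\cup F'') = M_1/H_1\oplus M_2/H_2$ with both summands of rank one.

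You instead take a disconnecting flat whose contraction has minimum rank, and lower that rank by one by contracting $\cl_M(F\cup e)$ for a single element $e$ of a component of rank at least two. Your check that this closure adds exactly the parallel class of $e$ in $M/F$, so both summands stay nonempty, is right. Your route avoids choosing hyperplanes and the fact that unions of flats of direct summands are flats of the direct sum. The paper's route avoids the extremal setup. You also dispose of the rank-$0$ and rank-$1$ cases explicitly, which the paper leaves implicit in its appeal to looplessness.

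The paper's construction shows directly that the rank-two disconnecting flat can be taken to contain any prescribed disconnecting flat. Your reduction step, iterated starting from $F_0$, gives the same conclusion.
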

\begin{proof}
If $M$ is connected but not unbreakable then there exists a flat $F'$ of $M$ such that $M/F'$ is not connected. Say $M/F'$ decomposes into a direct sum of connected matroids as follows: $M/F'= M_1 \oplus M_2 \oplus \ldots \oplus M_k$ for some $k \geq 2$. Suppose that $r(M/F') \neq 2$. Observe that because $M/F'$ is loopless, $r(M_i)\geq1$ for all $i=1,2,\ldots,k$, which implies that $r(M/F')> 2$. Set $F''$ to be a flat of $M/F'$ consisting of the elements of $\cup_{i \geq 3} E(M_i)$ and a hyperplane $H_1$ in $M_1$ and $H_2$ in $M_2$, respectively. Since $F''$ is a flat of $E(M)-F'$, the set $F' \cup F''$ is a flat of $M$ by \cite[Proposition~3.3.7]{Oxley2011}. We can contract the flat $F' \cup F''$ to obtain a decomposition as follows: $M/(F' \cup F'') = M_1\big/H_1\oplus M_2\big/H_2$ where $r(M_1/H_1)=1$ and $r(M_2/H_2)=1$. This proves the statement with $F = F' \cup F''$.
\end{proof}

\begin{figure} 
\centering
\setlength{\tabcolsep}{2pt} 
\begin{tabular}{ccc ccc ccc c}

\begin{tikzpicture}[thick, every node/.style={inner sep=1.5pt,circle,draw,fill}]

\node (a) at (0,0) {};
\node (b) at (2,0) {};
\node (c) at (1,-0.8) {};
\foreach \y in {0.1,0.5,0.9} {
    \draw (a) .. controls (0.5,-\y) .. (c);
  }
\foreach \y in {0.1,0.5,0.9} {
    \draw (b) .. controls (1.5,-\y) .. (c);
  }  
\node[draw=none, fill=none] at (1,-1.5 ) {\small $(i)$};
\end{tikzpicture}

&&&\begin{tikzpicture}[thick, every node/.style={inner sep=1.5pt,circle,draw,fill}]
  \node (a) at (0,0) {};
  \node (b) at (2,0) {};
  \foreach \y in {-0.8, -0.6, -0.4} {
    \draw (a) .. controls (1,-\y) .. (b);
  }
  \foreach \y in {-0.8, -0.6, -0.4} {
    \draw[densely dotted] (a) .. controls (1,\y) .. (b);
  }
  \node[draw=none, fill=none] at (1,-1.1) {\small $(ii)$};
\end{tikzpicture}

&&&\begin{tikzpicture}[thick, every node/.style={inner sep=1.5pt,circle,draw,fill}]
  \node (a) at (0,0) {};
  \node (b) at (2,0) {};
    \draw[thick]   (a) .. controls +(120:1cm) and +(-150:1cm) .. (a);
\draw[thick]   (a) .. controls +(120:1cm) and +(30:1cm) .. (a);
\draw[thick]   (a) .. controls +(210:1cm) and +(-50:1cm) .. (a);

  \foreach \y in {-0.5, -0.25, 0, 0.25, 0.5} {
    \draw (a) .. controls (1,\y) .. (b);
  }
  \node[draw=none, fill=none] at (1,-1.0) {\small $(iii)$};
\end{tikzpicture}

&&&\begin{tikzpicture}[thick, every node/.style={inner sep=1.5pt,circle,draw,fill}]
  \node (a) at (0,0) {};
  \node (b) at (2,0) {};
  \foreach \y in {-0.5, -0.25, 0, 0.25, 0.5} {
    \draw (a) .. controls (1,\y) .. (b);
  }
  \draw[thick]   (a) .. controls +(120:1cm) and +(-150:1cm) .. (a);
\draw[thick]   (a) .. controls +(120:1cm) and +(30:1cm) .. (a);
\draw[thick]   (a) .. controls +(210:1cm) and +(-50:1cm) .. (a);
\draw[thick]   (b) .. controls +(-40:1cm) and +(40:1cm) .. (b);
\draw[thick]   (b) .. controls +(120:1cm) and +(30:1cm) .. (b);
\draw[thick]   (b) .. controls +(210:1cm) and +(-50:1cm) .. (b);
   \node[draw=none, fill=none] at (1,-1.0) {\small $(iv)$};
\end{tikzpicture}

\end{tabular}
\caption{Graphs $(i)$--$(iv)$ illustrate cases $(i)$--$(iv)$ of Lemma \ref{lem: rank-2 disconnected loopless quasi-biased graphs}.}
\label{fig: small graphs}
\end{figure}

We will next characterize all quasi-biased graphs $(G, \cB, \cL, \cF)$ for which $G$ is connected and $M(G, \cB, \cL, \cF)$ is disconnected, loopless, and has rank two.
Note that every disconnected, loopless, rank-$2$ matroid simplifies to $U_{2,2}$ and is therefore the union of two parallel classes.
See Figure \ref{fig: small graphs} for illustrations of quasi-biased graphs satisfying the four outcomes of the following lemma.

\begin{lemma} \label{lem: rank-2 disconnected loopless quasi-biased graphs}
    Let $(G, \cB, \cL, \cF)$ be a quasi-biased graph so that $G$ is connected and $M(G, \cB, \cL, \cF)$ is disconnected, loopless, and has rank two. 
    Let $X$ and $Y$ be the two parallel classes of $M(G, \cB, \cL, \cF)$.
    Then one of the following holds:

    \begin{enumerate}[$(i)$]
        \item $|V(G)| = 3$, $G$ is loopless, all cycles are balanced, and $X$ and $Y$ are parallel classes of edges between different pairs of vertices.

        \item $|V(G)| = 2$, $G$ is loopless, and a cycle is balanced if and only if it is contained in $X$ or contained in $Y$.

        \item $|V(G)| = 2$, $X$ consists of unbalanced loops at a common vertex, and $Y$ consists of all edges between the two vertices of $G$.
    
        \item $|V(G)| = 2$, $G$ has loops at both vertices, all loops are in $\cL$, $X$ is the set of loops, and $Y$ is the set of non-loops.
    \end{enumerate}
\end{lemma}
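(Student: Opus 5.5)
We split on whether $G$ has an unbalanced cycle, using Proposition \ref{prop: rank of X}. The key facts are that $M=M(G,\cB,\cL,\cF)$ has rank two, $G$ is connected, and every element of $M$ lies in $X$ or $Y$. Two edges $e,f$ are parallel exactly when $\{e,f\}$ is a circuit. Since $M$ is loopless, there are no balanced loops, and every edge lies in exactly one of $X,Y$.

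\emph{Balanced case.} If every cycle of $G$ is balanced, then $M$ is the graphic matroid $M(G)$, so $r(M)=|V(G)|-1$ and $|V(G)|=3$. Since $M$ is loopless, $G$ is loopless. The simplification $\si(G)$ is a connected simple graph on three vertices, hence a path or a triangle. A triangle gives $U_{2,3}$, which is connected, so $\si(G)$ is a path of two edges. The two parallel classes of $M$ are then the two classes of parallel edges of $G$, which is outcome $(i)$.

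\emph{Unbalanced case.} If $G$ has an unbalanced cycle, then $r(M)=|V(G)|$, so $|V(G)|=2$; write $V(G)=\{a,b\}$. Let $N$ be the set of non-loop edges, which is nonempty because $G$ is connected.
\begin{itemize}
\item \emph{Two non-loops.} For $e,f\in N$, the set $\{e,f\}$ is a $2$-cycle. It is a circuit, i.e.\ $e,f$ are parallel, exactly when that $2$-cycle is balanced.
\item \emph{Two loops at the same vertex.} They form a tight handcuff, and two unbalanced cycles are never a circuit of size two.
\item \emph{Two loops at different vertices.} They form a circuit of size two only if both lie in $\cL$. Two vertex-disjoint $\cF$-loops are not a circuit, since the loose handcuff also needs a connecting edge. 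Note that by properness, two vertex-disjoint loops cannot lie one in $\cL$ and one in $\cF$.
\item \emph{A loop and a non-loop.} These are never parallel, since no circuit consists of just a loop and a non-loop edge.
\end{itemize}
Hence each parallel class lies entirely among the non-loops or entirely among the loops, and loops at a common vertex are never parallel to each other.

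We now split on whether $G$ has loops.
\begin{itemize}
\item \emph{No loops.} Then $X\cup Y=N$, and a $2$-cycle is balanced if and only if both of its edges lie in the same class. By the theta property, any cycle here is a $2$-cycle, so this is outcome $(ii)$.
\item \emph{Some loop.} Since loops at a common vertex are never parallel, a class of loops must contain at most one loop at each vertex. Say $X$ contains a loop; then $X$ consists of loops and $Y=N$. If $X$ has loops at only one vertex, then $|X|=1$, which is outcome $(iii)$ (with the degenerate single-loop reading of ``loops at a common vertex''). Otherwise $X$ contains loops at both vertices, so all of them lie in $\cL$ and are pairwise parallel, giving outcome $(iv)$.
\end{itemize}
The loop at a single vertex also needs rechecking: by rank, a single frame-type loop $\ell$ at $a$ together with any $e\in N$ spans, and so forms a basis. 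The statement of $(iii)$ says ``unbalanced loops at a common vertex'', but the analysis above shows two such loops are never parallel.

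The main obstacle is verifying the parallel-pair analysis, in particular that a loop and a non-loop are never parallel. Checking this against the listed circuit types shows it holds: every circuit type containing a loop and a non-loop has at least three edges, or consists only of loops. This also forces $|X|=1$ in case $(iii)$ when loops lie at a single vertex. I would double-check this against the intended meaning of $(iii)$, but the structure of the proof is this case analysis.
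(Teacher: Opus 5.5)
Your case analysis is essentially the paper's: loops are never parallel to non-loops, then you split on where the loop class sits. Reading off $|V(G)|\in\{3,2\}$ from the rank formula in the balanced and unbalanced cases is a slightly cleaner way to reach $(i)$ and $(ii)$ than the paper's.

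\textbf{The error.} Your bullet on two loops at the same vertex is false. Two unbalanced loops $e,f$ at a vertex are two edge-disjoint cycles meeting in exactly one vertex. That is a tight handcuff with neither cycle in $\cB$, which is on the list of circuits. Equivalently, Proposition~\ref{prop: rank of X} gives $r(\{e,f\})=1$ whether the loops lie in $\cL$ or in $\cF$. So all unbalanced loops at a common vertex are pairwise parallel; this is exactly the fact the paper's proof uses.

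Consequently two of your derived claims are wrong: that a class of loops has at most one loop per vertex, and that $|X|=1$ in case $(iii)$. For a counterexample, take $V(G)=\{a,b\}$, a single edge $ab$, and two $\cF$-loops at $a$. Then $M\cong U_{1,2}\oplus U_{1,1}$, and $X$ consists of both loops. Your doubt about the ``intended meaning'' of $(iii)$ comes from this mistake; $(iii)$ is meant literally.

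\textbf{The repair.} The false bullet is not needed. Since loops and non-loops are never parallel and $N\neq\varnothing$, you already get that $X$ is the set of all loops and $Y=N$. If these loops sit at one vertex, this is $(iii)$ with no restriction on $|X|$. If they sit at both vertices, every loop at $a$ is parallel to every loop at $b$, which forces all loops into $\cL$ and gives $(iv)$.

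A minor point: in the loopless two-vertex case every cycle is a $2$-cycle simply because there are two vertices and no loops, so the theta property plays no role there.
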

\begin{proof}
    If edges $e$ and $f$ of $G$ are parallel in $M(G, \cB, \cL, \cF)$, they are either both loops of $G$ or both non-loops of $G$. We will consider several cases depending on whether the two parallel classes consist of loops or non-loops. 
    
    If both parallel classes only consist of non-loops, then $|V(G)|$ is $2$ or $3$ since $G$ is connected. If $|V(G)|=3$ then $(i)$ holds and if $|V(G)|=2$ then $(ii)$ holds.

    Note that both parallel classes cannot consist of loops. Suppose to the contrary that parallel classes $X$ and $Y$ are both collections of loops. Then these loops must be distributed across at least two vertices $x$ and $y$. To see this, observe that since none of these loops are balanced, a collection of loops at a single vertex forms only one parallel class. Thus the loops of $X$ are at a vertex $x$ and the loops of $Y$ are at a vertex $y$, which contradicts that $G$ is connected.

    Now suppose $X$ consists of loops and $Y$ consists of non-loops. Note that each loop is unbalanced. If the loops in $X$ are all at a common vertex, then $Y$ consists of all edges between a pair of vertices. Since $G$ is connected and $E(G)=X\cup Y$, the edges of $Y$ are incident with the vertex of $X$, and $(iii)$ holds. Now suppose the edges of $X$ are distributed across two vertices $\{u,v\}$. All loops must be in $\cL$ since $X$ is a parallel class. Moreover, the edges of $Y$ must be incident to both $u$ and $v$ since $G$ is connected, so $(iv)$ holds.
\end{proof}

\subsection{Nearly complete graphs}\label{sec: nearly complete graphs}

We will now show that if $M = M(G, \cB, \cL, \cF)$ is connected but not unbreakable and $\si(G)$ is obtained from a complete graph by deleting a path with at most two edges, then $(G, \cB, \cL, \cF)$ has a highly structured balancing set. 
By Lemma \ref{lem: contract to rank 2}, $M$ has a flat $F$ so that $M/F$ has rank two and is disconnected.
We will take this minor and consider the structure that it imposes on $(G, \cB, \cL, \cF)$.
For disjoint sets $V_1$ and $V_2$ of vertices we will write $E[V_1, V_2]$ for the set of edges with one end in $V_1$ and one end in $V_2$.

\begin{theorem} \label{thm: structure of breakable complete quasi-graphic matroids}
Let $M = M(G, \cB, \cL, \cF)$ be a connected quasi-graphic matroid so that $\si(G)$ is obtained from a complete graph by deleting a path with at most two edges, and $|V(G)| > 3$.
Then $M$ is not unbreakable if and only if one of the following holds:
 \begin{itemize}
        \item[$(i)$] $V(G)$ has a partition $(V_1, V_2, V_3)$ so that $G[V_i]$ is connected for $i = 1,2,3$, there are no edges between $V_1$ and $V_2$, and either $G[V_i]$ contains a cycle in $\cL$ for some $i \in \{1,2,3\}$, or every cycle of $G$ is balanced.

        \item[$(ii)$] $V(G)$ has a partition $(V_1, V_2)$ so that $G[V_1]$ and $G[V_2]$ are connected and $E[V_1, V_2]$ has a partition into two balancing sets.

        \item[$(iii)$] $V(G)$ has a partition $(V_1, V_2)$ and $(G, \cB, \cL, \cF)$ has a balancing set $X$ so that $X \subseteq E(G[V_1])$ and $G[V_1] \del X$ and $G[V_2]$ are both connected.

        \item[$(iii)'$] $V(G)$ has a partition $(V_1, V_2, V_3)$ and $(G, \cB, \cL, \cF)$ has a balancing set $X$ so that $X \subseteq E(G[V_1 \cup V_3])$, the graphs $G[V_1]\del X$, $G[V_2]$, and $G[V_3] \del X$ are connected, $G[V_3]$ has a cycle in $\cF$, and there are no edges between $V_2$ and $V_3$.

        \item[$(iv)$] $V(G)$ has a partition $(V_1, V_2)$ and $(G, \cB, \cL, \cF)$ has a balancing set $X$ so that $X \subseteq E(G[V_1]) \cup E(G[V_2])$ and $G[V_i]\del X$ is connected for $i = 1,2$, and $G[V_i]$ has a cycle in $\cL$ for $i = 1,2$.

        \item[$(v)$] $V(G)$ has a partition $(V_1, V_2, V_3)$ so that $G[V_i]$ is connected for $i = 1,2,3$, there are no edges between $V_1$ and $V_2$, and $G[V_3]$ contains a cycle in $\cF$.
    \end{itemize}
\end{theorem}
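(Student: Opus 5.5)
Both directions go through Lemma \ref{lem: contract to rank 2} and Lemma \ref{lem: rank-2 disconnected loopless quasi-biased graphs}.

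\emph{Forward direction.} Suppose $M$ is connected but not unbreakable. By Lemma \ref{lem: contract to rank 2} there is a flat $F$ with $M/F$ disconnected of rank two. Then $M/F$ is loopless, because $F$ is a flat. Let $(G',\cB',\cL',\cF') = (G,\cB,\cL,\cF)/F$. The graph $G'$ is connected, since contraction preserves connectivity; rolling up edges also keeps $G'$ connected, as in the proof of Lemma \ref{lem: at least one frame-type loop}. So $G'$ falls into exactly one of the cases $(i)$--$(iv)$ of Lemma \ref{lem: rank-2 disconnected loopless quasi-biased graphs}.

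The vertices of $G'$ correspond to a partition of $V(G)$. Let $T$ be a maximal forest of $F$. Each vertex of $G/\cl(T)$ is a connected class $V_j$, since it is spanned by a component of $G[F]$. The only exception is when $F$ contains a frame-type cycle: then classes carrying frame-type loops are rolled up and disappear as vertices of $G'$. I then translate each outcome back to $G$.

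\begin{enumerate}[$(a)$]
\item Case $(i)$ ($|V(G')|=3$, all cycles balanced, two parallel classes on distinct pairs): $G'$ is a path on three vertex classes, so there are no edges between two of the classes. If $F$ contains no unbalanced cycle, Lemma \ref{lem: unbalanced cycles preserved by contraction} forces $G$ balanced. Otherwise the unbalanced cycle of $F$ is in $\cL$, giving $(i)$, or lies in $\cF$; the latter produces frame-type loops by Lemma \ref{lem: at least one frame-type loop}, and after removing the rolled-up class the bookkeeping gives $(v)$.
\item Case $(ii)$: $F$ is balanced. The edges $E[V_1,V_2]$ split into $X\cup Y$, and every unbalanced cycle of $G$ must use edges of both, by Lemma \ref{lem: unbalanced cycles preserved by contraction} applied to $G/F$. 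Hence $X$ and $Y$ are balancing sets, giving $(ii)$.
\item Case $(iii)$: the loop class $X$ at one vertex comes either from edges inside $V_1$ outside $F$, whose removal kills all unbalanced cycles, giving $(iii)$; or from rolled-up frame-type cycles in a third class $V_3$ adjacent only to $V_1$, giving $(iii)'$.
\item Case $(iv)$: lift-type loops appear at both vertices, so both sides contain $\cL$-cycles, giving $(iv)$.
\end{enumerate}

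In each case the edges of $G$ inside each $V_i$ not in $F$ form the claimed balancing set. The density of $\si(G)$ is used to argue that the partition has at most three parts and that only the pair $V_1,V_2$ can be non-adjacent. This works because $\si(G)$ misses at most a path of length two, so two vertex classes are non-adjacent only if both are tiny. That is consistent with $(i)$ and $(v)$.

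\emph{Converse.} For each outcome, exhibit a flat whose contraction is disconnected. Take $F$ to be the closure of the edges of $G[V_i]$ (minus $X$) over all $i$; the connectivity hypotheses make each $V_i$ collapse to a single vertex. Then check with Proposition \ref{prop: rank of X} that $M/F$ has rank two and has one of the forms in Lemma \ref{lem: rank-2 disconnected loopless quasi-biased graphs}, hence is disconnected.

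For $(v)$ and $(iii)'$, contracting the $\cF$-cycle in $V_3$ rolls up the edges incident with $V_3$. Because there are no $V_2$--$V_3$ edges, or because $V_3$ separates $V_1$ from $V_2$, this leaves the two-parallel-class picture. For $(iv)$, contracting inside each part leaves lift-type loops at both vertices, which is case $(iv)$ of the lemma.

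The main obstacle is the forward translation when $F$ contains a frame-type cycle. Rolled-up loops change which edges survive, and one must verify that every unbalanced cycle of $G$ avoiding the claimed balancing set is accounted for. I would handle this by first contracting $\cl(T)$, which is balanced so that Lemma \ref{lem: unbalanced cycles preserved by contraction} applies. I would then treat the final loop contraction separately, using the explicit rolling-up description.
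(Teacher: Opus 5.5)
Your overall strategy matches the paper's: Lemma \ref{lem: contract to rank 2}, then translate the cases of Lemma \ref{lem: rank-2 disconnected loopless quasi-biased graphs} back to $G$, and exhibit flats for the converse. But your forward direction rests on a false claim, namely that $G'=(G,\cB,\cL,\cF)/F$ is connected.

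When $F$ contains a cycle in $\cF$, each vertex of $G/\cl(T)$ carrying a loop of $F$ has all its non-loop edges rolled up onto their other ends, so it becomes isolated. Deleting these vertices can disconnect what remains. The proof of Lemma \ref{lem: at least one frame-type loop} uses connectivity of $G/\cl(T)$ only to produce \emph{one} frame-type loop; it says nothing about connectivity afterwards.

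This is exactly the situation of $(v)$. Take $F=\cl(E(G[V_1])\cup E(G[V_2])\cup E(G[V_3]))$ and delete the isolated vertex. The contraction is then two vertices $v_1,v_2$, each carrying frame-type loops, with no edge between them. That is a disconnected rank-two matroid on a disconnected graph, so it lies outside Lemma \ref{lem: rank-2 disconnected loopless quasi-biased graphs}, which assumes $G$ is connected.

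Your derivation of $(v)$ inside case $(a)$ cannot fill this hole, because that subcase is empty. If $F$ contained a cycle in $\cF$, Lemma \ref{lem: at least one frame-type loop} would put a frame-type loop in $G'$, but $G'$ is loopless in case $(i)$ of Lemma \ref{lem: rank-2 disconnected loopless quasi-biased graphs}. This is precisely how one excludes $\cF$-cycles from $F$ in cases $(i)$, $(ii)$, $(iv)$. So the configuration that actually yields $(v)$ is never analyzed.

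The missing step is a separate treatment of a disconnected $G'$ (after deleting isolated vertices), as the paper does:
\begin{enumerate}
\item Contracting non-loops or loops in $\cL$ keeps a connected graph connected, so $F$ must contain a cycle in $\cF$.
\item Let $T$ be a maximal forest of $G[F]$ and $V'$ the set of vertices of $G/\cl(T)$ carrying loops of $F$. No non-loop of $G/\cl(T)$ joins two vertices of $V'$: it would be spanned by the two loops, so it would lie in $F$, contradicting maximality of $T$.
\item Proposition \ref{prop: rank of X} with $r(M/F)=2$, together with density, forces $G'$ to be two vertices $v_1,v_2$ with frame-type loops and no edge between them.
\item $V'$ is independent in $G/\cl(T)$ and $v_1v_2$ is a non-edge. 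Since $\si(G/\cl(T))$ is still a complete graph minus a path with at most two edges, this forces $|V'|=1$, giving the three classes of $(v)$.
\end{enumerate}

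Separately, your remark that the edges inside the $V_i$ not in $F$ form the balancing set is wrong in two cases. In $(ii)$ the balancing sets consist of cross edges. In $(iii)'$ every edge of $G[V_3]$ lies in $F$, so your set misses the $\cF$-cycle in $G[V_3]$ and is not balancing. There you can instead take the loops of $G'$ at $v_1$ (including the rolled-up edges of $E[V_1,V_3]$) together with the edges of $G[V_3]$ outside a spanning tree.
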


\begin{proof}
We will first show that if one of the above outcomes holds, then $M$ is not unbreakable.
For each outcome other than $(v)$ we will find a flat $F$ of $M$ so that $M/F$ gives one of the disconnected minors from Lemma \ref{lem: rank-2 disconnected loopless quasi-biased graphs}. 

First suppose $(G,\cB, \cL,\cF)$ is as in case $(i)$. Let $F=\cl(E(G[V_1])\cup E(G[V_2])\cup E(G[V_3]))$. If every cycle of $G$ is balanced, then $M$ is graphic and $\qbg/F$ has the form of Lemma \ref{lem: rank-2 disconnected loopless quasi-biased graphs}$(i)$. If $G[V_i]$ contains a cycle $C$ in $\cL$ for some $i \in \{1,2,3\}$, then first contracting this cycle gives a quasi-biased graph with no unbalanced cycles. Contracting the remaining edges of $F-C$ results in a quasi-biased graph $\qbg/F$ of the form of Lemma \ref{lem: rank-2 disconnected loopless quasi-biased graphs}$(i)$.

Next suppose $(G,\cB, \cL,\cF)$ is as in case $(ii)$. Let $F=\cl(E(G[V_1]) \cup E(G[V_2]))$. Since every cycle of $G[V_1]$ or $G[V_2]$ is balanced and each of $G[V_1]$ and $G[V_2]$ is connected, and $E[V_1,V_2]$ has a partition into two balancing sets $X$ and $Y$, then $G\del X$ and $G\del Y$ are balanced. Thus $G/F$ contains these two balancing sets $X$ and $Y$ as two distinct parallel classes. These parallel classes consist of parallel edges between the vertices represented by the contraction of $G[V_1]$ and $G[V_2]$ respectively, so that each 2-cycle of $X$ or $Y$ is balanced, as in the form of Lemma \ref{lem: rank-2 disconnected loopless quasi-biased graphs}$(ii)$.

Next suppose $(G,\cB, \cL,\cF)$ is as in case $(iii)$. Let $F=\cl((E(G[V_1])- X) \cup E(G[V_2]))$. After contraction of $F$, the edges in $X$ become unbalanced loops on the contracted vertex obtained from $G[V_1]\del X$. These loops create one parallel class of $M/F$. The contracted vertex obtained from $G[V_2]$ does not have any loops on it. The edges in $E[V_1,V_2]$ form a parallel class of $M/F$ that consists of balanced 2-cycles. This results in the quasi-biased graph of Lemma \ref{lem: rank-2 disconnected loopless quasi-biased graphs}$(iii)$.

Next suppose $(G,\cB, \cL,\cF)$ is as in $(iii)'$. Let $F=\cl(E(G[V_1]\del X) \cup E(G[V_2])\cup E(G[V_3]))$. Since $G[V_i]\del X$ is connected for $i=1,3$, we may first contract spanning trees $T_1, T_2, T_3$ of $G[V_1]\del X,G[V_2]$, and $G[V_3]\del X$ respectively so that $\qbg/(T_1\cup T_2\cup T_3)$ has three vertices $v_1, v_2, v_3$ obtained by identifying the vertices of $V_1,V_2,$ and $V_3$ respectively. Since $X$ is a balancing set, $E[v_1,v_2]$ is balanced. Moreover, by Lemma \ref{lem: at least one frame-type loop}, one loop at $v_3$ is a frame-type loop since $G[V_3]$ contains a cycle in $\cF$. Since there are no edges between $V_2$ and $V_3$, there are no edges between $v_2$ and $v_3$. Contracting the remaining unbalanced loops at $v_3$, each edge of $E[v_1,v_3]$ is deleted and replaced with an unbalanced frame-type loop at $v_1$, so that we have one parallel class consisting of $E[v_1,v_2]$ and one consisting of the unbalanced loops at $v_1$, which gives us the quasi-biased graph of Lemma \ref{lem: rank-2 disconnected loopless quasi-biased graphs}$(iii)$ after deleting the isolated vertex $v_3$.

For case $(iv)$, let $F=\cl((E(G[V_1]) \cup E(G[V_2]))- X)$. All loops of $\qbg/F$ are unbalanced and, since $G[V_i]$ has a cycle in $\cL$ for $i=1,2$, one of them is a lift-type loop. These loops, which are the edges of $X \cap G[V_i]$ for $i=1,2$, appear on the two vertices obtained from contracting $G[V_i]\del X$ for $i=1,2$. Since the unbalanced loops at different vertices are disjoint, they share the same unbalanced cycle type in $\qbg/F$. Thus all loops of $\qbg/F$ are lift-type and create one parallel class of $M/F$. The edges of $E[V_1,V_2]$ create the other parallel class of $M/F$, since $X-F$ is a balancing set of $(G,\cB,\cL,\cF)/F$. This results in the quasi-biased graph of Lemma \ref{lem: rank-2 disconnected loopless quasi-biased graphs}$(iv)$. 

Finally, in the case of outcome $(v)$, we choose $F=\cl(E(G[V_1])\cup E(G[V_2])\cup E(G[V_3]))$. Observe that by first contracting a spanning tree $T_i$ of $G[V_i]$ for $i = 1,2,3$, this leaves each of the parts $V_i$ identified as a vertex $v_i$ in $\qbg/F$ for $i=1,2,3$, with some possible loops on each, so that there is no edge between $v_1,v_2$. Since $G[V_3]$ contains a cycle in $\cF$, after contracting the edges of $T_1\cup T_2\cup T_3$ we see that $v_3$ is incident with a frame-type loop. After contracting $F-(T_1\cup T_2\cup T_3)$, each edge of $E[V_1,V_3]$ or $E[V_2,V_3]$ is deleted and replaced with a frame-type loop on $v_1$ or $v_2$, respectively, in $\qbg/F$. Since the graph of $\qbg/F$ is disconnected with frame-type cycles in different components, it follows from Proposition \ref{prop: rank of X} that $M/F$ is disconnected. 

Now we will show that if $M$ is connected but not unbreakable, then one of the theorem statement outcomes holds. By Lemma \ref{lem: contract to rank 2}, $M$ has a flat $F$ so that $M/F$ has rank $2$ and is disconnected. 
Let $(G', \cB', \cL', \cF')$ be obtained from $(G, \cB, \cL, \cF)/F$ by deleting isolated vertices. We consider two cases depending on the connectivity of $G'$, beginning with the cases in which $G'$ is connected, which all arise from Lemma \ref{lem: rank-2 disconnected loopless quasi-biased graphs}.

First suppose that $(G',\cB',\cL',\cF')$ is the quasi-biased graph of Lemma \ref{lem: rank-2 disconnected loopless quasi-biased graphs}$(i)$. We will show that $\qbg$ satisfies outcome $(i)$. Let $v_1,v_2,$ and $v_3$ be the vertices of $G'$ so that there are no edges between $v_1$ and $v_2$. By Lemma \ref{lem: at least one frame-type loop}, $F$ contains no cycles in $\cF$, so $G'=G/F$. Then the vertices $v_1,v_2,v_3$ of $G'$ correspond to a partition $V_1,V_2,V_3$ of $V(G)$ so that $G[V_i]$ is connected for $i=1,2,3$ and there are no edges between $V_1$ and $V_2$. 
Since all cycles of $\qbgp$ are balanced, either every cycle of $G$ is balanced or one of $G[V_i]$ contains a cycle in $\cL$. Otherwise, there is an unbalanced cycle $C$ using edges between $E[V_1,V_3]$ or $E[V_2,V_3]$. Since, in $G'$, the edge sets $E[v_1,v_3]$ and $E[v_2,v_3]$ are non-empty, $F-C\neq\varnothing$. Thus by Lemma \ref{lem: unbalanced cycles preserved by contraction} there is an unbalanced cycle $C'\subseteq C-F$ in $G'$, a contradiction. Thus $\qbg$ satisfies outcome $(i)$.

Next suppose that $\qbgp$ is the quasi-biased graph of Lemma \ref{lem: rank-2 disconnected loopless quasi-biased graphs}$(ii)$. We will show that $\qbg$ satisfies outcome $(ii)$. By Lemma \ref{lem: at least one frame-type loop}, $F$ contains no cycles in $\cF$, so $G'=G/F$. Since $G'$ has an unbalanced cycle, $F$ contains no cycles in $\cL$. Let $v_1$ and $v_2$ be the vertices of $G'$ with associated vertex partition $V_1,V_2$ of $G$ such that $G[V_i]$ is connected for $i=1,2$. Let $X$ be a maximal balanced set of edges of $G'$ and let $Y=E(G')-X$. Note that $(X,Y)$ is a partition of $E[V_1,V_2]$ in $G$. We now show that $X$ and $Y$ form a pair of balancing sets of $(G,\cB,\cL,\cF)$. Suppose that $G\del X$ contains an unbalanced cycle $C$. By Lemma \ref{lem: unbalanced cycles preserved by contraction}, $C-F$ contains an unbalanced cycle $C'\subseteq Y$ in $G'$, a contradiction. Thus symmetry implies that both $X$ and $Y$ are balancing sets, so $\qbg$ satisfies outcome $(ii)$.

Next suppose that $(G', \cB', \cL', \cF')$ is the quasi-biased graph of Lemma \ref{lem: rank-2 disconnected loopless quasi-biased graphs}$(iii)$. 
Let $v_1$ and $v_2$ be the vertices of $G'$ so that all loops are incident with $v_1$.
Let $X$ be the set of loops and let $Y$ be the set of non-loops of $G'$.
Note that $F$ contains no cycles in $\cL$ because $(G', \cB', \cL', \cF')$ has an unbalanced cycle.
We will consider two subcases depending on whether or not $F$ contains a cycle in $\cF$.
First suppose that $F$ contains no cycles in $\cF$.
We will show that $(G, \cB, \cL, \cF)$ satisfies outcome $(iii)$.
Note that $G[F]$ is balanced, and $G' = G/F$.
Since $|V(G')| = 2$, it follows that $G[F]$ has two components $G_1$ and $G_2$ so that for $i = 1,2$ the vertices of $G_i$ are all identified via contraction to form vertex $v_i$.
For $i = 1,2$ let $V_i = V(G_i)$, and note that $(V_1, V_2)$ is a partition of $V(G)$.
Since $(G', \cB', \cL', \cF') \del X$ is balanced and $G[F]$ is balanced, Lemma \ref{lem: unbalanced cycles preserved by contraction} implies that $(G, \cB, \cL, \cF) \del X$ is balanced.
Clearly $X \subseteq E(G[V_1])$ because $X$ is a set of loops at $v_1$ in $G/F$.
The graph $G[V_1]\del X$ is connected because it has $G_1$ as a spanning subgraph, and $G[V_2]$ is connected because it has $G_2$ has a spanning subgraph.
Therefore $(G, \cB, \cL, \cF)$ satisfies outcome $(iii)$.

In the second subcase suppose that $F$ contains a cycle $C$ in $\cF$.
We will show that $(G, \cB, \cL, \cF)$ satisfies outcome $(iii)'$.
Let $T$ be an edge-maximal forest of $G[F]$.
Note that $G[\cl(T)]$ is balanced, and that each element in $F - \cl(T)$ is a frame-type loop of $(G, \cB, \cL, \cF)/\cl(T)$, by the edge-maximality of $T$.
Let $V'$ be the set of vertices of $G/\cl(T)$ incident with a loop in $F$.
Then $G'$ is obtained from $G/\cl(T)$ by deleting $V'$, and for each edge $e$ with exactly one end in $V'$, replacing $e$ with a frame-type loop at the end of $e$ that is not in $V'$.
In particular, $V(G/\cl(T)) = V' \cup \{v_1, v_2\}$, and $G/\cl(T)$ has no edges between $V'$ and $v_2$ since $G'$ has no loops at $v_2$.
Also note that in $G/\cl(T)$, there are no non-loop edges among vertices of $V'$.
To see this, if $e$ is a non-loop between vertices of $V'$, then it is spanned by the two loops in $F$ at its end-vertices, and therefore $e \in F$, which contradicts that $T$ is an edge-maximal forest of $G[F]$.
Since $\si(G)$, and therefore $\si(G/\cl(T))$, is obtained from a complete graph by deleting at most two edges and there are no edges between $V'$ and $v_2$ in $G/\cl(T)$, we see that $|V'| = 1$; let $v_3 \in V'$.
Therefore $G[\cl(T)]$ has three components $G_1$, $G_2$, and $G_3$ so that for $i = 1,2,3$ the vertices in $G_i$ are identified via contraction of $\cl(T)$ to form vertex $v_i$.
For $i = 1,2,3$ let $V_i = V(G_i)$, and note that $(V_1, V_2, V_3)$ is a partition of $V(G)$ with no edges between $V_2$ and $V_3$.
Clearly $X \subseteq E(G[V_1 \cup V_3])$ because $X$ is a set of loops at $v_1$ in $G'$.
The graphs $G[V_1] \del X$, $G[V_2]$, and $G[V_3] \del X$ are connected because $G[V_i]/\cl(T)\del X$ are connected for each $i=1,2,3$.
Finally, $G[V_3]$ contains a cycle in $\cF$ because $v_3$ has an incident frame-type loop in $C-\cl(T)$ in $G/\cl(T)$.
Therefore $(G, \cB, \cL, \cF)$ satisfies outcome $(iii)'$.

Next suppose that $\qbgp$ is the quasi-biased graph of Lemma \ref{lem: rank-2 disconnected loopless quasi-biased graphs}$(iv)$. We will show that $G$ satisfies outcome $(iv)$. By Lemma \ref{lem: at least one frame-type loop}, $F$ contains no cycles in $\cF$, so $G'=G/F$. And since $G'$ has an unbalanced cycle, $F$ contains no cycles in $\cL$. Let $X$ be the set of loops of $G'$. Let $v_1,v_2$ be the vertices of $G'$ with corresponding partition $(V_1,V_2)$ of $V(G)$ such that $G[V_i]\del X$ is connected for $i=1,2$. It is straightforward to see that $X\subseteq E(G[V_1])\cup E(G[V_2])$. Because $G[F]$ is balanced and $\qbgp\del X$ is balanced, it follows by Lemma \ref{lem: unbalanced cycles preserved by contraction} that $\qbg\del X$ is balanced. Since $G'$ has lift-type loops at each vertex $v_i$, it follows that $G[V_1]$ and $G[V_2]$ each contain cycles in $\cL$. Thus $\qbg$ satisfies outcome $(iv)$.

We now consider the case when $\qbg/F$ is disconnected. Since $G$ is obtained by deleting at most two edges from a complete graph with at least four vertices, $G$ is connected. If $G[F]$ contains no cycles in $\cF$, then $G'=G/F$. But this is impossible, since edge contraction in a connected graph cannot result in a disconnected graph. Thus $G[F]$ contains a cycle $C$ in $\cF$ and by Lemma \ref{lem: contract a cycle}, $M/F$ is a frame matroid. Also, $G[F]$ has no cycles in $\cL$ because $G'$ has unbalanced cycles. Let $T$ be an edge-maximal forest of $F$. We note that $G[\cl(T)]$ is balanced, and each element of $F-\cl(T)$ is a frame-type loop in $\qbg/\cl(T)$ by the edge-maximality of $T$. Let $V'$ be the set of vertices of $G/\cl(T)$ that are incident with a loop in $F$. Then $G'$ is obtained from $G/\cl(T)$ by deleting $V'$ and, for each edge $e$ with exactly one end in $V'$, replacing $e$ with a frame-type loop at the end of $e$ that is not in $V'$. 
Moreover, we see that in $G/\cl(T)$, there are no non-loop edges between vertices of $V'$. Otherwise, they are spanned by unbalanced loops at both end-vertices and belong to $F$, contradicting the edge-maximality of $T$. 
Since $G'$ is unbalanced and $r(M/F)=2$, it follows that $\lvert V(G')\rvert=2$. Let $\{v_1,v_2\}$ be the vertices of $G'$. We see that $V(G/\cl(T))=V'\cup\{v_1,v_2\}$. Since $G'$ is disconnected, there are no edges between $v_1$ and $v_2$. Since $\si(G/\cl(T))$ is obtained by deleting a path of length at most two from a complete graph, and since there are no edges between $v_1$ and $v_2$, we see that $\lvert V'\rvert=1$. Let $v_3\in V'$. Then $G[\cl(T)]$ has three components $G_1,G_2$ and $G_3$ so that for $i=1,2,3$ the vertices in $G_i$ are identified via contraction to form vertex $v_i$. For each $i$, let $V_i=V(G_i)$, and note that $(V_1,V_2,V_3)$ is a partition of $V(G)$ with no edges between $V_1$ and $V_2$. Since each graph $G[V_i]/\cl(T)$ is connected it follows that $G[V_i]$ is connected for $i=1,2,3$. Moreover, $G[V_3]$ contains a cycle in $\cF$ because $v_3$ is incident with a frame-type loop in $C-\cl(T)$ in $G/\cl(T)$.
Thus $\qbg$ satisfies outcome $(v)$.  
\end{proof}

We have now characterized all unbreakable quasi-graphic matroids on a graph $G$ with at least four vertices so that $\si(G)$ is the complement of a path with at most two edges.
It is natural to wonder which of these quasi-graphic matroids are $3$-connected.
Our next result answers this question.

\begin{proposition} \label{prop: which ones are 3-connected?}
Let $M = M(G, \cB, \cL, \cF)$ be a simple quasi-graphic matroid so that $\si(G)$ is obtained from a complete graph by deleting a path with at most two edges, and $|V(G)| > 6$.
Then $M$ is not $3$-connected if and only if $(G, \cB, \cL, \cF)$ has a balancing set $X$ of edges with $r_M(X) \le 2$.
\end{proposition}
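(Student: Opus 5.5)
We prove the two directions separately. Throughout, $n=|V(G)|\ge 7$. Since $\si(G)$ is $K_n$ minus a path of length at most two, $\si(G)$ is $3$-connected, and deleting any two edges leaves it connected.

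\emph{Reverse direction.} Suppose $X$ is a balancing set with $r_M(X)\le 2$.
\begin{enumerate}[$(1)$]
\item By definition of a balancing set, $G$ has an unbalanced cycle. Since $G$ is connected, Proposition~\ref{prop: rank of X} gives $r(M)=n$.
\item We list the possible shapes of $G[X]$. Since $M$ is simple, every loop of $G$ is unbalanced and distinct edges are not parallel in $M$. Proposition~\ref{prop: rank of X} then shows that $G[X]$ is one of: a single edge; two non-loop edges (a path or a $2$-cycle); an edge together with a loop; or two loops. In each case $X$ contains at most two non-loop edges of $G$ whose ends are distinct.
\item Hence $G\del X$ is connected, because $\si(G)$ minus two edges is connected. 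It is balanced because $X$ is a balancing set. Proposition~\ref{prop: rank of X} gives $r(E-X)=n-1$.
\item If $|X|=1$, then $X$ is a coloop, so $M$ is not $3$-connected. Otherwise $r(X)+r(E-X)-r(M)\le 2+(n-1)-n=1$ with $|X|\ge 2$ and $|E-X|\ge 2$, so $(X,E-X)$ is a $2$-separation.
\end{enumerate}

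\emph{Forward direction, setup.} Suppose $M$ is simple and not $3$-connected.
\begin{enumerate}[$(1)$]
\item If every cycle of $G$ were balanced, then simplicity of $M$ would force $G$ to be simple. Then $M=M(G)$ with $G$ a $3$-connected simple graph, which is $3$-connected, a contradiction. So $G$ has an unbalanced cycle and $r(M)=n$.
\item Take a separation $(A,B)$ of order at most two: $|A|,|B|\ge 1$, and $r(A)+r(B)\le n+1$. Since $M$ is simple, we may take $|A|,|B|\ge 2$ unless $M$ has a coloop.
\item The goal is to show that one side, say $A$, satisfies $r(A)\le 2$, and that $B$ is balanced. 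Then $A$ is the required balancing set.
\end{enumerate}

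\emph{Forward direction, rank comparison.} Write $V_A=V(G[A])$ and $V_B=V(G[B])$, so $V_A\cup V_B=V(G)$. By Proposition~\ref{prop: rank of X}, $r(A)\ge |V_A|-c(A)+l(A)$, and similarly for $B$. Combining with $r(A)+r(B)\le n+1$ gives
\[ |V_A\cap V_B| + l(A)+l(B) \le c(A)+c(B)+1. \]
The key combinatorial claim is the following. Because $G$ is nearly complete, every pair of vertices lying in different components of $G[A]$ is joined in $G$ by an edge of $B$, with at most two exceptional pairs. So a large number of components on one side forces $V_A\cap V_B$ to be large.

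I would prove this by counting. Let the components of $G[A]$ have vertex sets $S_1,\dots,S_k$. Every vertex of $S_i$ with a neighbour outside $S_i$ lies in $V_B$. Using $n\ge 7$ and minimum degree at least $n-3$ in $\si(G)$, this gives $|V_A\cap V_B|\ge |V_A|-O(1)$ whenever $k\ge 2$. Applying the same argument to $B$ should leave only two possibilities:
\begin{enumerate}[$(a)$]
\item one side, say $A$, spans at most two vertex pairs or loops, so $r(A)\le 2$, while $G[B]$ is connected and spanning;
\item both sides are connected, and the displayed inequality forces $l(A)+l(B)\le 1$.
\end{enumerate}

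\emph{Forward direction, finishing.} In case $(a)$, equality in the rank bound forces $l(B)=0$, that is, $B$ is balanced. So $A$ is a balancing set with $r(A)\le 2$. In case $(b)$, both $G[A]$ and $G[B]$ are connected and spanning. The inequality then reads $n+l(A)+l(B)\le 3$, which is impossible for $n\ge 7$.

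The hardest part will be the frame-type branch of Proposition~\ref{prop: rank of X}, where the rank is $|V|-b(X)$ rather than $|V|-c(X)+l(X)$. I would handle it by replacing $c(\cdot)-l(\cdot)$ with $b(\cdot)$ throughout, noting that $b(\cdot)\le c(\cdot)$. The same counting should then go through, because components of $G[A]$ containing frame-type cycles only increase $r(A)$.
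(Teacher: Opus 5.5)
Your reverse direction reaches the right inequality, but the forward direction has a genuine gap. The dichotomy $(a)$/$(b)$ is the whole content of the proposition, and you only assert that the counting ``should leave only two possibilities.'' The estimate $|V_A\cap V_B|\ge |V_A|-O(1)$ cannot produce this dichotomy. Your inequality $|V_A\cap V_B|+l(A)+l(B)\le c(A)+c(B)+1$ also needs $c(A)+c(B)$ to be small, and $c(A)$ can be of order $|V_A|$ (take $A$ to be a matching, or frame-type loops at many vertices).

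What is missing is a bound on the components of one side in terms of the other. Suppose the spanning subgraph $(V(G),A)$ has $m\ge 2$ components. Then every edge of $G$ joining two of them lies in $B$, so $(V(G),B)$ contains a complete $m$-partite graph minus at most two edges. It therefore has at most $1$, $2$, or $3$ components according as $m\ge 4$, $m=3$, or $m=2$. Write $\hat c(Z)=c(Z)+|V(G)-V(G[Z])|$. Your rank bound then reads $\hat c(A)+\hat c(B)\ge n-1+l(A)+l(B)\ge 6$, which forces $\hat c(A)=1$ or $\hat c(B)=1$; that is, one side contains a spanning tree of $G$. This is exactly the claim at the heart of the paper's proof. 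The paper argues that otherwise the two sides contain forests with $n-2$ and $n-3$ edges, which gives $r(M)\le 6$.

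Once $G[B]$ contains a spanning tree, finish by rank rather than by the shape of $A$. First, $r(B)\ge n-1$ gives $r(A)\le 2$. Second, $r(B)=n$ would give $r(A)\le 1$ with $|A|\ge 2$, contradicting simplicity. So $r(B)=n-1$ and $B$ is balanced.

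There are also some smaller problems:
\begin{itemize}
\item Your description of case $(a)$, that $A$ ``spans at most two vertex pairs or loops,'' is unjustified and false in general. A balanced triangle has rank $2$, and so do three edges joining the same two vertices with all $2$-cycles unbalanced.
\item The same error appears in the shape list of step $(2)$ of your reverse direction. There it is harmless: since $E-X$ is balanced, $r(E-X)\le n-1$ holds without any connectivity argument, and that is all you need (and all the paper uses).
\item In case $(b)$ you pass from ``connected'' to ``connected and spanning'' without argument. With $\hat c$ in place of $c$, this case no longer needs separate treatment.
\item The frame-type branch you flag as the hardest part is not an obstacle. If $Z$ contains a cycle in $\cF$, then $G[Z]$ has an unbalanced component, so $c(Z)-b(Z)\ge 1=l(Z)$. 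Hence the lower bound $r(Z)\ge |V(G[Z])|-c(Z)+l(Z)$ holds in both branches of the rank formula.
\end{itemize}
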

\begin{proof}
It is straightforward to see that if $X$ is a balancing set with $r_M(X)\leq2$, then $M$ is not 3-connected. To show this, let $Y=E(M)-X$. Note that, since $X$ is a balancing set, $r(Y)\leq\lvert V(G)\rvert-1=r(M)-1$. 
Let $r(X) = i$, so $|X|, |Y| \ge i$.
Then
\[
r(X)+r(Y)-r(M)\leq i+r(M)-1-r(M)\leq i - 1,
\]
implying that $M$ is not 3-connected.

Conversely, suppose that $M$ is not $3$-connected. If all cycles are balanced, then $\si(G) = G$ and $G$ is $3$-connected and it follows from \cite[Proposition 8.1.9]{Oxley2011} that $M$ is $3$-connected. So not all cycles are balanced, so $r(M) = |V(G)| > 6$.
Then $E(M)$ has a partition $(X, Y)$ so that $r(X) + r(Y) - r(M) \le i$ and $|X|, |Y| \ge i+1$ for some $i \in \{0,1\}$. We begin by showing the following.

\begin{claim}\label{$G[X]$ or $G[Y]$ contains a spanning tree of $G$}
$G[X]$ or $G[Y]$ contains a spanning tree of $G$.
\end{claim}
\begin{proof}
Suppose $G[X]$ does not contain a spanning tree of $G$. We will show that with $G[Y]$ contains a spanning tree of $G$, or $G[X]$ contains a forest with at least $|V(G)| - 2$ edges and $G[Y]$ contains a forest with at least $|V(G)| - 3$ edges. Since $G[X]$ does not contain a spanning tree, it is disconnected and we will consider cases according to the number of components in $G[X]$. First, assume that $G[X]$ has $4$ components. We note that the argument is identical for $G[X]$ having $c\geq4$ components. Then $G[Y]$ contains $K_{n_1,n_2,n_3,n_4}$ for some positive integers $n_1,n_2,n_3,n_4$ such that $n_1+n_2+n_3+n_4=\lvert V(G)\rvert$. Observe that $K_{n_1,n_2,n_3,n_4}$ is connected and does not have any $k$-edge cuts for $k\leq2$. Since $\si(G)$ is obtained by deleting a path of length at most 2 from a complete graph, $G[Y]$ contains a spanning tree. Next we assume that $G[X]$ has $3$ components. Then $G[Y]$ contains $K_{n_1,n_2,n_3}$ for integers $n_1,n_2,n_3$ so that $n_1+n_2+n_3=\lvert V(G)\rvert$. 
Note that $G[Y]$ has no cut-edge. So $G[Y]$ has a $(|V(G)|-2$)-edge forest, and $G[X]$ has a $(|V(G)|-3)$-edge forest. Finally assume that $G[X]$ has $2$ components. Thus $G[Y]$ contains $K_{n_1,n_2}$ for integers $n_1,n_2$ so that $n_1+n_2=\lvert V(G)\rvert$. Then $G[Y]$ contains a forest with $|V(G)|-3$ edges. 

In all cases, by possibly switching $X$ and $Y$, we may assume that $G[X]$ contains a forest with at least $\lvert V(G)\rvert-2$ edges and $G[Y]$ contains a forest with at least $\lvert V(G)\rvert-3$ edges. Thus $r(X) \ge r(M) - 2$ and $r(Y) \ge r(M) - 3$. It follows that, 
\[i \geq  r(X)+r(Y)-r(M) \geq r(M)-2+r(M)-3-r(M) \geq r(M)-5\]
which implies that $r(M) \le 5 + i \le 6$, a contradiction. This concludes the proof of \ref{$G[X]$ or $G[Y]$ contains a spanning tree of $G$}.
\end{proof}

So we may assume that $G[Y]$ contains a spanning tree of $G$. If $r(Y) = r(M)$, then $r(X) \le i \leq 1$ and $|X| \ge i + 1$ which implies that $X$ has a loop or a parallel pair, which contradicts our assumption.
Otherwise $r(Y) = r(M) - 1$, which implies that $G[Y]$ is balanced and $r_M(X)\leq i+1\leq 2$, hence $X$ is a balancing set with $r_M(X)\leq2$. 
\end{proof}

Theorem \ref{thm: structure of breakable complete quasi-graphic matroids} and Proposition \ref{prop: which ones are 3-connected?} combine to prove Theorem \ref{thm: main converse for nearly complete graphs}.

\subsection{Cycles}\label{sec: cycles}

In this section we will characterize the unbreakable $3$-connected lifted-graphic matroids $M = M(G, \cB, \cL, \varnothing)$ for which $\si(G)$ is a cycle, proving Theorem \ref{thm: main converse for cycles}.
We first characterize $3$-connectivity.

\begin{proposition} \label{lem: at most one edge of $M$ is not in an unbalanced $2$-cycle}
    Let $M=M(G, \cB, \cL, \varnothing)$ such that $\si(G)$ is a cycle and $|V(G)| > 3$. Then $M$ is $3$-connected if and only if $G$ does not have balanced loops or $2$-cycles, $G$ has at most one unbalanced loop, and at most one edge of $G$ is not in an unbalanced $2$-cycle.
\end{proposition}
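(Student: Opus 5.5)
We prove the two directions separately, working throughout with the rank function of Proposition~\ref{prop: rank of X} specialised to $\cF=\varnothing$. If $X$ contains an unbalanced cycle then $r(X)=|V(G[X])|-c(X)+1$; otherwise $r(X)=|V(G[X])|-c(X)$. Write $n=|V(G)|>3$ and let $v_1,\dots,v_n$ be the vertices in cyclic order of $\si(G)$. Let $P_j$ denote the parallel class of non-loop edges joining $v_j$ and $v_{j+1}$. We may assume some cycle is unbalanced; otherwise $M=M(G)$ is graphic with $\si(G)$ a cycle, which is never $3$-connected when $n>3$ (and is excluded by either side of the equivalence). Thus $r(M)=n$.

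\emph{Necessity.} We exhibit small separations whenever a condition fails. A balanced loop is a matroid loop. A balanced $2$-cycle is a parallel pair. Two unbalanced loops are parallel in a lift matroid, since any two lift-type cycles are vertex-disjoint or meet and either way their union has rank $2$. This handles the first two conditions; for the third, suppose two edges $e\in P_j$ and $f\in P_k$ lie in no unbalanced $2$-cycle.
\begin{itemize}
\item If $j\ne k$, set $A=P_j\cup P_k$. Since $e$ and $f$ lie in no unbalanced $2$-cycle and there are no balanced $2$-cycles, $|P_j|=|P_k|=1$. Then $\{e,f\}$ is a $2$-element cut of $\si(G)$, and $G\setminus\{e,f\}$ has two components. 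Computing $r(E-\{e,f\})=n-2+1$ gives $r(\{e,f\})+r(E-\{e,f\})-r(M)=1$, so $\{e,f\}$ is a series pair, a $2$-separation.
\item If $j=k$, then $e$ and $f$ are parallel with $\{e,f\}$ balanced, contradicting the absence of balanced $2$-cycles.
\end{itemize}
Hence at most one edge avoids unbalanced $2$-cycles.

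\emph{Sufficiency.} Assume the conditions hold. Then $M$ is simple: two non-loops are parallel in $M$ only if they form a balanced $2$-cycle, a loop and a non-loop are never parallel, and there is at most one unbalanced loop. It remains to rule out $2$-separations $(X,Y)$ with $|X|,|Y|\ge 2$ and $r(X)+r(Y)\le n+1$.

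Each $P_j$ has size at least $2$ except possibly one class of size $1$, and every vertex lies on an unbalanced $2$-cycle except possibly near that single edge. Using $r(X)=|V(G[X])|-c(X)+l(X)$, the inequality becomes
\[
|V(G[X])|+|V(G[Y])|-c(X)-c(Y)+l(X)+l(Y)\le n+1 .
\]
Since the cyclic structure forces $|V(G[X])|+|V(G[Y])|\ge n+(\text{number of shared vertices})$, we count shared vertices against components. Each component of $G[X]$ is a "cyclic interval" of $\si(G)$, and between consecutive components of $X$ and of $Y$ there are shared boundary vertices. If some $P_j$ is split between $X$ and $Y$, then both endpoints of $P_j$ are shared. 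The key estimate is that the number of shared vertices is at least $c(X)+c(Y)$, with equality only in the series-pair configuration. Moreover, $l(X)=l(Y)=1$ whenever both sides contain a full $P_j$ of size $\ge 2$.

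We then split into cases according to whether $X$ or $Y$ lies inside a single vertex neighbourhood or parallel class. The degenerate small cases are handled directly:
\begin{itemize}
\item $X\subseteq P_j\cup\{\text{loop}\}$;
\item $X$ consists of the lone size-$1$ class together with one edge.
\end{itemize}
In each, $X$ contains an unbalanced cycle or $Y$ stays connected and unbalanced, which yields the required strict inequality. I expect this combinatorial counting, showing equality in the key estimate only in excluded configurations, to be the main obstacle. I would carry it out by induction on the number of "switches" between $X$ and $Y$ around the cycle.
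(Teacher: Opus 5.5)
Your necessity direction is fine and is essentially the paper's argument. There is one slip: two unbalanced loops are parallel because their union is a $2$-element circuit, so it has rank $1$, not $2$.

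\textbf{The gap is in sufficiency, which is not actually proved.} The claim it hinges on is false as stated. Equality $|V(G[X])\cap V(G[Y])|=c(X)+c(Y)$ holds for every partition that splits no parallel class, not only for a series pair; take $X=P_1\cup P_2$ with $|P_1|,|P_2|\ge 2$.

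More importantly, the inequality $|V(G[X])\cap V(G[Y])|\ge c(X)+c(Y)$ only gives $r(X)+r(Y)\ge r(M)+l(X)+l(Y)$. That rules out a $2$-separation only when $l(X)=l(Y)=1$. The substantive case is when one side, say $X$, contains no unbalanced cycle, and your two ``degenerate'' cases do not cover it. For example, let $X$ consist of one edge from each of three consecutive parallel classes of size at least two. This $X$ is balanced, yet it is neither contained in $P_j\cup\{\text{loop}\}$ nor of the form ``the lone class plus one edge''.

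\textbf{The missing step} is to pin down what a balanced side must look like.
\begin{itemize}
\item If $l(X)=0$, then $X$ contains no loop and at most one edge from each parallel class of size at least two, since all loops and $2$-cycles are unbalanced.
\item Hence $Y$ meets every parallel class except possibly the single class of size one. So $G[Y]$ contains a spanning path and $r(Y)\ge n-1$.
\item If $r(Y)=n$, then $r(X)\le i<|X|$. This forces a loop or a parallel pair of $M$ inside $X$, contradicting simplicity.
\item If $r(Y)=n-1$, then $G[Y]$, being connected and spanning, is balanced. The same reasoning then shows $G[X]$ contains a spanning path, so $r(X)\ge n-1\ge 3$. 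This contradicts $r(X)\le i+1\le 2$.
\end{itemize}
This is how the paper finishes, after disposing of the case $l(X)=l(Y)=1$ with exactly the shared-vertex count you propose.

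If you prefer to stay within your counting framework, you need a sharper fact. When each of $X$ and $Y$ misses some parallel class, $|V(G[X])\cap V(G[Y])|-c(X)-c(Y)$ equals the number of parallel classes meeting both $X$ and $Y$. A balanced side forces every class of size at least two that it meets to be split, and this supplies the missing surplus. The case where one side meets every class is then a short direct computation.
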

\begin{proof}
    Suppose $M$ is 3-connected. Then $G$ has no balanced loops or 2-cycles. If $G$ has at least two unbalanced loops, these loops are a parallel pair of $M$, a contradiction. Suppose by contradiction that $G$ has a pair of edges $\{e,f\}$, neither of which belongs to an unbalanced 2-cycle. Letting $X=\{e,f\}$ and $Y=E(G)-\{e,f\}$, we note that $\lvert Y\rvert\geq2$ since $\lvert V(G)\rvert\geq4$. Moreover, $r(X)=2$ and, since $G$ is unbalanced, $r(Y)\leq r(M)-1$ by Proposition \ref{prop: rank of X} because $G[Y]$ has more components than $G$. Thus $r(X)+r(Y)-r(M)\leq1$, contradicting that $M$ is 3-connected. 

    Conversely, suppose that $G$ has no balanced loops or $2$-cycles, $G$ has at most one unbalanced loop, and at most one edge of $\si(G)$ is not in an unbalanced $2$-cycle of $G$. Recall that for lifted-graphic matroids, for a subset $A\subseteq E(M)$, $l(A)=1$ if $A$ contains an unbalanced cycle and $l(A)=0$ otherwise. Suppose that $E(M)$ has a partition $(X,Y)$ so that $\lvert X\rvert,\lvert Y\rvert\geq i+1$ and $r(X)+r(Y)-r(M)\leq i\leq1$ for $i\in\{0,1\}$. First assume that $l(X)=l(Y)=1$. Note that $\lvert V(G)\rvert=\lvert V(G[X])\rvert+\lvert V(G[Y])\rvert-\lvert V(G[X])\cap V(G[Y])\rvert$ and $\lvert V(G[X])\cap V(G[Y])\rvert \geq c(X)+c(Y)$ because $\si(G)$ is a cycle. Then, 
    \begin{align*}
        r(X)+r(Y)&=\lvert V(G[X])\rvert-c(X)+l(X)+\rvert V(G[Y])\rvert-c(Y)+l(Y)\\
        &=\lvert V(G)\rvert + \lvert V(G[X])\cap V(G[Y])\rvert-(c(X)+c(Y))+2\\
        &\geq r(M)+2,
    \end{align*}
    a contradiction.

   Now suppose $l(X)=0$. We claim that $G[Y]$ contains a spanning tree of $G$. To see this, note that since $l(X)=0$, for any non-loop $e\in X$, if $e$ belongs to an unbalanced 2-cycle with an edge $f$, then $f\notin X$. Since $G$ has at most one non-loop not in an unbalanced 2-cycle, this means that $Y$ contains at least one edge from each unbalanced 2-cycle of $G$, which forms a spanning tree of $G$. Thus $r(Y)\geq\lvert V(G)\rvert-1=r(M)-1$. If $r(Y)=r(M)$, we see that $r(X)\leq i$ since $r(X)+r(Y)-r(M)\leq i$. For $i=0,1$, this implies that $X$ contains a loop or parallel pair since $|X| \ge i + 1$. This contradicts that $G$ has no balanced loops or balanced 2-cycles, and at most one unbalanced loop. Next, if $r(Y)=r(M)-1$, then $r(X)\leq i+1 \leq 2$. Since $G[Y]$ contains a spanning tree of $G$, this also implies that $l(Y)=0$. By symmetry between $X$ and $Y$, we see that $G[X]$ also contains a spanning tree. 
   Then $r(M) \le r(X) + 1 \le 3$, so $\lvert V(G)\rvert\leq3$, a contradiction. 
\end{proof}

We next show that if $\si(G)$ is a cycle and $M(G, \cB, \cL, \varnothing)$ is $3$-connected, then it is unbreakable, completing the proof of Theorem \ref{thm: main converse for cycles}.

\begin{proposition} \label{prop: connected lifted graphic si(G) is a cycle}
If $M = M(G, \cB, \cL, \varnothing)$ is $3$-connected and $\si(G)$ is a cycle and $|V(G)| > 3$, then $M$ is unbreakable.
\end{proposition}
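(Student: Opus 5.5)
By Lemma \ref{lem: contract to rank 2}, it suffices to show that no flat $F$ of $M$ has $M/F$ disconnected of rank two. Suppose such an $F$ exists and let $(G',\cB',\cL',\cF')$ be $(G,\cB,\cL,\varnothing)/F$ with isolated vertices removed. Since $\cF=\varnothing$, every contraction keeps the graph $G/F$ (a lift-type loop contraction merely declares all cycles balanced). Hence $G'$ is connected, being a contraction of the connected graph $G$. We may therefore apply Lemma \ref{lem: rank-2 disconnected loopless quasi-biased graphs}, and $G'$ has one of the forms $(i)$, $(ii)$, $(iii)$, $(iv)$. Outcome $(iii)$ with frame-type loops cannot arise, because $\cF'=\varnothing$, so all unbalanced loops are lift-type. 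I will rule out each form using Proposition \ref{lem: at most one edge of $M$ is not in an unbalanced $2$-cycle}: $G$ has no balanced loops or $2$-cycles, at most one unbalanced loop, and at most one edge not in an unbalanced $2$-cycle.

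The key structural observation concerns the shape of $G/F$. Since $\si(G)$ is a cycle and $G[F]$ induces a partition of $V(G)$ into vertex sets of components, $\si(G/F)$ is a cycle, a path, or a point. In outcome $(i)$, $G'$ has three vertices and no $v_1v_2$ edge, so $\si(G')$ is a path. That forces $V(G)$ to split into three arcs $V_1,V_3,V_2$ of the cycle. Moreover, $F$ spans each arc and every edge between $V_1$ and $V_2$ lies in $F$. There are two such "gaps" between consecutive vertices of the cycle, and all edges across both gaps are in $F$. In outcome $(ii)$ and outcome $(iv)$, $V(G)$ splits into two arcs $V_1,V_2$ joined by two gaps $g_1,g_2$.

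I would handle the cases by pinning down which edges are contracted and checking ranks. In $(i)$, all cycles of $G'$ are balanced. If $F$ contains no unbalanced cycle, then the edges across a gap between $V_3$ and $V_1$ remain as parallel edges in $G'$. At least one such gap is a parallel class of size at least two, forming an unbalanced $2$-cycle by the hypothesis on edges; since $F$ is balanced, Lemma \ref{lem: unbalanced cycles preserved by contraction} shows that this cycle stays unbalanced in $G'$, a contradiction. If instead $F$ contains an unbalanced cycle, then $F$ contains all edges of $G[V_i]$ for each $i$ together with the two $V_1$–$V_2$ gaps. The closure of a set containing an unbalanced cycle in a lift matroid is described by Proposition \ref{prop: rank of X}: $r(F)=|V(G[F])|-c(F)+1$. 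Adding a gap edge joining two distinct components of $G[F]$ would raise $|V|-c$, so no contradiction appears directly. Instead, I will observe that $F$ then spans a connected subgraph plus components; since a flat with $l(F)=1$ contains every edge whose ends lie in the same component of $G[F]$, the two gaps joining $V_1$ to $V_2$ must lie within components of $G[F]$. This merges $V_1$ and $V_2$ with the adjacent arcs, reducing the number of vertices of $G'$ below three, a contradiction.

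In $(ii)$, $X$ and $Y$ are the edge sets across $g_1$ and $g_2$. If $F$ contained an unbalanced cycle, all cycles of $G'$ would be balanced, yet the $2$-cycle formed by one edge of $X$ and one of $Y$ is required to be unbalanced. So $F$ is balanced and $G'=G/F$. Here $X\cup Y$ has at least three edges, so at least one gap carries two or more edges. Because at most one edge of $G$ is outside unbalanced $2$-cycles, some gap contains an unbalanced $2$-cycle. Lemma \ref{lem: unbalanced cycles preserved by contraction} preserves it in $G'$, but in outcome $(ii)$ a $2$-cycle inside $X$ or inside $Y$ must be balanced, a contradiction. In $(iii)$ and $(iv)$, the loops of $G'$ are unbalanced and come from edges across a gap, or from the at most one unbalanced loop of $G$. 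The loops must sit in the same parallel class as an entire gap-edge class, which again yields a $2$-cycle within one class that is balanced in $G'$, contradicting Lemma \ref{lem: unbalanced cycles preserved by contraction}. The main obstacle I anticipate is the bookkeeping in cases $(iii)$ and $(iv)$, where an edge across a gap becomes a loop only if its ends are already merged by $F$. I expect to need $|V(G)|>3$ and the single exceptional edge to ensure each gap is a multi-edge.
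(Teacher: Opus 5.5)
Your skeleton matches the paper's: reduce to a rank-two disconnected $M/F$, classify $G/F$ by Lemma~\ref{lem: rank-2 disconnected loopless quasi-biased graphs}, then exclude each outcome using the $3$-connectivity characterization and Lemma~\ref{lem: unbalanced cycles preserved by contraction}. However, your exclusion of outcome $(ii)$ does not work.

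You identify the two parallel classes $X,Y$ of $M/F$ with the edge sets of the two gaps $g_1,g_2$ joining the arcs $V_1,V_2$. You then derive a contradiction from an unbalanced $2$-cycle inside one gap. Nothing justifies that identification, and in fact the opposite holds. Two edges of the same gap form an unbalanced $2$-cycle of $G$, which survives in $G'$ because $F$ is balanced, so they lie in \emph{different} classes. For example, $g_1=\{a,a'\}$, $g_2=\{b,b'\}$, $X=\{a,b\}$, $Y=\{a',b'\}$ is perfectly consistent with outcome $(ii)$. So no contradiction can be extracted from the edges of $G'$ alone.

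The missing idea is to look at $F$ itself. The only balanced cycles of $G$ are Hamiltonian and $r(M/F)=2$, so the balanced flat $F$ is a forest with $|V(G)|-2\ge 2$ edges. At most one edge of $G$ lies in no unbalanced $2$-cycle, so some $f\in F$ has a partner $f'$ with $\{f,f'\}$ an unbalanced $2$-cycle. Then $f'\notin F$ becomes a loop of $G/F$, contradicting that outcome $(ii)$ is loopless. In your arc language: looplessness forces every edge inside the arcs into $F$, and one of the $|V(G)|-2$ internal gaps carries an unbalanced $2$-cycle, which would then lie in the balanced set $F$. This is how the paper excludes $(ii)$, and it is where $|V(G)|>3$ is really needed.

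Two smaller points.

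First, your claim that $\si(G/F)$ may be a path is false. The components of $G[F]$ are arcs, and consecutive arcs are joined by the edges of the gap between them, none of which lie in $F$. So $\si(G/F)$ is a cycle, an edge, or a point, and outcome $(i)$ dies at once, since three contracted vertices are pairwise adjacent. The configuration you describe, with all $V_1$--$V_2$ edges in $F$, cannot occur, and your two subcases reach the contradiction only by a roundabout route built on this impossible picture.

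Second, your sketch for $(iii)$ and $(iv)$ is too vague to check, but the argument you misapplied in $(ii)$ is exactly right there:
\begin{itemize}
\item $F$ is balanced, since $G'$ has unbalanced loops.
\item The non-loop class $Y$ contains every edge of both gaps between $V_1$ and $V_2$.
\item At most one of these gaps is a single edge, so at least one carries an unbalanced $2$-cycle of $G$.
\item That $2$-cycle survives in $G'$ inside the single parallel class $Y$, which is a contradiction.
\end{itemize}
This is equivalent to the paper's count showing that the two gaps missed by the forest $F$ would both have to be single edges.
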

\begin{proof}
  By Theorem \ref{thm: the unbreakable graphic matroids}, we may assume that $\cL\neq\varnothing$. Since $M(G,\cB,\cL,\varnothing)$ is $3$-connected, by Proposition \ref{lem: at most one edge of $M$ is not in an unbalanced $2$-cycle} we see that $G$ has at most one loop, and all $2$-cycles are in $\cL$. If $M$ is not unbreakable, then by Lemma \ref{lem: contract to rank 2} there exists a flat $F$ of $M$ so that $r(M/F)=2$ and $M/F$ is disconnected. Note that $M/F$ is a lifted-graphic matroid with $G/F$ connected. From Lemma \ref{lem: rank-2 disconnected loopless quasi-biased graphs}, outcomes $(ii),(iii)$ and $(iv)$ are the only possible structures for $G/F$. Outcome $(i)$ clearly cannot happen since $\si(G/F)$ is a cycle or a single edge, because $\si(G)$ is a cycle. 

  If $F$ contains a cycle in $\cL$, then $M/F$ is graphic. In this case, $\si(G/F)$ is a cycle on $3$ vertices because $r(M/F) = 2$ and $\si(G)$ is a cycle.
  Note that $G/F$ is loopless because $F$ is a flat.
  It then follows from Theorem \ref{thm: the unbreakable graphic matroids} that $M/F$ is connected, a contradiction. 
  So $F$ does not contain a cycle in $\cL$. 
  Then $F$ is a forest on $|V(G)|-2$ edges and $F$ has either $1$ or $2$ components. To see this, note that since all 1- and 2-cycles of $G$ are unbalanced and $\si(G)$ is a cycle, the only balanced cycles are Hamiltonian. Thus if $F$ contains a cycle it follows that $r(M/F)\leq1$, so $M/F$ is connected, a contradiction. Also, $F$ cannot have less than $|V(G)|-2$ edges since $r(M/F)=2$. From this we see that $F$ has at most $2$ components. 

  Proposition \ref{lem: at most one edge of $M$ is not in an unbalanced $2$-cycle} implies that an edge of $F$ is contained in an unbalanced 2-cycle $C$. From this we see that $G/F$ is a graph on two vertices with a loop on at least one of them, from the edge of $C$ that was not in $F$, which excludes outcome $(ii)$ of Lemma \ref{lem: rank-2 disconnected loopless quasi-biased graphs}. If $E(M)-F$ also contains a $2$-cycle in $\cL$ whose edges are not parallel in $G$ with any edges of $F$, then in the graph of $M/F$ this $2$-cycle remains, which cannot happen in outcomes $(iii)$ or $(iv)$ of Lemma \ref{lem: rank-2 disconnected loopless quasi-biased graphs}. Since $\lvert F\rvert=\lvert V(G)\rvert-2$, it follows that $G$ has at least two non-loops contained in $E(M)-F$ which are not in unbalanced 2-cycles. By Proposition \ref{lem: at most one edge of $M$ is not in an unbalanced $2$-cycle}, this contradicts that $M$ is 3-connected. We conclude that $M$ is unbreakable.
\end{proof}

\section{Beyond 3-connectivity} \label{sec: future work}

In this section we will prove Theorem \ref{thm: non-degen unbreakability} and Theorem \ref{thm: connected not 3-connected}. For Theorem \ref{thm: 3-connected quasi matroid with G 2 connected} and Theorem \ref{thm: 4.1}, $M$ is assumed to be 3-connected, and this assumption provides the constraints that characterize the underlying graphs. When neither of the collections $\cL$ and $\cF$ are degenerate, the fact that $M$ has a proper tripartition provides the necessary ground for similar results.

This section will utilize the notion of a \emph{link-sum} from \cite{Bowler-Funk-Slilaty20} which explicitly verifies that the class of quasi-graphic matroids is closed under 2-sums with graphic matroids. Let $G_1$ and $G_2$ be graphs and let $(\cB,\cL,\cF)$ be a proper tripartition of the cycles of $G_1$. Assume that $E(G_1)\cap E(G_2)=\{e\}$ and that $e$ is a non-loop in both $G_1$ and $G_2$. The 2-sum of the matroid $M(G_1,\cB,\cL,\cF)$ and the cycle matroid $M(G_2)$ on basepoint $e$ may be realized in the graphs as follows. Let $u_1,v_1$ be the endpoints of $e$ in $G_1$ and let $u_2,v_2$ be the endpoints of $e$ in $G_2$. The \emph{link-sum} of $G_1$ and $G_2$ on $e$ is the graph, denoted $G_1\oplus_2^e G_2,$ obtained by identifying $u_1$ with $u_2$ and identifying $v_1$ with $v_2$, then deleting $e$, together with the following tripartition of its cycles. For a collection of cycles $\cC$ and an edge $e$ write $\cC-e$ for the set $\{C\in\cC: e \notin C\}$. Let $\cB'$ be the union of $\cB-e$, the set of all cycles of $G_2-e$ and the set 
$$
\{P\cup Q: P \text{ is a } (u_1,v_1) \text{-path in $G_1$ with } P\cup e\in\cB \text{ and $Q$ is a } (u_2,v_2)\text{-path in $G_2$}\}. 
$$
Let $\cL'$ be the union of $\cL-e$ and the set
$$
\{P\cup Q: P \text{ is a } (u_1,v_1) \text{-path in $G_1$ with } P\cup e\in\cL \text{ and $Q$ is a } (u_2,v_2)\text{-path in $G_2$}\} 
$$
and let $\cF'$ be the union of $\cF-e$ and the set 
$$
\{P\cup Q: P \text{ is a } (u_1,v_1) \text{-path in $G_1$ with } P\cup e\in\cF \text{ and $Q$ is a } (u_2,v_2)\text{-path in $G_2$}\}. 
$$
Then $(\cB',\cL',\cF')$ is a proper tripartition of the cycles of $G_1\oplus_2^e G_2$, and the 2-sum of $M(G_1,\cB,\cL,\cF)$ and $M(G_2)$ on $e$ is equal to $M(G_1\oplus_2^e G_2, \cB',\cL',\cF')$ \cite[Section 4]{Bowler-Funk-Slilaty20}.

Note that if $(G, \cB, \cL, \cF)$ is a quasi-biased graph and $(A, B)$ is a partition of $E(G)$ so that $G[A]$ and $G[B]$ share two vertices and $G[B]$ is balanced, then $(G, \cB, \cL, \cF)$ decomposes as a link-sum $G_1 \oplus_2^e G_2$ with $G_1 \del e = G[A]$ and $G_2 \del e = G[B]$.
Moreover, if $(\cB', \cL', \cF')$ is the corresponding proper tripartition of the cycles of $G_1$, then it is straightforward to check that $\cL'$ (resp. $\cF'$) is degenerate if and only if $\cL$ (resp. $\cF$) is degenerate.

With the link-sum construction in mind, we have the following theorem of Bowler, Funk, and Slilaty \cite{Bowler-Funk-Slilaty20}, which shows that when $M=M\qbg$ is connected but not 3-connected, and has non-degenerate collections $\cL$ and $\cF$, then $M$ is obtained through a very specific 2-sum decomposition.

\begin{theorem}[{\cite[Theorem 4.8]{Bowler-Funk-Slilaty20}}]\label{thm: non-degen decomposition}
    Let $M$ be a connected matroid of the form $M(G,\cB,\cL,\cF)$ with $(\cB,\cL,\cF)$ a proper tripartition of $G$ such that neither $\cL$ nor $\cF$ is degenerate. If $M$ is not 3-connected, then $M$ is obtained via 2-sums of graphic matroids and a single 3-connected matroid of the form $M(H,\cB',\cL',\cF')$, for some graph $H$ with proper tripartition $(\cB',\cL',\cF')$.
\end{theorem}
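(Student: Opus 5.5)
The plan is induction on $|E(M)|$. We cut $M$ along a $2$-separation, show that one side is a balanced subgraph meeting the rest of $G$ in exactly two vertices, and peel that side off as a graphic link-sum summand. First we set up the graph. Since $\cF$ is non-degenerate, $M$ is not merely lifted-graphic, so \cite[Corollary 4.7]{Bowler-Funk-Slilaty20} makes $G$ connected. As $G$ contains an unbalanced cycle, Proposition \ref{prop: rank of X} gives $r(M)=|V(G)|$. Now let $(A,B)$ be a $2$-separation of $M$, so $|A|,|B|\ge 2$ and $r(A)+r(B)=r(M)+1$.

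The key step is to show that after possibly choosing a different $2$-separation, $G[B]$ (say) is balanced and $G[A]$, $G[B]$ share exactly two vertices. By Proposition \ref{prop: rank of X}, a set $X$ has $r(X)\ge |V(G[X])|-c(X)$, with equality when $X$ is balanced. If $X$ contains an unbalanced cycle, then $r(X)\ge |V(G[X])|-c(X)+1$: either by the $l(X)$ term, or because $b(X)\le c(X)-1$ when $X$ contains a cycle in $\cF$. Connectivity of $G$ gives $|V(G[A])\cap V(G[B])|\ge c(A)+c(B)-1$. Combining these with $r(A)+r(B)=|V(G)|+1$:
\begin{itemize}
\item if both sides are unbalanced, then $|V(G[A])\cap V(G[B])|$ must be as small as connectivity allows, and the $\cF$-term must not contribute extra rank on either side;
\item if one side, say $B$, is balanced, then $G[A]$ and $G[B]$ are connected and share exactly two vertices, or the separation can be refined to one that is.
\end{itemize}

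This is where non-degeneracy is used, and I expect it to be the main obstacle. Suppose both $G[A]$ and $G[B]$ are unbalanced with the tight vertex count above. There are two vertex-disjoint cycles $L_1,L_2\in\cL$ and two vertex-disjoint cycles $F_1,F_2\in\cF$, and each $L_j$ meets each $F_k$. I would run through where these four cycles sit relative to the few shared vertices and show that some placement forces extra rank on one side. The candidates are: a frame-type cycle in each of $A$ and $B$, which makes $b$ drop on both sides; or two disjoint lift-type cycles split across the sides, which makes both $l$-terms $1$ while $A\cup B$ gains only one unit of rank. Either outcome contradicts $r(A)+r(B)=r(M)+1$. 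The case analysis on which shared vertices the cycles use is where the care is needed, in particular the situation where $A$ and $B$ share a single vertex lying on all four cycles. For that case I would first try to use the theta property together with the requirement that every lift-type cycle meets every frame-type cycle.

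Once such a separation exists, the remark preceding the theorem writes $(G,\cB,\cL,\cF)$ as a link-sum $G_1\oplus_2^e G_2$ with $G_1\del e=G[A]$ and $G_2\del e=G[B]$. Thus $M$ is the $2$-sum of the smaller matroid $M_1=M(G_1,\cB_1,\cL_1,\cF_1)$ and the graphic matroid $M(G_2)$, where $(\cB_1,\cL_1,\cF_1)$ is the tripartition induced on the cycles of $G_1$; $M_1$ is connected. The same remark shows that $\cL_1$ and $\cF_1$ are still non-degenerate. If $M_1$ is $3$-connected we are done. Otherwise we apply induction to $M_1$, which expresses it as $2$-sums of graphic matroids and one $3$-connected matroid of the form $M(H,\cB',\cL',\cF')$. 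Finally, the graphic summand $M(G_2)$ decomposes by the standard $2$-sum decomposition into $3$-connected pieces and cycles or bonds, all of which are graphic since minors of graphic matroids are graphic. This gives the required decomposition of $M$.
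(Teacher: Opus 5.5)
Your reduction has a genuine gap at its key step. A $2$-separation of $M$ with $G[B]$ balanced, $G[A]$ and $G[B]$ meeting in exactly two vertices, and $|A|,|B|\ge 2$ need not exist, even after re-choosing the separation.

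Here is a counterexample. Take $G$ to be $K_{2,4}$ with parts $\{m,n\}$ and $\{a,b,c,d\}$, plus the edges $x=ab$ and $y=cd$.
\begin{itemize}
\item Let $\cB$ be the cycles avoiding both $x$ and $y$; this class is theta-closed.
\item Let $\cF$ consist of the two triangles $\{x,am,bm\}$ and $\{y,cn,dn\}$.
\item Let $\cL$ be all other cycles.
\end{itemize}
Apart from these two triangles, every cycle using $x$ but not $y$ contains $a$ and $n$. Every cycle using $y$ but not $x$ contains $c$ and $m$, and every cycle using both is Hamiltonian. So the tripartition is proper. $\cF$ is non-degenerate, and so is $\cL$, via the triangles $\{x,an,bn\}$ and $\{y,cm,dm\}$.

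This $M$ is connected: the tight handcuff $\{x,am,bm,y,cm,dm\}$ is a circuit, and $M|E(K_{2,4})=M(K_{2,4})$ is connected. Also $\{x,y\}$ is a cocircuit, since $r(E(K_{2,4}))=5=r(M)-1$, so $M$ is not $3$-connected. But $G$ is simple, and its only $2$-vertex cut is $\{m,n\}$, whose two sides both contain unbalanced triangles. So no link-sum along a vertex $2$-separation of $G$ with a balanced side exists, and your induction has nothing to peel off.

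What is missing is the case where the balanced piece attached at two vertices is a single edge. The paper (following Bowler, Funk, and Slilaty) first uses $2$-connectivity of $G$ \cite[Theorem 4.6]{Bowler-Funk-Slilaty20} to find a balanced component $X$ of one side with exactly two attachment vertices.
\begin{itemize}
\item If $|X|\ge 2$, this gives your link-sum.
\item If $X=\{x\}$, it chooses $(A,B)$ minimizing $c_A+c_B$ to force $|A|=2$. Then $A$ is a series pair, and it splits off $M(K_3)$ via $M=(M/x)\oplus_2 M(K_3)$.
\end{itemize}
This second step changes the framework to $G/x$ rather than cutting $G$ at two vertices. One must then re-verify that $\cL/x$ and $\cF/x$ are non-degenerate; this is the paper's pair of subcases on how many components of $B$ meet $x$. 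In the example above, $\{am,bm\}$ and $\{y,cn,dn\}$ stay disjoint in $\cF/x$, and $\{an,bn\}$ and $\{y,cm,dm\}$ stay disjoint in $\cL/x$.

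Separately, your first bullet does not work as written. When both sides are unbalanced, the rank counts are consistent: two connected sides sharing one vertex, each containing a frame-type cycle, have $r(A)+r(B)=|V(G)|+1$. So no ``extra rank'' contradiction arises. Also, no vertex can lie on all four cycles, since $L_1\cap L_2=\varnothing$. That case is excluded for a different reason: the tight count $|V(G[A])\cap V(G[B])|=c(A)+c(B)-1$ produces a cut vertex, contradicting $2$-connectivity of $G$.
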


We will need a strengthening of this result in which we show that $\cL'$ and $\cF'$ are both non-degenerate.
The key is the following lemma. We say a 2-sum (or link-sum) $M=M_1\oplus_2M_2$ is \emph{trivial} if $M\cong M_i$ for some $i\in\{1,2\}$.

\begin{lemma} \label{lem: doubly non-degenerate implies 2-sum with graphic}
Let $(G, \cB, \cL, \cF)$ be a quasi-biased graph such that $\cL$ and $\cF$ are non-degenerate, and let $M = M(G, \cB, \cL, \cF)$. If $M$ is connected but not $3$-connected, then $M = M(G', \cB', \cL', \cF') \oplus_2 M_2$ for a nontrivial 2-sum with $\cL'$ and $\cF'$ both non-degenerate, $M_2$ graphic, and $M(G', \cB', \cL', \cF')$ connected.
\end{lemma}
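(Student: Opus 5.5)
Because $M$ is connected but not $3$-connected, it has a $2$-separation. I would first turn this into a vertex $2$-separation of $G$ whose small side is balanced, and then split off that side as a link-sum with a graphic matroid, using the observation stated just before Theorem \ref{thm: non-degen decomposition}.

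The first step is to apply Theorem \ref{thm: non-degen decomposition}. It shows that $M$ is built by $2$-sums from graphic matroids and one $3$-connected matroid of the form $M(H,\cB'',\cL'',\cF'')$. Since $M$ is not $3$-connected, at least one graphic summand is genuinely present. Pick a leaf of the $2$-sum decomposition tree that is a graphic summand $N$, and write $M = M_1 \oplus_2 N$ nontrivially along a basepoint $e$. Set $B = E(N) - e$ and $A = E(M) - B$. Then $(A,B)$ is an exact $2$-separation of $M$ with $M|B$ graphic.

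The second step, which I expect to be the main obstacle, is to realize this separation inside the quasi-biased graph. I need $G[A]$ and $G[B]$ to meet in exactly two vertices, with $G[B]$ balanced. The plan is to use Proposition \ref{prop: rank of X} together with $r(A)+r(B) = r(M)+1$. Note that $r(M) = |V(G)|$ since $G$ is unbalanced and connected; $G$ is connected by \cite[Corollary 4.7]{Bowler-Funk-Slilaty20}, because $\cF$ is non-degenerate so $M$ is not lifted-graphic.
\begin{itemize}
\item If both sides contain unbalanced cycles, then $r(A)+r(B) \ge |V(G)| + |V(A)\cap V(B)| - (\text{component corrections})$, and connectivity of $G$ pushes this above $r(M)+1$ unless degenerate configurations occur.
\item I expect those degenerate configurations to be exactly the ones where one side has all its unbalanced cycles pairwise intersecting. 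Non-degeneracy of $\cL$ and $\cF$, with disjoint pairs from each available, should rule them out; otherwise I would swap to a different leaf.
\end{itemize}
Concretely, I would try to show that one can choose the leaf so that $B$ is balanced and $|V(A)\cap V(B)| = 2$. If the rank count leaves a case where $G[B]$ meets $G[A]$ in one vertex, that case contradicts connectivity of $M$ directly.

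The third step is to build the link-sum. Once $G[B]$ is balanced and shares exactly two vertices with $G[A]$, the observation before Theorem \ref{thm: non-degen decomposition} gives $(G,\cB,\cL,\cF) = G_1 \oplus_2^e G_2$ with $G_1\del e = G[A]$ and $G_2 \del e = G[B]$. The same observation says the new tripartition $(\cB',\cL',\cF')$ on $G' = G_1$ has $\cL'$ and $\cF'$ non-degenerate exactly when $\cL$ and $\cF$ are, which gives the required non-degeneracy for free. The link-sum construction of \cite{Bowler-Funk-Slilaty20} identifies $M$ with $M(G',\cB',\cL',\cF') \oplus_2 M(G_2)$.

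Finally, I would check the remaining conditions.
\begin{itemize}
\item The $2$-sum is nontrivial because $|A|,|B| \ge 2$.
\item $M_2 = M(G_2)$ is graphic.
\item $M(G',\cB',\cL',\cF')$ is connected, since the summands of a $2$-sum of a connected matroid are connected \cite[Proposition 7.1.22]{Oxley2011}.
\end{itemize}
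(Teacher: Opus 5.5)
Your Step 2 is where the content of the lemma lies, and the target you set there is false in general, so swapping to a different leaf cannot rescue it. Knowing that $M|B$ is graphic says nothing about whether $G[B]$ is balanced or attached to $G[A]$ at only two vertices.

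Here is a counterexample to that target. Take $K_{2,4}$ with parts $\{m,n\}$ and $\{u_1,v_1,u_2,v_2\}$, and add the edges $x=u_1v_1$ and $a=u_2v_2$. Let $\cB$ be the set of cycles of $K_{2,4}$. Put the triangles $u_1mv_1$ and $u_2nv_2$ in $\cL$, and put every other unbalanced cycle in $\cF$, in particular the triangles $u_1nv_1$ and $u_2mv_2$.
\begin{itemize}
\item The theta property holds, since every cycle through $x$ or $a$ is unbalanced.
\item The tripartition is proper: every unbalanced cycle other than these four triangles contains both $m$ and $n$, or both $x$ and $a$.
\item Both $\cL$ and $\cF$ are non-degenerate.
\item $M$ is connected.
\item $\{x,a\}$ is a series pair, because $G\setminus\{x,a\}=K_{2,4}$ is balanced and connected. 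Hence $M$ is not $3$-connected.
\end{itemize}
Yet the only $2$-vertex cut of $G$ is $\{m,n\}$, and both of its sides are unbalanced. So $M$ has no $2$-separation $(A,B)$ with $G[B]$ balanced and $|V(G[A])\cap V(G[B])|=2$. The actual separation is $(\{x,a\},E-\{x,a\})$, which is balanced but meets the rest of $G$ in four vertices. There the link-sum observation does not apply, and neither does your free transfer of non-degeneracy.

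The paper handles this in two stages. First, it chooses a $2$-separation minimizing $c_A+c_B$. By counting components against $|S|$, and using non-degeneracy of $\cF$ to exclude $|S|\le 3$, it shows that some side has a balanced component $X$ meeting $S$ in exactly two vertices.
\begin{itemize}
\item If $|X|\ge 2$, the link-sum split you describe works.
\item If $X$ is a single edge $x$, minimality forces $|A|=2$ and $|S|=4$, and $A$ is a series pair of vertex-disjoint edges, exactly as in the example.
\end{itemize}
In the second case, $M=M/x\oplus_2 M(K_3)$ with $G'=G/x$, which is not obtained from $G$ by splitting along a vertex $2$-cut. One must then verify directly that $\cL/x$ and $\cF/x$ remain non-degenerate, splitting on whether one or two components of $B$ meet $x$.

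This contraction case and its non-degeneracy check are the missing idea in your proposal. Your heuristic rank count in Step 2 also does not replace the component-counting claim.

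A minor point: connectivity of $G$ should come from the proper tripartition, or from the $2$-connectivity result the paper cites. It should not come from ``$M$ is not lifted-graphic'', which does not follow from $\cF$ being non-degenerate.
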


\begin{proof}
To prove this, we will follow the proof as \cite[Theorem 4.8]{Bowler-Funk-Slilaty20}, but without the induction on the number of graphic matroids in the 2-sum. 

By \cite[Theorem 4.6]{Bowler-Funk-Slilaty20}, $G$ is $2$-connected. Since $M$ is connected but not $3$-connected, it must have a $2$-separation $(A,B)$. Let $c_A$ be the number of components of $G[A]$ and $c_B$ the number of components of $G[B]$. Choose $(A,B)$ so as to minimize $c_A + c_B$. Let $S$ be the set of vertices incident to edges in both $A$ and $B$. Since $G$ is $2$-connected, no component of $A$ or $B$ can meet $S$ in fewer than $2$ vertices.

\begin{claim}
    There is a balanced component of $A$ or $B$ having precisely $2$ vertices in $S$.
\end{claim} 
\begin{proof}
Suppose that the claim is false.
We use $r_G$ to denote the rank function of $M(G)$ ($r$ is used for the rank function of $M$). We define $d$ to be $(r(A) - r_G(A)) + (r(B) - r_G(B))$. Then the equation for the order of $(A,B)$ tells us that
\[
1 = r(A) + r(B) - r(E(G)) = (r_G(A) + r_G(B) - r_G(E(G))) - 1 + d \ge d,
\]
since $G$ is $2$-connected. So one of $r(A) - r_G(A)$ or $r(B) - r_G(B)$, without loss of generality the second, must be zero. Thus $B$ is balanced. Since every component of $A$ meets $S$ in at least $2$ vertices we have $c_A \le |S|/2$. 
If some component of $B$ meets $S$ in $2$ vertices, then the statement holds, so we may assume that every component of $B$ meets $S$ in at least $3$ vertices,  $c_B \le |S|/3$. Using the above formula for the order of $(A,B)$ once more, we have
\begin{align*}
1 &= (r_G(A) + r_G(B) - r_G(E(G))) - 1 + d \\
&= |V(A)| - c_A + |V(B)| - c_B - |V(G)| + d \\ 
&= |S| - c_A - c_B + d \\
&\ge |S| - |S|/2 - |S|/3 + d \\
&= |S|/6 + d \\
&> d.
\end{align*}
So $d = 0$ and thus $A$ is also balanced and by the above claim we have $c_A \le |S|/3$. Substituting this into the above calculation we have
\[
1 = |S| - c_A - c_B \ge |S| - |S|/3 - |S|/3 = |S|/3,
\]
so $S$ contains at most $3$ vertices. Since both $A$ and $B$ are balanced, every cycle in $\mathcal{F}$ must meet both $A$ and $B$ and so must contain at least $2$ of these $3$ vertices, contradicting the assumption that $\mathcal{F}$ is non-degenerate.
\end{proof}

Let $X$ be a balanced component of $A$ having precisely two vertices in $S$. Then $X$ consists of a single edge $x$ (else $M$ would be a $2$-sum of a connected quasi-graphic matroid with non-degenerate sets of frame-type and lift-type cycles and a graphic matroid, given by the link-sum corresponding to the $2$-separation $(X, E(G) - X)$). But now $r(A - x) = r(A) - 1$ and $r(B \cup x) \le r(B) + 1$, so that the order of $(A - x, B \cup x)$ is at most that of $(A,B)$. Since the order of $(A,B)$ is at most $1$ and that of $(A - x, B \cup x)$ is at least $1$, we must have $r(B \cup x) = r(B) + 1$ and the order of $(A - x, B \cup x)$ is precisely $1$.

Since we chose $(A,B)$ to minimize $c_A + c_B$, $(A - x, B \cup x)$ cannot be a $2$-separation and so $|A| = 2$. Since $M$ is connected and neither $\mathcal{L}$ nor $\mathcal{F}$ is degenerate, $G$ has no loops. Thus both elements of $A$ are non-loops, so $|S| = 4$. Since $r(B \cup x) = r(B) + 1$, $B$ is not spanning and so $A$ is codependent. Since $M$ is connected, $A$ must be a cocircuit, so its two elements are in series in $M$. But then $M$ is a $2$-sum of $M/x$ with $M(K_3)$. Note that $M/x$ is connected since $M$ is, and that $M/x = M((G,\cB,\mathcal{L},\mathcal{F})/x)$. 
We will write $(G', \cB/x, \cL/x, \cF/x)$ for $(G, \cB, \cL, \cF)/x$.
Our next aim is to show that neither $\mathcal{L}/x$ nor $\mathcal{F}/x$ is degenerate; then the lemma statement will hold with $M_2 = M(K_3)$.

There are two cases. The first is that at least two components of $B$ meet $X$. In this case, since each of these components meets $S$ in at least $2$ vertices and $S$ has only $4$ vertices, these two are the only components of $B$. Let $v$ be an endpoint of $x$. Since $\mathcal{F}$ is non-degenerate, there is some $C \in \mathcal{F}$ not containing $v$. Thus $C$ must be contained in some component $Y_1$ of $B$. Let $Y_2$ be the other component. Then $Y_2$ cannot also contain a cycle in $\mathcal{F}$, since $r(B) = r(M) - 1 = |V(B)| - 1$. But $Y_2$ cannot contain a cycle in $\mathcal{L}$ either, since $(B,\mathcal{L},\mathcal{F})$ is proper. So $Y_2$ is balanced, and has precisely $2$ vertices in $S$, so as above it consists of a single edge $y$. But then $x$ and $y$ are in series in $G$, and so neither $\mathcal{L}/x$ nor $\mathcal{F}/x$ can be degenerate.

The second case is that only one component $Y$ of $B$ meets $X$. Then $Y$ must contain at least one further vertex of $S$, and since $S$ has only $4$ vertices $Y$ must contain $S$ and so in fact $B$ must be connected. In this case, since $r(B) = r(M) - 1 = |V(B)| - 1$ we must have that $B$ is balanced. Since $\mathcal{F}$ is not degenerate, there are disjoint cycles $C_1$ and $C_2$ in $\mathcal{F}$. Then $C_1$ and $C_2$ must meet $A$ in different edges, so one of them, say $C_1$, must contain $x$. Then $C_1/x$ and $C_2$ are disjoint cycles in $\mathcal{F}/x$, which is therefore also non-degenerate. Similarly $\mathcal{L}/x$ is non-degenerate.
\end{proof}

We can now prove the following strengthening of Theorem \ref{thm: non-degen decomposition} by inductively applying Lemma \ref{lem: doubly non-degenerate implies 2-sum with graphic}.

\begin{theorem}\label{thm: non-degen decomposition stronger}
    Let $M$ be a connected matroid of the form $M(G,\cB,\cL,\cF)$ with $(\cB,\cL,\cF)$ a proper tripartition of $G$ such that neither $\cL$ nor $\cF$ is degenerate. Then $M$ is obtained via 2-sums of graphic matroids and a single 3-connected matroid of the form $M(H,\cB',\cL',\cF')$, for some graph $H$ with proper tripartition $(\cB',\cL',\cF')$ with $\cL'$ and $\cF'$ both non-degenerate.
\end{theorem}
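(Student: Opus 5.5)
We argue by induction on $|E(M)|$, applying Lemma \ref{lem: doubly non-degenerate implies 2-sum with graphic} repeatedly. If $M$ is already $3$-connected, we take $H = G$ and $(\cB',\cL',\cF') = (\cB,\cL,\cF)$ with no graphic summands, and the statement holds trivially. Otherwise $M$ is connected but not $3$-connected, and $\cL$ and $\cF$ are both non-degenerate. Lemma \ref{lem: doubly non-degenerate implies 2-sum with graphic} then gives a nontrivial $2$-sum
\[
M = M(G_1,\cB_1,\cL_1,\cF_1) \oplus_2 M_2,
\]
where $M_2$ is graphic, $M(G_1,\cB_1,\cL_1,\cF_1)$ is connected, and $\cL_1$ and $\cF_1$ are both non-degenerate.

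Since the $2$-sum is nontrivial, the summand $M_1 = M(G_1,\cB_1,\cL_1,\cF_1)$ has strictly fewer elements than $M$. Indeed, for a $2$-sum $M_1\oplus_2 M_2$ we have $|E(M)| = |E(M_1)| + |E(M_2)| - 2$. Nontriviality rules out $|E(M_2)| \le 2$, because a connected $M_2$ on at most two elements would give $M \cong M_1$. Hence $|E(M_2)| \ge 3$ and $|E(M_1)| < |E(M)|$. The summand $M_1$ satisfies every hypothesis of the theorem, so by induction it is obtained via $2$-sums of graphic matroids and a single $3$-connected matroid $M(H,\cB',\cL',\cF')$ with $\cL'$ and $\cF'$ both non-degenerate. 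Taking one further $2$-sum with the graphic matroid $M_2$ then expresses $M$ in the required form.

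The one point needing care is to check that the $2$-sum in Lemma \ref{lem: doubly non-degenerate implies 2-sum with graphic} really decreases the size, so that the induction is well-founded. In the proof of that lemma, the graphic summand is either $M(K_3)$ or arises from a link-sum along a $2$-separation. In both cases nontriviality gives $|E(M_2)| \ge 3$, which is exactly the bound used above. I expect this to be the only real obstacle, and it is routine.

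Connectivity of every intermediate summand is supplied directly by the lemma. Non-degeneracy of the lift-type and frame-type collections is also preserved at each step by the lemma, and this is precisely what upgrades Theorem \ref{thm: non-degen decomposition} to the present statement.
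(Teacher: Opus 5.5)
Your proposal is correct and takes essentially the same approach as the paper. The paper also argues by minimal counterexample on the number of elements, disposes of the $3$-connected case trivially, and otherwise applies Lemma~\ref{lem: doubly non-degenerate implies 2-sum with graphic} and then the induction hypothesis to the non-degenerate quasi-graphic summand. Your explicit check that nontriviality forces $|E(M_2)|\ge 3$, so that this summand is strictly smaller, is a detail the paper leaves implicit.
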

\begin{proof}
We use the induction part of the proof of \cite[Theorem 4.8]{Bowler-Funk-Slilaty20}. Suppose that the statement is false and let $M$ be a counterexample whose number of elements is minimal. If $M$ is $3$-connected, the statement trivially holds true with $(H', \cB', \cL', \cF') = (G, \cB, \cL, \cF)$. If $M$ is not $3$-connected, we apply Lemma \ref{lem: doubly non-degenerate implies 2-sum with graphic}. Thus there is a nontrivial 2-sum of matroids $M'=M\qbgp$ and $M_2$ so that $\cL'$ and $\cF'$ are non-degenerate and $M_2$ is graphic. By induction, $M'$ decomposes as 2-sums of a single 3-connected quasi-graphic matroid $M''=M(G'',\cB'',\cL'',\cF'')$ and graphic matroids so that $\cL''$ and $\cF''$ are both non-degenerate. Thus so does $M$, a contradiction.
\end{proof}

The previous results tell us that for those quasi-graphic matroids $M$ that are not 3-connected, the non-degeneracy of $\cL$ and $\cF$ restricts the possible structure of $M$. The following critical results of Oxley and Pfeil \cite{Oxley-Pfeil2022} provide additional constraints when $M$ is unbreakable and not 3-connected. A \emph{free element} of a matroid $M$ is an element $e$ which is not a coloop so that the only circuits containing $e$ are spanning. Thus a loopless matroid with a free element is connected.
\begin{lemma}[{\cite[Lemma 3.5]{Oxley-Pfeil2022}}] \label{lem: free element implies unbreakable}
    If a loopless matroid $M$ has a free element, then $M$ is unbreakable.
\end{lemma}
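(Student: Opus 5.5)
The statement is that a loopless matroid $M$ with a free element $e$ is unbreakable. The plan is to verify the definition directly. First show $M$ is connected, then show that $M/F$ is connected for every flat $F$.

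For connectivity, observe that $e$ lies in some circuit, so the spanning circuit $C$ containing $e$ has $\cl(C)=E(M)$. Take any other element $f$. Since $M$ is loopless, $f$ is not a loop. If $f\in C$, then $e$ and $f$ share a circuit. Otherwise $C\cup f$ contains a circuit $D$ through $f$ with $D\subseteq C\cup f$. If $e\notin D$, eliminate $f$ between $D$ and the fundamental circuit of $f$ with respect to the basis $C-e$, or simply apply strong circuit elimination to $C$ and $D$ at an element of $(C\cap D)-e$; this yields a circuit containing both $e$ and $f$. Hence every element shares a circuit with $e$, and $M$ is connected.

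For the flats, let $F$ be a proper flat of $M$. If $F=E(M)$ the contraction is empty and trivially connected. Since $F$ is closed, $M/F$ is loopless. The goal is to show that $e$ is still free in $M/F$, which together with the connectivity argument above finishes the proof.
\begin{itemize}
\item First check $e\notin F$. A flat containing $e$ contains a spanning circuit only if it is all of $E$, and more carefully, any circuit $D$ through $e$ is spanning, so if $e\in F$ then $F\supseteq \cl(\,\cdot\,)$ does not follow immediately. The cleaner route is to handle the case $e\in F$ separately: then $F\ne E$ forces $r(F)<r(M)$, and we pick a basis $B_F$ of $F$ containing $e$ and extend it to a basis $B$ of $M$. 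For $g\in E-F$, the fundamental circuit of $g$ in $B$ is spanning exactly when it contains $e$, which would mean it contains all of $B$, including $B_F$. This is the subtle case.
\item Next, the main step: for $e\notin F$, take any circuit $D'$ of $M/F$ containing $e$. Then $D'\cup I$ contains a circuit $D$ of $M$ through $e$ for some independent $I\subseteq F$, with $D\subseteq D'\cup F$. Since $D$ is spanning in $M$, $D'$ spans $M/F$, so $e$ is free in $M/F$. Also $e$ is not a coloop of $M/F$, because $e\notin F=\cl(F)$ and $e$ lies in a circuit meeting $E-F$ nontrivially.
\end{itemize}

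The main obstacle is the possibility $e\in F$. I expect to resolve it by showing that it cannot occur for a proper flat $F$ in a nontrivial way. Take $g\notin F$ and a circuit through $e$ and $g$, which exists by connectivity. This circuit is spanning, so it has $r(M)+1$ elements. The part of it inside $F$ is then independent of size at most $r(F)$, which forces $|C-F|\ge r(M)-r(F)+1$. From this I would derive that $M/F$ is connected directly, by showing that any two elements $g,h\notin F$ lie in a common circuit of $M/F$ obtained from spanning circuits through $e$, using elimination as in the first part.
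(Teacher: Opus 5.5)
\textbf{There is a genuine gap: the case $e\in F$ is never resolved.}

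Your argument for connectivity of $M$ is essentially right, and so is your argument for flats $F$ with $e\notin F$. Two small points in the elimination step:
\begin{itemize}
\item You should say why the new circuit contains $e$. If it did not, it would lie in $(B\cup f)-b$ with $B=C-e$, and this set is independent because $b$ lies in the fundamental circuit $D$ of $f$.
\item Your first alternative is vacuous: when $e\notin D$, the circuit $D$ already \emph{is} the fundamental circuit of $f$ with respect to $C-e$.
\end{itemize}

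The case $e\in F$ cannot be argued away. In $U_{3,5}$ every element is free, $\{e\}$ is a proper flat, and $M/\{e\}\cong U_{2,4}$. In this case $e$ is not an element of $M/F$, so ``$e$ is still free in $M/F$'' is unavailable. Your count $|C-F|\ge r(M)-r(F)+1$ only shows that $C-F$ is a spanning \emph{dependent} set of $M/F$, not a circuit. It gives no way to put two prescribed elements $g,h\in E-F$ into a common circuit of $M/F$, and ``elimination as in the first part'' has no free element of $M/F$ to pivot on.

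\textbf{The missing idea.} If $B$ is any basis of $M$ with $e\notin B$, then the fundamental circuit of $e$ with respect to $B$ is spanning. It therefore has $r(M)+1$ elements, so $B\cup e$ is itself a circuit.

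Next, let $F\neq E(M)$ be a flat containing $e$. Then $e\notin\cl(F-e)$, since a circuit through $e$ inside $F$ would make $F$ spanning. So $F-e$ is a flat, and $I\cup e$ is a basis of $F$ for any basis $I$ of $F-e$.

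Now take $g,h\in E-F$ that are not parallel in $M/F$.
\begin{itemize}
\item The set $I\cup\{g,h\}$ is independent. Extend it to a basis $B$ of $M$ using only elements of $E-e$, which spans $M$.
\item Then $B\cap F=I$, and $B-I$ has $r(M/F)+1$ elements, so it is dependent in $M/F$.
\item For each $b\in B-I$, the set $(B-I)-b$ is independent in $M/F$. This holds because $I\cup e$ is a basis of $F$ and $(B-b)\cup e$ is independent in $M$.
\end{itemize}
Hence $B-I$ is a circuit of $M/F$ through $g$ and $h$. Since $M/F$ is loopless, this shows $M/F$ is connected.

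The same construction also handles the other cases. Taking $I$ to be a basis of $F$ and using the circuit $(B-I)\cup e$ covers $e\notin F$. Taking $F=\varnothing$ gives connectivity of $M$. So this single observation proves the whole lemma without a case split.
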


\begin{lemma}[{\cite[Lemma 3.6]{Oxley-Pfeil2022}}] \label{lem: 2-sum is unbreakable iff free element}
    Let $M_1$ and $M_2$ be loopless matroids with $E(M_1) \cap E(M_2)=\{p\}$ and $r(M_i)\geq 2$ for each $i$. The 2-sum of $M_1$ and $M_2$ with respect to the basepoint $p$ is unbreakable if and only if $p$ is free in both $M_1$ and $M_2$.
\end{lemma}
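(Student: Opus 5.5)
The plan is to prove both directions directly from the description of a $2$-sum as a parallel connection with the basepoint deleted, tracking how flats of $M=M_1\oplus_2 M_2$ restrict to the two sides. Write $E_i=E(M_i)-p$. A set $F\subseteq E(M)$ is a flat of $M$ exactly when, for each $i$, either $F\cap E_i=\cl_{M_i}(F\cap E_i)-p$ with $p\notin\cl_{M_i}(F\cap E_i)$, or $p$ lies in the $M_i$-closure of $F\cap E_i$ and $F\cap E_j$ is a flat of $M_j$ containing $p$ with $p$ deleted. I will first record this bookkeeping fact, together with the formula for contraction. If $p\notin\cl_{M_i}(F_i)$ for $i=1,2$, where $F_i=F\cap E_i$, then
\[ M/F=(M_1/F_1)\oplus_2(M_2/F_2). \]
If instead $p\in \cl_{M_1}(F_1)$, then $p$ is a loop of $M_1/F_1$, and the $2$-sum degenerates to $(M_1/F_1)\del p\,\oplus\, (M_2/F_2)/p$.

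For the forward direction I argue by contrapositive. Suppose $p$ is not free in $M_1$. Since $M_1$ is loopless and $p$ is not a coloop (the $2$-sum requires it), there is a non-spanning circuit $C$ of $M_1$ containing $p$. Take $F=\cl_{M_1}(C)-p$, which is a flat of $M$ by the description above. Since $p\in\cl_{M_1}(F)$, the degenerate formula gives
\[ M/F=(M_1/\cl_{M_1}(C))\,\oplus\,(M_2/p). \]
Because $C$ is non-spanning, $r(M_1/\cl_{M_1}(C))\ge 1$, and because $r(M_2)\ge2$, we have $r(M_2/p)\ge 1$. Both summands are therefore nonempty, so $M/F$ is disconnected and $M$ is not unbreakable.

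For the converse, assume $p$ is free in both $M_1$ and $M_2$. Then each $M_i$ is unbreakable by Lemma \ref{lem: free element implies unbreakable}, and $M$ is connected since it is a $2$-sum of connected matroids. Let $F$ be a flat of $M$.
\begin{itemize}
\item[(a)] Suppose $p\in\cl_{M_1}(F_1)$. Freeness of $p$ in $M_1$ forces $F_1\cup p$ to span $M_1$: otherwise a circuit inside $F_1\cup p$ through $p$ would be non-spanning. So $M_1/F_1$ has rank $0$, and $M/F$ reduces to the contraction of $M_2$ by the flat $\cl_{M_2}(F_2\cup p)$. This contraction is connected because $M_2$ is unbreakable.
\item[(b)] Suppose $p\notin\cl_{M_i}(F_i)$ for both $i$. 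I will check that $p$ remains free in $M_i/F_i$: any circuit of $M_i/F_i$ through $p$ lifts to a circuit of $M_i$ through $p$ contained in $F_i$ together with that circuit; this lift is spanning, hence the original circuit spans $M_i/F_i$. Also, $p$ is neither a loop nor a coloop of $M_i/F_i$. Then each $M_i/F_i$ is loopless with a free element, hence connected, and their $2$-sum $M/F$ is connected. The degenerate case where some $M_i/F_i$ has rank $1$ only turns the $2$-sum into a relabelling of the other side, and needs a separate sentence.
\end{itemize}

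I expect the main obstacle to be the bookkeeping rather than any single idea. It consists of stating the characterization of flats of a $2$-sum correctly, verifying that $F=\cl_{M_1}(C)-p$ is genuinely a flat of $M$ so that $M/F$ has no loops, and handling the small-rank degenerate cases in (b).
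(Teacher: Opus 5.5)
The paper gives no proof of this lemma (it is quoted from Oxley and Pfeil), so your argument has to stand on its own, and most of it does. Your converse direction is sound. In case (a) the flat condition forces $F_1\cup p=E(M_1)$, so $M/F=M_2/(F_2\cup p)$ with $F_2\cup p$ a flat of $M_2$. In case (b) your lifting argument correctly shows that $p$ remains free, and is neither a loop nor a coloop, in each $M_i/F_i$, so $M/F=(M_1/F_1)\oplus_2(M_2/F_2)$ is a $2$-sum of connected matroids. The rank-one case needs no special treatment, since a $2$-sum of connected matroids is connected. Note also that if $M_2/F_2\cong U_{1,k}$ with $k\ge 3$, the effect is to replace $p$ by $k-1$ parallel elements, not merely to relabel.

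The gap is in the forward direction, where you assert that $F=\cl_{M_1}(C)-p$ is a flat of $M$. Your own description of flats shows why this can fail. Since $p\in\cl_{M_1}(F\cap E_1)$, you need $(F\cap E_2)\cup p=\{p\}$ to be a flat of $M_2$. That fails when $p$ has a parallel element $q$ in $M_2$, which the hypotheses allow: take $M_2$ to be the cycle matroid of a triangle with the edge $p$ doubled. In that case $(C-p)\cup q$ is a circuit of $M$, so $q\in\cl_M(F)-F$, and $q$ appears as a loop of the summand $M_2/p$ in your displayed decomposition. Disconnectedness of $M/F$ for a non-flat $F$ says nothing about unbreakability. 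The repair is to contract $F'=\cl_M(F)=(\cl_{M_1}(C)-p)\cup(\cl_{M_2}(\{p\})-p)$ instead. The same parallel-connection computation gives
\[ M/F'=\bigl(M_1/\cl_{M_1}(C)\bigr)\oplus\bigl(M_2/\cl_{M_2}(\{p\})\bigr). \]
The second summand has rank $r(M_2)-1\ge 1$, and the first has rank at least $1$ because $C$ is non-spanning, so $M/F'$ is disconnected. (Equivalently, $M/\cl_M(F)$ is just $M/F$ with its loops deleted.) With this change the proof goes through.
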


To utilize these results, we prove a straightforward lemma about graphic matroids as components of 2-sums with free element basepoints. Note that this result applies to $G$ rather than just $\si(G)$.   
\begin{lemma}\label{lem: link-sum graphic matroids are cycles}
    Suppose that $M$ is a simple quasi-graphic matroid obtained via a non-trivial 2-sum over basepoint $p$ with a graphic matroid $M(G)$ such that $p$ is a free element of $M(G)$. Then $G$ is a cycle on at least 3 vertices. 
\end{lemma}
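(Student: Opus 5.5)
We argue directly from the definition of a free element in the graphic matroid $M(G)$, and use simplicity of $M$ to rule out degenerate configurations. Write $M = M_1 \oplus_2 M(G)$ with basepoint $p$. Since the 2-sum is non-trivial, $M(G)$ is not isomorphic to $M$; in particular $M(G)$ has at least three elements, and $p$ is neither a loop nor a coloop of $M(G)$. We may take $G$ connected with no isolated vertices, since $p$ lies in a circuit of $M(G)$ and, after identifying components at cut vertices (Whitney), the components do not affect the matroid.

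The first step is to show that $G$ is $2$-connected and loopless, i.e.\ that $M(G)$ is connected. A loop of $G$ would be a loop of $M$, which is excluded by simplicity. Since $p$ is free, every element $e \neq p$ of $M(G)$ lies in some circuit with $p$: otherwise $e$ is a coloop, or lies in a component of $M(G)$ not containing $p$. A coloop of $M(G)$ other than $p$ would be a coloop of $M$ and, since $M_1$ is connected at $p$ in the non-trivial sum, this would contradict that $M$ is connected. We expect to deduce this connectivity from the context (the lemma will be applied to unbreakable $M$); alternatively, such elements can be harmlessly absorbed. So $M(G)$ is connected, and $G$ is $2$-connected and loopless.

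The second step is the key one. Because $p$ is free in $M(G)$, every circuit of $M(G)$ containing $p$ is spanning, so it is a Hamiltonian cycle of $G$ through $p$. Suppose $G$ is not a cycle. Since $G$ is $2$-connected, take a Hamiltonian cycle $C$ through $p$; there is then an edge $f \notin C$ whose ends $a,b$ lie on $C$. Here $f$ is a chord of $C$, or a parallel edge if $a,b$ are adjacent on $C$. The theta graph $C \cup f$ contains two cycles through $f$; exactly one of them contains $p$, and that cycle misses some vertex of $C$ whenever $f$ is a genuine chord. This yields a non-spanning circuit through $p$, a contradiction.

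If instead $f$ is parallel to an edge $g$ of $C$, then $\{f,g\}$ is a parallel pair of $M(G)$. If $g \neq p$, this pair consists of two elements of $M$, contradicting simplicity. If $g = p$, then the cycle $\{f,p\}$ is a circuit through $p$, and it is spanning only if $|V(G)|=2$. In that case $M(G)$ would have rank $1$, and the 2-sum would be trivial, so this case is also excluded. Hence $G$ is a cycle.

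Finally, if $G$ had only two vertices, it would be a $2$-cycle, $M(G) \cong U_{1,2}$, and the 2-sum would just relabel $p$, which is trivial. So $G$ is a cycle on at least $3$ vertices.

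We expect the main obstacle to be justifying that $M(G)$ is connected and that no element of $G$ other than $p$ behaves badly, i.e.\ handling the non-triviality and connectivity hypotheses cleanly. The cycle argument itself is routine.
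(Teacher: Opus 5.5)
Your second step is correct and takes a different, more elementary route than the paper's. Your first step, however, does not prove what it claims.

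\textbf{The gap in step one.} To rule out a coloop $e\neq p$ of $M(G)$ you appeal to connectivity of $M$, which is not a hypothesis of the lemma. You also never exclude an element lying in a component of $M(G)$ that avoids $p$. Finally, neither ``absorbing'' such elements nor re-gluing components of $G$ via Whitney is legitimate, because the conclusion is about $G$ itself.

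The missing observation is that freeness of $p$ already forces connectivity. Since $p$ is not a coloop, it lies in a circuit $C$ of $M(G)$, and $C$ is spanning. Suppose $M(G)=N_1\oplus N_2$ with $p\in E(N_1)$ and $E(N_2)\neq\varnothing$. Then $C\subseteq E(N_1)$, so $r(C)\le r(N_1)<r(M(G))$, because $N_2$ is loopless and hence has positive rank. This is a contradiction. It is exactly the paper's remark that a loopless matroid with a free element is connected; the paper gets it through the Oxley--Pfeil lemma that such a matroid is unbreakable.

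In graph terms, once isolated vertices are discarded, a spanning cycle of a loopless graph must pass through every vertex: an edge at a vertex off $C$ would not lie in $\cl(C)$. So $C$ is already the Hamiltonian cycle through $p$ that your second step uses. You need neither $2$-connectivity of $G$ nor connectivity of $M$.

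\textbf{A smaller slip.} In the two-vertex subcase with $g=p$, you have $M(G)\cong U_{1,n}$ with $n\ge 3$, so the $2$-sum need not be trivial. The correct reason this case is excluded is that $f$ is also parallel to the edge of $C$ other than $p$. That gives a parallel pair of $M$, contradicting simplicity.

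\textbf{Comparison with the paper.} With these repairs your argument is a valid alternative. The paper goes through the Oxley--Pfeil characterization. Loopless plus free gives $M(G)$ unbreakable, so $\si(G)$ is a complete graph or a cycle. It then excludes $K_n$ with $n\ge 4$, since a triangle through $p$ is a non-spanning circuit. It excludes $\si(G)\cong K_2$ using simplicity, non-triviality, and the fact that $p$ is not a coloop. Finally it excludes $2$-cycles as you do.

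Your route bypasses both cited results. Once every circuit through $p$ is a Hamiltonian cycle, your theta-graph argument disposes of every non-cycle configuration at once: a chord yields a shorter cycle through $p$, and a parallel edge yields either a parallel pair avoiding $p$ or a $2$-cycle through $p$. The paper's proof is shorter given the cited theorem. Yours is self-contained and makes transparent why a free edge leaves room only for a Hamiltonian cycle through it.
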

\begin{proof}
    Suppose that $M=M(G)\oplus_2M'$. Since $M$ is simple, $G$ does not have any loops other than possibly $p$. Since $p$ is a free element, it is not a loop. Thus Lemma \ref{lem: free element implies unbreakable} implies that $M(G)$ is unbreakable. Theorem \ref{thm: the unbreakable graphic matroids} implies that $\si(G)$ is either a complete graph or a cycle. We see that $\si(G)$ is not a complete graph on more than 3 vertices since $p$ is a free element and a non-loop edge of $G$. Thus $\si(G)$ is an edge or a cycle. Suppose $\si(G)$ is an edge. If $\lvert E(G)\rvert\geq3$, this implies that $M$ has a parallel pair, contradicting that $M$ is simple. If $\lvert E(G)\rvert=2$, then $M\cong M'$, contradicting that the 2-sum is non-trivial. If $G$ consists of a single edge, it must be the edge $p$, in which case $p$ is a coloop of $M(G)$, contradicting that $p$ is a free element. Thus we may assume that $\si(G)$ is a cycle on at least 3 vertices.

    Since $M$ is simple, $G$ may not have any 2-cycles that do not use $p$. If $p$ belongs to a 2-cycle $C$, since $G$ has at least 3 vertices, $C$ does not span $M(G)$, contradicting that $p$ is a free element. Thus $G$ has no 2-cycles or loops, implying that $G$ is a cycle.
\end{proof}

Finally, we need a straightforward observation about the consequences of non-degenerate collections $\cL$ and $\cF$ on an underlying graph $G$ that simplifies to a cycle.

\begin{lemma}\label{lem: nondegen vertex bounds}
    Let $\qbg$ be a quasi-biased graph so that $\cL$ and $\cF$ are both non-degenerate. Then $\lvert V(G)\rvert\geq4$. In addition, when $\si(G)$ is a cycle, $\lvert V(G)\rvert=4$.
\end{lemma}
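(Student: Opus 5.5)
The plan is to work directly from the definition of degeneracy: pick vertex-disjoint cycles $C_1,C_2\in\cL$ and vertex-disjoint cycles $D_1,D_2\in\cF$, and use that $(\cB,\cL,\cF)$ is proper, so each $D_i$ meets each $C_j$.

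\emph{First claim.} Since $C_1$ and $C_2$ are vertex-disjoint, $D_1$ meets them in distinct vertices. The same holds for $D_2$. Because $D_1$ and $D_2$ are also disjoint, $C_1$ contains a vertex of $D_1$ and a different vertex of $D_2$, so $|V(C_1)|\ge 2$. Likewise $|V(C_2)|\ge 2$. As $C_1$ and $C_2$ are disjoint, $|V(G)|\ge 4$. The same argument with the roles of $\cL$ and $\cF$ swapped gives $|V(D_i)|\ge 2$ for $i=1,2$.

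\emph{Second claim.} Suppose $\si(G)$ is a cycle; by the first claim it has at least four vertices. Every cycle of $G$ is then a loop, a $2$-cycle of parallel edges, or a Hamiltonian cycle.
\begin{itemize}
\item None of $C_1,C_2,D_1,D_2$ is a loop, since each has at least two vertices.
\item None is Hamiltonian, since each is vertex-disjoint from another cycle with nonempty vertex set.
\end{itemize}
So all four are $2$-cycles. Write $V(C_1)=\{a,b\}$ and $V(C_2)=\{c,d\}$. Each $D_i$ has exactly two vertices and must meet both $C_1$ and $C_2$, so it has one end in $\{a,b\}$ and one in $\{c,d\}$. Since $D_1$ and $D_2$ are disjoint, up to relabelling $V(D_1)=\{a,c\}$ and $V(D_2)=\{b,d\}$. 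Hence $ab, cd, ac, bd$ are edges of $\si(G)$, and together they form a $4$-cycle subgraph.

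The only point needing care is the final step. A cycle graph contains no proper subgraph that is a cycle, so $\si(G)$ equals this $4$-cycle and $|V(G)|=4$. I do not expect any real obstacle; the key step is the case analysis of cycles in a graph whose simplification is a cycle.
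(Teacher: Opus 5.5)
Your proof is correct and follows essentially the same route as the paper's: you rule out loops among the disjoint lift-type cycles using the proper tripartition and the disjoint frame-type cycles, and when $\si(G)$ is a cycle you show all four cycles are $2$-cycles whose vertex pairs force a $4$-cycle in $\si(G)$. You are a bit more explicit than the paper in justifying why the frame-type cycles are also $2$-cycles (via the symmetric argument), but the structure of the argument is the same.
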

\begin{proof}
    Since $\cL$ is non-degenerate, there is a pair of vertex-disjoint lift-type cycles $C_1$ and $C_2$. Neither of these cycles may be loops of $G$ since $\cF$ is non-degenerate. Thus $\lvert V(C_i)\rvert\geq2$ for $i=1,2$, and it follows that $\lvert V(G)\rvert\geq4$. Now suppose that $\si(G)$ is a cycle. Note that the only cycles of $G$ are Hamiltonian or 2-cycles, so both $C_1$ and $C_2$ are 2-cycles. Since $\cF$ is non-degenerate, there is a pair of vertex-disjoint frame-type cycles $C$ and $C'$, which must also be 2-cycles of $G$. Moreover, since $G$ has a proper tripartition $(\cB,\cL,\cF)$, the cycle $C$ must meet $C_1$ and $C_2$. This implies that $C_1$ and $C_2$ have a pair of vertices $v_1$ and $v_2$, respectively, which are adjacent via an edge of $C$. Moreover, the cycle $C'$ must meet both $C_1$ and $C_2$. Since it is vertex-disjoint from $C$, we see that the other vertices of $C_1$ and $C_2$, say $v_1'$ and $v_2'$ respectively, must be adjacent via an edge of $C'$. Since $\si(G)$ is a cycle, this implies that $\lvert V(G)\rvert\leq4$, and the conclusion follows.
\end{proof}

Combining the previous results with Theorem \ref{thm: non-degen decomposition stronger}, we apply Theorems \ref{thm: the unbreakable graphic matroids} and \ref{thm: main} to the matroids in the 2-sum for further structural insight. We now prove Theorem \ref{thm: non-degen unbreakability}, which we restate for convenience.

\begin{theorem}
    Let $M=M\qbg$ be a simple connected quasi-graphic matroid such that neither $\cL$ nor $\cF$ is degenerate. If $M$ is not 3-connected, then $M$ is not unbreakable.
\end{theorem}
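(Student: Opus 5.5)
We argue by contradiction: suppose $M$ is unbreakable. By Theorem \ref{thm: non-degen decomposition stronger}, $M$ is obtained by a sequence of 2-sums from graphic matroids and a single $3$-connected matroid $N=M(H,\cB',\cL',\cF')$ with $\cL'$ and $\cF'$ both non-degenerate. Since $M$ is not $3$-connected, at least one 2-sum is nontrivial. Using Lemma \ref{lem: doubly non-degenerate implies 2-sum with graphic} and the decomposition tree, there is a nontrivial 2-sum $M=M_1\oplus_2 M_2$ along a basepoint $p$. Here $M_2$ is graphic and $M_1$ is a quasi-graphic matroid whose frame-type and lift-type classes are both non-degenerate and which contains $N$ as a 2-summand.

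By Lemma \ref{lem: 2-sum is unbreakable iff free element}, applied after checking that $M_1$ and $M_2$ are loopless of rank at least $2$ (simplicity of $M$ and nontriviality), $p$ is free in both $M_1$ and $M_2$. By Lemma \ref{lem: free element implies unbreakable}, both $M_1$ and $M_2$ are then unbreakable. Lemma \ref{lem: link-sum graphic matroids are cycles} shows that the graph of $M_2$ is a cycle on at least $3$ vertices. Iterating along the decomposition, every graphic summand is a cycle attached along a free element. In particular, the $3$-connected summand $N$ is unbreakable, since it is a 2-summand with free basepoints at each attachment. Also, $N$ contains a free element $p$ (a basepoint), so every circuit of $N$ through $p$ is spanning.

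The key step, which I expect to be the main obstacle, is to show that a $3$-connected quasi-graphic matroid $N$ with $\cL'$ and $\cF'$ non-degenerate cannot have a free element. Let $p$ be such an element of $H$. Non-degeneracy of $\cL'$ gives two vertex-disjoint lift-type cycles $C_1,C_2$. If $p\in C_1$, then $C_1\cup C_2$ is a circuit containing $p$, so it must be spanning. Hence $r(C_1\cup C_2)=|V(H)|$ forces $|C_1|+|C_2|=|V(H)|+1$, and $C_1\cup C_2$ covers all vertices. Similarly, two disjoint frame-type cycles $D_1,D_2$ with a minimal connecting path give a loose handcuff circuit, which must be spanning if it contains $p$. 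If $p$ lies in none of these, attach $p$ to an unbalanced cycle avoiding one endpoint to form a theta graph or handcuff. Such a circuit has too few edges, since every lift-type cycle meets every frame-type cycle, forcing $C_i$ and $D_j$ to interlock on few vertices. Lemma \ref{lem: nondegen vertex bounds} gives $|V(H)|\ge 4$, and the counting should show that some circuit through $p$ misses a vertex, hence is non-spanning, unless $H$ is very small. I would handle the small residual cases ($|V(H)|=4$, where $\si(H)$ may be a cycle) directly by listing 2-cycle configurations, as in Lemma \ref{lem: nondegen vertex bounds}.

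If this direct approach is awkward, the fallback is to use that $p$ is free in $N$, so $N/p$ is connected and $N\del p$ has $p$ in the free position. Then apply Lemma \ref{lem: contract a cycle}: contracting a lift-type cycle through $p$ yields a graphic unbreakable matroid on a graph that is a cycle or complete, and this is incompatible with the disjoint frame-type cycles surviving. Either way we obtain a contradiction, so $M$ is not unbreakable.
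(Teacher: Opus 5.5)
Your reduction to the $3$-connected summand is correct, but it makes no real progress. The step you flag as the main obstacle is that a $3$-connected $N=M(H,\cB',\cL',\cF')$ with $\cL'$ and $\cF'$ non-degenerate has no free element, and this is \emph{equivalent} to the theorem. Indeed, if $p$ were free in such an $N$, then the 2-sum of $N$ with $M(K_3)$ at $p$ (that is, subdividing $p$) would be simple, connected, not $3$-connected, and still doubly non-degenerate. It would also be unbreakable by Lemma~\ref{lem: 2-sum is unbreakable iff free element}.

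So the whole content of the proof sits in a step for which you give only a heuristic. Nothing in your sketch shows that some theta or handcuff through $p$ misses a vertex. Moreover $H$ need not be dense or even $3$-connected, and ``interlocking on few vertices'' is not an argument. Your bracelet computation is also off: a lift bracelet $C_1\cup C_2$ has rank $|V(C_1)|+|V(C_2)|-1\le |V(H)|-1$. So it is never spanning, and $p$ simply cannot lie in one.

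The fallback fails too. Unbreakability only gives that $\si(H/\cl(C))$ is a cycle or complete for a lift-type cycle $C$. After this contraction every cycle is balanced, so surviving frame-type cycles impose no constraint, and nothing excludes the complete outcome. Contracting a lift-type cycle \emph{through} $p$ also destroys the element whose freeness you want to use.

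The missing idea is a way to force the cycle outcome rather than the complete one. The paper gets this from the graphic summand. In the innermost 2-sum $N_1=N\oplus_p M(G_1)$, $G_1$ is a cycle on at least three vertices by Lemma~\ref{lem: link-sum graphic matroids are cycles}. So the link-sum $H_1=H\oplus_2^{p}G_1$ has at least five vertices, and the internal vertices of the path $G_1\del p$ are cut off by the ends of $p$.

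\begin{itemize}
\item Take vertex-disjoint lift-type cycles $C,C'$ of $H$; they avoid $p$ by the rank computation above.
\item $N_1/\cl(C)$ is graphic and unbreakable, and $H_1/\cl(C)$ has a vertex cut of size at most two. So Theorem~\ref{thm: the unbreakable graphic matroids} forces $\si(H_1/\cl(C))$ to be a cycle on at least four vertices.
\item $C'$ survives this contraction and is not Hamiltonian there, so $|C'|=2$. Symmetrically $|C|=2$.
\item Comparing the two contractions shows no edge of $C$ lies in a triangle. Hence $\si(H_1)$ is a cycle, contradicting Lemma~\ref{lem: nondegen vertex bounds}.
\end{itemize}

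You could run a version of this inside $N$. Note $p\notin\cl(C)$, since otherwise $C\cup p$ would be a non-spanning theta. Freeness of $p$ then survives contracting $\cl(C)$ and forbids a complete outcome on four or more vertices. However, the case $|V(H)|=4$ is then not excluded by Lemma~\ref{lem: nondegen vertex bounds} and needs separate handling. The extra vertex supplied by $G_1$ is what makes the paper's contradiction immediate.
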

\begin{proof}
    Suppose for a contradiction that $M$ is unbreakable. We apply Theorem \ref{thm: non-degen decomposition stronger}, beginning the proof by carefully articulating the 2-sum structure. We may choose a 3-connected quasi-graphic matroid $M'=M(H_0,\cB_0,\cL_0,\cF_0)$ such that $\cL_0$ and $\cF_0$ are non-degenerate, and a minimal sequence of connected graphic matroids $M_1,\ldots,M_k$ so that, 
  
    $$
    M\cong((((M'\oplus_{p_1} M_1)\oplus_{p_2} M_2)\ldots)\oplus_{p_k} M_k).
    $$
    Let $N_0 = M'$, and for $i = 1,2,\dots, k$, let 
    $$N_i =((((M'\oplus_{p_1} M_1)\oplus_{p_2} M_2)\ldots)\oplus_{p_i} M_i).$$

    Note that $N_k \cong M$. Moreover, the minimality of $k$ implies that each of these 2-sums are non-trivial.
    Since $M$ is connected, \cite[Proposition 7.1.22]{Oxley2011} implies that $N_i$ is connected for all $i \in \{0,1,\dots, k\}$ and $M_i$ is connected for all $i \in \{1,2,\dots, k\}$.

    \begin{subtheorem}\label{sub: N_i decomp}
        For each $i \in \{0,1,\dots, k\}$, $N_i$ is connected, simple, and unbreakable.
    \end{subtheorem}
    \begin{proof}
        We proceed by induction on $k-i$.  Since $N_k\cong M$ the statement is clearly true for $i=k$.
        Next, let us assume that the matroid $N_i = N_{i-1}\oplus_{p_i} M_i$ is connected, simple, and unbreakable. Lemma \ref{lem: nondegen vertex bounds} implies that $r(N_{i-1})\geq4$. Since $N_{i}$ is simple, the fact that each 2-sum is non-trivial implies that $r(M_{i})\geq2$. Thus it follows from Lemma \ref{lem: 2-sum is unbreakable iff free element} that $p_i$ is a free element of $N_{i-1}$ and $M_i$. Since $N_{i-1}$ is connected it is loopless. Since $p_i$ is a non-loop element of $M_i$ and $N_i$ is simple, $M_i$ is loopless, so Lemma \ref{lem: free element implies unbreakable} implies that $N_{i-1}$ and $M_i$ are unbreakable. Finally, to show that $N_{i-1}$ is simple, we observe again that any parallel pairs of $N_{i-1}$ may not be disjoint from $p_i$, otherwise this contradicts that $N_i$ is simple. But if $\{p_{i-1},e\}$ is a parallel pair of $N_{i-1}$, since $r(N_i)\geq 4$, this parallel pair is not spanning in $N_{i-1}$, contradicting that $p_i$ is a free element. This completes the proof of \ref{sub: N_i decomp}.
    \end{proof}
    
    We now focus on the matroid $N_1=M'\oplus_2M_1$. By \ref{sub: N_i decomp}, the matroids $N_1$ and $N_0=M'$ are connected, simple, and unbreakable. 
    Since $N_1$ is simple and the $2$-sum is non-trivial we see that $r(M_1) \ge 2$.
    Let $G_1$ be a graph for $M_1$ and let $H_1$ be the link-sum $H_0\oplus_2^{p_1}G_1$. Note that $N_1\cong M(H_0\oplus_2^{p_1}G_1,\cB',\cL',\cF')\oplus_{p_1}M_1$ for a proper tripartition $(\cB',\cL',\cF')$ of $H_1$. By Proposition \ref{lem: nondegen vertex bounds}, we have that $\lvert V(H_0)\rvert\geq4$, so $r(M')\geq4$. Since $N_1$ is simple, $M'$ and $M_1$ are loopless.
    By Lemma \ref{lem: 2-sum is unbreakable iff free element}, the basepoint $p_i$ is a free element of both $M'$ and $M_1$ since $N_1$ is unbreakable. Since $M_1$ is connected and $\cL_0$ and $\cF_0$ are non-degenerate, the free element basepoint $p_1$ is a non-loop edge of both $G_1$ and $H_0$. Lemma \ref{lem: link-sum graphic matroids are cycles} implies that $G_1$ is a cycle on at least 3 vertices. In particular, this implies that $\lvert V(H_1)\rvert\geq5$. We also make the straightforward observation that the basepoint $p_1$ does not belong to a lift bracelet of $H_1$. This follows from Proposition \ref{prop: rank of X} and because $p_1$ is a free element of $M'$.

    Since $\cL_0$ and $\cF_0$ are both non-degenerate, and $p_1$ does not belong to a lift bracelet, we see that $H_0$ contains a pair of vertex-disjoint lift-type cycles $C$ and $C'$ that avoid $p_1$. Thus $C$ and $C'$ are lift-type cycles of $H_1$. Moreover, since $C$ and $C'$ are vertex-disjoint, neither of these cycles is Hamiltonian. Contracting $\cl(C)$ from $M'\oplus_{p_1}M_1$, the resulting graph (which is $H_1/\cl(C)$) has no edges from any vertices of $G_1\del\cl(C)$ to any vertices of $H_1/\cl(C)$. Since $C$ is not Hamiltonian and $\cL_0$ contains no loops of $H_0$, each of these vertex sets contain a vertex that is distinct from the end-vertices of $p_1$. Since $N_1/\cl(C)$ is graphic and unbreakable, Theorem \ref{thm: the unbreakable graphic matroids} implies that $\si(H_1/\cl(C))$ is a cycle on at least four vertices. This implies that $\lvert C'\rvert=2$. By replacing $\cl(C)$ with $\cl(C')$, we see that $\lvert C\rvert=2$ and $\si(H_1/\cl(C'))$ is a cycle on at least four vertices. 
    Since $\si(H_1/\cl(C))$ is a cycle and $C$ is a 2-cycle, either $\si(H_1)$ is a cycle or each edge of $C$ is in a triangle of $H_1$.
    Since $C$ and $C'$ are vertex-disjoint and $\si(H_1/\cl(C'))$ is a cycle on at least four vertices, we see that no edge of $C$ is in a triangle of $H_1$.
    Therefore $\si(H_1)$ is a cycle.
    But Proposition \ref{lem: nondegen vertex bounds} implies that $\lvert V(H_1)\rvert<5$, a contradiction. We conclude that $N_1$ is not unbreakable, a contradiction.
    \end{proof}

Theorem \ref{thm: non-degen unbreakability} has the significant consequence that the only simple and connected but not 3-connected unbreakable quasi-graphic matroids are either lifted-graphic or frame matroids, and we will consider these cases separately.
We begin with the following lemma about some quasi-graphic matroids with free elements. 
Note that we make no assumptions on $\cL$ and $\cF$.

\begin{lemma}\label{lem: complete graphs dont have free elements}
    Suppose $M=M\qbg$ where $\lvert V(G)\rvert>6$ and $\si(G)$ is obtained from a complete graph by deleting the edges of a path of length at most two. Then $M$ does not have a free element.
\end{lemma}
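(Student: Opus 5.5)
Suppose for contradiction that $e$ is a free element of $M$. The plan is to exhibit a non-spanning circuit of $M$ through $e$. Since $|V(G)|>6$ and $\si(G)$ is nearly complete, such a circuit should be easy to find inside a small vertex set. We may assume $e$ lies in some circuit. First, if $G$ has no unbalanced cycle, then $M=M(G)$ with $r(M)=|V(G)|-1\ge 6$. We then split on the type of $e$:
\begin{itemize}
\item if $e$ is a loop, it is a loop of $M$ and hence not free;
\item if $e$ is a non-loop with ends $a,b$, pick a third vertex $c$ adjacent to both $a$ and $b$ (possible because at most two edges of $K_n$ are missing, so at most two vertices fail to be adjacent to $a$ or $b$). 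The resulting triangle is a circuit through $e$ of rank $2<r(M)$.
\end{itemize}

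So assume $G$ has an unbalanced cycle; then $r(M)=|V(G)|\ge 7$, and any circuit of $M$ on at most six vertices is non-spanning, by Proposition~\ref{prop: rank of X}. Circuits of every type (balanced cycle, theta, tight handcuff, union of two disjoint cycles in $\cL$, loose handcuff in $\cF$) span at most as many vertices as they have. It therefore suffices to find a circuit containing $e$ whose underlying graph uses at most six vertices.

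The main step is to build this circuit in a small vertex-induced subgraph. Let $U$ be a set of five or six vertices containing the ends of $e$, chosen so that $G[U]$ still has $\si(G[U])$ equal to a complete graph minus at most a path of length two. In particular $G[U]$ is $2$-connected and contains many triangles. Now consider the restriction $M|E(G[U])$, or $M|E(G[U])\cup\{e\}$ if $e$ is a loop. If this set is dependent and $e$ is not a coloop of the restriction, then $e$ lies in a circuit of $M$ inside $G[U]$, and that circuit is non-spanning.

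By Proposition~\ref{prop: rank of X}, $r(E(G[U]))\le |U|$, while $G[U]$ has many more than $|U|$ edges, say at least $\binom{5}{2}-2=8$ for $|U|=5$. So the restriction is highly dependent. The remaining task is to show that $e$ is not a coloop of it. For a non-loop $e$ this holds because $e$ lies in triangles of $G[U]$, so $G[U]\del e$ is connected and spans the same vertices. If in addition $G[U]$ contains an unbalanced cycle avoiding $e$ (or $G[U]$ is balanced), then $r$ is unchanged by deleting $e$, by the rank formula.

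The expected obstacle is the case where every unbalanced cycle inside $G[U]$ uses $e$; a loop $e$ is handled similarly. There I would instead use the rank formula directly. If $e$ is an unbalanced loop at $a$, take a triangle $T$ at $a$. Then $T\cup e$ is either a tight handcuff (if $T$ is unbalanced) or contains a dependent set together with $e$; alternatively, vary $U$ so as to include another unbalanced cycle, using Lemma~\ref{lem: vertex adjacent to every vertex of cycle} to shrink any unbalanced cycle to an unbalanced triangle at a chosen vertex. Shrinking to triangles keeps every circuit on at most six vertices: two triangles plus $e$, or a theta on a $K_4$. This contradicts freeness, since $6<r(M)$.
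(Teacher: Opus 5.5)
\textbf{Gap: the case you label the ``expected obstacle'' is where the whole lemma lives, and your proposal does not resolve it.}

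Your reduction is sound. Once $r(M)=|V(G)|\ge 7$, any circuit through $e$ on at most six vertices contradicts freeness. Your rank certificate also works uniformly for loops and non-loops: if $G[U]\del e$ is connected on $U$ and contains an unbalanced cycle, then $r(E(G[U])-e)=|U|\ge r(E(G[U]))$, so $e$ lies in a circuit inside $G[U]$.

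The difficulty is the case where no unbalanced cycle avoiding $e$ is available in $G[U]$.
\begin{itemize}
\item Your loop fix fails. If the triangle $T$ at $a$ is balanced, then $r(T\cup e)=3=r(T)+1$, whether $e\in\cL$ or $e\in\cF$. So $e$ is a coloop of $M|(T\cup e)$, and no circuit through $e$ lives there.
\item The alternative, ``vary $U$ so as to include another unbalanced cycle'', presupposes that $G$ has an unbalanced cycle avoiding $e$, which you never prove. This is exactly where freeness must be used. You note at the start that $e$ lies in a circuit, but in your second case you never turn this into a graph statement. Without it the approach cannot work: if every unbalanced cycle used $e$, then $e$ would be a coloop and lie in no circuit at all.
\item Lemma~\ref{lem: vertex adjacent to every vertex of cycle} returns a triangle with edge set in $E(C)\cup E_w$. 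So unless $w$ is chosen with care, the shrunk cycle may contain $e$ again.
\end{itemize}

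\textbf{The repair is short.}
\begin{enumerate}
\item Since $\si(G)$ is nearly complete and $|V(G)|\ge 7$, the graph $G\del e$ is connected. If it were balanced, then $r(E(G)-e)=|V(G)|-1<r(M)$ and $e$ would be a coloop. Hence $G\del e$ has an unbalanced cycle $C$.
\item If $|C|\ge 3$, choose $w$ not an end of $e$ and adjacent to every other vertex. The missing path touches at most three vertices, so at least $|V(G)|-3\ge 4$ vertices qualify.
\item Apply Lemma~\ref{lem: vertex adjacent to every vertex of cycle} to get an unbalanced triangle $D$ with $E(D)\subseteq E(C)\cup E_w$. This triangle avoids $e$. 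If $|C|\le 2$, take $D=C$.
\item Choose $U$ containing $V(D)$ and the ends of $e$, with $5\le |U|\le 6$. Your restriction argument on $M|E(G[U])$ then finishes both the loop and non-loop cases.
\end{enumerate}

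\textbf{Comparison with the paper.} With this repair your argument is a cleaner variant of the paper's. The paper also starts from an unbalanced cycle avoiding the free element, shrinking it to length at most three by a chord argument rather than the universal-vertex lemma. It then builds explicit tight handcuffs, loose handcuffs, or bracelets through $p$, case by case. Your rank certificate avoids having to identify which circuit type arises.
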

\begin{proof}
    We will use the language ``nearly complete" to refer to the assumed structure of $G$. The statement is clearly true if all cycles are balanced, so we may assume that $G$ has an unbalanced cycle, implying $r(M)=\lvert V(G)\rvert$. Suppose by contradiction that $M$ has a free element $p$. Since $p$ is a free element, it is not a coloop, so there is an unbalanced cycle that avoids the edge $p$. Note as well that because $p$ is a free element, every cycle using $p$ is unbalanced since a balanced cycle cannot be spanning by Proposition \ref{prop: rank of X}. Let $C$ be a shortest unbalanced cycle avoiding $p$. We claim that $\lvert C\rvert\leq3$. To see this, suppose $C$ has at least four vertices. Because $G$ is nearly complete, we can find an edge $f$ with both endpoints in $V(C)$ that is not parallel with any edge of $C$. Thus $G[C]$ contains two cycles $C_1$ and $C_2$ which are strictly smaller than $C$. One of these cycles must be unbalanced, otherwise $C$ is balanced by the theta property. But this contradicts our choice of $C$. So $\lvert C\rvert\leq3$.

    We now consider the edge $p$. First, suppose that $p$ is a loop of $G$ at vertex $v$. Since $p$ is a free element, this loop is unbalanced. Because $G$ is nearly complete, we may find a shortest (possibly empty) path $P$ from $v$ to a vertex of $C$. Note that $P$ has length at most 3, and is empty when $v$ is a vertex of $C$. Then $C\cup P\cup p$ contains a non-spanning circuit of $M$. To see this, note that if $v$ is a vertex of $C$, this circuit is a tight handcuff with both cycles unbalanced. If $v$ is not a vertex of $C$, then because $G$ has proper tripartition $(\cB,\cL,\cF)$, the cycle $C$ and the loop $p$ share the same type, and so this circuit is either a loose handcuff with both cycles in $\cF$ or a bracelet with both cycles in $\cL$. In every case this contradicts that $p$ is a free element of $M$.

    Now we may assume that $p$ is a non-loop edge of $G$ with ends $u$ and $v$. Note that because $\lvert C\rvert\leq3$ and $\lvert V(G)\rvert\geq7$, there are two vertices $x$ and $y$ that are not vertices of $C$ or incident with $p$. Since $G$ is nearly complete, we must have that both $u$ and $v$ are adjacent to one of these vertices, without loss of generality say $x$. This forms a 3-cycle using $p$, say $C'$, which must be unbalanced. Moreover, because this cycle is vertex-disjoint from $C$ and $G$ has a proper tripartition, they share the same type. Because $G$ is nearly complete, there is a vertex of $C$ that is adjacent with a vertex of $C'$ via an edge $e$ that is not an edge of either $C$ or $C'$. Then $C\cup C'\cup e$ contains a non-spanning circuit of $M$. When $C$ and $C'$ belong to $\cF$, this circuit is a loose handcuff, and when $C$ and $C'$ belong to $\cL$, this circuit is a bracelet. But this contradicts that $p$ is a free element.
    \end{proof}

We recall that any connected but not 3-connected matroid may be obtained from a sequence of 2-sums of proper minors such that each is 3-connected \cite[Corollary 8.3.4]{Oxley2011}. We may now prove Theorem \ref{thm: connected not 3-connected}, which we restate for convenience.

\begin{theorem}
    Let $M$ be a simple frame or lifted-graphic matroid that is connected but not 3-connected. Then $M$ is unbreakable if and only if $M$ is obtained via 2-sums along free elements from unbreakable frame or lifted-graphic matroids such that:
    \begin{enumerate}[$(i)$]
        \item If $M$ is a frame matroid, the rank of each summand is at most 6.
        \item If $M$ is lifted-graphic, each summand with rank greater than 6 is on a graph which simplifies to a cycle.
        \item If any summand is also graphic, it is on a graph which is a cycle on exactly 3 vertices.
    \end{enumerate}
\end{theorem}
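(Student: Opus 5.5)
The plan is to decompose $M$ along its $2$-separations and treat each $3$-connected summand with the results already in hand. Since $M$ is connected but not $3$-connected, \cite[Corollary 8.3.4]{Oxley2011} writes $M$ as an iterated $2$-sum of $3$-connected matroids, each a proper minor of $M$. Because the classes of frame matroids and of lifted-graphic matroids are minor-closed, each summand is frame (respectively lifted-graphic) when $M$ is. For the forward direction, assume $M$ is unbreakable. As in the proof of Theorem \ref{thm: non-degen unbreakability} (claim \ref{sub: N_i decomp}), I would peel off one summand at a time and use Lemma \ref{lem: 2-sum is unbreakable iff free element}. Simplicity of $M$ together with nontriviality of each $2$-sum gives the rank hypotheses $r\ge 2$. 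The lemma then says each basepoint is free on both sides. Lemma \ref{lem: free element implies unbreakable} makes every summand unbreakable, and the parallel-pair argument used there shows each intermediate sum stays simple. This establishes ``2-sums along free elements of unbreakable summands''.

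Next I would bound each summand $N$. A $3$-connected unbreakable summand of rank greater than $6$ has more than six vertices in its graph (rank $=|V|$ once unbalanced, and graphic summands are handled separately). Theorem \ref{thm: main} then says $\si$ of its graph is either a cycle with $N$ lifted-graphic, or nearly complete. The nearly complete case is excluded by Lemma \ref{lem: complete graphs dont have free elements}, because $N$ must carry a free basepoint. Hence a frame summand must have rank at most $6$, giving $(i)$. Here a cycle-graph frame summand of rank $>6$ is also excluded, since it would be lifted-graphic with $\si$ a cycle, and by Lemma \ref{lem: si(G) is a cycle} $\cF$ is degenerate; I need to check that such a matroid is then graphic or falls under $(iii)$. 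For a lifted-graphic $M$, the same argument gives $(ii)$. For $(iii)$, a graphic summand attached along a free element is a cycle by Lemma \ref{lem: link-sum graphic matroids are cycles}. Being $3$-connected, it is then a triangle, i.e. $U_{2,3}$.

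For the converse, suppose $M$ is such a $2$-sum along elements free in both parts, with unbreakable loopless summands. I would induct on the number of summands using the ``if'' direction of Lemma \ref{lem: 2-sum is unbreakable iff free element}. The point requiring care is that the remaining basepoints stay free in the partial sums. This needs the standard fact that if $p$ is free in $M_1$ and $q\neq p$ is free in $M_1$ and in $M_2$, then $q$ remains free in $M_1\oplus_p M_2$. I would verify this from the description of circuits of a $2$-sum: a circuit through $q$ is either a circuit of $M_1$ avoiding $p$, which is impossible since circuits through $q$ are spanning and hence contain $p$ unless rank is tiny, or has the form $(C_1-p)\cup(C_2-p)$, which is spanning when both pieces are. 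Conditions $(i)$–$(iii)$ are then just consistency constraints, not needed for sufficiency.

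The main obstacle I expect is the bookkeeping of the free-element property through an arbitrary tree of $2$-sums, since a summand may carry several basepoints. I would first try to show that every basepoint of a summand is simultaneously free in it, by applying Lemma \ref{lem: 2-sum is unbreakable iff free element} to each $2$-separation of $M$ separately, each of which corresponds to an edge of the decomposition tree.
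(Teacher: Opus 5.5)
Your forward direction is essentially the paper's. You decompose via \cite[Corollary 8.3.4]{Oxley2011}, peel off summands with Lemmas~\ref{lem: 2-sum is unbreakable iff free element} and~\ref{lem: free element implies unbreakable}, then apply Theorem~\ref{thm: main}, Lemma~\ref{lem: complete graphs dont have free elements} and Lemma~\ref{lem: link-sum graphic matroids are cycles}. The loose end you flag, a frame summand whose graph simplifies to a cycle, closes at once. Apply Theorem~\ref{thm: the unbreakable frame matroids} to a frame representation $(G_i,\cB_i)$ of that summand. It forces $\si(G_i)$ to be nearly complete, which Lemma~\ref{lem: complete graphs dont have free elements} rules out.

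The converse, however, has a genuine gap. The ``standard fact'' you invoke is false. Take $M_1=U_{2,4}$ on $\{p,q,a,b\}$ and $M_2=U_{2,4}$ on $\{p,x,y,z\}$. Both $p$ and $q$ are free in $M_1$. But $\{q,a,b\}$ is a circuit of $M_1\setminus p$, hence of $M_1\oplus_p M_2$, and it has rank $2<3=r(M_1\oplus_p M_2)$. So $q$ is not free in the $2$-sum. The failing step is ``circuits through $q$ are spanning and hence contain $p$'': a spanning circuit of $M_1$ through $q$ can avoid $p$, and after the $2$-sum it is no longer spanning.

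Worse, under your reading of the hypothesis the ``if'' direction is false even for frame matroids. Your reading is that each basepoint is free in the two individual summands it joins. Let $G$ have vertices $1,2,3,4$, parallel edges $x,y,z$ joining $1,2$, parallel edges $a,b$ joining $1,3$, and parallel edges $s,t,w$ joining $3,4$, with every cycle unbalanced. Then $F(G,\varnothing)$ has the following properties:
\begin{itemize}
\item it is simple, connected and not $3$-connected;
\item it is the $2$-sum of three copies of $U_{2,4}$ along a path;
\item every basepoint is free in both summands containing it, and each summand is an unbreakable, non-graphic frame matroid of rank $2$.
\end{itemize}
Yet $\{a,b\}$ is a flat, and contracting it rolls $x,y,z$ and $s,t,w$ into frame-type loops at $2$ and $4$. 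So $F(G,\varnothing)/\{a,b\}\cong U_{1,3}\oplus U_{1,3}$ is disconnected.

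So the persistence statement cannot be proved. What is needed is the reading your forward direction actually produces, which is the one the paper uses. There $M=(\cdots((M_1\oplus_{p_1}M_2)\oplus_{p_2}M_3)\cdots)\oplus_{p_{k-1}}M_k$, with each $p_j$ free both in the partial sum $N_j$ and in $M_{j+1}$. With that reading the converse is a direct induction using the ``if'' direction of Lemma~\ref{lem: 2-sum is unbreakable iff free element}. The only work is checking that every summand is loopless of rank at least $2$. This follows from simplicity of $M$ and non-triviality of the $2$-sums. It also uses that a basepoint parallel to another element cannot be free in a partial sum of rank at least $2$. Your remark that (i)--(iii) are not needed for sufficiency is correct.
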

\begin{proof}
    Suppose $M$ is unbreakable. Since $M$ is connected but not 3-connected, it is obtained via 2-sums with a sequence of 3-connected matroids, each of which is a proper minor of $M$ \cite[Corollary 8.3.4]{Oxley2011}. Since $M$ is either a frame matroid or a lifted-graphic matroid, and both of these classes are minor-closed, we may assume that either each matroid in this 2-sum is a 3-connected frame matroid, or each matroid is a 3-connected lifted-graphic matroid. 
    
    Let $M_1,\ldots,M_k$ be a minimal sequence of 3-connected proper minors of $M$ such that,
    $$
    M\cong (((M_1\oplus_{p_1}M_2)\oplus_{p_2}M_3)\oplus_{p_3}\ldots)\oplus_{p_{k-1}}M_k.
    $$
    The minimality of $k$ implies that each 2-sum in this sequence is non-trivial. Let $N_k=M$ and define $N_i$ by
    $$
    N_i=(((M_1\oplus_{p_1}M_2)\oplus_{p_2}M_3)\oplus_{p_3}\ldots)\oplus_{p_{i-1}}M_i.
    $$
    We first show that $N_i$ is simple, connected, and unbreakable for each $i\in\{1,\ldots,k\}$. We proceed similarly to \ref{sub: N_i decomp} by induction on $k-i$. By assumption, $M=N_k$ is simple, connected, and unbreakable, and we have that $N_i$ is connected for each $i$ by Proposition \cite[7.1.22]{Oxley2011}. Now suppose that $N_i$ is simple and unbreakable, and consider $N_{i-1}$. We have that $N_i=N_{i-1}\oplus_{p_{i-1}}M_{i}$. Since $N_i$ is simple and each 2-sum is non-trivial, $r(N_{i-1})\geq2$ and $r(M_i)\geq2$. Since $N_i$ is unbreakable, Lemma \ref{lem: 2-sum is unbreakable iff free element} implies that the basepoint $p_{i-1}$ is a free element of both $N_{i-1}$ and $M_i$. Since these matroids are connected (and therefore loopless), Lemma \ref{lem: free element implies unbreakable} implies that they are both unbreakable. To show that $N_{i-1}$ is simple, we have the straightforward observation that $N_{i-1}$ may not have any parallel pairs that are disjoint from $p_{i-1}$ since $N_i$ is simple. Furthermore, since $r(N_{i-1})\geq2$, any parallel pairs containing $p_{i-1}$ are not spanning in $N_{i-1}$, contradicting that $p_{i-1}$ is a free element. Thus $N_{i-1}$ has no parallel pairs, and, since it is connected, $N_{i-1}$ has no loops. Thus $N_{i-1}$ is simple, as desired.

    A particularly relevant observation from the previous argument is that, because each 2-sum is non-trivial, $r(N_i) \ge 2$ and $r(M_i)\geq2$ for each $i$. Because $N_i$ is unbreakable, Lemma \ref{lem: 2-sum is unbreakable iff free element} implies that each basepoint $p_i$ is a free element of $M_{i+1}$ (noting that $p_1$ is also a free element of $M_1$). Since $M_i$ is 3-connected for each $i$, it is loopless, so Lemma \ref{lem: free element implies unbreakable} implies that $M_i$ is unbreakable for each $i$. We now have that each matroid $M_i$ is 3-connected and unbreakable, and may apply Theorem \ref{thm: main}. We recall that for each matroid $M_i$, we may choose a biased graph $(G_i,\cB_i)$ representing $M_i$ such that $G_i$ has no component with fewer than two vertices.

    First suppose that $M$ (and therefore each $M_i$) is a frame matroid. Suppose that $r(M_i)\geq7$ for some $i\in\{1,\ldots,k\}$. Theorem \ref{thm: main} together with Lemma \ref{lem: complete graphs dont have free elements} implies that $\lvert V(G_i)\rvert\leq6$, so $r(M_i)\leq6$, a contradiction. Therefore $(i)$ holds.

    Next suppose that $M$ (and therefore each $M_i$) is lifted-graphic. If $r(M_i)\geq7$ for some $i\in\{1,\ldots,k\}$, then Theorem \ref{thm: main} and Lemma \ref{lem: complete graphs dont have free elements} imply that $\si(G_i)$ is a cycle, so $(ii)$ holds.

    Finally, if any of the matroids $M_i$ is graphic, then Lemma \ref{lem: link-sum graphic matroids are cycles} implies that the graph of $M_i$ is a cycle on at least 3 vertices.  Since $M_i$ is 3-connected, the fact that the graphs of cycles are not 3-connected implies that $\lvert V(G_i)\rvert=3$ \cite[Proposition 8.1.9]{Oxley2011}.

    Conversely, suppose that a simple quasi-graphic matroid $M$ is obtained via 2-sums of a sequence of unbreakable matroids $M_1,\ldots,M_k$, with corresponding free element basepoints $p_1,\ldots,p_{k-1}$. We may assume that $k$ is minimal, so that each of these 2-sums is non-trivial. We wish to show that $r(M_i)\geq2$ and apply Lemma \ref{lem: 2-sum is unbreakable iff free element}. Since each matroid $M_i$ is unbreakable, it is connected, so $M_i$ has no loops. Since the basepoints $p_i$ are free elements (and therefore not coloops), $M_i$ has at least two elements. If $M_i$ consists of a parallel pair containing $p_i$, this contradicts that the 2-sums are non-trivial. If $M_i$ consists of a parallel class on $m$ elements with $m \ge 3$, then, since $M$ is simple, one element of this parallel class must be a basepoint $p_j$ of the $2$-sum of $((M_1\oplus_{p_1}M_2)\oplus_{p_2}\ldots)\oplus_{p_{j}}M_{j+1}$ with $M_{j+1}$.
    But then $p_j$ is not a free element of $((M_1\oplus_{p_1}M_2)\oplus_{p_2}\ldots)\oplus_{p_{j}}M_{j+1}$ because this matroid has rank at least two, a contradiction.
    Thus $M_i$ contains an element which is not parallel with $p_i$, implying that $r(M_i)\geq2$ for each $i\in\{1,\ldots,k\}$. Lemma \ref{lem: 2-sum is unbreakable iff free element} then implies that each 2-sum in this sequence is unbreakable, and it follows that $M$ is unbreakable.
\end{proof}


\bibliographystyle{abbrv}
\bibliography{references.bib}

@article {Oxley-Pfeil2022,
    AUTHOR = {Oxley, James and Pfeil, Simon},
     TITLE = {Unbreakable matroids},
   JOURNAL = {Adv. in Appl. Math.},
  FJOURNAL = {Advances in Applied Mathematics},
    VOLUME = {141},
      YEAR = {2022},
     PAGES = {Paper No. 102404, 7},
      ISSN = {0196-8858},
   MRCLASS = {05B35},
  MRNUMBER = {4456793},
MRREVIEWER = {Maruti M. Shikare},
       DOI = {10.1016/j.aam.2022.102404},
       URL = {https://doi.org/10.1016/j.aam.2022.102404},
}

@article {Fife-Mayhew-Oxley-Semple2020,
    AUTHOR = {Fife, Tara and Mayhew, Dillon and Oxley, James and Semple,
              Charles},
     TITLE = {The unbreakable frame matroids},
   JOURNAL = {SIAM J. Discrete Math.},
  FJOURNAL = {SIAM Journal on Discrete Mathematics},
    VOLUME = {34},
      YEAR = {2020},
    NUMBER = {3},
     PAGES = {1522--1537},
      ISSN = {0895-4801},
   MRCLASS = {05B35 (05C40)},
  MRNUMBER = {4121881},
MRREVIEWER = {B. N. Waphare},
       DOI = {10.1137/19M126428X},
       URL = {https://doi.org/10.1137/19M126428X},
}

@book {Oxley2011,
    AUTHOR = {Oxley, James},
     TITLE = {Matroid theory},
    SERIES = {Oxford Graduate Texts in Mathematics},
    VOLUME = {21},
   EDITION = {Second},
 PUBLISHER = {Oxford University Press, Oxford},
      YEAR = {2011},
     PAGES = {xiv+684},
      ISBN = {978-0-19-960339-8},
   MRCLASS = {05-01 (05B35 90C27)},
  MRNUMBER = {2849819},
MRREVIEWER = {Maruti M. Shikare},
       DOI = {10.1093/acprof:oso/9780198566946.001.0001},
       URL = {https://doi.org/10.1093/acprof:oso/9780198566946.001.0001},
}

@article {Zaslavsky1989,
    AUTHOR = {Zaslavsky, Thomas},
     TITLE = {Biased graphs. {I}. {B}ias, balance, and gains},
   JOURNAL = {J. Combin. Theory Ser. B},
  FJOURNAL = {Journal of Combinatorial Theory. Series B},
    VOLUME = {47},
      YEAR = {1989},
    NUMBER = {1},
     PAGES = {32--52},
      ISSN = {0095-8956},
   MRCLASS = {05C99 (05B35)},
  MRNUMBER = {1007712},
MRREVIEWER = {J. M. S. Sim\~{o}es-Pereira},
       DOI = {10.1016/0095-8956(89)90063-4},
       URL = {https://doi.org/10.1016/0095-8956(89)90063-4},
}

@article {Zaslavsky1991,
    AUTHOR = {Zaslavsky, Thomas},
     TITLE = {Biased graphs. {II}. {T}he three matroids},
   JOURNAL = {J. Combin. Theory Ser. B},
  FJOURNAL = {Journal of Combinatorial Theory. Series B},
    VOLUME = {51},
      YEAR = {1991},
    NUMBER = {1},
     PAGES = {46--72},
      ISSN = {0095-8956},
   MRCLASS = {05B35 (05C99)},
  MRNUMBER = {1088626},
MRREVIEWER = {J. M. S. Sim\~{o}es-Pereira},
       DOI = {10.1016/0095-8956(91)90005-5},
       URL = {https://doi.org/10.1016/0095-8956(91)90005-5},
}

@article {Bowler-Funk-Slilaty20,
    AUTHOR = {Bowler, Nathan and Funk, Daryl and Slilaty, Daniel},
     TITLE = {Describing quasi-graphic matroids},
   JOURNAL = {European J. Combin.},
  FJOURNAL = {European Journal of Combinatorics},
    VOLUME = {85},
      YEAR = {2020},
     PAGES = {103062, 26},
      ISSN = {0195-6698},
   MRCLASS = {05B35},
  MRNUMBER = {4037634},
MRREVIEWER = {Brigitte Servatius},
       DOI = {10.1016/j.ejc.2019.103062},
       URL = {https://doi.org/10.1016/j.ejc.2019.103062},
}

@article {Bowler-Funk-Slilaty24-corrigendum,
    AUTHOR = {Bowler, Nathan and Funk, Daryl and Slilaty, Daniel},
     TITLE = {Corrigendum to ``{D}escribing quasi-graphic matroids''
              [{E}uropean {J}. {C}ombin. 85 (2020) 103062]},
   JOURNAL = {European J. Combin.},
  FJOURNAL = {European Journal of Combinatorics},
    VOLUME = {122},
      YEAR = {2024},
     PAGES = {Paper No. 104004, 4},
      ISSN = {0195-6698},
   MRCLASS = {05B35},
  MRNUMBER = {4789251},
       DOI = {10.1016/j.ejc.2024.104004},
       URL = {https://doi.org/10.1016/j.ejc.2024.104004},
}

@article {Geelen-Gerards-Whittle18,
    AUTHOR = {Geelen, Jim and Gerards, Bert and Whittle, Geoff},
     TITLE = {Quasi-graphic matroids},
   JOURNAL = {J. Graph Theory},
  FJOURNAL = {Journal of Graph Theory},
    VOLUME = {87},
      YEAR = {2018},
    NUMBER = {2},
     PAGES = {253--264},
      ISSN = {0364-9024},
   MRCLASS = {05B35 (05C83)},
  MRNUMBER = {3742182},
MRREVIEWER = {Maruti M. Shikare},
       DOI = {10.1002/jgt.22177},
       URL = {https://doi.org/10.1002/jgt.22177},
}

@article {Drummond-Fife-Grace-Oxley-2020,
    AUTHOR = {Drummond, George and Fife, Tara and Grace, Kevin and Oxley,
              James},
     TITLE = {Circuit-difference matroids},
   JOURNAL = {Electron. J. Combin.},
  FJOURNAL = {Electronic Journal of Combinatorics},
    VOLUME = {27},
      YEAR = {2020},
    NUMBER = {3},
     PAGES = {Paper No. 3.11, 11},
   MRCLASS = {05B35},
  MRNUMBER = {4245124},
MRREVIEWER = {Hery Randriamaro},
       DOI = {10.37236/9314},
       URL = {https://doi.org/10.37236/9314},
}

@article {Cho-Oxley-Wang-2026,
    AUTHOR = {Cho, Christine and Oxley, James and Wang, Suijie},
     TITLE = {The symmetric strong circuit elimination property},
   JOURNAL = {Adv. in Appl. Math.},
  FJOURNAL = {Advances in Applied Mathematics},
    VOLUME = {173},
      YEAR = {2026},
    NUMBER = {part A},
     PAGES = {Paper No. 102983},
      ISSN = {0196-8858},
   MRCLASS = {05B35},
  MRNUMBER = {4969865},
       DOI = {10.1016/j.aam.2025.102983},
       URL = {https://doi.org/10.1016/j.aam.2025.102983},
}

@article {Geelen-Nelson-2010,
    AUTHOR = {Geelen, Jim and Nelson, Peter},
     TITLE = {The number of points in a matroid with no {$n$}-point line as
              a minor},
   JOURNAL = {J. Combin. Theory Ser. B},
  FJOURNAL = {Journal of Combinatorial Theory. Series B},
    VOLUME = {100},
      YEAR = {2010},
    NUMBER = {6},
     PAGES = {625--630},
      ISSN = {0095-8956},
   MRCLASS = {05B35 (05C83 51A45)},
  MRNUMBER = {2718682},
MRREVIEWER = {Stefan H. M. van Zwam},
       DOI = {10.1016/j.jctb.2010.06.001},
       URL = {https://doi.org/10.1016/j.jctb.2010.06.001},
}

@article {Nelson-2013,
    AUTHOR = {Nelson, Peter},
     TITLE = {Growth rate functions of dense classes of representable
              matroids},
   JOURNAL = {J. Combin. Theory Ser. B},
  FJOURNAL = {Journal of Combinatorial Theory. Series B},
    VOLUME = {103},
      YEAR = {2013},
    NUMBER = {1},
     PAGES = {75--92},
      ISSN = {0095-8956},
   MRCLASS = {05B35 (05B25)},
  MRNUMBER = {2995720},
MRREVIEWER = {Stefan H. M. van Zwam},
       DOI = {10.1016/j.jctb.2012.09.002},
       URL = {https://doi.org/10.1016/j.jctb.2012.09.002},
}

@article {Chen-Geelen18,
    AUTHOR = {Chen, Rong and Geelen, Jim},
     TITLE = {Infinitely many excluded minors for frame matroids and for
              lifted-graphic matroids},
   JOURNAL = {J. Combin. Theory Ser. B},
  FJOURNAL = {Journal of Combinatorial Theory. Series B},
    VOLUME = {133},
      YEAR = {2018},
     PAGES = {46--53},
      ISSN = {0095-8956},
   MRCLASS = {05B35},
  MRNUMBER = {3856704},
MRREVIEWER = {Joseph E. Bonin},
       DOI = {10.1016/j.jctb.2018.04.003},
       URL = {https://doi.org/10.1016/j.jctb.2018.04.003},
}

@article {Chen-Whittle18,
    AUTHOR = {Chen, Rong and Whittle, Geoff},
     TITLE = {On recognizing frame and lifted-graphic matroids},
   JOURNAL = {J. Graph Theory},
  FJOURNAL = {Journal of Graph Theory},
    VOLUME = {87},
      YEAR = {2018},
    NUMBER = {1},
     PAGES = {72--76},
      ISSN = {0364-9024},
   MRCLASS = {05B35},
  MRNUMBER = {3729836},
MRREVIEWER = {Haidong Wu},
       DOI = {10.1002/jgt.22141},
       URL = {https://doi.org/10.1002/jgt.22141},
}

@article {Funk-Pivotto-Slilaty22,
    AUTHOR = {Funk, Daryl and Pivotto, Irene and Slilaty, Daniel},
     TITLE = {Matrix representations of frame and lifted-graphic matroids
              correspond to gain functions},
   JOURNAL = {J. Combin. Theory Ser. B},
  FJOURNAL = {Journal of Combinatorial Theory. Series B},
    VOLUME = {155},
      YEAR = {2022},
     PAGES = {202--255},
      ISSN = {0095-8956},
   MRCLASS = {05B35 (05C22)},
  MRNUMBER = {4392273},
MRREVIEWER = {Haidong Wu},
       DOI = {10.1016/j.jctb.2022.02.007},
       URL = {https://doi.org/10.1016/j.jctb.2022.02.007},
}

\end{document}